\documentclass[11pt,letterpaper]{article}

\usepackage{amssymb,amsmath,amsfonts}
\usepackage{graphicx,xcolor,enumitem}
\usepackage{epsfig}
\usepackage{amsthm}
\usepackage{bm}
\usepackage{subcaption}
\usepackage{nicematrix}
\usepackage[round]{natbib}
\usepackage{csquotes}

\usepackage{multirow}
\usepackage{algorithm, algorithmic}

\usepackage{diagbox}

%\usepackage{array}
%\newcolumntype{N}{>{\centering\arraybackslash}m{.5in}}
%\newcolumntype{G}{>{\centering\arraybackslash}m{2in}}

%\newcommand*{\boldsymbol}[1]{#1}

\renewcommand{\mkbegdispquote}[2]{\itshape}
\usepackage[twoside, hmarginratio=1:1, vmarginratio=1:1, left=1in,top=1in]{geometry}

\RequirePackage[breaklinks=true, hidelinks]{hyperref}
\usepackage{breakcites}

\newcommand{\twoheadmapsto}{\mathrel{\mapstochar\twoheadrightarrow}}

\newcommand{\cA}{\mathcal{A}}

\newcommand{\cD}{\mathcal{D}}
\newcommand{\E}{\mathbb{E}}
\newcommand{\btau}{\bar{\tau}}
\newcommand{\cH}{\mathcal{H}}
\newcommand{\bF}{\mathbb{F}}
\newcommand{\R}{\mathbb{R}}
\newcommand{\p}{\mathbb{P}}

\newcommand{\cL}{{\mathcal L}}
\newcommand{\cS}{{\mathcal S}}

\newcommand{\barS}{\overline{\mathcal{S}}}
\newcommand{\cF}{{\mathcal F}}

\newcommand{\cV}{{\mathcal V}}
\newcommand{\cC}{{\mathcal C}}
\newcommand{\cM}{{\mathcal M}}

\newcommand{\cI}{{\mathcal I}}

\newcommand{\one}{\mathbf{1}}

\newtheorem{theorem}{Theorem}

\newtheorem{definition}[theorem]{Definition}

\newtheorem{lemma}[theorem]{Lemma}

\newtheorem{proposition}[theorem]{Proposition}

\theoremstyle{definition}

\numberwithin{equation}{section}
\numberwithin{theorem}{section}

% \allowdisplaybreaks

\begin{document}

\title{Goal-based portfolio selection with fixed transaction costs\footnote{Erhan Bayraktar is partially supported by the National Science Foundation under grant DMS-2507940 and by the Susan M. Smith chair. Bingyan Han is partially supported by The Hong Kong University of Science and Technology (Guangzhou) Start-up Fund G0101000197 and the Guangzhou-HKUST(GZ) Joint Funding Scheme (No. 2025A03J3858). Jingjie Zhang is supported by the National Natural Science Foundation of China under Grant No.12201113.}}

\author{
	Erhan Bayraktar\thanks{Department of Mathematics, University of Michigan, Ann Arbor, Email: erhan@umich.edu.}
	\and Bingyan Han\thanks{Thrust of Financial Technology, The Hong Kong University of Science and Technology (Guangzhou), Email: bingyanhan@hkust-gz.edu.cn.}
	\and Jingjie Zhang\thanks{University of International Business and Economics, Email: jingjie.zhang@uibe.edu.cn.}
}

\date{\today}
\maketitle

\begin{abstract}
	We study a goal-based portfolio selection problem in which an investor aims to meet multiple financial goals, each with a specific deadline and target amount. Trading the stock incurs a strictly positive transaction cost. Using the stochastic Perron's method, we show that the value function is the unique viscosity solution to a system of quasi-variational inequalities. The existence of an optimal trading strategy and goal funding scheme is established. Numerical results reveal complex optimal trading regions and show that the optimal investment strategy differs substantially from the V-shaped strategy observed in the frictionless case.
	\\[2ex] 
	\noindent{\textbf {Keywords}: Goal-based portfolio selection, viscosity solutions, stochastic Perron's method, transaction costs.}
	\\[2ex]
	\noindent{\textbf {Mathematics Subject Classification:} 49L20, 91G10, 49L25, 60H30} % \\
	%\noindent{\textbf {JEL Classification:}} 
\end{abstract}

\section{Introduction}
% Motivation

Portfolio selection has long been a central topic in financial research. Classical frameworks, including Merton's utility maximization and Markowitz's mean-variance model, are built upon several key assumptions. A critical assumption is that investors possess a precise understanding of their own risk aversion and can specify its value without ambiguity. In practice, however, retail investors often find it difficult to quantify their risk preferences. The well-known equity premium puzzle \citep{mehra2003equity} illustrates that it is challenging to identify a reasonable risk aversion coefficient consistent with observed equity premiums and broader economic considerations. Furthermore, a single coefficient is insufficient to capture the diverse investment objectives of individual investors.

Goal-based portfolio selection has emerged as an alternative paradigm for modeling and fulfilling investors' objectives. In this framework, an investor specifies the timing, required funding levels of financial goals and their relative importance. Compared with risk aversion, investors typically have a clearer understanding of their funding needs and the relative importance of different goals. For instance, an investor may know that purchasing a house within a certain price range before a given date is a priority, while a vacation is a less important objective.

The goal-based paradigm has been considered in both the wealth management industry and academia. Platforms such as Schwab and Betterment enable clients to specify goals including retirement plans and home down payments. \cite{gargano2024goal} used data from a FinTech application to demonstrate that setting savings goals increases individual savings rates. \cite{das2010portfolio} investigated separate portfolios for distinct goals and imposed different thresholds on the failure probability associated with each goal. \cite{das2022dynamic} extended this framework by allowing different deadlines and capturing competition among goals, although their model assumes a finite number of states for both strategy and wealth. \cite{capponi2024} introduced a continuous-time framework for multi-goal wealth management, solved using the Hamilton-Jacobi-Bellman (HJB) equation method. \cite{bayraktar2025goal} incorporated mental accounting behavior by assuming that investors construct separate portfolios for each goal, with penalties applied to fund transfers between goals.

% literature on transaction/fixed costs 
An essential aspect of portfolio selection is the inclusion of transaction costs in stock trading. A substantial body of literature has examined investment decisions under market frictions. Proportional transaction costs were first introduced by \cite{magill1976portfolio} in the context of Merton's problem. \cite{davis1990portfolio} demonstrated that the optimal strategies correspond to the local times of a two-dimensional process at the boundaries of a wedge-shaped region. \cite{shreve1994optimal} relaxed several assumptions in \cite{davis1990portfolio} and provided a comprehensive characterization of the value function and optimal strategies. Finite-horizon problems with proportional transaction costs have been investigated in \cite{davis1993european,dai2009finite,belak2019finite}, among others. In addition to the dynamic programming and HJB equation approaches, the duality method has been widely employed to derive structural results and candidate solutions; see, for example, \cite{cvitanic1996hedging,kabanov1999hedging,deelstra2001dual,klein2007duality,Kallsen2010,Czichowsky2016AAP}. Another line of research incorporates fixed transaction costs; see \cite{altarovici2017optimal,belak2019utility,belak2022optimal,bayraktar2022convergence} and references therein. Notably, when transaction costs are small, asymptotic expansions can be derived using homogenization methods \citep{soner2013homogenization,possamai2015homogenization,altarovici2015asymptotics}.

% our technical contributions and findings
A key finding in \cite{capponi2024} is the $V$-shaped investment strategy, which exhibits a non-monotonic relationship between the risk profile and wealth level (see Figure \ref{fig:invest_frictionless} for details). This pattern often results in substantial shifts in stock holdings. Since \cite{capponi2024} assumes a frictionless market, a natural question arises as to whether the $V$-shaped behavior persists when trading incurs costs. In this work, we adopt the goal-based framework of \cite{capponi2024} and consider a financial market with frictions as described in \cite{belak2022optimal}. The cost structure encompasses fixed costs, fixed-plus-proportional costs, and floored or capped costs, which commonly arise in retail investment settings.

Our main contributions and findings are summarized as follows. We employ the stochastic Perron's method to establish that the value function is the unique viscosity solution of a quasi-variational inequality (QVI) system. Early developments of the stochastic Perron's method can be found in \cite{bayraktar2012linear,bayraktar2013stochastic,bayraktar2014Dynkin,bayraktar2015stochastic}. Several essential differences distinguish our results from existing studies in \cite{capponi2024,belak2022optimal}:
\begin{enumerate}[label={(\arabic*)}]
	\item Unlike \cite{belak2022optimal}, demonstrating that the lower stochastic envelope $v_{-}$ is the unique viscosity solution to the QVI system is insufficient in our setting. This distinction stems from the specific structure of the goal-based objective functions.
	\item The expiration of goals at fixed deadlines complicates the proof of the viscosity solution properties. Further details are provided in Lemmas \ref{lem:k_vissub} and \ref{lem:k_vissup}.
	\item The construction of a strict classical subsolution in Lemma \ref{lem:classical_sub} is more delicate, with the difficulty again stemming from the goal-based objectives.
\end{enumerate}
Despite these challenges, one advantage of strictly positive costs is that the existence of an optimal strategy requires only continuity, rather than smoothness, of the value function, similar to the setting in \cite{belak2022optimal}. This property allows for an explicit construction of an optimal strategy, which is presented in Section \ref{sec:strategy}.

In the numerical study, we focus on fixed transaction costs and summarize the main findings as follows:
\begin{enumerate}[label={(\arabic*)}]
	\item The investor must consider both stock and bank account holdings, rather than total wealth alone, when determining the optimal stock exposure. The continuation regions exhibit complex geometries and lack symmetry with respect to the target positions. In particular, a straight continuation region arises when the wealth level is close to the amounts required for both goals, as discussed in Section \ref{sec:straight}.
	\item The optimal strategy in our setting may still allocate the entire wealth to the stock when the total wealth is close to the amount required by the first goal, as shown in Figure \ref{fig:c002t05}. This behavior contrasts with the $V$-shaped strategy observed in the frictionless case. 
	\item Within the continuation region, since no transfer occurs, the optimal funding ratio of the first goal is determined based on the bank account. Figure \ref{fig:fund_equal_w} shows that, under fixed costs, the optimal funding ratios exhibit greater variability for a given level of total wealth.
\end{enumerate}

In contrast to the present paper, \cite{bayraktar2025goal} study a frictionless financial market and introduce penalties for fund transfers between goals. The proof of the viscosity solution property in \cite{bayraktar2025goal} differs substantially in handling goal deadlines and establishing the comparison principle. Furthermore, the incorporation of mental costs in \cite{bayraktar2025goal} results in optimal trading regions that differ from those derived in the current study.

The remainder of this paper is structured as follows. Section \ref{sec:form} introduces the problem formulation and the financial market. Section \ref{sec:HJB} derives the QVI system and presents the first main result, Theorem \ref{thm:viscosity}, which establishes the viscosity solution property of the value function. Sections \ref{sec:sto_super}, \ref{sec:sto_sub}, and \ref{sec:compare} contain the proof of Theorem \ref{thm:viscosity}. Section \ref{sec:strategy} constructs the optimal strategy, and Section \ref{sec:numerics} reports the numerical results. All technical proofs are provided in the Appendix.

\section{Formulation}\label{sec:form}

Assume that an investor has $K$ goals. Each goal $k \in \{1, \ldots, K\}$ requires a target amount $G_k$ by a predetermined deadline $T_k$. For simplicity, assume that the deadlines are distinct and ordered as $T_1 < \ldots < T_k < \ldots < T_K$. For convenience, let $T_0 := 0$ and $T := T_K$. The investment problem therefore spans the time horizon $[0, T]$. The investor constructs a single portfolio to meet each target $G_k$.

Following the financial market framework in \cite{belak2022optimal}, we restate the setting here for completeness. Let $(\Omega, \cF, \p)$ be a probability space supporting a one-dimensional Brownian motion $W := \{W(t) : t \in [0, T] \}$. The filtration $\mathbb{F} := \{\cF_t : t \in [0, T] \}$ denotes the completion of the natural filtration generated by $W$ and satisfies the usual conditions. The financial market consists of a risk-free asset and a single risky asset (stock). Denote by $r$ the constant risk-free interest rate. The stock price process $\{ S(u): u \in [t, T]\}$ evolves according to
\begin{equation}\label{stock}
	d S(u)  = S(u) [\mu du + \sigma dW(u)], 
\end{equation}
where $\mu \in \R$ is the constant drift and $\sigma > 0$ is the constant volatility.

Following \cite{belak2022optimal}, a trading volume $\Delta$ in the stock is assumed to incur a strictly positive transaction cost denoted by $C(\Delta)$. Suppose the transaction cost function $C(\cdot)$ satisfies the following conditions:
\begin{enumerate}[label={(\arabic*)}]	
	\item The function $C(\Delta)$ is continuous and the mapping $|\Delta| \mapsto C(|\Delta|)$ is increasing, implying that transaction costs rise with trading volume. The minimum cost is attained at $\Delta = 0$, with $C_{\min} := C(0) > 0$. 
	\item Suppose the mapping
	\begin{equation}
		\Delta \mapsto \Delta + C(\Delta) 
	\end{equation}
	is strictly increasing on $\R$, and its range contains $[0, \infty)$.
	\item Transactions of size zero $(\Delta = 0)$ are permitted but still incur a positive cost $C_{\min} > 0$. This assumption is made for analytical convenience, as it guarantees the compactness of the feasible set of transactions.
\end{enumerate} 
Typical examples of $C(\Delta)$ include fixed costs, fixed-plus-proportional costs, and other specifications discussed in \cite{belak2022optimal}.

We now introduce the regions representing portfolio positions. Let $x_0$ and $x_1$ denote the dollar amounts invested in the money market and the stock, respectively. The two-dimensional variable $x := (x_0, x_1) \in \R^2$ represents the investor's portfolio position. Throughout this paper, short selling is not permitted in either the money market or the stock. The corresponding set of admissible portfolio positions is denoted by $\barS := [0, \infty)^2$. For later use, define $\cS: = [0, \infty)^2 \backslash \{(0, 0)\}$, which excludes the corner point $(0, 0)$. 

Following a transaction of size $\Delta \in \R$, the portfolio $x = (x_0, x_1)$ is updated according to
\begin{equation}
	(x_0 - \Delta - C(\Delta), x_1 + \Delta) =: \Gamma(x, \Delta),
\end{equation}
where $\Gamma(x, \Delta)$ is referred to as the rebalancing function in \cite{belak2022optimal}. 

Given a portfolio position $x \in \barS$, a transaction $\Delta$ is called feasible if it does not result in short positions in either asset. The set of all feasible transactions is defined by 
\begin{equation}
	D(x) := \{ \Delta \in \R : \Gamma(x, \Delta) \in \barS \}.
\end{equation}
Following \cite{belak2022optimal}, the feasible set $D(x)$ can be simplified. Recall that the mapping $ \Delta  \mapsto \Delta + C(\Delta)$ is strictly increasing, and its range covers $[0, \infty)$. Consequently, there exists a continuous and strictly increasing inverse function $\chi : [0, \infty) \rightarrow \R$. The rebalancing position $\Gamma(x, \Delta)$ belongs to $\barS$ if and only if 
\begin{equation}
	x_0 - \Delta - C(\Delta) \geq 0 \quad \text{and} \quad x_1 + \Delta \geq 0.
\end{equation} 
This condition is equivalent to $\chi(x_0) \geq \Delta$ and $\Delta \geq - x_1$. Hence, the set of feasible transactions can be written as
\begin{equation}\label{eq:D(x)}
	D(x) = [- x_1, \chi(x_0)], \quad x \in \barS.
\end{equation}
When $\chi(x_0) < - x_1$, no feasible transaction exists. The set of portfolio positions without feasible transactions is denoted by 
\begin{equation}
	\cS_\emptyset := \{ x \in \barS: \chi(x_0) < - x_1\}.
\end{equation}
The representation \eqref{eq:D(x)} implies that $D(x) \neq \emptyset$ if and only if $- x_1 \in D(x)$, which is equivalent to $x_0 + x_1 \geq C(-x_1)$; in other words, there is sufficient budget to liquidate the stock position. As noted by \cite{belak2022optimal}, this yields
\begin{equation}
	\cS_\emptyset = \{ x \in \barS : x_0 + x_1 < C(-x_1) \} \supseteq \{ x \in \barS: x_0 + x_1 < C_{\min} \}.
\end{equation}
Therefore, $\cS_\emptyset$ is open relative to $\barS$. The $\barS$-relative boundary of $\cS_\emptyset$ is
\begin{equation}
	\partial \cS_\emptyset = \{x \in \barS : x_0 + x_1 = C(-x_1)\}.
\end{equation}
The closure of $\cS_\emptyset$ is 
\begin{equation}
	\overline{\cS_\emptyset} = \{ x \in \barS : x_0 + x_1 \leq C(-x_1) \}.
\end{equation}

When transaction costs are bounded below by a strictly positive constant, the investor can only trade discretely, as continuous trading would lead to immediate bankruptcy. An investment strategy is represented by a sequence $\Lambda := \{(\tau_n, \Delta_n)\}^\infty_{n=1}$, where $\{ \tau_n \}^\infty_{n=1}$ is an increasing sequence of $\bF$-stopping times representing trading times, and $\Delta_n$ is an $\cF_{\tau_n}$-measurable random variable denoting the volume of the $n$-th trade. In addition to the investment strategy, the investor also needs to determine the dollar amounts allocated to each goal. Let $\theta_k \geq 0$ denote the $\cF_{T_k}$-measurable random variable representing the amount withdrawn from the money account to finance goal $k$.

Starting at time $0$ with an initial portfolio position $x = (x_0, x_1) \in \barS$, the portfolio dynamics $(X_0(s), X_1(s))_{s \in [0, T]}$ are given by
\begin{equation}
	\begin{aligned}
		X_0(s) &= x_0 + \int^s_0 r X_0(u) du - \sum^\infty_{n=1} [ \Delta_n + C(\Delta_n)] \one_{\{\tau_n \leq s \}} - \sum^K_{l=1} \theta_l \one_{\{T_l \leq s\}},  \\
		X_1(s) &= x_1 + \int^s_0 \mu X_1(u) du + \int^s_0 \sigma X_1(u) dW(u) + \sum^\infty_{n=1} \Delta_n \one_{\{ \tau_n \leq s\}}, \quad s \in [0, T]. 
	\end{aligned}
\end{equation} 
For notational simplicity, let $X(s) := (X_0(s), X_1(s))$. Since trading at time $0$ is allowed, the initial condition is interpreted as $X(0-) = x$.

In the general case where the initial time is $t \in [0, T]$ and $X(t-) = x$, the dynamics are given by
\begin{equation}\label{eq:Xt}
	\begin{aligned}
		X_0(s) &= x_0 + \int^s_t r X_0(u) du - \sum^\infty_{n=1} [ \Delta_n + C(\Delta_n)] \one_{\{t \leq \tau_n \leq s \}} - \sum^K_{l=1} \theta_l \one_{\{t \leq T_l \leq s\}},  \\
		X_1(s) &= x_1 + \int^s_t \mu X_1(u) du + \int^s_t \sigma X_1(u) dW(u) + \sum^\infty_{n=1} \Delta_n \one_{\{t \leq \tau_n \leq s\}}, \quad s \in [t, T]. 
	\end{aligned}
\end{equation} 
In particular, at each goal deadline $T_k$ for $k = 1, \ldots, K$, the portfolio dynamics satisfy
\begin{equation}
	\begin{aligned}
		X_0(T_k) &= X_0(T_k -) - \sum^\infty_{n=1} [ \Delta_n + C(\Delta_n)] \one_{\{\tau_n = T_k \}} - \theta_k,  \\
		X_1(T_k) &= X_1(T_k -) + \sum^\infty_{n=1} \Delta_n \one_{\{\tau_n = T_k\}}. 
	\end{aligned}
\end{equation} 
The wealth processes jump due to the withdrawal $\theta_k$ and transfers between the money account and the stock. Depending on the cost structure, executing several smaller trades may be less costly than making a single large trade.

For the final goal $K$, it is assumed that the investor liquidates her stock position whenever doing so does not incur a net loss. The liquidation value of a portfolio $x \in \barS$ is defined as
\begin{equation}
	L(x) := x_0 + (x_1 - C(-x_1))^+.
\end{equation}
Accordingly, the investor is assumed to meet the last goal using the liquidation value.

\begin{definition}[Admissible strategies]\label{def:adm}
	Consider the initial time $t \in [T_{k-1}, T_k]$ for some $k = 1, \ldots, K$ and the initial portfolio position $x = (x_0, x_1) \in \barS$. A trading strategy consists of the withdrawal sequence $\theta_{k:K} = \{\theta_l\}^K_{l=k}$, where $\theta_K$ equals to the liquidation value, and the investment strategy $\Lambda = \{(\tau_n, \Delta_n)\}^\infty_{n=1}$ with $\tau_1 \geq t$. The strategy is called admissible if it does not involve short positions in either the money account or the stock. The set of admissible strategies is denoted by $\cA(t, x; k)$.
\end{definition}
In Definition \ref{def:adm}, when $ k \leq K-1$, the set $\cA(T_k, x; k)$ corresponds to the problem immediately before the expiration of goal $k$ and therefore includes $\theta_k$. In contrast, $\cA(T_k, x; k+1)$ only contains $\theta_{k+1:K}$ and applies to the problem immediately after the expiration of goal $k$. This distinction is crucial for defining the value functions.

For each $k=1, \ldots, K$, a pair $(\btau, \xi)$ is called a {\it random initial condition} for the portfolio process \eqref{eq:Xt} if $\btau \in [T_{k-1}, T]$ is an $\mathbb{F}$-stopping time and $\xi$ is an $\cF_{\btau}$-measurable random variable satisfying $\p(\xi \in \barS) = 1$. For an admissible strategy $(\theta_{k:K}, \Lambda) := (\theta_{k:K}, \{ (\tau_n, \Delta_n) \}^\infty_{n=1})$ with $\tau_1 \geq \btau$, let $\{X(t; \btau, \xi, \theta_{k:K}, \Lambda) \}_{t \in [\btau, T]}$ denote the solution of the portfolio process \eqref{eq:Xt}. The random initial condition $(\btau, \xi)$ is said to be satisfied if
\begin{equation*}
	X(\btau-; \btau, \xi, \theta_{k:K}, \Lambda) = \xi.
\end{equation*}
The strategy $(\theta_{k:K}, \Lambda)$ is called $(\btau, \xi)$-admissible if 
\begin{equation*}
	\p(X(t; \btau, \xi, \theta_{k:K}, \Lambda) \in \barS, \; \btau \leq t \leq T ) = 1.
\end{equation*}

When there is no transfer between the money account and the stock, and only withdrawals $\theta_{k:K}$ are permitted, denote by $\{X(t; \btau, \xi, \theta_{k:K}, \emptyset) \}_{t \in [\btau, T]}$ the corresponding solution of the portfolio process \eqref{eq:Xt}. For later reference, we consider the solution on the interval $[\btau, T_k]$ with $\btau \leq T_k$. Denote by $\{X(t; \btau, \xi, \emptyset, \Lambda) \}_{t \in [\btau, T_k]}$ the solution when the withdrawal $\theta_k$ has not yet been determined. Similarly, the process $\{X(t; \btau, \xi, \emptyset, \emptyset) \}_{t \in [\btau, T_k]}$ represents the uncontrolled state process.

For clarity, we distinguish between processes initialized at time $T_k$. In the process $\{X(t; T_k, x$, $\theta_{k:K}, \Lambda) \}_{t \in [T_k, T]}$, the control variable $\theta_k$ remains active, and the initial position $x$ represents the state before the withdrawal of $\theta_k$. In contrast, in the process $\{X(t; T_k, x, \theta_{k+1:K}, \Lambda) \}_{t \in [T_k, T]}$, the initial position $x$ corresponds to the state after the withdrawal of $\theta_k$. Other analogous notations with $T_k$ as the initial time are interpreted in the same manner.  

Under the admissibility and no-arbitrage conditions, \citet[Lemma A.4]{belak2019utility} shows that the investor trades only finitely many times almost surely within a finite time interval. Moment estimates for $X(\cdot; t, x, \theta_{k:K}, \Lambda)$ can be obtained similarly to \citet[Equation (10)]{belak2022optimal}.

The investor seeks to minimize the shortfalls between the target levels $G_k$ and the funding amounts $\theta_k$, weighted by the importance parameters $w_k > 0$: 
\begin{align}\label{obj0}
	\inf_{(\theta_{1:K}, \Lambda) \in \cA(0, x; 1)}\E \Big[ &\sum^{K}_{k=1} w_k (G_k - \theta_k)^+  \Big].
\end{align}  
As a benchmark, the weight for goal $1$ is set as $w_{1} = 1.0$. To avoid trivial cases, we assume $w_k > 0$ and $G_k > 0$ for all $k = 1, \ldots, K$.

For time $t \in [T_{k-1}, T_{k}]$ with $k = 1, \ldots, K$, the value function is defined as
\begin{equation}\label{eq:ValFun_k}
	\begin{aligned}
		V_k(t, x) := \inf_{(\theta_{k:K}, \Lambda) \in \cA(t, x; k)} \E \Big[& \sum^{K}_{i = k} w_i (G_i - \theta_i)^+ \Big| X(t-; t, x, \theta_{k:K}, \Lambda) = x \Big].
	\end{aligned}
\end{equation}
The value function $V_k(t, x)$ applies when the goals $k, \ldots, K$ are active. At the deadline $T_k$ with $k \leq K-1$, both $V_k(T_k, x)$ and $V_{k+1}(T_k, x)$ are defined, representing the optimal objective values immediately before and after the deadline $T_k$, respectively. Specifically, $V_k(T_k, x)$ optimizes over $(\theta_{k:K}, \Lambda) \in \cA(T_k, x; k)$, while $V_{k+1}(T_k, x)$ optimizes over $(\theta_{k+1:K}, \Lambda) \in \cA(T_k, x; k+1)$.

\section{The QVI system}\label{sec:HJB}
In contrast to \cite{capponi2024}, we define the value function as an array of \eqref{eq:ValFun_k}:
\begin{equation}\label{value}
	(\{ V_1(t, x) \}_{t \in [0, T_1]}, \ldots, \{ V_k(t, x) \}_{t \in [T_{k-1}, T_{k}]}, \ldots, \{ V_{K}(t, x) \}_{t \in [T_{K-1}, T]} ),
\end{equation}
which facilitates the analysis of terminal conditions at $T_k$, $k=1, \ldots, K-1$. Under the framework of \cite{capponi2024}, our $ V_k(T_k, x)$ corresponds to $V(T_k -, x)$ in their notation.

To introduce the QVI system, we define the infinitesimal generator as
\begin{equation}
	\cL[V_k](t, x) := - \frac{\partial V_k}{\partial t} - r x_0 \frac{\partial V_k}{\partial x_0} - \mu x_1 \frac{\partial V_k}{\partial x_1} - \frac{1}{2} \sigma^2 x^2_1 \frac{\partial^2 V_k}{\partial x^2_1}.
\end{equation}
For a locally bounded function $V_k(t, x)$, the intervention operator is defined by
\begin{eqnarray}
	\cM[V_k](t, x) = \left\{\begin{array}{rcl}
		& \inf_{\Delta \in D(x)} V_k (t, \Gamma(x, \Delta)), & \text{ if } D(x) \neq \emptyset, \\
		& + \infty, & \text{ if } D(x) = \emptyset.
	\end{array}\right.
\end{eqnarray}

Through a heuristic derivation, the QVI system is given as follows:
\begin{enumerate}[label={(\arabic*)}]
	\item For time $t \in [T_{k-1}, T_{k})$ with $k = 1, \ldots, K$, the goals $k, \ldots, K$ are active. The corresponding QVI is
	\begin{equation}\label{QVI_t}
		\max \Big\{  \cL[V_k](t, x), V_k(t, x) - \cM[V_k](t, x) \Big\} = 0, \quad (t, x) \in [T_{k-1}, T_k) \times \cS.
	\end{equation}

	\item At time $T_k$ with $k = 1, \ldots, K-1$, the boundary condition connecting $V_k(T_k, x)$ and $V_{k+1}(T_k, x)$ is
	\begin{equation}\label{Tk_bc}
		\begin{aligned}
			\max \Big\{ & V_k(T_k, x) - \inf_{0 \leq \theta_k \leq x_0} \left[ w_k (G_k - \theta_k)^+ + V_{k+1}(T_k, x_0 - \theta_k, x_1) \right], \\
			& V_k(T_k, x) - \cM[V_k](T_k, x) \Big\} = 0, \quad x \in \cS.
		\end{aligned}
	\end{equation}
	
	\item At time $T_K$, the terminal condition is
	\begin{equation}\label{TK_bc}
		\begin{aligned}
			\max \Big\{ & V_K(T_K, x) - w_K \left[ G_K - x_0 - (x_1 - C(-x_1))^+ \right]^+ , \\
			& V_K(T_K, x) - \cM[V_K](T_K, x) \Big\} = 0, \quad x \in \cS.
		\end{aligned}
	\end{equation}
	
	\item At the portfolio position $x = (0, 0)$, the boundary condition is
	\begin{equation}\label{eq:cond_0}
		V_k(t, 0) = \sum^{K}_{i = k} w_i G_i, \quad t \in [T_{k-1}, T_k], \quad k = 1, \ldots, K.
	\end{equation}
\end{enumerate}
Since this is the only QVI system considered in the paper, we refer to it simply as {\it the QVI system}.

The first main result of this paper characterizes the value function defined in \eqref{value} with \eqref{eq:ValFun_k} as the unique viscosity solution of the QVI system. We adopt standard notation from the theory of viscosity solutions. For a locally bounded function $v_k$, denote $v^*_k$ as its upper semicontinuous (USC) envelope and $v_{k,*}$ as its lower semicontinuous (LSC) envelope. See \citet[Equation 4.1]{crandall1992user} for the precise definition.

\begin{definition}[Viscosity subsolution]\label{def:vis_sub} 
	Consider an array of functions
	\begin{equation}\label{vis_sub}
		(\{ v_1(t, x) \}_{t \in [0, T_1]}, \ldots, \{ v_k(t, x) \}_{t \in [T_{k-1}, T_{k}]}, \ldots, \{ v_{K}(t, x) \}_{t \in [T_{K-1}, T]} ),
	\end{equation}
	where $v_k(t, x): [T_{k-1}, T_k] \times \barS \rightarrow \R$ is locally bounded for each $k=1, \ldots, K$. The array \eqref{vis_sub} is a viscosity subsolution of the QVI system if the following conditions hold: 
	\begin{enumerate}[label={(\arabic*)}]	
		\item For each $k=1,\ldots, K$,
		\begin{equation}\label{eq:interior_vissub}
			\max \Big\{  \cL[\varphi](\bar{t}, \bar{x}), v^*_k(\bar{t}, \bar{x}) - \cM[v^*_k]^*(\bar{t}, \bar{x}) \Big\} \leq 0,
		\end{equation}
		for all $(\bar{t},\bar{x}) \in [T_{k-1}, T_k) \times \cS$ and for all $\varphi \in C^{1, 2}([T_{k-1}, T_k) \times \cS)$ such that $(\bar{t},\bar{x})$ is a maximum point of $v^*_k - \varphi$.
		
		\item For each $T_k$ with $k = 1, \ldots, K-1$,
		\begin{equation}\label{eq:k_vissub}
			\begin{aligned}
				\max \Big\{ & v^*_k(T_k, x) - \inf_{0 \leq \theta_k \leq x_0} \left[ w_k (G_k - \theta_k)^+ + v^*_{k+1}(T_k, x_0 - \theta_k, x_1) \right], \\
				& v^*_k(T_k, x) - \cM[v^*_k]^*(T_k, x) \Big\} \leq 0,
			\end{aligned}
		\end{equation}
		for all $ x \in \cS$.
		\item At the terminal time $T_K$,
		\begin{equation}\label{eq:TK_vissub}
			\begin{aligned}
				\max \Big\{ & v^*_K(T_K, x) - w_K \left[ G_K - x_0 - (x_1 - C(-x_1))^+ \right]^+ , \\
				& v^*_K(T_K, x) - \cM[v^*_K]^*(T_K, x) \Big\} \leq 0,
			\end{aligned}
		\end{equation}
		for all $ x \in \cS$.
		\item At the boundary $x = (0, 0)$,
		\begin{equation}\label{vissub_bd0} 
			v^*_k(t, 0) \leq \sum^{K}_{i = k} w_i G_i, \quad t \in [T_{k-1}, T_k], \quad k = 1, \ldots, K. 
		\end{equation}
	\end{enumerate}
\end{definition}

\begin{definition}[Viscosity supersolution]\label{def:vis_super} 
Consider an array of functions
	\begin{equation}\label{vis_super}
		(\{ v_1(t, x) \}_{t \in [0, T_1]}, \ldots, \{ v_k(t, x) \}_{t \in [T_{k-1}, T_{k}]}, \ldots, \{ v_{K}(t, x) \}_{t \in [T_{K-1}, T]} ),
	\end{equation}
	where $v_k(t, x): [T_{k-1}, T_k] \times \barS \rightarrow \R$ is locally bounded for each $k=1,\ldots, K$. The array \eqref{vis_super} is a viscosity supersolution of the QVI system if the following conditions hold:
	\begin{enumerate}[label={(\arabic*)}]	
		\item For each $k=1,\ldots, K$,
		\begin{equation}\label{t_vissup}
			\begin{aligned}
				\max \Big\{  \cL[\varphi](\bar{t}, \bar{x}), v_{k, *}(\bar{t}, \bar{x}) - \cM[v_{k, *}]_*(\bar{t}, \bar{x}) \Big\} \geq 0,
			\end{aligned} 
		\end{equation}
		for all $(\bar{t},\bar{x}) \in [T_{k-1}, T_k) \times \cS$ and for all $\varphi \in C^{1, 2}([T_{k-1}, T_k) \times \cS)$ such that $(\bar{t},\bar{x})$ is a minimum point of $v_{k, *} - \varphi$.
		
		\item For each $T_k$ with $k = 1, \ldots, K-1$,
		\begin{equation}\label{k_vissup}
			\begin{aligned}
				\max \Big\{ & v_{k, *}(T_k, x) - \inf_{0 \leq \theta_k \leq x_0} \left[ w_k (G_k - \theta_k)^+ + v_{k+1, *}(T_k, x_0 - \theta_k, x_1) \right], \\
				& v_{k, *}(T_k, x) - \cM[v_{k, *}]_* (T_k, x) \Big\} \geq 0,
			\end{aligned}
		\end{equation}
		for all $ x \in \cS$.
		\item At the terminal time $T_K$,
		\begin{equation}\label{K_vissup}
			\begin{aligned}
				\max \Big\{ & v_{K, *}(T_K, x) - w_K \left[ G_K - x_0 - (x_1 - C(-x_1))^+ \right]^+ , \\
				& v_{K, *}(T_K, x) - \cM[v_{K, *}]_*(T_K, x) \Big\} \geq 0,
			\end{aligned}
		\end{equation}
		for all $ x \in \cS$.
		\item At the boundary $x = (0, 0)$,
		\begin{equation}
			v_{k, *}(t, 0) \geq \sum^{K}_{i = k} w_i G_i, \quad t \in [T_{k-1}, T_k], \quad k = 1, \ldots, K. 
		\end{equation}
	\end{enumerate}
	
\end{definition}

\begin{definition}[Viscosity solution]
	Consider an array of functions
	\begin{equation}\label{vis_sol}
		(\{ v_1(t, x) \}_{t \in [0, T_1]}, \ldots, \{ v_k(t, x) \}_{t \in [T_{k-1}, T_{k}]}, \ldots, \{ v_{K}(t, x) \}_{t \in [T_{K-1}, T]} ),
	\end{equation}
	where $v_k(t, x): [T_{k-1}, T_k] \times \barS \rightarrow \R$ is locally bounded for each $k=1,\ldots, K$. The array \eqref{vis_sol} is a viscosity solution of the QVI system if it is a viscosity subsolution under Definition \ref{def:vis_sub} and a viscosity supersolution under Definition \ref{def:vis_super}.
\end{definition}

The first main result of this paper is stated as follows:
\begin{theorem}\label{thm:viscosity}
	The value function array defined in \eqref{value} is the unique viscosity solution of the QVI system. For each $k = 1, \ldots, K$, the function $V_k(t, x)$ is continuous and bounded on $[T_{k-1}, T_k] \times \barS$.
\end{theorem}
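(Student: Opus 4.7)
The plan is to apply the stochastic Perron's method to the array-valued value function in \eqref{value}. I would introduce a class $\mathcal{U}^+$ of \emph{stochastic supersolutions} consisting of locally bounded, LSC arrays $(u_1, \ldots, u_K)$ for which the process $u_k(\cdot, X(\cdot))$ is a submartingale along every admissible control on each slab $[T_{k-1}, T_k] \times \barS$, and for which the link $u_k(T_k, x) \geq w_k(G_k - \theta_k)^+ + u_{k+1}(T_k, x_0 - \theta_k, x_1)$ holds at each deadline for all $\theta_k \in [0, x_0]$, with the terminal liquidation analogue at $T_K$ and the correct point value at $(0,0)$; a symmetric class $\mathcal{U}^-$ of USC stochastic subsolutions is defined analogously. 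Nonemptiness is immediate: the constant array $u_k \equiv \sum_{i \geq k} w_i G_i$ lies in $\mathcal{U}^+$, while the zero array lies in $\mathcal{U}^-$. Defining the envelopes
\begin{equation*}
v_-(t,x) := \inf\{u(t,x) : u \in \mathcal{U}^+\}, \qquad v_+(t,x) := \sup\{u(t,x) : u \in \mathcal{U}^-\}
\end{equation*}
componentwise, the standard verification against admissible strategies $(\theta_{k:K}, \Lambda)$ produces the sandwich $v_+ \leq V \leq v_-$.

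In Section \ref{sec:sto_super} I would show that $v_-$ is a viscosity supersolution of the QVI system via the classical Perron contradiction: if any inequality in Definition \ref{def:vis_super} failed at some $(\bar{t},\bar{x})$, I would take a smooth test function touching $v_{k,*}$ from below, perturb it strictly downward on a small ball, and glue it back into $v_-$ as a pointwise minimum, producing a strictly smaller element of $\mathcal{U}^+$ and contradicting minimality. Compactness of $D(x)$ guaranteed by the standing assumption $C_{\min}>0$ delivers an optimal transaction in the $\cM$ branch, and the deadline condition \eqref{k_vissup} requires gluing the perturbation across $T_k$ through an admissible $\theta_k$, which is handled in Lemma \ref{lem:k_vissup}. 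Section \ref{sec:sto_sub} treats $v_+$ symmetrically, with Lemma \ref{lem:k_vissub} providing the $T_k$ subsolution analogue. The need to use \emph{both} envelopes, rather than only $v_-$ as in \cite{belak2022optimal}, stems from the state-dependent jump at each $T_k$ induced by the constrained withdrawal optimization in $\theta_k$, which prevents a one-sided Perron argument from closing.

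The comparison principle in Section \ref{sec:compare} proceeds by backward induction on $k$. On the last slab $[T_{K-1}, T_K]$, given a viscosity subsolution $v$ and supersolution $u$, I add a small multiple $\epsilon \phi$ of the strict classical subsolution from Lemma \ref{lem:classical_sub} to $v^*_K$ so that both the interior PDE and the intervention branch become strict; a doubling-variables argument in the style of \cite{crandall1992user} then rules out an interior maximum of $v^*_K - u_{K,*}$, while the terminal condition at $T_K$, the pointwise boundary at $(0,0)$, and boundedness control the exterior, yielding $v^*_K \leq u_{K,*}$ after $\epsilon \to 0$. Once $V_K = v_- = v_+$ is established on $[T_{K-1}, T_K]$, the matching condition \eqref{k_vissup} combined with \eqref{eq:k_vissub} transfers equality to $V_{K-1}(T_{K-1},\cdot)$ as new terminal data, and the induction continues downward to $k=1$. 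The resulting chain $v_- \leq v_+$ combined with the earlier sandwich forces $v_- = V = v_+$, so each $V_k$ is simultaneously USC and LSC hence continuous, and boundedness is immediate from $0 \leq V_k \leq \sum_{i \geq k} w_i G_i$.

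The hard part is Lemma \ref{lem:classical_sub}: the strict classical subsolution $\phi$ must be simultaneously compatible with the interior QVI, the $T_K$ terminal data, and every deadline matching condition \eqref{eq:k_vissub}. A naive exponential or quadratic ansatz fails because the deadline conditions couple $\phi_k$ to $\phi_{k+1}$ through an infimum over $\theta_k \in [0, x_0]$ bounded by the bank account alone, and strict inequality in this coupled optimization is not automatic. I anticipate a piecewise construction built backward in $k$ that exploits the sublinear shortfall $(G_k - \theta_k)^+$ and the strictly positive cost floor $C_{\min}$ to maintain strictness across both the differential and intervention inequalities at every deadline. Secondary technicalities include the degenerate region $\cS_\emptyset$ where $D(x) = \emptyset$ renders $\cM = +\infty$ (so the intervention branch is vacuous and the PDE branch is the only constraint), and the need to match the exact point value $\sum_{i \geq k} w_i G_i$ at $(0,0)$ throughout the argument.
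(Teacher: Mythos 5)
The overall shape of your plan (stochastic Perron with two envelopes, backward induction over the deadlines, a strict classical subsolution to drive comparison) matches the paper, but the roles of supersolutions and subsolutions are inverted at the decisive point, and this breaks the argument. First, the class $\mathcal{U}^+$ you describe — LSC arrays that are submartingales along every admissible control, with $u_k(T_k,x)\ge w_k(G_k-\theta_k)^+ + u_{k+1}(T_k,x_0-\theta_k,x_1)$ for \emph{all} $\theta_k\in[0,x_0]$ — does not consist of upper bounds of $V$. The submartingale inequality gives $u_k(t,x)\le\E[u_k(\rho,X_\rho)]$, which only controls $V$ from below, and the terminal link with "$\ge$ for all $\theta_k$" is unsatisfiable by any finite function (set $\theta_k=0$). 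Compare with Definition \ref{def:sto_super}: stochastic supersolutions are USC, satisfy the supermartingale-type inequality $v_k(\btau,\xi)\ge\E\big[\cH(\cdot)\big|\cF_{\btau}\big]$ for \emph{some} admissible strategy, and dominate $V$; stochastic subsolutions (Definition \ref{def:sto_sub}) are LSC, satisfy the submartingale-type inequality for all withdrawals with $\Lambda=\emptyset$ together with $v_k\le\cM[v_k]$, and are dominated by $V$. Your definition mixes the monotonicity features of both and loses the sandwich.

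Second, and more fundamentally, you claim the infimum envelope is a viscosity supersolution and the supremum envelope a viscosity subsolution. It is the opposite, and that reversal is the whole point of stochastic Perron. The infimum of stochastic supersolutions (which bounds $V$ from above) is a viscosity \emph{sub}solution: if the subsolution inequality failed, Perron's pasting would manufacture a strictly smaller stochastic supersolution, contradicting minimality (Proposition \ref{prop:vissub} and Lemmas \ref{lem:K_vissub}, \ref{lem:k_vissub}, \ref{lem:interior_vissub}). Symmetrically, the supremum of stochastic subsolutions is a viscosity \emph{super}solution (Proposition \ref{prop:vissuper}). Only with this assignment does the comparison principle, "subsolution $\le$ supersolution," yield "upper envelope $\le$ lower envelope," which squeezes the sandwich shut. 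With your assignment the comparison gives the same inequality you already had from the sandwich, and the backward induction cannot terminate. As a smaller point, the paper's reason for tracking both envelopes is not the deadline jump; it is that the positivity of the lower envelope cannot be read off its definition, while positivity of $V$ is needed for the perturbed-region construction of optimal strategies in Section \ref{sec:strategy}. And the comparison proof does not take $v^*_K + \epsilon\phi$ with $\epsilon\to 0$ but rather the convex mixtures $u_\eta=\frac{\eta+1}{\eta}u+\frac1\eta F^1_k$, $v_\eta=\frac{\eta-1}{\eta}v-\frac1\eta F^1_k$ with $\eta\to\infty$, a structurally different use of the strict subsolution.
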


The proof relies on the stochastic Perron's method developed in \cite{bayraktar2013stochastic,bayraktar2015stochastic}. The main advantage of this approach is that it avoids the need to establish the dynamic programming principle (DPP) a priori, instead deriving it after demonstrating that the value function satisfies the viscosity solution property. This method circumvents the technical difficulties and potential gaps in DPP proofs.

Theorem \ref{thm:viscosity} is proved in three steps: 
\begin{enumerate}[label={(\arabic*)}]	
	\item In Section \ref{sec:sto_super}, stochastic supersolutions are defined to bound the value function from above. The infimum of them is called the upper stochastic envelope and is shown to be a viscosity subsolution. 
			
	\item In Section \ref{sec:sto_sub}, stochastic subsolutions are defined to bound the value function from below. The supremum of them is called the lower stochastic envelope and is shown to be a viscosity supersolution. 
			
	\item In Section \ref{sec:compare}, a comparison argument is applied to complete the proof of Theorem \ref{thm:viscosity}.
\end{enumerate}

\section{Stochastic supersolution}\label{sec:sto_super}
In this paper, we fix a constant $p_0 \in (0, 1)$, which serves as the growth rate.

\begin{definition}[Stochastic supersolution]\label{def:sto_super}
	Consider an array of functions
	\begin{equation}\label{sto_super}
		(\{ v_1(t, x) \}_{t \in [0, T_1]}, \ldots, \{ v_k(t, x) \}_{t \in [T_{k-1}, T_{k}]}, \ldots, \{ v_{K}(t, x) \}_{t \in [T_{K-1}, T]} ).
	\end{equation}
	The array \eqref{sto_super} is a stochastic supersolution of the QVI system if the following conditions hold:
	\begin{enumerate}[label={(\arabic*)}]	
		\item For each $k=1,\ldots, K$, the function $v_k(t, x): [T_{k-1}, T_k] \times \barS \rightarrow \R$ is USC.
		\item There exists a constant $c>0$ such that
		\begin{equation*}
			|v_k(t, x)| \leq c (1 + |x|^{p_0}), \quad (t, x) \in [T_{k-1}, T_k] \times \barS, \quad k=1,\ldots, K.
		\end{equation*}
		\item For each $k=1, \ldots K$, consider any random initial condition $(\btau, \xi)$ with $\btau \in [T_{k-1}, T_k]$, $\xi \in \cF_{\btau}$ and $\p(\xi \in \barS) = 1$. There exists a $(\btau, \xi)$-admissible strategy $(\theta_{k:K}, \Lambda)$, such that for all stopping time $\rho \in [\btau, T]$, we have
		\begin{equation*}
			v_k(\btau, \xi) \geq \E \big[ \cH \big([\btau, \rho], v_{k:K}, X(\cdot; \btau, \xi, \theta_{k:K}, \Lambda) \big) \big| \cF_{\btau} \big],
		\end{equation*}
		where
		\begin{align}
			& \cH \big([\btau, \rho], v_{k:K}, X(\cdot; \btau, \xi, \theta_{k:K}, \Lambda) \big) \label{cH} \\
			& \quad := v_{k}(\rho, X(\rho; \btau, \xi, \theta_{k:K}, \Lambda)) \one_{\{\btau \leq \rho < T_k\}} \nonumber \\		
			& \qquad + \sum^{K-1}_{l=k} \Big\{ v_{l+1}(\rho, X(\rho; \btau, \xi, \theta_{k:K}, \Lambda)) +  \sum^{l}_{i = k} w_i (G_i - \theta_i)^+ \Big \} \one_{\{T_l \leq \rho < T_{l+1} \}} \nonumber \\
			& \qquad + \Big\{ \sum^{K}_{i = k} w_i (G_i - \theta_i)^+ \Big \} \one_{\{ \rho = T \}}. \nonumber
		\end{align}
		We refer to $(\theta_{k:K}, \Lambda)$ as a suitable strategy for $v_k$ (and $v_{k+1:K}$ in \eqref{sto_super}) with the random initial condition $(\btau, \xi)$. 	 	
	\end{enumerate}
	Denote by $\cV^+$ the set of stochastic supersolutions. Write $v : = (v_1, \ldots, v_k, \ldots, v_K)$ and use $v \in \cV^+$ to indicate that $v$ is a stochastic supersolution.
\end{definition}
The set $\cV^+$ is nonempty because 
\begin{equation}\label{ex_stosuper}
	v_{k}(t, x) = \sum^{K}_{i = k} w_i G_i, \quad (t, x) \in [T_{k-1}, T_k] \times \barS
\end{equation}
is a stochastic supersolution.

For $k = 1, \ldots, K$ and $(t, x) \in [T_{k-1}, T_k] \times \barS$, define
\begin{equation}\label{eq:sto_upper}
	v_{k, +}(t, x) := \inf \big\{ v_k(t, x) |\; v_k \text{ is the $k$-th element of some } v \in \cV^+ \big\}. 
\end{equation}
The upper stochastic envelope is denoted by $v_{+} := (v_{1, +}, \ldots, v_{k, +}, \ldots, v_{K, +})$. By definition, we can show that $v_{+}$ is an upper bound of the value function:
\begin{equation}\label{eq:val_up}
	v_{k, +}(t, x) \geq V_k(t, x), \quad (t, x) \in [T_{k-1}, T_k] \times \barS.
\end{equation}

Lemma \ref{lem:two_super} below establishes that the family $\cV^+$ of stochastic supersolutions is stable under taking the minimum. The proof follows directly from the definition and is therefore omitted.
\begin{lemma}\label{lem:two_super}
	If $(v^1_1, \ldots, v^1_k, \ldots, v^1_{K})$ and $ (v^2_1, \ldots, v^2_k, \ldots, v^2_{K})$ are stochastic supersolutions, then $(v^1_1 \wedge v^2_1, \ldots, v^1_k \wedge v^2_k, \ldots, v^1_{K} \wedge v^2_{K})$ is also a stochastic supersolution.
\end{lemma}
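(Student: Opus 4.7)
The plan is to verify the three conditions in Definition \ref{def:sto_super} for the merged array $v^1 \wedge v^2 := (v^1_1 \wedge v^2_1, \ldots, v^1_K \wedge v^2_K)$. Upper semicontinuity is immediate because the pointwise minimum of two USC functions is USC, and the polynomial growth bound $c(1 + |x|^{p_0})$ transfers with constant $c := \max(c_1, c_2)$. The substance lies in condition (3), and I would handle it via a pathwise splicing of the two given strategies, selecting the strategy associated with whichever component is smaller at the initial state.

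Fix $k \in \{1, \ldots, K\}$ and a random initial condition $(\btau, \xi)$ with $\btau \in [T_{k-1}, T_k]$, $\xi \in \cF_{\btau}$, $\p(\xi \in \barS) = 1$. For each $i \in \{1,2\}$, let $(\theta^i_{k:K}, \Lambda^i)$ with $\Lambda^i = \{(\tau^i_n, \Delta^i_n)\}_{n \geq 1}$ be a $(\btau, \xi)$-admissible strategy suitable for $v^i$ in the sense of Definition \ref{def:sto_super}(3). Form the $\cF_{\btau}$-measurable event
\begin{equation*}
	A := \bigl\{ v^1_k(\btau, \xi) \leq v^2_k(\btau, \xi) \bigr\}
\end{equation*}
and define the spliced strategy by
\begin{equation*}
	\theta_l := \theta^1_l \one_A + \theta^2_l \one_{A^c}, \quad (\tau_n, \Delta_n) := (\tau^1_n, \Delta^1_n) \one_A + (\tau^2_n, \Delta^2_n) \one_{A^c},
\end{equation*}
for $l = k, \ldots, K$ and $n \geq 1$. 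Since $A \in \cF_{\btau} \subseteq \cF_{\tau^i_n}$, each $\tau_n$ is a stopping time and the $\Delta_n, \theta_l$ are measurable with respect to the appropriate $\sigma$-algebras; moreover the resulting portfolio process coincides pathwise with $X^i := X(\cdot; \btau, \xi, \theta^i_{k:K}, \Lambda^i)$ on $A$ and on $A^c$ respectively, so $(\btau, \xi)$-admissibility is inherited from the individual strategies.

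For the supermartingale-type inequality, the componentwise bound $v^1 \wedge v^2 \leq v^i$ and the evident monotonicity of $\cH$ in its $v$-slot give, for any stopping time $\rho \in [\btau, T]$,
\begin{align*}
	\E\bigl[\cH([\btau, \rho], v^1 \wedge v^2, X) \mid \cF_{\btau}\bigr]
	&= \one_A \E\bigl[\cH([\btau, \rho], v^1 \wedge v^2, X^1) \mid \cF_{\btau}\bigr] \\
	&\quad + \one_{A^c} \E\bigl[\cH([\btau, \rho], v^1 \wedge v^2, X^2) \mid \cF_{\btau}\bigr] \\
	&\leq \one_A \E\bigl[\cH([\btau, \rho], v^1, X^1) \mid \cF_{\btau}\bigr] + \one_{A^c} \E\bigl[\cH([\btau, \rho], v^2, X^2) \mid \cF_{\btau}\bigr] \\
	&\leq \one_A v^1_k(\btau, \xi) + \one_{A^c} v^2_k(\btau, \xi) = v^1_k(\btau, \xi) \wedge v^2_k(\btau, \xi),
\end{align*}
by the supersolution property of each $v^i$ and the definition of $A$. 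This is precisely the inequality required of the spliced strategy.

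The only mildly delicate part is the measurability bookkeeping at the splice, which I would verify via $\{\tau_n \leq t\} = (A \cap \{\tau^1_n \leq t\}) \cup (A^c \cap \{\tau^2_n \leq t\})$ combined with $A \cap \{\btau \leq t\} \in \cF_t$, and an analogous check for $\Delta_n$ and $\theta_l$. No further obstacle arises, so all three conditions of Definition \ref{def:sto_super} transfer to $v^1 \wedge v^2$, establishing the lemma.
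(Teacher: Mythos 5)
Your proof is correct and takes exactly the approach the paper has in mind when it calls the result ``direct from the definition'' and omits the proof: verify USC and growth directly, then for condition (3) splice the two suitable strategies along the $\cF_{\btau}$-measurable event $A=\{v^1_k(\btau,\xi)\le v^2_k(\btau,\xi)\}$, use that $\cH$ is increasing in its $v$-argument, and apply each supersolution inequality on its respective event. The measurability checks you sketch ($A\cap\{\tau^1_n\le t\}\in\cF_t$ since $\{\tau^1_n\le t\}\subseteq\{\btau\le t\}$, and similarly for $\Delta_n$, $\theta_l$) are the right ones and close the argument.
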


We now prove the viscosity subsolution property of $v_+$ in Proposition \ref{prop:vissub}. 
\begin{proposition}\label{prop:vissub}
	The upper stochastic envelope $v_{+}$ is a viscosity subsolution of the QVI system under Definition \ref{def:vis_sub}.
\end{proposition}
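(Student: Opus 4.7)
The plan is to verify the four subsolution conditions of Definition \ref{def:vis_sub} separately, each by contradiction against the infimum definition of $v_+$ in \eqref{eq:sto_upper}. In every case, if the required inequality is violated strictly at some point, I would produce a function $\psi \in \cV^+$ whose $k$-th component $\psi_k$ lies strictly below $v_{k,+}$ at that point, contradicting the defining minimality. The building blocks are: an existing supersolution $v^1 \in \cV^+$ chosen so that $v^1_k$ approximates $v_{k,+}$ at the point of interest; a local perturbation built from either a smooth test function or an admissible trade; and the minimum-stability of $\cV^+$ from Lemma \ref{lem:two_super} that permits gluing the perturbation to $v^1$ along the boundary of a ball.

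For the interior condition \eqref{eq:interior_vissub} at $(\bar t, \bar x) \in [T_{k-1}, T_k) \times \cS$, suppose first that $\cL[\varphi](\bar t, \bar x) > 0$. After adding a strictly localizing penalty so that $(\bar t, \bar x)$ becomes a strict maximum of $v^*_{k,+} - \varphi$ while preserving $\cL[\varphi](\bar t, \bar x) > 0$, continuity gives $\cL[\varphi] > 0$ and $v^*_{k,+} \leq \varphi - 2\delta$ on the parabolic boundary of a small ball $B$ around $(\bar t, \bar x)$ for some $\delta > 0$. Picking $v^1 \in \cV^+$ with $v^1_k(\bar t, \bar x) < \varphi(\bar t, \bar x) + \delta$, I would set $\psi_k := v^1_k \wedge (\varphi - \delta)$ on $B$, $\psi_k := v^1_k$ outside $B$, and $\psi_j := v^1_j$ for $j \neq k$. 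The USC and growth properties follow from matching on $\partial B$ and the bound on $v^1$; the supersolution condition (3) in Definition \ref{def:sto_super} is checked by splicing the suitable strategy of $v^1$ with a no-trade strategy inside $B$, applying It\^o's formula to $\varphi - \delta$ up to the first exit time of $B$. This yields $\psi_k(\bar t, \bar x) \leq \varphi(\bar t, \bar x) - \delta < v_{k,+}(\bar t, \bar x)$, the desired contradiction. If instead $v^*_{k,+}(\bar t, \bar x) > \cM[v^*_{k,+}]^*(\bar t, \bar x)$, I would pick an admissible trade nearly attaining the intervention infimum, paste an appropriate minimum against $v^1_k$ on a small neighborhood to retain USC, and prepend that trade to the suitable strategy of $v^1$.

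The boundary conditions \eqref{eq:TK_vissub} at $T_K$ and \eqref{vissub_bd0} at $x = 0$ follow by the same template; at the origin the explicit supersolution \eqref{ex_stosuper} already saturates the bound. The principal obstacle is the deadline condition \eqref{eq:k_vissub} at each $T_k$ with $k < K$, singled out in contribution (2) of the introduction and addressed in Lemma \ref{lem:k_vissub}. There, no smooth test function in time is available and the inequality couples two regimes through the deterministic minimization in $\theta_k$. I would argue by contradiction: if a strict gap persists at $(T_k, x)$, choose $\theta^0_k$ nearly attaining the infimum in \eqref{eq:k_vissub} and $v^1 \in \cV^+$ with $v^1_{k+1}(T_k, x_0 - \theta^0_k, x_1)$ close to $v_{k+1,+}(T_k, x_0 - \theta^0_k, x_1)$, then lower the $k$-th component of a well-chosen supersolution on a small neighborhood of $\{T_k\} \times \{x\}$ to the strictly smaller quantity $w_k (G_k - \theta^0_k)^+ + v^1_{k+1}(T_k, x_0 - \theta^0_k, x_1)$. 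Verifying condition (3) across $\rho = T_k$ exploits the fact that $\cH$ in \eqref{cH} already encodes the withdrawal payoff; the delicate point is preserving upper semicontinuity through the deadline while decreasing the function only on one side, which I expect to be the most technical step of the proof.
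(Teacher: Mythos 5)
Your overall scaffolding matches the paper: verify the four subsolution conditions, using the constant supersolution for the origin, the test-function penalty plus It\^o splicing for the interior, and minimum-stability (Lemma \ref{lem:two_super}) to glue perturbations. For the interior and origin conditions your sketch is correct and close to Lemma \ref{lem:interior_vissub}; the $T_K$ case follows the same pattern with the explicit terminal payoff replacing the test function.

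The gap is in the $T_k$ deadline case with $k<K$, which you correctly flag as the hardest step. Lowering the $k$-th component near $(T_k,\bar x)$ to the \emph{pointwise constant} $w_k(G_k-\theta^0_k)^+ + v^1_{k+1}(T_k,\bar x_0-\theta^0_k,\bar x_1)$ cannot produce a stochastic supersolution, for two reasons. First, a constant does not grow as $(t,x)$ leaves $(T_k,\bar x)$, so it cannot be forced above the ambient supersolution $v^n_k$ on the boundary annulus $E(\varepsilon)$, and therefore the pointwise minimum against $v^n_k$ neither glues continuously nor preserves USC. The paper instead builds $\psi^{\varepsilon,\eta,p}(t,x) = v_{k,+}(T_k,\bar x) + |x-\bar x|^2/\eta + p(T_k-t)$, whose quadratic growth in $x$ and linear growth in $T_k-t$ supply exactly the needed domination on $E(\varepsilon)$ while keeping $\cL[\psi]>0$ inside. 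Second, the supermartingale verification requires that the modified function dominate $w_k(G_k-\theta)^+ + v^n_{k+1}(T_k, x_0-\theta, x_1)$ at \emph{every} $x$ near $\bar x$ where the uncontrolled process can exit at time $T_k$, with $\theta$ chosen measurably depending on the exit position; a single fixed $\theta^0_k$ at $\bar x$ does not do this. The paper uses a Borel $\delta''$-minimizer $\theta^*_k(x)$ from \citet[Theorem 4.8(b)]{rieder1978measurable} together with \emph{two} auxiliary supersolutions (one with $v^{n_1}_{k+1}$ uniformly close to $v_{k+1,+}$ on the reachable set $I_\theta$, one with $v^{n_2}_k$ controlled on $E(\varepsilon)$), combined by Lemma \ref{lem:two_super}, to carry this out. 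Your sketch of condition~(3) across $\rho=T_k$ cannot succeed without these ingredients, so the deadline case of your proposal is incomplete rather than an alternative route.
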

\begin{proof} 
	The proof proceeds as follows:
	\begin{enumerate}[label={(\arabic*)}]	
		\item Since \eqref{ex_stosuper} is a stochastic supersolution and $v_{k, +}$ is the infimum, Condition (4) at $x = (0, 0)$ holds. 
		\item Condition (3) at $T_K$ is established in Lemma \ref{lem:K_vissub}.
		\item Condition (2) at $T_k$, for $k = 1, \ldots, K-1$, is proved in Lemma \ref{lem:k_vissub}.
		\item Lemma \ref{lem:interior_vissub} verifies Condition (1) in Definition \ref{def:vis_super}, concerning the viscosity supersolution property on $[T_{k-1}, T_k) \times \cS$.
	\end{enumerate}
\end{proof}

\section{Stochastic subsolution}\label{sec:sto_sub}

\begin{definition}[Stochastic subsolution]\label{def:sto_sub}
	Consider an array of functions
	\begin{equation}\label{sto_sub}
		(\{ v_1(t, x) \}_{t \in [0, T_1]}, \ldots, \{ v_k(t, x) \}_{t \in [T_{k-1}, T_{k}]}, \ldots, \{ v_{K}(t, x) \}_{t \in [T_{K-1}, T]} ).
	\end{equation}
	The array \eqref{sto_sub} is a stochastic subsolution of the QVI system if the following conditions hold:
	\begin{enumerate}[label={(\arabic*)}]	
		\item For each $k=1,\ldots, K$, the function $v_k(t, x): [T_{k-1}, T_k] \times \barS \rightarrow \R$ is LSC.
		\item There exists a constant $c>0$ such that
		\begin{equation}
			|v_k(t, x)| \leq c (1 + |x|^{p_0}), \quad (t, x) \in [T_{k-1}, T_k] \times \barS, \quad k=1,\ldots, K.
		\end{equation}
		\item The function $v_k$ is nondecreasing in the direction of transactions, that is,
		\begin{equation}
			v_k(t, x) \leq \cM[v_k](t, x), \quad (t, x) \in [T_{k-1}, T_k] \times \barS, \quad k = 1, \ldots, K.
		\end{equation} 
		\item For each $k=1, \ldots K$, consider any random initial condition $(\btau, \xi)$ with $\btau \in [T_{k-1}, T_k]$, $\xi \in \cF_{\btau}$ and $\p(\xi \in \barS) = 1$. For any $(\btau, \xi)$-admissible withdrawals $\theta_{k:K}$, the following inequality holds:
		\begin{equation}\label{eq:submartingale}
			v_k(\btau, \xi) \leq \E \big[ \cH \big([\btau, \rho], v_{k:K}, X(\cdot; \btau, \xi, \theta_{k:K}, \emptyset) \big) \big| \cF_{\btau} \big]
		\end{equation}
		for any stopping time $\rho \in [\btau, T]$, where $\cH([\btau, \rho], v_{k:K}, X(\cdot; \btau, \xi, \theta_{k:K}, \emptyset))$ is defined in \eqref{cH}.
	\end{enumerate}
	Denote the set of stochastic subsolutions as $\cV^-$.
\end{definition}
Condition (4) implies the following terminal condition for $v_K$ when $\btau = T$, $\xi = x \in \barS$, and $\rho = T$:
\begin{equation}
	v_K(T, x) \leq w_K \left[ G_K - x_0 - (x_1 - C(-x_1))^+ \right]^+, \quad x \in \barS.
\end{equation}
This result uses the assumption that $\theta_K$ liquidates the stock position whenever it does not generate a net loss.

For brevity, we write $v \in \cV^-$ to indicate that $v$ is a stochastic subsolution. Lemma \ref{lem:two_sub} below shows that the family $\cV^-$ is stable under taking the maximum. The proof is omitted since it follows directly from the definition.
\begin{lemma}\label{lem:two_sub}
	If $ (v^1_1, \ldots, v^1_k, \ldots, v^1_{K})$ and $ (v^2_1, \ldots, v^2_k, \ldots, v^2_{K})$ are stochastic subsolutions, then $(v^1_1 \vee v^2_1, \ldots, v^1_k \vee v^2_k, \ldots, v^1_{K} \vee v^2_{K})$ is also a stochastic subsolution.
\end{lemma}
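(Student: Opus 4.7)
The plan is to verify the four conditions of Definition \ref{def:sto_sub} for the pointwise maximum $v := (v^1_1 \vee v^2_1, \ldots, v^1_K \vee v^2_K)$. Conditions (1) and (2) are immediate: the maximum of two LSC functions is LSC, and the polynomial bound $|v^1_k \vee v^2_k| \le \max(|v^1_k|,|v^2_k|) \le c(1+|x|^{p_0})$ holds with the larger of the two growth constants.

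For Condition (3), I would exploit the monotonicity of the intervention operator $\cM$: if $f \le g$ pointwise then $\cM[f] \le \cM[g]$, since $\cM$ is an infimum of the pointwise value of the argument evaluated along the rebalancing function $\Gamma(x,\Delta)$. Hence for each $i \in \{1,2\}$ and each $(t,x)$,
\begin{equation*}
v^i_k(t,x) \le \cM[v^i_k](t,x) \le \cM[v^1_k \vee v^2_k](t,x),
\end{equation*}
and taking the maximum over $i$ on the left gives $(v^1_k \vee v^2_k)(t,x) \le \cM[v^1_k \vee v^2_k](t,x)$.

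The main step, and the only one that requires real care, is Condition (4). Fix $k$, a random initial condition $(\btau,\xi)$, admissible withdrawals $\theta_{k:K}$, and a stopping time $\rho \in [\btau,T]$. The key observation is that the driving process $X(\cdot;\btau,\xi,\theta_{k:K},\emptyset)$ does not depend on which subsolution is considered, since no transactions are executed. Moreover, inspecting the definition of $\cH$ in \eqref{cH} shows that the functions $v_{k:K}$ enter only through their evaluations with nonnegative indicator coefficients, so the map $(v_{k:K}) \mapsto \cH([\btau,\rho],v_{k:K},X)$ is monotone. I would then partition $\Omega$ along the $\cF_{\btau}$-measurable event $A := \{v^1_k(\btau,\xi) \ge v^2_k(\btau,\xi)\}$: on $A$, applying Condition (4) for $v^1$ and then monotonicity of $\cH$ yields
\begin{equation*}
(v^1_k \vee v^2_k)(\btau,\xi) = v^1_k(\btau,\xi) \le \E\big[\cH\big([\btau,\rho],v^1_{k:K},X\big)\,\big|\,\cF_{\btau}\big] \le \E\big[\cH\big([\btau,\rho],v^1_{k:K}\vee v^2_{k:K},X\big)\,\big|\,\cF_{\btau}\big],
\end{equation*}
and the symmetric estimate on $A^c$ uses $v^2$. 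Multiplying each inequality by the corresponding $\cF_{\btau}$-measurable indicator and summing gives Condition (4) for $v^1 \vee v^2$.

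The hard part is purely the bookkeeping in Condition (4): one must ensure both the measurability of $A$ (which is automatic since $v^i_k$ are deterministic Borel functions and $\xi \in \cF_{\btau}$) and that the monotonicity of $\cH$ can be applied before taking conditional expectations (which holds pathwise). Everything else reduces to inspection of Definition \ref{def:sto_sub}.
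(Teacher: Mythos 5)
Your proof is correct and, since the paper omits the argument with the comment that it follows directly from the definition, you have supplied exactly the intended verification. Conditions (1) and (2) are handled as you say, and the monotonicity of $\cM$ for Condition (3) is argued correctly.

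One remark on Condition (4), which you flag as the only delicate step: the partition of $\Omega$ along $A := \{v^1_k(\btau,\xi) \ge v^2_k(\btau,\xi)\}$ is valid but unnecessary. Condition (4) for stochastic subsolutions is a ``for all $\theta_{k:K}$, for all $\rho$'' statement, and the uncontrolled path $X(\cdot;\btau,\xi,\theta_{k:K},\emptyset)$ is the same regardless of which subsolution you test. So, for each $i\in\{1,2\}$ and almost every $\omega$,
\begin{equation*}
v^i_k(\btau,\xi)\;\le\;\E\bigl[\cH([\btau,\rho],v^i_{k:K},X)\,\big|\,\cF_\btau\bigr]\;\le\;\E\bigl[\cH([\btau,\rho],v^1_{k:K}\vee v^2_{k:K},X)\,\big|\,\cF_\btau\bigr],
\end{equation*}
and taking the pointwise maximum over $i$ on the left immediately gives Condition (4) for $v^1\vee v^2$, with no decomposition of the sample space. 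The event-splitting device is the genuinely necessary tool in the companion Lemma \ref{lem:two_super} on the minimum of stochastic supersolutions: there Condition (3) is an existence statement and one must actually glue two different suitable strategies together along the $\cF_{\btau}$-measurable event $\{v^1_k(\btau,\xi)\le v^2_k(\btau,\xi)\}$. For the subsolution lemma there is nothing to glue, which is precisely why the subsolution case is the ``easy'' one. So your approach works, but it imports machinery from the harder dual lemma that isn't needed here.
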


The following example is useful for constructing a strict classical subsolution and proving the comparison principle. The result in Lemma \ref{lem:classical_sub} remains valid if the constant $2$ in $C_k$ is replaced by a larger constant.
\begin{lemma}\label{lem:classical_sub}
	Let constants $a \in \{0, 1\}$, $q \in (0, 1)$, $\lambda > q \max\{r, \mu, 0\}$, and
	\begin{equation}
		C_k = \sum^K_{i = k} 2 w_i G^{1 - q}_{i} e^{\lambda (T_i - T_k)}.
	\end{equation}
	Define
	\begin{equation}\label{eq:Fa}
		F^{a}_k(t, x) = \sum^{K}_{i = k} w_i G_i - C_k (a + x_0 + x_1)^q e^{\lambda(T_k - t)}.
	\end{equation}
	Then there exist continuous functions $\{\kappa^c_k(x) \}^K_{k=1}$ and $\{\kappa^b_k(x) \}^K_{k=1}$, satisfying
	\begin{align*}
		& \kappa^c_k(x) \leq 0, \; \kappa^b_k(x) \leq 0, \quad x \in \barS, \\
		& \kappa^c_k(x) < 0, \; \kappa^b_k(x) < 0, \quad x \in \cS.
	\end{align*}
	Moreover, the following conditions hold:
	\begin{enumerate}[label={(\arabic*)}]	
		\item For each $k=1,\ldots, K$,
		\begin{equation}\label{eq:sub_t}
			\max \Big\{  \cL[F^a_k](t, x), F^a_k(t, x) - \cM[F^a_k](t, x) \Big\} \leq \kappa^c_k(x) < 0,
		\end{equation}
		for all $(t, x) \in [T_{k-1}, T_k) \times \cS$.
		
		\item For each $T_k$ with $k = 1, \ldots, K-1$,
		\begin{equation}\label{eq:subTk}
			\begin{aligned}
				\max \Big\{ & F^a_k(T_k, x) - \inf_{0 \leq \theta_k \leq x_0} \left[ w_k (G_k - \theta_k)^+ + F^a_{k+1}(T_k, x_0 - \theta_k, x_1) \right], \\
				& F^a_k(T_k, x) - \cM[F^a_k](T_k, x) \Big\} \leq \kappa^b_k(x) < 0,
			\end{aligned}
		\end{equation}
		for all $ x \in \cS$.
		\item At the terminal time $T_K$,
		\begin{equation}\label{eq:sub_TK}
			\begin{aligned}
				\max \Big\{ & F^a_K(T_K, x) - w_K \left[ G_K - x_0 - (x_1 - C(-x_1))^+ \right]^+ , \\
				& F^a_K(T_K, x) - \cM[F^a_K](T_K, x) \Big\} \leq \kappa^b_K(x) < 0,
			\end{aligned}
		\end{equation}
		for all $ x \in \cS$.
	\end{enumerate}
\end{lemma}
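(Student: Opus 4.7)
The plan is to verify each of the three inequalities by directly substituting $F^a_k$ and exploiting the recursion
\begin{equation*}
C_k = 2 w_k G_k^{1-q} + C_{k+1} e^{\lambda(T_{k+1}-T_k)}, \qquad C_{K+1} := 0,
\end{equation*}
which is implicit in the definition of $C_k$. The continuous functions $\kappa^c_k$ and $\kappa^b_k$ are then assembled from the pointwise continuous upper bounds derived below; both are $\leq 0$ on $\barS$ and strictly negative on $\cS$.

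For the HJB inequality \eqref{eq:sub_t}, direct differentiation yields
\begin{equation*}
\cL[F^a_k](t,x) = C_k e^{\lambda(T_k-t)} \Bigl[-\lambda(a+x_0+x_1)^q + q(rx_0+\mu x_1)(a+x_0+x_1)^{q-1} + \tfrac{1}{2}\sigma^2 x_1^2 q(q-1)(a+x_0+x_1)^{q-2}\Bigr].
\end{equation*}
The third bracketed term is non-positive since $q(q-1)<0$. Bounding $rx_0+\mu x_1 \leq \max\{r,\mu,0\}(x_0+x_1)$ and using $\lambda > q\max\{r,\mu,0\}$ reduces the remainder to a strictly negative multiple of $\lambda a + (\lambda - q\max\{r,\mu,0\})(x_0+x_1)$, which is positive on $\cS$ for both values of $a$. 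For the intervention, every feasible transaction shrinks the coordinate sum by at least $C_{\min}>0$, yielding
\begin{equation*}
F^a_k(t,x) - \cM[F^a_k](t,x) = C_k e^{\lambda(T_k-t)} \sup_{\Delta \in D(x)} \bigl[(a+x_0+x_1-C(\Delta))^q - (a+x_0+x_1)^q\bigr] < 0
\end{equation*}
on $\cS \setminus \cS_\emptyset$, and $-\infty$ on $\cS_\emptyset$.

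The deadline inequality \eqref{eq:subTk} is the main obstacle, and is precisely where the factor $2$ in $C_k$ comes into play. Writing $\alpha := a+x_0+x_1$, $\beta := C_{k+1} e^{\lambda(T_{k+1}-T_k)}$, and $A(\theta) := w_k(G_k-\theta)^+ + F^a_{k+1}(T_k, x_0-\theta, x_1)$, the recursion simplifies the algebra to
\begin{equation*}
A(\theta) - F^a_k(T_k, x) = -w_k \min(\theta, G_k) + 2 w_k G_k^{1-q} \alpha^q + \beta\bigl[\alpha^q - (\alpha-\theta)^q\bigr], \qquad \theta \in [0, x_0].
\end{equation*}
The last bracket is non-negative by monotonicity, so it suffices to prove $2 G_k^{1-q} \alpha^q > \min(\theta, G_k)$ strictly on $\cS$. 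When $\alpha \geq G_k$, the left-hand side is at least $2G_k \geq 2\min(\theta, G_k)$. When $\alpha < G_k$, one has $\alpha < 2^{1/(1-q)} G_k$, so $\alpha = \alpha^q \alpha^{1-q} < 2 G_k^{1-q} \alpha^q$; combined with $\min(\theta, G_k) \leq \theta \leq x_0 \leq \alpha$, this gives the strict inequality. The intervention piece of \eqref{eq:subTk} is handled exactly as in the interior case.

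Finally, \eqref{eq:sub_TK} follows analogously: using $L(x) \leq x_0+x_1 \leq \alpha$ and $C_K = 2 w_K G_K^{1-q}$, one splits on whether $L(x) \leq G_K$ and applies the same concavity estimates. The functions $\kappa^c_k$ and $\kappa^b_k$ are then obtained as the pointwise maximum of the explicit continuous upper bounds derived in each step, with the intervention-gap term replaced by any continuous negative extension across $\partial \cS_\emptyset$ (valid since the true gap is $-\infty$ on $\cS_\emptyset$). The case analysis for \eqref{eq:subTk} is the technical heart of the argument; the factor $2$ in $C_k$ is precisely what lets $2 w_k G_k^{1-q} \alpha^q$ dominate $w_k \min(\theta, G_k)$ uniformly on $\cS$.
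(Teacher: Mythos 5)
Your proof is correct and takes essentially the same route as the paper's. The only cosmetic difference is in item (2): the paper first bounds $(G_k-\theta_k)^+\geq(G_k-x_0-x_1)^+$ and $F^a_{k+1}(T_k,x_0-\theta_k,x_1)\geq F^a_{k+1}(T_k,x_0,x_1)$ by monotonicity and then invokes the recursion for $C_k$, whereas you apply the recursion first to isolate the slack term $2w_k G_k^{1-q}\alpha^q$ and bound $\beta[\alpha^q-(\alpha-\theta)^q]\geq 0$ afterward; both arguments reduce to the same case split on $\alpha\gtrless G_k$ and hinge on the factor $2$ in $C_k$.
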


Based on Lemma \ref{lem:classical_sub}, an example of stochastic subsolutions is given as follows.
\begin{lemma}\label{lem:F0}
The array of functions
	\begin{equation}\label{eq:F0}
		F^0 := (\{ F^0_1(t, x) \}_{t \in [0, T_1]}, \ldots, \{ F^0_k(t, x) \}_{t \in [T_{k-1}, T_{k}]}, \ldots, \{ F^0_{K}(t, x) \}_{t \in [T_{K-1}, T]}),
	\end{equation}
where each element is defined in \eqref{eq:Fa} with $q = p_0$ and $a=0$, is a stochastic subsolution to the QVI system.
\end{lemma}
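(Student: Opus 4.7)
The plan is to verify each of the four conditions in Definition \ref{def:sto_sub} for the array $F^0$, leveraging Lemma \ref{lem:classical_sub} throughout. Conditions (1) and (2) are immediate: for $p_0\in(0,1)$, $x\mapsto(x_0+x_1)^{p_0}$ is continuous on $\barS$, so each $F^0_k$ is continuous (hence LSC), bounded above by $\sum_{i=k}^K w_i G_i$, and the bound $(x_0+x_1)^{p_0}\le 2^{p_0}|x|^{p_0}$ yields the required $p_0$-growth. Condition (3), the transaction-monotonicity $F^0_k(t,x)\le\cM[F^0_k](t,x)$, is a direct consequence of \eqref{eq:sub_t}, \eqref{eq:subTk}, and \eqref{eq:sub_TK}, which control the relevant inequalities on the interior, at intermediate deadlines, and at $T_K$ respectively; the corner $x=(0,0)$ is handled trivially since $D(0)=\emptyset$ gives $\cM[F^0_k](t,0)=+\infty$.

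The main content is condition (4). Fix $k$, a random initial condition $(\btau,\xi)$ with $\btau\in[T_{k-1},T_k]$ and $\p(\xi\in\barS)=1$, admissible withdrawals $\theta_{k:K}$, and a stopping time $\rho\in[\btau,T]$. I will work piecewise across the deadlines $T_k,T_{k+1},\ldots,T_K$ intersected with $[\btau,\rho]$. Between consecutive deadlines the state $X(\cdot;\btau,\xi,\theta_{k:K},\emptyset)$ is a continuous diffusion with no trades or withdrawals, so applying Itô's formula to $F^0_l(t,X(t))$ and using $\cL[F^0_l]\le 0$ from \eqref{eq:sub_t} shows, after localization, that $F^0_l(\cdot,X(\cdot))$ is a submartingale on each such interval. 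At an intermediate deadline $T_l$ ($k\le l\le K-1$), evaluating \eqref{eq:subTk} at the particular admissible choice $\theta=\theta_l\in[0,X_0(T_l-)]$ yields
\[
F^0_l(T_l,X(T_l-))\le w_l(G_l-\theta_l)^+ + F^0_{l+1}(T_l,X(T_l)),
\]
and at $T_K$ the inequality \eqref{eq:sub_TK} provides the corresponding liquidation bound against $w_K[G_K-L(X(T_K-))]^+$. Chaining these submartingale inequalities across the intervals together with the jump inequalities at crossed deadlines, and then taking conditional expectation given $\cF_{\btau}$, reproduces exactly the payoff $\cH([\btau,\rho],F^0_{k:K},X(\cdot))$ from \eqref{cH}.

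The main obstacle is securing integrability to remove the localization in the Itô step. Because $F^0_l$ is bounded above by a constant and bounded below by $-C_l(x_0+x_1)^{p_0}e^{\lambda T_l}$ with $p_0<1$, standard moment estimates apply: $X_0$ is bounded above by $\xi_0 e^{rT}$ (since only withdrawals decrease it, no trades occur), and $X_1$ is a nonnegative GBM starting from $\xi_1$, yielding a uniform bound on $\E[|X(t)|^{p_0}\mid\cF_{\btau}]$ for $t\in[\btau,T]$. Localizing by $\rho_n:=\inf\{t\ge\btau:|X(t)|\ge n\}\wedge\rho$, applying Itô to $F^0_l$ on each sub-interval, and passing $n\to\infty$ by dominated convergence removes the localization. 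A minor care point is that $\partial_{x_1}F^0_l$ blows up as $x_0+x_1\downarrow 0$, but this is handled by the same localization together with the observation that if $\xi\ne(0,0)$ then $X_0(t)+X_1(t)>0$ a.s. on $[\btau,T]$, while if $\xi=(0,0)$ the only admissible withdrawals are $\theta_l\equiv 0$, the state stays at the origin, and the required inequality reduces to the identity $F^0_k(\btau,0)=\sum_{i=k}^K w_iG_i=\cH$.
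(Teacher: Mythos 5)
Your proof is correct and takes essentially the same approach as the paper: conditions (1)--(3) follow directly from the form of $F^0_k$ and Lemma \ref{lem:classical_sub}, and condition (4) is established by a piecewise It\^o/submartingale argument on $[T_l, T_{l+1}]$ chained through the deadline inequalities \eqref{eq:subTk}, \eqref{eq:sub_TK}, with localization removed via an integrability bound and the $\xi=0$ case handled separately. The only cosmetic difference is that you invoke dominated convergence where the paper uses a Fatou argument, which is immaterial given the upper bound $F^0_l \leq \sum_i w_i G_i$ and the $p_0$-moment estimate on $X$.
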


For each $k = 1, \ldots, K$ and $(t, x) \in [T_{k-1}, T_k] \times \barS$, define
\begin{equation}\label{eq:envelope}
	v_{k, -}(t, x) := \sup \big\{ v_k(t, x) \big|\; v_k \text{ is the $k$-th element of some } v \in \cV^- \big\}. 
\end{equation}
The supremum in \eqref{eq:envelope} is taken over all $v_k$ that can form part of a stochastic subsolution together with some $(v_1, \ldots, v_{k-1}, v_{k+1}, \ldots, v_K)$. Denote the lower stochastic envelope as $$v_{-} := (v_{1,-}, \ldots, v_{k,-}, \ldots, v_{K,-}).$$
The following properties hold for the lower stochastic envelope $v_{-}$: 
\begin{enumerate}[label={(\arabic*)}]	
	\item Stochastic subsolutions do not exceed the value function. For any $v \in \cV^-$, applying Fatou's lemma yields
	\begin{align}
		v_k(t, x) \leq \E \Big[& \sum^{K}_{i = k} w_i (G_i - \theta_i)^+ \Big| X(t-) = x \Big],
	\end{align}
	for any admissible $(\theta_{k:K}, \Lambda) \in \cA(t, x; k)$. Taking the infimum over all admissible controls $(\theta_{k:K}, \Lambda) \in \cA(t, x; k)$ gives
	\begin{equation}\label{eq:upper_hk}
		v_k(t, x) \leq V_k(t, x), \quad (t, x) \in [T_{k-1}, T_k] \times \barS.
	\end{equation}
	Taking the supremum on the left-hand side then implies
	\begin{equation}\label{eq:gkVk}
		v_{k,-}(t, x) \leq V_k(t, x), \quad (t, x) \in [T_{k-1}, T_k] \times \barS.
	\end{equation}
	Since the value function is bounded, \eqref{eq:upper_hk} also shows that stochastic subsolutions are bounded above. Therefore, Condition (2) in Definition \ref{def:sto_sub} can be imposed on the lower side only.
	
	\item The supremum in \eqref{eq:envelope} is attained and $v_{-} \in \cV^-$. The proof follows the argument of \citet[Lemma 3.5]{belak2017impulse}, which relies on the result of \cite{bayraktar2012linear} ensuring that the supremum can be chosen to be countable.
	
	\item Since Lemma \ref{lem:F0} establishes that $F^0$ in \eqref{eq:F0} is a stochastic subsolution, the following boundary condition holds:
	\begin{equation}
		v_{k,-}(t, 0) \geq \sum^{K}_{i = k} w_i G_i, \quad t \in [T_{k-1}, T_k], \quad k = 1, \ldots, K. 
	\end{equation}
	Combining this with \eqref{eq:gkVk} gives
	\begin{equation}\label{eq:gk00}
		v_{k,-}(t, 0) = \sum^{K}_{i = k} w_i G_i, \quad t \in [T_{k-1}, T_k], \quad k = 1, \ldots, K. 
	\end{equation}
\end{enumerate}

We prove the viscosity supersolution property of $v_-$ in Proposition \ref{prop:vissuper}. 

\begin{proposition}\label{prop:vissuper}
	The lower stochastic envelope $v_{-}$ is a viscosity supersolution of the QVI system under Definition \ref{def:vis_super}.
\end{proposition}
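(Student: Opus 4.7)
The proof verifies the four conditions of Definition \ref{def:vis_super} for $v_{-}$. Condition (4) at $x=(0,0)$ is already established in \eqref{eq:gk00}. The remaining conditions---the differential inequality on $[T_{k-1},T_k)\times\cS$ (Condition (1)), the coupling condition at each intermediate deadline $T_k$ (Condition (2)), and the terminal condition at $T_K$ (Condition (3))---are handled by the classical perturbation scheme of the stochastic Perron's method. The plan is to argue by contradiction: if any of these inequalities fails at a point, I construct a new stochastic subsolution that strictly dominates $v_{k,-}$ at that point by gluing a smooth bump onto $v_{k,-}$ inside a small neighborhood, and thereby contradict the maximality expressed in \eqref{eq:envelope}.

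For Condition (1), consider a minimum point $(\bar t,\bar x)\in [T_{k-1},T_k)\times\cS$ of $v_{k,*}-\varphi$ with $\varphi\in C^{1,2}$, and suppose both $\cL[\varphi](\bar t,\bar x)<0$ and $v_{k,*}(\bar t,\bar x)<\cM[v_{k,*}]_*(\bar t,\bar x)$. After reducing to a strict minimum by subtracting $|t-\bar t|^2+|x-\bar x|^4$, continuity produces an open cylinder $B_\delta$ around $(\bar t,\bar x)$ on which $\cL[\varphi]<0$, the strict inequality $\varphi+2\eta<\cM[\varphi+2\eta]$ holds, and $v_{k,-}-\varphi\geq 3\eta$ on the parabolic boundary for some $\eta>0$. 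I then set $\tilde v_k:=\max\{v_{k,-},\varphi+\eta\}$ inside $B_\delta$ and $\tilde v_k:=v_{k,-}$ outside, keeping $\tilde v_j:=v_{j,-}$ for $j\neq k$. Itô's formula combined with $\cL[\varphi]\leq 0$ makes $\varphi(s,X(s))+\eta$ a local submartingale along the uncontrolled process until the exit time from $B_\delta$, so the submartingale inequality \eqref{eq:submartingale} is preserved on the glued piece; the strict gap $\varphi+2\eta<\cM[\varphi+2\eta]$ guarantees the no-transaction monotonicity in Condition (3) of Definition \ref{def:sto_sub}. Hence $\tilde v\in\cV^-$ and $\tilde v_k(\bar t,\bar x)>v_{k,-}(\bar t,\bar x)$, the desired contradiction.

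Conditions (2) and (3) follow the same template but must accommodate the coupling to $v_{k+1,-}$ (resp.\ to the liquidation payoff) on the slice $\{t=T_k\}$ (resp.\ $\{t=T_K\}$). If \eqref{k_vissup} fails at $(T_k,\bar x)$, there is $\eta>0$ with
\begin{equation*}
v_{k,-}(T_k,\bar x)+3\eta\leq \inf_{0\leq\theta\leq\bar x_0}\bigl[w_k(G_k-\theta)^+ + v_{k+1,-}(T_k,\bar x_0-\theta,\bar x_1)\bigr],\qquad v_{k,-}(T_k,\bar x)+3\eta\leq \cM[v_{k,-}]_*(T_k,\bar x).
\end{equation*}
Lower semicontinuity of $v_{k+1,-}$ and of $\cM[v_{k,-}]_*$, together with continuity of the infimum in $\theta\in[0,\bar x_0]$, supplies a neighborhood on which $v_{k,-}$ may be raised by $\eta$ without violating either inequality. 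To extend this bump backward from $T_k$ into $[T_{k-1},T_k)$ while keeping the submartingale condition (4) intact, I superimpose a sufficiently scaled and localized copy of the strict classical subsolution $F^a$ from Lemma \ref{lem:classical_sub}; its strict dissipativity \eqref{eq:sub_t}--\eqref{eq:subTk} absorbs the additional drift produced by the bump and ensures that both the no-transaction monotonicity and the submartingale inequality of Definition \ref{def:sto_sub} are preserved. The argument at $T_K$ is identical, with $w_K[G_K-L(x)]^+$ in place of $w_k(G_k-\theta)^++v_{k+1,-}$.

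The principal obstacle is Condition (2). The bump placed on the slice $\{t=T_k\}$ must be simultaneously consistent with the no-transaction inequality, with the coupling to $v_{k+1,-}$ in \eqref{k_vissup}, and with the backward submartingale inequality that runs into $[T_{k-1},T_k)$. As the authors flag in the introduction and in Lemma \ref{lem:k_vissup}, the goal-based structure makes the backward extension non-obvious, and the strict classical subsolutions from Lemma \ref{lem:classical_sub} are invoked precisely to bridge this gap.
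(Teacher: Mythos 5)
Your overall skeleton matches the paper --- verify the corner condition via \eqref{eq:gk00}, argue by contradiction, glue a smooth bump onto $v_{k,-}$, and verify the stochastic subsolution axioms to contradict the maximality in \eqref{eq:envelope}. The interior argument (Condition (1)) is essentially the paper's own, which it attributes to \citet[Theorem 4.1]{bayraktar2013stochastic}.

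The deviation, and the gap, is in your treatment of the deadline conditions (2) and (3). You propose to ``superimpose a sufficiently scaled and localized copy of the strict classical subsolution $F^a$'' to extend the bump backward from $\{t=T_k\}$ into $[T_{k-1},T_k)$. But $F^a$ plays no role in the paper's Lemmas~\ref{lem:K_vissup} and~\ref{lem:k_vissup}; it is reserved for the comparison principle in Propositions~\ref{prop:compareTk}--\ref{prop:compare} (where $F^1$ is used to perturb $u$ and $v$) and for Lemma~\ref{lem:F0} (which only supplies the corner boundary condition). The paper's actual bump at a deadline is the explicit paraboloid $\psi^{\varepsilon,\eta,p}(t,x) := v_{k,-}(T_k,\bar x) - |x-\bar x|^2/\eta - p(T_k - t)$, for which (i) $\cL[\psi]<0$ is forced by taking $p$ large, (ii) $\psi \leq v_{k,-} - \varepsilon$ on the annulus $E(\varepsilon)$ follows from the $|x-\bar x|^2/\eta$ penalty, and (iii) $\psi \leq$ the deadline coupling term and $\cM[v_{k,-}] - \varepsilon$ on $\overline{\cD(T_k,\bar x,\varepsilon)}$ comes from the contradictory hypothesis plus lower semicontinuity. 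Your ``localized copy of $F^a$'' is not a well-posed object: localizing $F^a_k$ with a cutoff destroys the property $\cL[F^a_k]<0$ off the support of the cutoff, and $F^a_k$ is globally decreasing in $|x|$ so it cannot serve as a compactly supported bump centered at $\bar x$. There is no need for it --- the paraboloid is simpler and suffices.

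You also skip the delicate submartingale verification at the deadline. After defining $v^\delta_k := v_{k,-}\vee(\psi + \delta)$ on $\overline{\cD(T_k,\bar x,\varepsilon)}$, the exit time $\tau^1$ from the cylinder may either be strictly less than $T_k$ or equal $T_k$ with the exit position $\xi^{1-}$ inside $B(\bar x,\varepsilon/2)$; the paper splits this into events $Q_1, Q_2$ and uses the submartingale property of $v_{k,-}$ (resp.\ $v_{k+1,-}$) on each piece, and on $Q_2$ must insert the withdrawal $\theta_k$ before passing to the $(k+1)$-st component. Without this case analysis the coupling inequality in \eqref{k_vissup} never actually enters the computation, so the proof does not close. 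You should replace the $F^a$-superposition with the paraboloid construction and spell out the $Q_1/Q_2$ split.
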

\begin{proof} 
	The proof proceeds as follows:
	\begin{enumerate}[label={(\arabic*)}]	
		\item Condition (4) at $x = (0, 0)$ has been verified in \eqref{eq:gk00}. 
		\item Condition (3) at $T_K$ is established in Lemma \ref{lem:K_vissup}.
		\item Condition (2) at $T_k$, for $k = 1, \ldots, K-1$, is proved in Lemma \ref{lem:k_vissup}.
		\item Following arguments similar to those in \citet[Theorem 4.1]{bayraktar2013stochastic}, Condition (1) in Definition \ref{def:vis_super}, which concerns the viscosity supersolution property on $[T_{k-1}, T_k) \times \cS$, can be established. The detailed proof is omitted.
	\end{enumerate}
\end{proof}

\section{Comparison principle}\label{sec:compare}
This section establishes a comparison principle that guarantees the continuity and uniqueness of viscosity solutions to the QVI system. The proof follows the standard approach based on Ishii's lemma \cite[Section 4.4]{pham2009book} and the treatment of the intervention operator $\cM$ described in \cite{belak2019utility,belak2022optimal}. The result is included here for completeness.

\begin{proposition}[Terminal comparison at $T_k$]\label{prop:compareTk}
	Let $k=1, \ldots, K-1$ and consider a continuous and bounded function $f(t, x): [T_k, T_{k+1}] \times \barS \rightarrow \R$. Suppose that the following conditions hold:
	\begin{enumerate}[label={(\arabic*)}]	
		\item The function $u(t, x): [T_{k-1}, T_{k}] \times \barS \rightarrow \R$ is USC and satisfies
		\begin{equation}
			\begin{aligned}
				\max \Big\{ & u(T_k, x) - \inf_{0 \leq \theta_k \leq x_0} \left[ w_k (G_k - \theta_k)^+ + f(T_k, x_0 - \theta_k, x_1) \right], \\
				& u(T_k, x) - \cM[u]^* (T_k, x) \Big\} \leq 0, \quad x \in \cS.
			\end{aligned}
		\end{equation}
		\item The function $v(t, x): [T_{k-1}, T_{k}] \times \barS \rightarrow \R$ is LSC and satisfies
		\begin{equation}
			\begin{aligned}
				\max \Big\{ & v(T_k, x) - \inf_{0 \leq \theta_k \leq x_0} \left[ w_k (G_k - \theta_k)^+ + f(T_k, x_0 - \theta_k, x_1) \right], \\
				& v(T_k, x) - \cM[v]_* (T_k, x) \Big\} \geq 0, \quad x \in \cS.
			\end{aligned}
		\end{equation}
		\item At the corner $0$,
        \begin{equation}\label{cond:0}
            u(T_k, 0) \leq v(T_k, 0), \quad v(T_k, 0) = \sum^K_{i=k} w_i G_i. 
        \end{equation}
        Furthermore, for any $x \in \barS$,
        \begin{equation}\label{eq:growth}
            \begin{aligned}
                0 &\leq u(T_k, x) \leq \sum^K_{i=k} w_i G_i, \\
                -c(1 + |x|^{p_0}) & \leq v(T_k, x) \leq \sum^K_{i=k} w_i G_i \quad \text{ with some constant } c > 0. 
            \end{aligned}
        \end{equation}
	\end{enumerate}
	Then it follows that
	\begin{equation}
		u(T_k, x) \leq v(T_k, x), \quad \forall \; x \in \barS.
	\end{equation}
\end{proposition}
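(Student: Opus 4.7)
I argue by contradiction using a perturbation in the spirit of the strict classical subsolution constructed in Lemma \ref{lem:classical_sub}. Suppose that $M := \sup_{x \in \barS}[u(T_k, x) - v(T_k, x)] > 0$. Fix $q \in (p_0, 1)$, set $\phi(x) := (1 + x_0 + x_1)^q$, and for each small $\epsilon > 0$ consider
\begin{equation*}
\Psi_\epsilon(x) := u(T_k, x) - v(T_k, x) - \epsilon \phi(x).
\end{equation*}
By the growth bounds \eqref{eq:growth}, $u(T_k, \cdot)$ is bounded and $v(T_k, \cdot) \geq -c(1 + |x|^{p_0})$, and since $q > p_0$ the function $\Psi_\epsilon$ tends to $-\infty$ as $|x| \to \infty$. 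Because $\Psi_\epsilon$ is USC, it attains its supremum at some $x^* = x^*_\epsilon \in \barS$ with $\Psi_\epsilon(x^*) > 0$ for all sufficiently small $\epsilon$; moreover $\Psi_\epsilon(0) \leq -\epsilon < 0$ by \eqref{cond:0}, so $x^* \in \cS$.

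The supersolution property at $x^*$ delivers either the \emph{funding} inequality
\begin{equation*}
v(T_k, x^*) \geq \inf_{0 \leq \theta_k \leq x^*_0}\bigl[w_k(G_k - \theta_k)^+ + f(T_k, x^*_0 - \theta_k, x^*_1)\bigr]
\end{equation*}
or the \emph{intervention} inequality $v(T_k, x^*) \geq \cM[v]_*(T_k, x^*)$. In the funding case, the matching subsolution inequality for $u$ forces $u(T_k, x^*) \leq v(T_k, x^*)$, whence $\Psi_\epsilon(x^*) \leq -\epsilon \phi(x^*) < 0$, a contradiction. In the intervention case, Hausdorff continuity of the feasibility multifunction $D(\cdot)$ with compact fibers on $\cS \setminus \cS_\emptyset$ propagates LSC (resp.\ USC) through the infimum defining $\cM$, so $\cM[v]_* = \cM[v]$ and $\cM[u]^* = \cM[u]$ near $x^*$. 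Provided $x^* \notin \cS_\emptyset$, the LSC of $v$ over the compact set $D(x^*)$ yields a minimizer $\Delta^* \in D(x^*)$ of $\Delta \mapsto v(T_k, \Gamma(x^*, \Delta))$; set $y^* := \Gamma(x^*, \Delta^*)$.

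For this $y^*$, one obtains simultaneously $v(T_k, x^*) \geq v(T_k, y^*)$ and $u(T_k, x^*) \leq \cM[u](T_k, x^*) \leq u(T_k, y^*)$, hence $(u - v)(T_k, x^*) \leq (u - v)(T_k, y^*)$. Because $y^*_0 + y^*_1 = x^*_0 + x^*_1 - C(\Delta^*) \leq x^*_0 + x^*_1 - C_{\min}$, the strict monotonicity of $\phi$ gives $\phi(y^*) < \phi(x^*)$, so $\Psi_\epsilon(y^*) > \Psi_\epsilon(x^*)$, contradicting the maximality of $x^*$. The residual sub-cases (the interior of $\cS_\emptyset$, where $\cM[v]_* = +\infty$ forces the funding branch, and $\partial \cS_\emptyset$, where the only feasible transaction is the liquidation to $y^* = (0, 0)$, so \eqref{cond:0} closes the argument) are handled by the same template. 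The main technical obstacle is the propagation of semicontinuity through the intervention operator, which is what allows the viscosity inequalities to be tested against the concrete point $y^*$ rather than against a limiting value; the perturbation $\epsilon \phi$ plays the dual role of providing coercivity at infinity and of creating the strict one-step gap $\epsilon(\phi(x^*) - \phi(y^*)) > 0$ that produces the contradiction in a single step, obviating any iterative liquidation argument.
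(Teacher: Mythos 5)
Your proof is correct and takes a genuinely different, more economical route than the paper's. Where the paper perturbs $u$ and $v$ by convex combinations with the strict classical subsolution $F^1_k$ of Lemma \ref{lem:classical_sub} and then runs a doubling-of-variables argument with $\Phi_n(x,x')$, you add a single coercive penalty $\epsilon\phi$ (with $\phi(x)=(1+x_0+x_1)^q$, $q\in(p_0,1)$) directly to $u-v$ and work at a single maximizer $x^*$. Since the terminal comparison at $T_k$ is a static inequality rather than a PDE, there is no second-order jet to manufacture, and the doubling machinery is indeed dispensable; your one-point argument closes the loop. The key structural point you exploit is that the subsolution condition is a $\max\le 0$, yielding \emph{both} branches for $u$ simultaneously, while the supersolution condition is a $\max\ge 0$, giving only a disjunction for $v$: in the funding branch the shared $\inf_{\theta_k}$ term couples the two directly, and in the intervention branch the explicit margin $\epsilon\bigl(\phi(x^*)-\phi(y^*)\bigr)\ge\epsilon\bigl[(1+x^*_0+x^*_1)^q-(1+x^*_0+x^*_1-C_{\min})^q\bigr]>0$ replaces the paper's $\bar\kappa/\eta$ margin generated by the strict classical subsolution. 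Your residual sub-cases ($\cS_\emptyset$ and $\partial\cS_\emptyset$) are handled correctly; the only wording slip is that the main intervention argument requires $x^*\notin\overline{\cS_\emptyset}$, not merely $x^*\notin\cS_\emptyset$, since Lemma \ref{lem:Mproperty}(2) gives $\cM[u]^*=\cM[u]$ only away from the closure — as your separate treatment of $\partial\cS_\emptyset$ tacitly acknowledges. The paper's heavier scheme buys uniformity: the same $F^1_k$-based perturbation and doubling scaffold is reused essentially verbatim in Proposition \ref{prop:compare}, where Ishii's lemma genuinely needs it; your argument is cleaner for the terminal comparison but would have to be supplemented by the standard machinery there.
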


\begin{proposition}[Terminal comparison at $T_K$]\label{prop:compareTK}
	Suppose that the following conditions hold:
	\begin{enumerate}[label={(\arabic*)}]	
		\item  The function $u(t, x): [T_{K-1}, T_{K}] \times \barS \rightarrow \R$ is USC and satisfies
		\begin{equation*}
			\begin{aligned}
				\max \Big\{ & u(T_K, x) - w_K \left[ G_K - x_0 - (x_1 - C(-x_1))^+ \right]^+, \\
				& u(T_K, x) - \cM[u]^* (T_K, x) \Big\} \leq 0, \quad x \in \cS.
			\end{aligned}
		\end{equation*}
		\item The function $v(t, x): [T_{K-1}, T_{K}] \times \barS \rightarrow \R$ is LSC and satisfies
		\begin{equation*}
			\begin{aligned}
				\max \Big\{ & v(T_K, x) - w_K \left[ G_K - x_0 - (x_1 - C(-x_1))^+ \right]^+, \\
				& v(T_K, x) - \cM[v]_* (T_K, x) \Big\} \geq 0, \quad x \in \cS.
			\end{aligned}
		\end{equation*}
		\item At the corner $0$, $$u(T_K, 0) \leq v(T_K, 0), \quad v(T_K, 0) =  w_K G_K.$$ 
        Furthermore, for any $x \in \barS$,
		\begin{align*}
			& 0 \leq u(T_K, x) \leq w_K G_K, \quad -c(1 + |x|^{p_0}) \leq v(T_K, x) \leq w_K G_K \; \text{with some constant $c > 0$.}
		\end{align*}
	\end{enumerate}
	Then it follows that
	\begin{equation*}
		u(T_K, x) \leq v(T_K, x), \quad \forall \; x \in \barS.
	\end{equation*}
\end{proposition}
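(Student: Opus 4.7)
The plan is a contradiction argument using the strict classical subsolution from Lemma~\ref{lem:classical_sub}. Assume $M := \sup_{x \in \barS}\big(u(T_K, x) - v(T_K, x)\big) > 0$. Fix an exponent $q \in (p_0, 1)$ and $\lambda > q\max\{r, \mu, 0\}$, and let $F^1_K$ be the associated strict classical subsolution with $a = 1$. Introduce the coercive penalty $\psi(x) := w_K G_K - F^1_K(T_K, x) = C_K(1 + x_0 + x_1)^q$ and consider the USC function
\begin{equation*}
\Phi^\delta(x) := u(T_K, x) - v(T_K, x) - \delta\psi(x).
\end{equation*}
Since $q > p_0$ dominates the growth of $u - v$, $\Phi^\delta(x) \to -\infty$ as $|x| \to \infty$, and $\Phi^\delta$ attains its maximum at some $x^\delta \in \barS$. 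For $\delta$ sufficiently small, $\Phi^\delta(x^\delta) \geq M/2 > 0$; the boundary condition $u(T_K, 0) \leq v(T_K, 0)$ together with $\psi(0) > 0$ yields $\Phi^\delta(0) < 0$, forcing $x^\delta \in \cS$.

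At $x^\delta$, the subsolution inequality for $u$ supplies two bounds: $u(T_K, x^\delta) \leq w_K\big[G_K - x^\delta_0 - (x^\delta_1 - C(-x^\delta_1))^+\big]^+$ and $u(T_K, x^\delta) \leq \cM[u]^*(T_K, x^\delta)$. Positivity of $\Phi^\delta(x^\delta)$ implies $u(T_K, x^\delta) > v(T_K, x^\delta)$, and combining with the first bound forces $v(T_K, x^\delta) < w_K\big[G_K - x^\delta_0 - (x^\delta_1 - C(-x^\delta_1))^+\big]^+$. The terminal-payoff branch of the supersolution inequality for $v$ is thus inactive at $x^\delta$, leaving the intervention branch $v(T_K, x^\delta) \geq \cM[v]_*(T_K, x^\delta)$ as the only possibility.

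The core of the proof is the final step combining the two intervention inequalities with the strict decrease of $\psi$ under interventions, namely $\psi(x) - \psi(\Gamma(x, \Delta)) \geq C_K\big[(1 + x_0 + x_1)^q - (1 + x_0 + x_1 - C_{\min})^q\big] =: \eta(x) > 0$. By USC of $u$, compactness of $D(\cdot)$, and continuity of $\Gamma$, one extracts sequences $y_n \to x^\delta$ and $\Delta_n \in D(y_n)$ realizing the USC envelope $\cM[u]^*(T_K, x^\delta) = \lim u(T_K, \Gamma(y_n, \Delta_n))$. The maximality $\Phi^\delta(x^\delta) \geq \Phi^\delta(\Gamma(y_n, \Delta_n))$ combined with $u(T_K, \Gamma(y_n, \Delta_n)) \geq u(T_K, x^\delta) - o(1)$ yields $v(T_K, \Gamma(y_n, \Delta_n)) - v(T_K, x^\delta) \geq \delta\eta(x^\delta) - o(1)$. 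Pairing this with a near-optimal sequence for $\cM[v]_*(T_K, x^\delta)$ on the LSC side and reconciling the two sequences via compactness of $D(x^\delta)$, following the stationary impulse-control comparison argument in \cite[Section 4]{belak2022optimal} and \cite[Section 5]{belak2019utility}, drives the contradiction $0 \geq \delta\eta(x^\delta) > 0$.

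The principal obstacle is the alignment of the USC envelope of $\cM[u]$ with the LSC envelope of $\cM[v]$ at $x^\delta$: these envelopes are realized along a priori distinct sequences, and combining them into a single contradiction requires extracting a common convergent subsequence via compactness of $D(x^\delta)$ and continuity of $\Gamma(x^\delta, \cdot)$. A secondary technicality, the possibility that $x^\delta \in \mathrm{int}(\cS_\emptyset)$, is easily dismissed because there $\cM[v]_*(T_K, x^\delta) = +\infty$ would conflict with the finiteness of $v(T_K, x^\delta)$.
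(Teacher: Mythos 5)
Your proposal is essentially correct, but it takes a genuinely different route from the paper. The paper (referring back to the proof of Proposition~\ref{prop:compareTk}) perturbs $u$ and $v$ into $u_\eta$ and $v_\eta$ by an affine combination with the strict classical subsolution $F^1_K$ and then runs a doubling-of-variables argument with $\Phi_n(x,x') = u_\eta(T_K,x) - v_\eta(T_K,x') - \tfrac{n}{2}|x-x'|^2$, extracting a common limit $\bar{x}$ and showing $\bar{x}\neq 0$, $\bar{x}\notin\overline{\cS_\emptyset}$, before splitting into cases on which branch of the supersolution inequality fires along subsequences. You instead use a single-variable penalty $\Phi^\delta(x) = u(T_K,x) - v(T_K,x) - \delta\psi(x)$ with $\psi = w_K G_K - F^1_K(T_K,\cdot)$, which suffices here because the $T_K$-comparison is purely algebraic (no Ishii's lemma needed). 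Your approach is more elementary and, in my view, cleaner for this step: the same strict-subsolution penalty provides coercivity (valid since $q > p_0$) and, at the interior maximizer $x^\delta$, the contradiction follows from the strict decrease of $\psi$ under any feasible intervention. What you buy is a shorter argument that does not need to track two sequences $(x_n, x'_n)$; what the paper's presentation buys is uniformity with the machinery it reuses in Proposition~\ref{prop:compare}.

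Two points in your write-up could be tightened. First, the ``principal obstacle'' you flag---aligning the envelope $\cM[u]^*$ with $\cM[v]_*$ via a priori distinct sequences---is more cleanly dispatched by Lemma~\ref{lem:Mproperty} than by sequence extraction: once you establish $x^\delta\notin\overline{\cS_\emptyset}$, the lemma gives $\cM[u]^*(T_K,x^\delta)=\cM[u](T_K,x^\delta)$ and $\cM[v]_*(T_K,x^\delta)=\cM[v](T_K,x^\delta)$ pointwise, the LSC property of $v$ and compactness of $D(x^\delta)$ yield an attained minimizer $\Delta^*$ for $\cM[v]$, and $\cM[u](T_K,x^\delta)\le u(T_K,\Gamma(x^\delta,\Delta^*))$ is automatic since $\cM[u]$ is an infimum. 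Plugging $\Gamma(x^\delta,\Delta^*)$ into the maximality of $\Phi^\delta$ then gives $\delta\bigl[\psi(x^\delta) - \psi(\Gamma(x^\delta,\Delta^*))\bigr]\le 0$, contradicting $C(\Delta^*)\ge C_{\min}>0$, with no sequences at all. Second, you dismiss $x^\delta\in\mathrm{int}(\cS_\emptyset)$ but omit the boundary case $x^\delta\in\partial\cS_\emptyset$, which is not covered by Lemma~\ref{lem:Mproperty} for $\cM[u]^*$; it is handled directly: there $D(x^\delta)=\{-x^\delta_1\}$ and $\Gamma(x^\delta,-x^\delta_1)=0$, so the (forced) intervention branch for $v$ gives $v(T_K,x^\delta)\ge\cM[v](T_K,x^\delta)=v(T_K,0)=w_K G_K\ge u(T_K,x^\delta)$ by Condition~(3), contradicting $u(T_K,x^\delta)>v(T_K,x^\delta)$. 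With these small additions your argument is complete and correct.
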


\begin{proposition}[Comparison principle: $t \in [T_{k-1}, T_k)$]\label{prop:compare}
	Let $k=1, \ldots, K$. Suppose that the following conditions hold:
	\begin{enumerate}[label={(\arabic*)}]	
		\item  The function $u \in USC([T_{k-1}, T_{k}] \times \barS)$ is a viscosity subsolution of \eqref{QVI_t} on $[T_{k-1}, T_k) \times \cS$, that is, the USC function $u$ satisfies \eqref{eq:interior_vissub} in Definition \ref{def:vis_sub}.
		
		\item The function $v \in LSC([T_{k-1}, T_{k}] \times \barS)$ is a viscosity supersolution of \eqref{QVI_t} on $[T_{k-1}, T_k) \times \cS$, that is, the LSC function $v$ satisfies \eqref{t_vissup} in Definition \ref{def:vis_super}.
		\item There exists a constant $c > 0$ such that
		\begin{equation*}
			-c(1 + |x|^{p_0}) \leq v(t, x) \leq \sum^K_{i=k} w_i G_i, \quad (t, x) \in [T_{k-1}, T_{k}] \times \barS.
		\end{equation*}
		Furthermore,
		\begin{align*}
			& 0 \leq u(t, x) \leq \sum^K_{i=k} w_i G_i, \quad (t, x) \in [T_{k-1}, T_{k}] \times \barS, \\
			& u(t, 0) \leq v(t, 0) =  \sum^K_{i=k} w_i G_i, \quad t \in [T_{k-1}, T_k], \\
			& u(T_k, x) \leq v(T_k, x), \quad x \in \barS.
		\end{align*}
	\end{enumerate}
	Then it follows that
	\begin{equation*}
		u(t, x) \leq v(t, x), \quad \forall \; (t, x) \in [T_{k-1}, T_{k}] \times \barS.
	\end{equation*}
\end{proposition}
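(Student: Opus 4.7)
The plan is to argue by contradiction, assuming $M := \sup_{[T_{k-1}, T_k] \times \barS}(u - v) > 0$ and deriving a contradiction via the doubling-of-variables technique and Ishii's lemma. The boundary ordering in hypothesis~(3) --- namely $u(T_k, \cdot) \leq v(T_k, \cdot)$ on $\barS$ and $u(t, 0) \leq v(t, 0)$ on $[T_{k-1}, T_k]$ --- prevents $M$ from being attained on the parabolic boundary $\{T_k\} \times \barS \cup [T_{k-1}, T_k] \times \{(0, 0)\}$.

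To obtain a strict margin and to tame the $p_0$-growth of $v$ at infinity, I perturb $u$ using the strict classical subsolution $F^1_k$ of Lemma~\ref{lem:classical_sub} by setting
\begin{equation*}
\tilde u(t, x) := u(t, x) - \epsilon C_k (1 + x_0 + x_1)^{p_0} e^{\lambda(T_k - t)}
\end{equation*}
for small $\epsilon > 0$. Choosing $C_k$ large enough (permitted by the remark in Lemma~\ref{lem:classical_sub}) ensures the polynomial penalty dominates the $c(1+|x|^{p_0})$ lower bound on $v$, so $\sup(\tilde u - v)$ is attained on a bounded subset of $[T_{k-1}, T_k) \times \cS$, and it remains strictly positive for $\epsilon$ small enough. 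A routine viscosity-sense computation using Lemma~\ref{lem:classical_sub} then shows $\tilde u$ is a \emph{strict} subsolution with a negative slack of order $\epsilon |\kappa^c_k(x)|$ on compacts: both $\cL[\tilde \varphi] \leq \epsilon \kappa^c_k(x)$ and $\tilde u - \cM[\tilde u]^* \leq \epsilon \kappa^c_k(x)$ hold.

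I then apply the doubling-of-variables technique to
\begin{equation*}
\Phi_n(t, s, x, y) := \tilde u(t, x) - v(s, y) - \tfrac{n}{2}\bigl( |x - y|^2 + (t - s)^2 \bigr).
\end{equation*}
Standard estimates yield a maximizer $(t_n, s_n, x_n, y_n) \to (\bar t, \bar t, \bar x, \bar x) \in [T_{k-1}, T_k) \times \cS$ achieving $\sup(\tilde u - v)$, with $n\bigl( |x_n - y_n|^2 + (t_n - s_n)^2 \bigr) \to 0$. Ishii's parabolic lemma supplies jets for $\tilde u$ and $v$ with common spatial gradient $n(x_n - y_n)$ and the usual semidefinite Hessian ordering.

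The main obstacle is the intervention operator $\cM$. The viscosity supersolution inequality for $v$ at $(s_n, y_n)$ yields one of two alternatives. In the PDE alternative $\cL[\varphi](s_n, y_n) \geq 0$, combining with $\cL[\tilde \varphi](t_n, x_n) \leq \epsilon \kappa^c_k(x_n) < 0$ and invoking Ishii's matrix inequality together with the linear-in-$x$ structure of $r x_0$, $\mu x_1$, $\sigma x_1$ produces an $o(1)$ discrepancy as $n \to \infty$, whence $0 \leq \epsilon \kappa^c_k(\bar x) < 0$, a contradiction. In the intervention alternative $v(s_n, y_n) \geq \cM[v]_*(s_n, y_n)$, careful USC/LSC envelope bookkeeping (as in \cite{belak2019utility, belak2022optimal}) allows one to extract a common near-optimal $\Delta^*_n$ such that the rebalanced pair $(\Gamma(x_n, \Delta^*_n), \Gamma(y_n, \Delta^*_n))$ still lies in the compact localization set, with $\Gamma(x_n, \Delta^*_n) - \Gamma(y_n, \Delta^*_n) = x_n - y_n \to 0$. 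Then, for any $\eta > 0$,
\begin{equation*}
(\tilde u - v)(t_n, x_n) \leq \tilde u\bigl(t_n, \Gamma(x_n, \Delta^*_n)\bigr) - v\bigl(s_n, \Gamma(y_n, \Delta^*_n)\bigr) + 2\eta + \epsilon \kappa^c_k(x_n),
\end{equation*}
and sending $n \to \infty$ followed by $\eta \to 0$ yields $\sup(\tilde u - v) \leq \sup(\tilde u - v) - \epsilon |\kappa^c_k(\bar x)|$, again a contradiction. The strictly positive minimum cost $C_{\min} > 0$ is precisely what guarantees rebalanced positions stay uniformly in $\cS$, making the envelope handling tractable.
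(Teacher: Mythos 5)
Your overall architecture---perturb $u$ by a strict classical subsolution to obtain a coercive strict subsolution, then run the doubling-of-variables/Ishii machinery with envelope bookkeeping for the intervention operator $\cM$---is the same one the paper uses (it modifies Proposition~\ref{prop:compareTk} and invokes Ishii's lemma together with \cite[Proposition 4.2]{belak2019utility}). However, there is a genuine gap in your choice of penalty exponent, and it matters.

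You take the penalty to grow like $(1+x_0+x_1)^{p_0}$, i.e.\ with the \emph{same} exponent $p_0$ that controls the lower bound $v \geq -c(1+|x|^{p_0})$. To make $\sup(\tilde u - v)$ coercive (hence attained on a bounded set), you then need $\epsilon C_k \gtrsim c$. But then the penalty $\epsilon C_k(1+x_0+x_1)^{p_0}e^{\lambda(T_k-t)}$ is bounded below by $\sim c(1+x_0+x_1)^{p_0}$ \emph{uniformly} as $\epsilon \downarrow 0$, because decreasing $\epsilon$ forces $C_k$ to grow proportionally. Consequently, the conclusion you would reach from the contradiction argument is $u \leq v + c(1+x_0+x_1)^{p_0}e^{\lambda T}$, not $u \leq v$; the penalty never vanishes. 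Equivalently, for fixed $C_k$ there is a lower threshold on $\epsilon$ below which coercivity breaks, so "it remains strictly positive for $\epsilon$ small enough" and "$\sup(\tilde u - v)$ is attained" cannot both be arranged in a regime that lets $\epsilon \to 0$. The paper explicitly chooses $q \in (p_0, 1)$ in $F^1_k$ (first line of the proof of Proposition~\ref{prop:compareTk}, which Proposition~\ref{prop:compare} builds on). With $q > p_0$, the penalty $\eta^{-1} C_k(1+x_0+x_1)^q$ dominates the $|x|^{p_0}$ tail of $v$ at infinity for \emph{every} positive coefficient, so $C_k$ can be fixed, coercivity holds uniformly, and the penalty genuinely vanishes as $\eta \to \infty$. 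You should replace $p_0$ by such a $q$ in your penalty (Lemma~\ref{lem:classical_sub} is stated for arbitrary $q\in(0,1)$, so it still applies).

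A secondary, cosmetic difference: the paper mixes multiplicatively, setting $u_\eta := \frac{\eta+1}{\eta}u + \frac{1}{\eta}F^1_k$ and $v_\eta := \frac{\eta-1}{\eta}v - \frac{1}{\eta}F^1_k$, precisely so that \cite[Proposition 4.2]{belak2019utility} can be invoked off the shelf to certify the mixtures are strict sub/supersolutions with slack $\sim \eta^{-1}\kappa^c_k$. Your additive perturbation $\tilde u = u - \epsilon\psi$ can be made to work too, but you then owe a short verification that $\tilde u - \cM[\tilde u]^* \leq$ (negative slack) --- which follows from $\psi(t,\Gamma(x,\Delta)) \leq \psi(t,x_0-C_{\min},x_1)$ since every feasible trade reduces total wealth by at least $C_{\min}$ --- rather than citing the mixing lemma directly. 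Once the exponent is corrected, the rest of your outline (boundary exclusion at $x=0$ and $t=T_k$, Ishii's matrix inequality with the linear coefficients, the near-optimal common rebalancing $\Delta^*_n$ in the intervention alternative) lines up with the paper's argument.
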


We now provide the proof of the viscosity solution properties of the value function.
\begin{proof}[Proof of Theorem \ref{thm:viscosity}]
	The argument proceeds by backward induction.
	
	\begin{enumerate}[label={(\arabic*)}]	
		\item At the terminal time $T_K$, Lemma \ref{lem:K_vissup} shows that $v_{K, -}$ is an LSC viscosity supersolution, and Lemma \ref{lem:K_vissub} shows that $v_{K, +}$ is a USC viscosity subsolution. Moreover, $v_{K, -}$ and $v_{K, +}$ satisfy the boundary and growth conditions required in Condition (3) of Proposition \ref{prop:compareTK}, which yields
		\begin{equation}\label{ineqK+-}
			v_{K, +}(T_K, x) \leq v_{K, -} (T_K, x), \quad x \in \overline{\cS}.
		\end{equation}
		As established earlier, $v_{K, -} (T_K, x)  \leq V_K(T_K, x) \leq  v_{K, +}(T_K, x)$ fo all $x \in \overline{\cS}$. Therefore,
		\begin{equation}
			v_{K, -} (T_K, x) = v_{K, +}(T_K, x) = V_K(T_K, x), \; x \in \overline{\cS}.
		\end{equation} 
		Moreover, $V_{K}(T_K, \cdot)$ is continuous and bounded on $\overline{\cS}$. 
		
		\item On the interval $[T_{K-1}, T_K)$, the functions $v_{K, -}$ and $v_{K, +}$ satisfy the boundary condition at $0$, the growth condition, and the viscosity supersolution and subsolution properties, respectively. By \eqref{ineqK+-} and Proposition \ref{prop:compare}, it follows that
		\begin{equation}
			v_{K, +}(t, x) \leq v_{K, -} (t, x), \quad (t, x) \in [T_{K-1}, T_K] \times \overline{\cS}.
		\end{equation} 
		Since $v_{K, -}(t, x) \leq V_K(t, x) \leq v_{K, +} (t, x)$, we obtain
		\begin{equation}
			v_{K, -}(t, x) = v_{K, +} (t, x) = V_K(t, x), \quad (t, x) \in [T_{K-1}, T_K] \times \overline{\cS}.
		\end{equation}
		Moreover, $V_K(t, x)$ is continuous and bounded on $[T_{K-1}, T_K] \times \overline{\cS}$.

		\item We repeat the previous steps for each $k = K-1, \ldots, 1$. Consequently, the value function array \eqref{value} is the unique viscosity solution of the QVI system. Moreover, the value function $V_k(t, x)$ is continuous and bounded on $[T_{k-1}, T_k] \times \overline{\cS}$.
	\end{enumerate}	
\end{proof}

In contrast to \cite{belak2022optimal}, we prove that the value function $V_k$ is the unique viscosity solution to the QVI system, instead of focusing on the lower stochastic envelope $v_{-}$ only. This choice is motivated by the fact that the positivity of $v_{-}$ cannot be established directly from its definition. When perturbing the continuation and intervention regions to construct optimal strategies, the non-negativity of $V_k$ becomes essential. Importantly, the existence of an optimal strategy requires only the continuity, rather than the smoothness, of the value function.

\section{Construction of optimal strategies}\label{sec:strategy}
First, we introduce several optimizers that will be used to construct an optimal strategy. Given $i=1, \ldots, K$, recall that $V_i(t, x)$ is the continuous value function with $t \in [T_{i-1}, T_i]$. The continuation region $\cC_i$ and the intervention region $\cI_i$ are defined as 
\begin{align}
	\cC_i & := \{ (t, x) \in [T_{i-1}, T_i] \times \barS : V_i(t, x) < \cM[V_i] (t, x)\}, \\
	\cI_i & := \{ (t, x) \in [T_{i-1}, T_i] \times \barS : V_i(t, x) = \cM[V_i] (t, x)\}. \label{eq:I_i}
\end{align}

By \citet[Corollary 4]{schal1974selection}, there exists a Borel measurable optimizer $g_i: [T_{i-1}, T_i] \times (\barS \setminus \cS_\emptyset) \rightarrow \R$ satisfying
\begin{equation}
	g_i(t, x) \in D(x) \quad \text{and} \quad \cM[V_i] (t, x) = V_i(t, \Gamma(x, g_i(t, x))),
\end{equation}
for all $(t, x) \in [T_{i-1}, T_i] \times (\barS \setminus \cS_\emptyset)$.

For $i \neq K$, another application of \citet[Corollary 4]{schal1974selection} yields a Borel measurable optimizer $\Theta_i(x): \barS \rightarrow \R$, such that $\Theta_i(x) \in [0, x_0]$ and
\begin{equation}
	\begin{aligned}
		& \inf_{0 \leq \theta_i \leq x_{0}} \left[ w_i (G_i - \theta_i)^+ + V_{i+1}(T_i, x_{0} - \theta_i, x_{1}) \right] \\
		& \quad = w_i (G_i - \Theta_i(x))^+ + V_{i+1}(T_i, x_{0} - \Theta_i(x), x_{1})
	\end{aligned}
\end{equation}
for all $x \in \barS$.

Given $k=1, \ldots, K$ and $(t, x) \in [T_{k-1}, T_k] \times \barS$, our goal is to construct an admissible strategy $(\theta^*_{k:K}, \Lambda^*)  \in \cA(t, x; k)$, such that 
\begin{equation}
	\begin{aligned}
		V_k(t, x) = \E \Big[& \sum^{K}_{i = k} w_i (G_i - \theta^*_i)^+ \Big| X^*(t-) = x \Big].
	\end{aligned}
\end{equation}
This implies that $(\theta^*_{k:K}, \Lambda^*)$ is an optimal strategy. Here, we denote the corresponding wealth process as $X^*(s) := X(s; t, x, \theta^*_{k:K}, \Lambda^*)$, $s \in [t, T]$. Note that $V_k(T_k, x)$ includes the funding amount $\theta^*_k$ for goal $k$, whereas $V_{k+1}(T_k, x)$ excludes $\theta^*_k$ since goal $k$ has expired.

The candidate optimal strategy is constructed recursively. The investment strategy $\Lambda^*$ is partitioned by goal deadlines as $\Lambda^* := (\Lambda^*_k, \ldots, \Lambda^*_K)$, where $\Lambda^*_i := \{ (\tau^{*, i}_n, \Delta^{*, i}_n)\}^\infty_{n=1}$ is specified as follows. For $\Lambda^*_k$, the initial position is set to $(\tau^{*, k}_0, \xi^{*, k}_0) = (t, x)$. For $n = 1, 2, \ldots$, define iteratively
\begin{equation}\label{eq:Lambda_k}
	\begin{aligned}
		\tau^{*, k}_n & := \inf\{ u \in [\tau^{*, k}_{n-1}, T_k] : X(u; \tau^{*, k}_{n-1}, \xi^{*, k}_{n-1}, \emptyset, \emptyset) \in \cI_k \}, \\
		\Delta^{*, k}_n & := g_k(\tau^{*, k}_n, X(\tau^{*, k}_n; \tau^{*, k}_{n-1}, \xi^{*, k}_{n-1}, \emptyset, \emptyset)) \one_{\{\tau^{*, k}_n \leq T_k\}}, \\
		\xi^{*, k}_n & := \Gamma(X(\tau^{*, k}_n; \tau^{*, k}_{n-1}, \xi^{*, k}_{n-1}, \emptyset, \emptyset), \Delta^{*, k}_n). 
	\end{aligned}
\end{equation}

If $k \neq K$, the candidate optimal supporting amount for goal $k$ is given by
\begin{equation}
	\theta^*_k := \Theta_k(X(T_k; \tau^{*, k}_0, \xi^{*, k}_0, \emptyset, \Lambda^*_k)).
\end{equation}

The next component $\Lambda^*_{k+1}$ is constructed with the initial position
\begin{equation}
	(\tau^{*, k+1}_0, \xi^{*, k+1}_0) = (T_k, X(T_k; \tau^{*, k}_0, \xi^{*, k}_0, \theta^*_k, \Lambda^*_k)),
\end{equation}
which satisfies
\begin{align*}
	X_0(T_k; \tau^{*, k}_0, \xi^{*, k}_0, \theta^*_k, \Lambda^*_k) & = X_0(T_k; \tau^{*, k}_0, \xi^{*, k}_0, \emptyset, \Lambda^*_k) - \theta^*_k, \\
	X_1(T_k; \tau^{*, k}_0, \xi^{*, k}_0, \theta^*_k, \Lambda^*_k) & = X_1(T_k; \tau^{*, k}_0, \xi^{*, k}_0, \emptyset, \Lambda^*_k).
\end{align*}
For $n = 1, 2, \ldots$, the terms $\tau^{*, k+1}_n$,  $\Delta^{*, k+1}_n$, and $\xi^{*, k+1}_n$ are defined as in \eqref{eq:Lambda_k}, with $k$ replaced by $k+1$.

This recursive procedure continues until the final goal $K$. The supporting amount for the last goal is determined by the liquidation value:
\begin{equation}
	\theta^*_K = L(X(T_K; \tau^{*, K}_0, \xi^{*, K}_0, \emptyset, \Lambda^*_K)).
\end{equation}

To verify that the strategy constructed above is indeed optimal, two technical results are required: Lemma \ref{lem:borel} and Lemma \ref{lem:Tk_submart}. These results are instrumental in establishing Theorem \ref{thm:optimalstrategy}. The proof of Lemma \ref{lem:borel} follows similar arguments to those in \citet[Proposition 4.3.1]{pham2009book} and \citet[Proposition 5.10]{belak2022optimal}.
\begin{lemma}\label{lem:borel}
	Consider an array of functions given by
	\begin{equation}\label{eq:borel}
		(\{ h_1(t, x) \}_{t \in [0, T_1]}, \ldots, \{ h_k(t, x) \}_{t \in [T_{k-1}, T_{k}]}, \ldots, \{ h_{K}(t, x) \}_{t \in [T_{K-1}, T]} ),
	\end{equation} 
	where $h_k(t, x): [T_{k-1}, T_k] \times \barS \rightarrow \R$, $k=1, \ldots, K$ is Borel measurable and satisfies $h_k(t, x) \leq C_g$ for a generic constant $C_g$. If \eqref{eq:borel} satisfies Conditions (2), (3), and (4) in Definition \ref{def:sto_sub}, then \eqref{eq:borel} also satisfies the viscosity subsolution properties (1), (2), and (3) in Definition \ref{def:vis_sub}.
\end{lemma}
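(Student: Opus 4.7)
The plan is to verify each of the three subsolution requirements of Definition \ref{def:vis_sub} from the stochastic subsolution properties in Definition \ref{def:sto_sub}, following the approach of \citet[Proposition 4.3.1]{pham2009book} and \citet[Proposition 5.10]{belak2022optimal}. The intervention halves of \eqref{eq:interior_vissub}, \eqref{eq:k_vissub}, and \eqref{eq:TK_vissub} are handled uniformly: condition (3) of Definition \ref{def:sto_sub} gives $h_k \leq \cM[h_k]$, while $h_k \leq h_k^*$ together with the monotonicity of $\cM$ (as a pointwise infimum over $\Delta$) yields $\cM[h_k] \leq \cM[h_k^*] \leq \cM[h_k^*]^*$. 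Combining these gives $h_k \leq \cM[h_k^*]^*$ pointwise, and taking USC envelopes on both sides, the right-hand side being already USC and hence idempotent under $(\cdot)^*$, produces $h_k^* \leq \cM[h_k^*]^*$ throughout.

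For the differential part of \eqref{eq:interior_vissub}, fix $(\bar t, \bar x) \in [T_{k-1}, T_k) \times \cS$ and $\varphi \in C^{1,2}$ at which $h_k^* - \varphi$ attains a maximum; by adding a constant and a small quadratic penalty one may assume the maximum is strict and $h_k^*(\bar t, \bar x) = \varphi(\bar t, \bar x)$. Pick $(t_n, x_n) \to (\bar t, \bar x)$ with $h_k(t_n, x_n) \to h_k^*(\bar t, \bar x)$, and set $\epsilon_n := \varphi(t_n, x_n) - h_k(t_n, x_n) \geq 0$. Fix a small closed ball $B \subset [T_{k-1}, T_k) \times \cS$ centred at $(\bar t, \bar x)$, let $\tau_n^B$ be the first exit from $B$ of the uncontrolled process $X(\cdot; t_n, x_n, 0, \emptyset)$, and choose $h_n \downarrow 0$ with $\epsilon_n / h_n \to 0$. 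Applying condition (4) of Definition \ref{def:sto_sub} with $\btau = t_n$, $\xi = x_n$, $\theta_{k:K} = 0$, and $\rho_n := (t_n + h_n) \wedge \tau_n^B < T_k$, the functional $\cH$ collapses to $h_k(\rho_n, X(\rho_n))$; bounding $h_k \leq h_k^* \leq \varphi$ on $B$ and applying It\^o's formula to $\varphi$ yields $\epsilon_n \geq \E[\int_{t_n}^{\rho_n} \cL[\varphi](s, X(s))\,ds]$. Dividing by $h_n$ and letting $n \to \infty$ delivers $\cL[\varphi](\bar t, \bar x) \leq 0$.

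For \eqref{eq:k_vissub} at $T_k$ with $k < K$, fix any deterministic $\theta_k^0 \in [0, x_0]$ and choose $(t_n, y_n) \to (T_k, x)$ realising $h_k^*(T_k, x)$. Apply condition (4) with $\btau = t_n$, $\xi = y_n$, $\rho = T_k$, $\theta_k^n := \theta_k^0 \wedge X_0(T_k-; t_n, y_n, \emptyset, \emptyset)$ (admissible by construction and converging a.s.\ to $\theta_k^0$), and any admissible choice of $\theta_{k+1:K}$. Since $\rho = T_k$, the functional $\cH$ reduces to $h_{k+1}(T_k, X(T_k; t_n, y_n, \theta_k^n, \emptyset)) + w_k(G_k - \theta_k^n)^+$. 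The uniform upper bound $h_{k+1} \leq C_g$ enables reverse Fatou; extracting an a.s.\ convergent subsequence of $X(T_k; t_n, y_n, \theta_k^n, \emptyset) \to (x_0 - \theta_k^0, x_1)$ and invoking the definition of $h_{k+1}^*$ then produce $h_k^*(T_k, x) \leq w_k(G_k - \theta_k^0)^+ + h_{k+1}^*(T_k, x_0 - \theta_k^0, x_1)$, and an infimum over $\theta_k^0$ completes the step. The terminal requirement \eqref{eq:TK_vissub} at $T_K$ is obtained analogously, with the free withdrawal replaced by the mandatory liquidation prescription $\theta_K = L(X(T_K-))$ noted immediately after Definition \ref{def:sto_sub}.

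The main obstacle is the terminal step: the withdrawal $\theta_k^n$ must be $\cF_{T_k}$-measurable and admissible with respect to the random state $X_0(T_k-)$ of the uncontrolled process, yet it must converge deterministically to the prescribed $\theta_k^0$; coupled with the lack of continuity of $h_{k+1}$, this forces the combined use of reverse Fatou and an a.s.\ convergent subsequence of the stock component $X_1(T_k)$, and is precisely where the uniform upper bound $h_k \leq C_g$ from the hypothesis enters.
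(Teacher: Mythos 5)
Your proof is correct and follows essentially the same three-step route as the paper: (i) the pointwise chain $h_k \leq \cM[h_k] \leq \cM[h_k^*] \leq \cM[h_k^*]^*$ plus a USC envelope on the left; (ii) a sequence $(t_n,y_n) \to (T_k,x)$, the truncated withdrawal $\theta_k^0 \wedge X_0(T_k-)$, reverse Fatou via the uniform upper bound, and the USC envelope of $h_{k+1}$; (iii) the standard stopped-at-a-ball Itô argument with $\epsilon_n/h_n \to 0$. The only cosmetic differences are that the paper takes the full-sequence a.s.\ limit of the flow $X(T_k;t_n,y_n,\emptyset,\emptyset) \to x$ rather than a subsequence, and writes $\theta_{k:K} = \emptyset$ rather than $0$ in the interior step (immaterial since $\rho_n < T_k$).
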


\begin{lemma}\label{lem:Tk_submart}
	For each $k= 1, \ldots, K$, consider any random initial condition $(\btau, \xi)$ with $\btau \in [T_{k-1}, T_k]$, $\xi \in \cF_{\btau}$, and $\p(\xi \in \barS) = 1$. Then for any stopping time $\rho \in [\btau, T_k]$, the value function $V_k$ satisfies
	\begin{equation}\label{eq2:Tk_submart}
		V_k(\btau, \xi) \leq \E \big[ V_k(\rho, X(\rho; \btau, \xi, \emptyset, \emptyset)) \big| \cF_{\btau} \big].
	\end{equation}
\end{lemma}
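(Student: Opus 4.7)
The plan is to use the stochastic subsolution property that $V_k$ inherits from the identity $V_k = v_{k,-}$ established in the proof of Theorem \ref{thm:viscosity}, applied at a stopping time truncated just below $T_k$ so that no goal deadline ever falls inside the integration window.

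First, I would dispose of the trivial event $\{\btau = T_k\}$: there the constraint $\rho \in [\btau, T_k]$ forces $\rho = \btau = T_k$, hence $X(\rho; \btau, \xi, \emptyset, \emptyset) = \xi$ and the claim collapses to the equality $V_k(T_k, \xi) = V_k(T_k, \xi)$. This avoids the (genuine) obstruction that, at $\rho = T_k$, the subsolution inequality would involve $V_{k+1}(T_k, \cdot) + w_k G_k$ rather than $V_k(T_k, \cdot)$, and the two need not coincide if the intervention option is active at $T_k$.

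On the complementary event $\{\btau < T_k\}$, for each $\delta \in (0, T_k - T_{k-1})$ I would introduce the stopping time
$$\rho^\delta := \big(\rho \wedge (T_k - \delta)\big) \vee \btau,$$
which a direct case analysis shows lies in $[\btau, T_k)$ on $\{\btau < T_k\}$. Since $v_{k,-} \in \cV^-$, Condition (4) of Definition \ref{def:sto_sub} applies with the random initial condition $(\btau, \xi)$, the stopping time $\rho^\delta$, and the admissible strategy $\theta_k = \dots = \theta_{K-1} = 0$ together with $\theta_K$ equal to the liquidation value. Because $\rho^\delta < T_k$, no withdrawal is triggered on $[\btau, \rho^\delta]$, so the process $X(\cdot; \btau, \xi, \theta_{k:K}, \emptyset)$ coincides with $X(\cdot; \btau, \xi, \emptyset, \emptyset)$ on that interval, and every indicator in \eqref{cH} other than $\one_{\{\btau \leq \rho^\delta < T_k\}}$ vanishes. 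Hence
$$V_k(\btau, \xi) \leq \E \big[ V_k\big(\rho^\delta, X(\rho^\delta; \btau, \xi, \emptyset, \emptyset)\big) \,\big|\, \cF_\btau \big].$$

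Finally, I would send $\delta \downarrow 0$. On $\{\btau < T_k\}$ one checks that $\rho^\delta \to \rho$ almost surely, and the uncontrolled process has continuous paths since neither trading nor withdrawals affect it on $[\btau, T_k]$. The continuity and boundedness of $V_k$ established in Theorem \ref{thm:viscosity}, combined with bounded convergence for conditional expectations, then deliver the required inequality. The main bookkeeping tasks are verifying that $\rho^\delta$ is an $\bF$-stopping time with $\rho^\delta < T_k$ on $\{\btau < T_k\}$ and that the chosen withdrawal-only strategy is $(\btau, \xi)$-admissible (with the automatic liquidation at $T_K$ keeping the bank account nonnegative); both are routine from the definitions in Section \ref{sec:form}.
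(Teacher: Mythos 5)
Your proof is correct and follows essentially the same route as the paper: truncate the stopping time strictly below $T_k$ via a deterministic parameter (your $\rho^\delta = (\rho \wedge (T_k - \delta)) \vee \btau$ is, by a short case analysis using $\btau \leq \rho$, the same stopping time as the paper's $\eta_n = \min\{\rho, \max\{T_k - 1/n, \btau\}\}$ with $\delta = 1/n$), apply the stochastic subsolution property of $V_k = v_{k,-}$ with a withdrawal-only strategy so that the process reduces to the uncontrolled one and only the $\one_{\{\btau \le \rho^\delta < T_k\}}$ term of $\cH$ survives, and pass to the limit by continuity, boundedness, and dominated convergence. Your explicit remark on why $\rho = T_k$ cannot be handled directly (the subsolution inequality there produces $V_{k+1}(T_k,\cdot) + w_k G_k$ rather than $V_k(T_k,\cdot)$) is the same obstruction that motivates the truncation in the paper, merely spelled out more fully.
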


The second main result establishes the existence of an optimal strategy, as stated in Theorem \ref{thm:optimalstrategy} below.
\begin{theorem}\label{thm:optimalstrategy}
	Consider $k=1, \ldots, K$ and $(t, x) \in [T_{k-1}, T_k] \times \barS$. The strategy $(\theta^*_{k:K}, \Lambda^*)$ is admissible and optimal, that is, 
	\begin{equation}
		(\theta^*_{k:K}, \Lambda^*) \in \cA(t, x; k) \quad \text{ and } \quad V_k(t, x) = \E \Big[ \sum^{K}_{i = k} w_i (G_i - \theta^*_i)^+ \Big| X^*(t-) = x \Big].
	\end{equation}
	The corresponding wealth process is denoted by $X^*(s) := X(s; t, x, \theta^*_{k:K}, \Lambda^*)$, $s \in [t, T]$.
\end{theorem}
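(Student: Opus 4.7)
The strategy is to proceed by backward induction on $k$ from $K$ to $1$, establishing both $V_k(t, x) \leq \E[\sum^K_{i=k} w_i (G_i - \theta^*_i)^+]$ (automatic from the infimum definition) and the reverse inequality.

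First, I would verify admissibility: by the defining properties of $g_k$ and $\Theta_k$ supplied by \citet[Corollary 4]{schal1974selection}, every transaction $\Delta^{*,k}_n \in D(X^*(\tau^{*,k}_n-))$ preserves $\barS$ and every withdrawal $\theta^*_k \in [0, X^*_0(T_k-)]$ is feasible; combined with the finiteness of trades guaranteed by strictly positive transaction costs via \citet[Lemma A.4]{belak2019utility}, this gives $(\theta^*_{k:K}, \Lambda^*) \in \cA(t, x; k)$.

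The heart of the proof is showing that $V_k(\cdot, X^*(\cdot))$ is a martingale on $[t, T_k]$ along the constructed strategy. On each inter-trade interval $[\tau^{*,k}_{n-1}, \tau^{*,k}_n \wedge T_k]$ the process $X^*$ is uncontrolled and remains in $\cC_k$ by construction of $\tau^{*,k}_n$. Lemma \ref{lem:Tk_submart} supplies the submartingale inequality on this interval, and the reverse inequality follows from the viscosity supersolution property \eqref{t_vissup} specialised to the continuation region, where $V_k < \cM[V_k]$ forces the maximum in the QVI to come from $\cL[V_k]$; together the two inequalities upgrade to a martingale equality. At each trading time $\tau^{*,k}_n < T_k$ the selector $g_k$ realises $\cM[V_k]$, so $V_k(\tau^{*,k}_n, X^*(\tau^{*,k}_n-)) = V_k(\tau^{*,k}_n, X^*(\tau^{*,k}_n))$ and no jump occurs. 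At the deadline $T_k$ for $k<K$, the QVI boundary condition \eqref{Tk_bc} together with the choice of $\Theta_k$ yields $V_k(T_k, X^*(T_k-)) = w_k (G_k - \theta^*_k)^+ + V_{k+1}(T_k, X^*(T_k))$; at $T_K$, the terminal QVI \eqref{TK_bc} with the liquidation rule yields $V_K(T_K, X^*(T_K-)) = w_K (G_K - \theta^*_K)^+$. Chaining these identities, substituting the inductive hypothesis $V_{k+1}(T_k, X^*(T_k)) = \E[\sum^K_{i=k+1} w_i (G_i - \theta^*_i)^+ \mid \cF_{T_k}]$, and taking expectations delivers $V_k(t, x) = \E[\sum^K_{i=k} w_i (G_i - \theta^*_i)^+]$, which is optimality.

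The main obstacle is rigorously establishing the martingale equality in the continuation region, since $V_k$ is only continuous rather than $C^{1,2}$. I would resolve this via the Perron identification $V_k = v_{k,+}$ obtained in Theorem \ref{thm:viscosity}: Condition (3) of Definition \ref{def:sto_super} supplies a suitable $(\tilde\theta, \tilde\Lambda)$ along which $v_{k,+}$ enjoys a supermartingale-type inequality, and one then verifies that the recursively defined $(\theta^*_{k:K}, \Lambda^*)$ qualifies as such a suitable strategy using the Borel measurable selectors $g_k$ and $\Theta_k$. A secondary subtlety arises when $X^*(T_i-) \in \cI_i$, in which case the construction \eqref{eq:Lambda_k} executes a transaction at $\tau^{*,i}_n = T_i$ immediately before the withdrawal $\theta^*_i$, a case the above chaining accommodates by applying the $\cM[V_k] = V_k$ identity at the deadline prior to invoking the $\Theta_i$ boundary condition.
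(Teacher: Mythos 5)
Your proposal correctly identifies the overall structure (backward induction over $k$, chaining the value process across continuation intervals, trade times, and deadlines) and, crucially, the central difficulty: $V_k$ is only continuous, not $C^{1,2}$, so a direct verification argument via It\^o's formula in the continuation region is unavailable. However, the resolution you propose is circular. You write that you would use the Perron identification $V_k=v_{k,+}$ and then ``verify that the recursively defined $(\theta^*_{k:K},\Lambda^*)$ qualifies as such a suitable strategy'' in the sense of Condition (3) of Definition~\ref{def:sto_super}. But that condition is purely existential: it says some suitable strategy exists for $v_{k,+}$, not that the one built from the measurable selectors $g_k,\Theta_k$ is one. Verifying that $(\theta^*_{k:K},\Lambda^*)$ is suitable means proving precisely the supermartingale inequality $v_{k,+}(\btau,\xi)\geq\E[\cH(\cdot)]$ along $X^*$ --- which is the very inequality you are trying to establish. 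Likewise, your earlier step ``the reverse inequality follows from the viscosity supersolution property specialised to the continuation region'' conflates the pointwise PDE interpretation with a pathwise supermartingale statement; passing from one to the other is exactly the verification step that fails without smoothness.

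The paper resolves this by a different tactic (Steps 1--5 of the actual proof). Rather than trying to show the constructed strategy is ``suitable'' for $v_{k,+}$, it perturbs the intervention region to $\cI_{k,\lambda}$, defines $h_k(s,y):=\E[V_k(\rho^{\lambda,k,s,y},X(\rho^{\lambda,k,s,y};\cdot))]$ and the convex combination $h_{k,\lambda}=\lambda V_k+(1-\lambda)h_k$, and shows via Lemma~\ref{lem:borel} that $h_{k,\lambda}$ is a viscosity subsolution (here the only pathwise input needed is the submartingale bound of Lemma~\ref{lem:Tk_submart}, which you already have). The comparison principle then gives $h_{k,\lambda}\leq V_k$, and combining with $V_k\leq h_{k,\lambda}$ (again Lemma~\ref{lem:Tk_submart}) yields the equality $V_k(\tau^{*,k}_n,\xi^{*,k}_n)=\E[V_k(\rho^\lambda,X(\rho^\lambda;\cdot))\mid\cF_{\tau^{*,k}_n}]$. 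The limit $\lambda\uparrow 1$, using the lower semicontinuity of $\cM[V_k]$ and the definition of $\cI_{k,\lambda}$, upgrades $\rho^\lambda$ to the true trading time $\tau^{*,k}_{n+1}\wedge T_k$. This machinery --- the $\lambda$-perturbation and the detour through a new auxiliary subsolution fed into the comparison principle --- is the missing idea in your proposal; it is what makes ``continuity suffices'' an actual argument rather than an aspiration. Your treatment of trade times ($\cM[V_k]=V_k$ at $\cI_k$), of the boundary condition at $T_k$, and of possible trades exactly at $T_k$ (the subtlety you flag at the end, handled in the paper by $V_k(T_k,x)=\min_n\cM^n[U_k](x)$) is otherwise on the right track.
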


\section{Numerical analysis}\label{sec:numerics}

In this section, we present numerical results for the optimal investment strategies. For simplicity, consider an investor with two goals, $G_1 = 3$ and $G_2 = 6$, with respective deadlines $T_1 = 1$ and $T_2 = 2$. In the benchmark setting, the goal importance weights are $w_1 = 1$ and $w_2 = 0.2$, which are close to those in \cite{capponi2024} after appropriate adjustments.

For the financial market, unless stated otherwise, the parameters are set as follows: the interest rate $r = 0$, the expected stock return $\mu = 0.3$, and the volatility $\sigma = 0.4$. In this numerical study, we consider only fixed transaction costs, specified by $C(\Delta) \equiv C_{\min} > 0$. The benchmark case assumes $C_{\min} = 0.02$. The algorithm employs a classical finite difference method combined with a penalty scheme; further details can be found in \cite{azimzadeh2017}. Following the rationale of \cite{belak2022optimal}, the computations are conducted on a triangular grid rather than the square grid used in \cite{azimzadeh2017}. For positions satisfying $x_0 + x_1 \ge G_1 + G_2 + C_{\min}$, the value function equals zero. Therefore, the computational domain is restricted to the triangular region where $x_0 + x_1 \le 9 + C_{\min}$. The wealth grid size is set to $\Delta x = (9 + C_{\min})/200$, which equals 0.0451 in the benchmark case. The tick size in all heatmap figures such as Figure \ref{fig:transfer_c0_t0} is set to $10 \Delta x$, and the axis labels are rounded to two decimal places. For comparison, Figure \ref{fig:largerDx} is computed with a coarser grid size of $9.02/50$ and a tick size of $2 \times 9.02/50$. The time step is fixed at $\Delta t = 0.01$.

\subsection{The frictionless case}

Before presenting the fixed-cost case, we first reproduce the $V$-shaped investment behavior observed in \cite{capponi2024}. Figure \ref{fig:invest_frictionless} shows the optimal proportion invested in the stock, given by $x_1/(x_0 + x_1)$, over time. The results indicate that the optimal strategy reduces the stock proportion when total wealth approaches $G_1$, the level required to meet the first goal, and increases it once wealth exceeds $G_1$. This $V$-shaped adjustment reflects an investor's tendency to reduce risk near the target level to avoid missing the primary goal. Figure \ref{fig:fund_plot_frictionless} illustrates the optimal funding ratio $\theta^*_1/G_1$ for goal 1. Since this goal has a significantly higher weight, the investor allocates all available funds to it until the target amount is achieved.

\begin{figure}[h]
	\centering
	\includegraphics[width=0.9\textwidth]{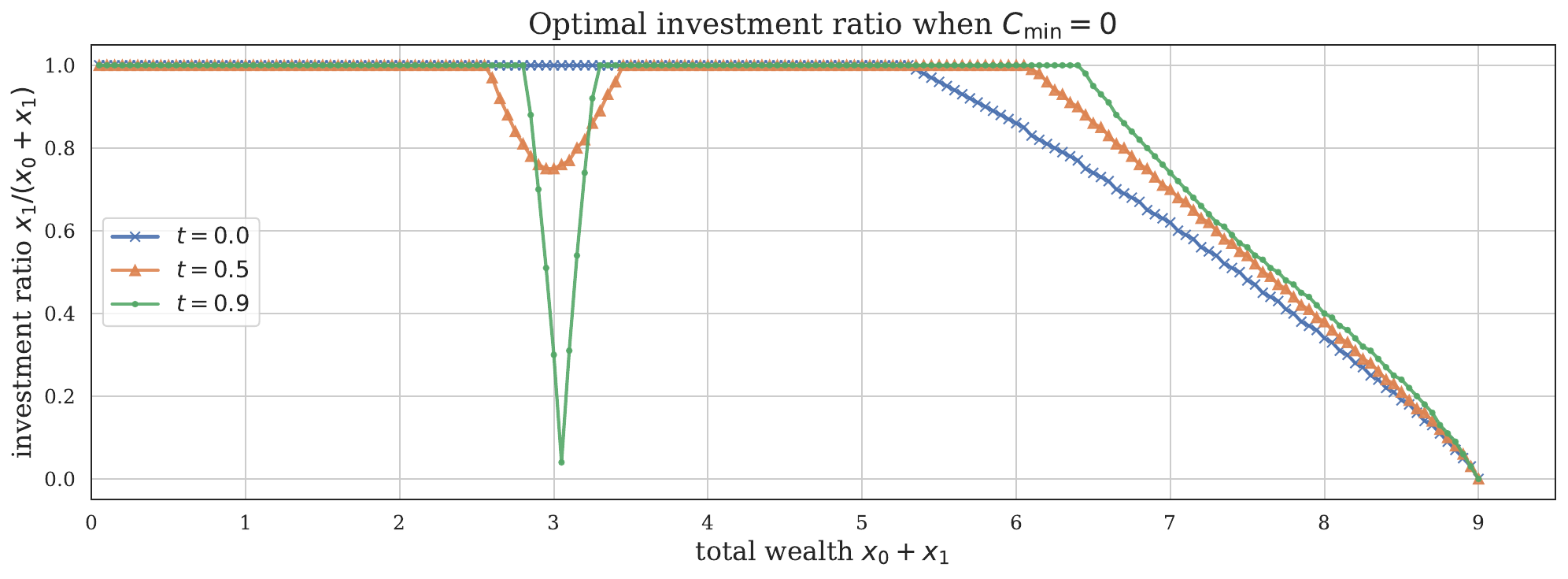}
	\caption{Optimal stock proportions without transaction costs.}
	\label{fig:invest_frictionless}
\end{figure}

\begin{figure}[h]
	\centering
	\includegraphics[width=0.9\textwidth]{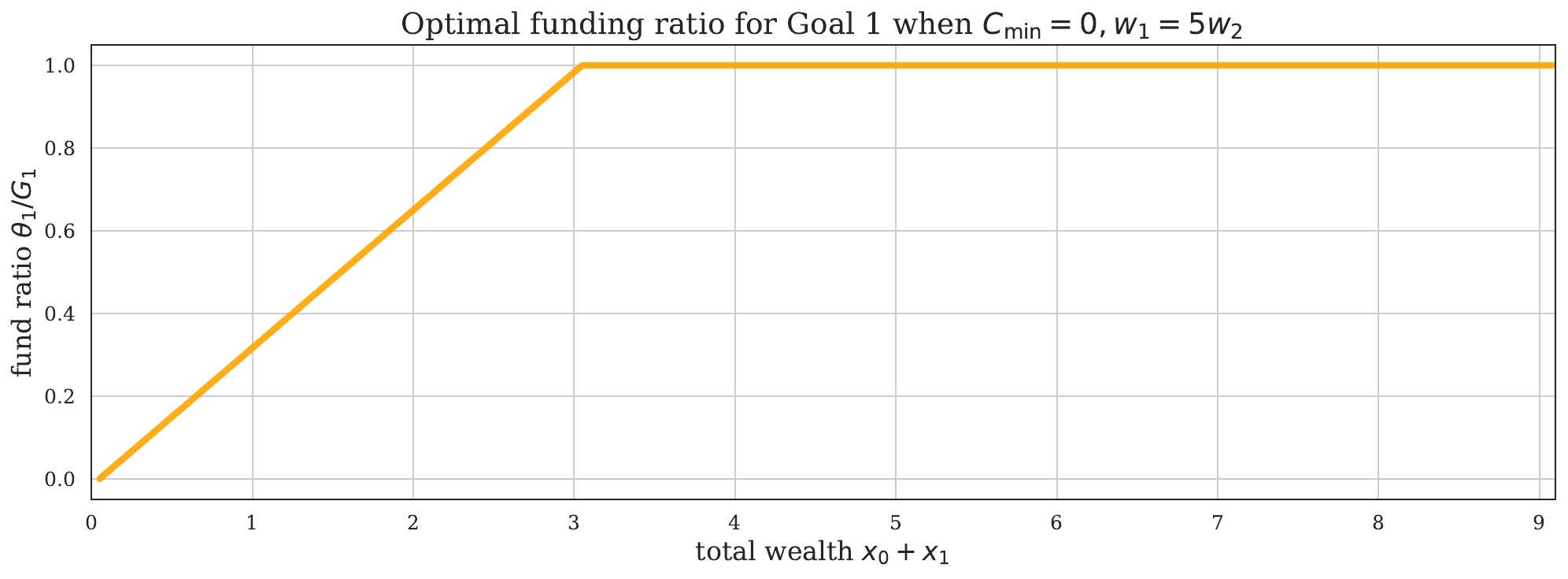}
	\caption{Optimal funding ratio without transaction costs.}
	\label{fig:fund_plot_frictionless}
\end{figure}

We now propose a conjecture regarding the optimal strategy under fixed costs. When the fixed cost is sufficiently small, the optimal stock exposure $x_1/(x_0 + x_1)$ should closely resemble that in the frictionless case for the same total wealth $x_0 + x_1$. The continuation region is expected to lie near the frictionless optimal investment proportion, within which the portfolio evolves without adjustment. Due to market fluctuations, the portfolio may occasionally reach the trading boundaries, prompting the investor to buy or sell the stock to reposition the portfolio onto the target set.

The analysis of the fixed-cost case proceeds in two steps. First, we consider a given level of total wealth. Second, by comparing with the frictionless optimal strategy, we identify trading regions that indicate whether to buy or sell when the current position deviates substantially from the target portfolio.

The following aspects are the main focus of our analysis:
\begin{enumerate}[label={(\arabic*)}]
	\item the effect of fixed costs on the portfolio's risk profile, particularly the relationship between stock investment and total wealth;
	\item the effect of fixed costs on the funding ratios required to meet investment goals.
\end{enumerate}
In addition, we discuss how these relationships evolve over time, as well as how they change when fixed costs increase or expected stock returns decrease.

\subsection{The benchmark case with $C_{\min} = 0.02$}

This subsection examines the properties of the optimal strategies when the fixed transaction cost is $C_{\min} = 0.02$.

\subsubsection{Time $t = 0.0$}
\begin{figure}[h]
	\centering
	\includegraphics[width=0.6\linewidth]{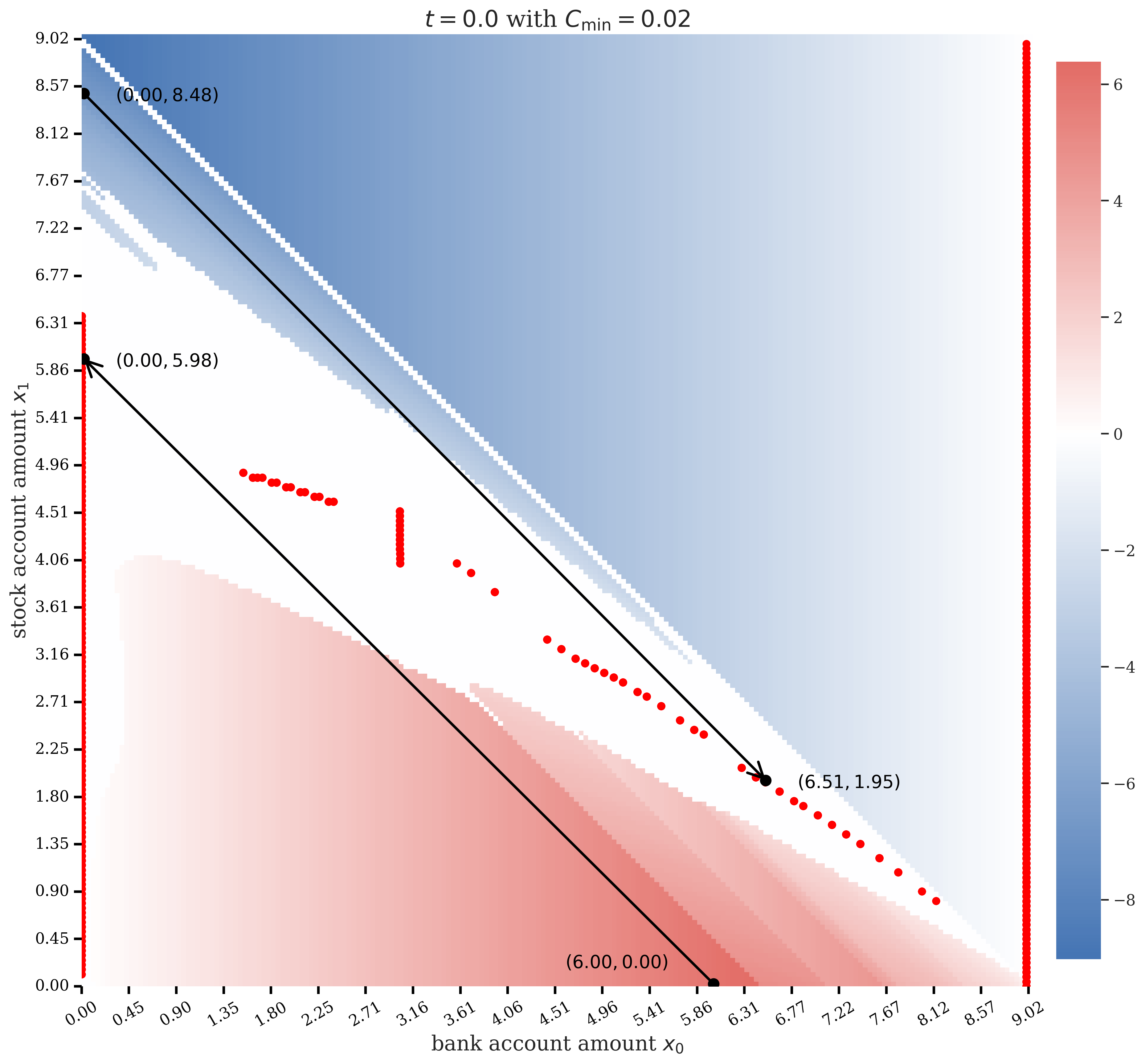}
	\caption{Optimal trading regions at $t=0.0$ with $C_{\min} = 0.02$.}
	\label{fig:transfer_c0_t0}
\end{figure}

\begin{figure}[h]
	\centering
	\includegraphics[width=0.8\linewidth]{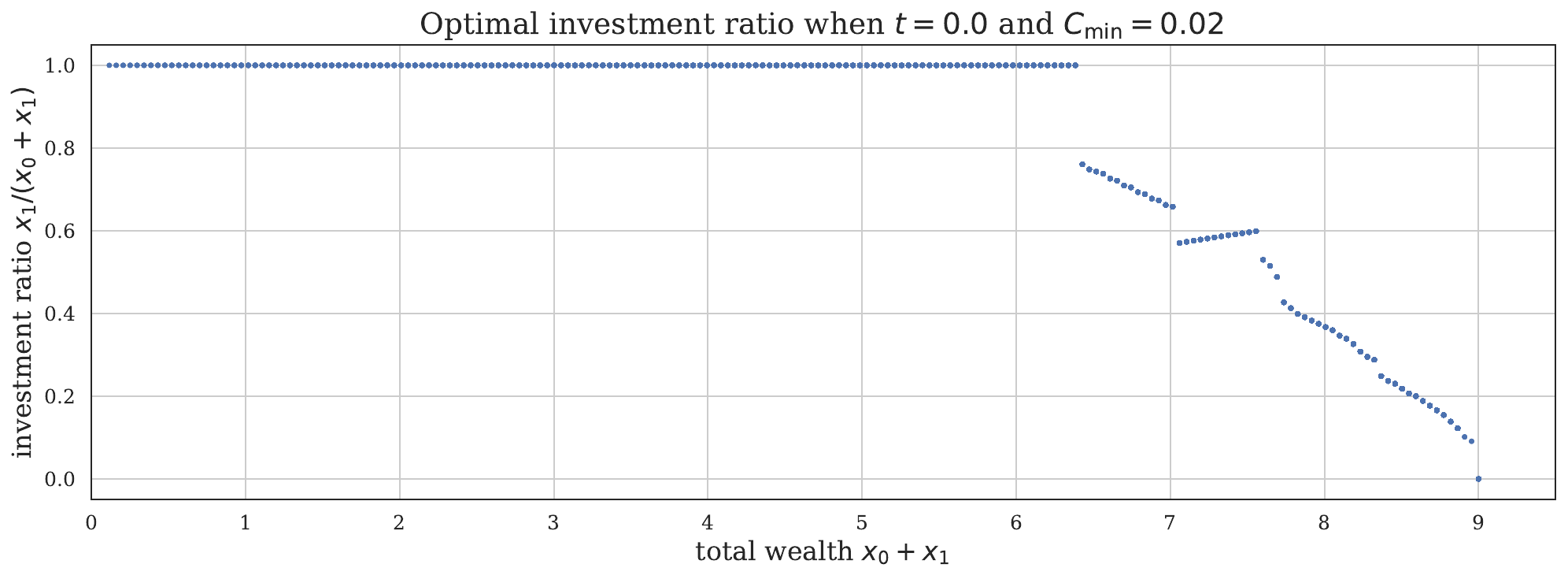}
	\caption{Stock proportions corresponding to the red target points in Figure \ref{fig:transfer_c0_t0}. }
	\label{fig:invest_t0_c0}
\end{figure}

In figures such as Figure \ref{fig:transfer_c0_t0}, which illustrate the optimal trading regions, the blue area corresponds to selling the stock, the red area corresponds to buying, and the red points mark the target portfolio positions. These red points may represent target positions from either side or from both sides. Since each trade reduces total wealth by $C_{\min}$, this property can be used to identify the correspondence between the red target points and the positions within the trading regions. A deeper color indicates a larger trade. 

The white area denotes the continuation region, where the portfolio evolves uncontrolled. The shape of this region differs significantly from that in Merton's problem with fixed transaction costs; see \citet[Figure 2]{belak2022optimal}. As shown in Figure \ref{fig:transfer_c0_t0}, the continuation region is not approximately $V$-shaped and is not symmetric with respect to the red target positions.

The behavior of the optimal strategies varies with total wealth, as summarized below:
\begin{enumerate}[label={(\arabic*)}]
	\item When $x_0 + x_1 \geq G_1 + G_2 + C_{\min}$, it is optimal to sell the stock so that the bank account holds the required amount $G_1 + G_2$ to meet both goals.
	
	\item When total wealth is slightly below $G_1 + G_2 + C_{\min}$, Figure \ref{fig:invest_frictionless} shows that the optimal stock ratio in the frictionless case remains low. If the current stock holding $x_1$ is high, one might expect selling to be optimal; however, after selling, the remaining stock position would be very small since the target level in the frictionless case is close to zero. In this case, even with a positive stock return $\mu$, the potential gain from such a small stock holding is unlikely to offset the fixed cost $C_{\min}$. Hence, the optimal strategy is not to trade when $x_1$ is high and the total wealth is just below $G_1 + G_2 + C_{\min}$, which corresponds to the white region in the upper-left corner of Figure~\ref{fig:transfer_c0_t0}. Conversely, when $x_1$ is low, buying the stock becomes optimal since otherwise it is difficult to exceed $G_1 + G_2 + C_{\min}$ with very small $x_1$. This behavior corresponds to the red area in the lower-right corner of Figure~\ref{fig:transfer_c0_t0}.
	
	\item When total wealth is lower than $G_1 + G_2 + C_{\min}$ but above $7.6$, Figure \ref{fig:invest_frictionless} shows that the optimal stock ratio in the frictionless case is higher than before. The investor can now offset the fixed cost through sufficient stock returns, leading to stock sales in the upper-left region of Figure~\ref{fig:transfer_c0_t0}, different from the previous case. This implies a non-monotonic relationship between risk exposure and wealth level in this range, resulting from the presence of fixed costs.

	\item  When total wealth is below $7.6$, a distinct pattern appears for $x_0 + x_1 \in [7.0, 7.6]$. A red vertical bar in the middle of Figure \ref{fig:transfer_c0_t0} indicates that the agent reserves roughly $3.0$ for goal 1. This corresponds to the increased stock proportion shown in Figure \ref{fig:invest_t0_c0} for the same wealth range, which is the only region where the stock proportion rises.
	
	In the frictionless case, Figure \ref{fig:invest_frictionless} shows that the optimal stock ratio is lower than $100\%$ at $t=0$ when total wealth exceeds $5.5$. Under fixed costs, however, if the bank balance is high, the optimal strategy is to allocate all wealth to the stock, for example when $x_0 + x_1 \in [5.0, 6.0]$. This suggests more aggressive behavior compared with the frictionless case, as potential transaction costs reduce overall profits. Moreover, for $x_1 \in [4.8, 6.6]$ and $x_0 \in [0.6, 1.4]$, the optimal decision is to refrain from trading.
\end{enumerate}

\subsubsection{Time $t=0.5$ and $t = 0.9$}

\begin{figure}[H]
	\centering
	\begin{minipage}{0.45\textwidth}
		\includegraphics[width=\linewidth]{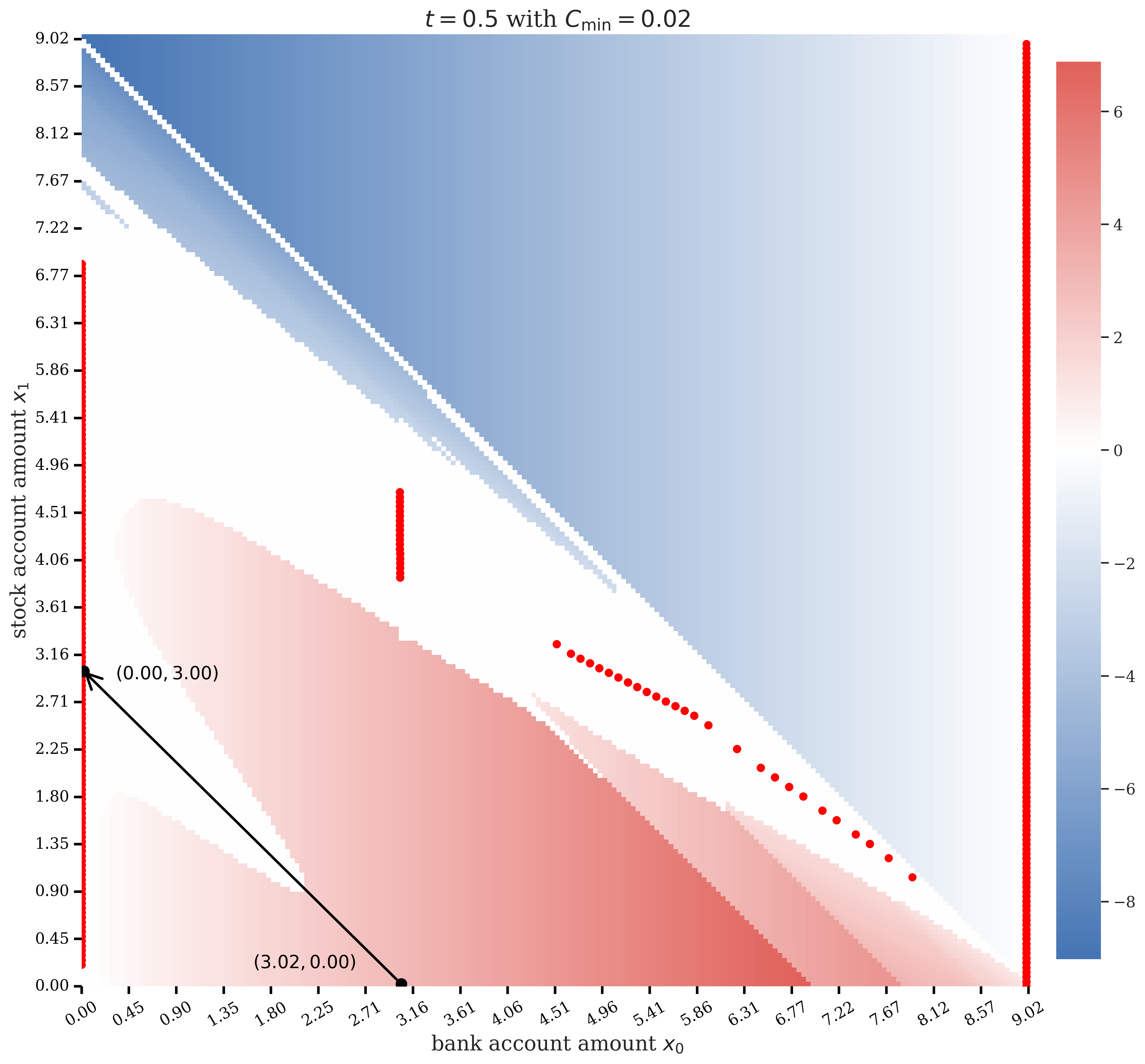}
		\subcaption{$t=0.5$}
		\label{fig:c002t05}
	\end{minipage}
	\begin{minipage}{0.45\textwidth}
		\includegraphics[width=\linewidth]{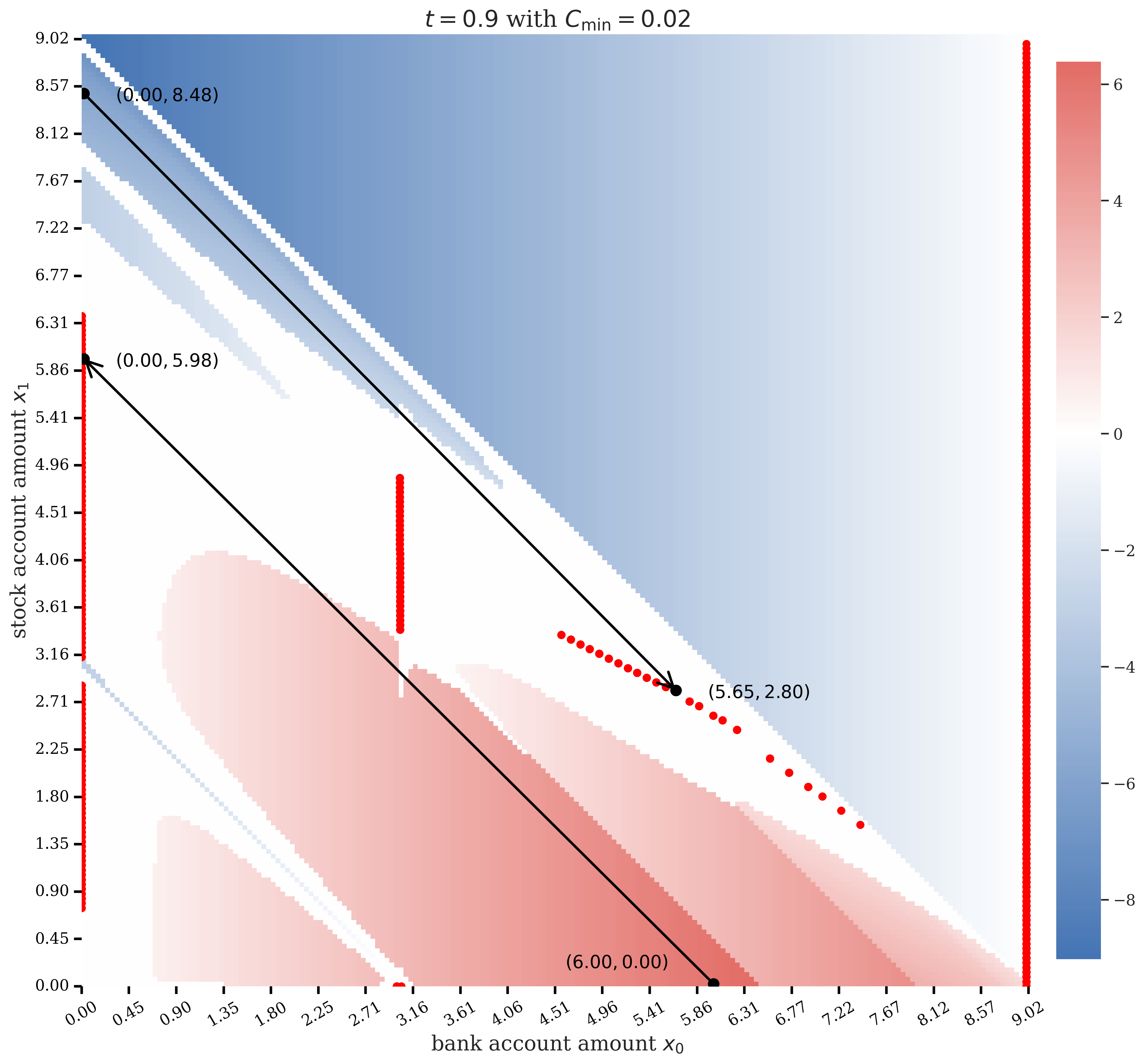}
		\subcaption{$t=0.9$}
		\label{fig:c002t09}
	\end{minipage}
	\caption{Optimal trading regions at $t=0.5$ and $t=0.9$ with $C_{\min} = 0.02$.}
	\label{fig:t0509}
\end{figure}

\begin{figure}[H]
	\centering
	\begin{minipage}{0.8\textwidth}
		\includegraphics[width=\linewidth]{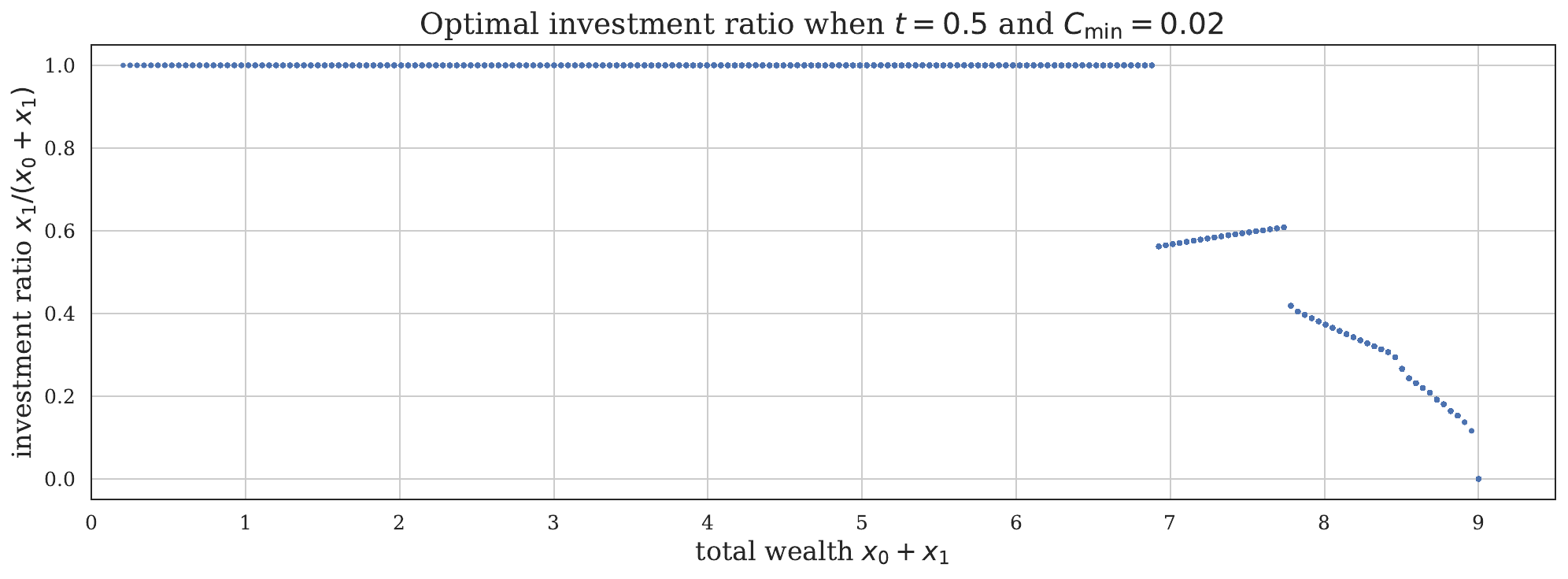}
		\subcaption{$t=0.5$}
		\label{fig:ratiot05c002}
	\end{minipage}
	\begin{minipage}{0.8\textwidth}
		\includegraphics[width=\linewidth]{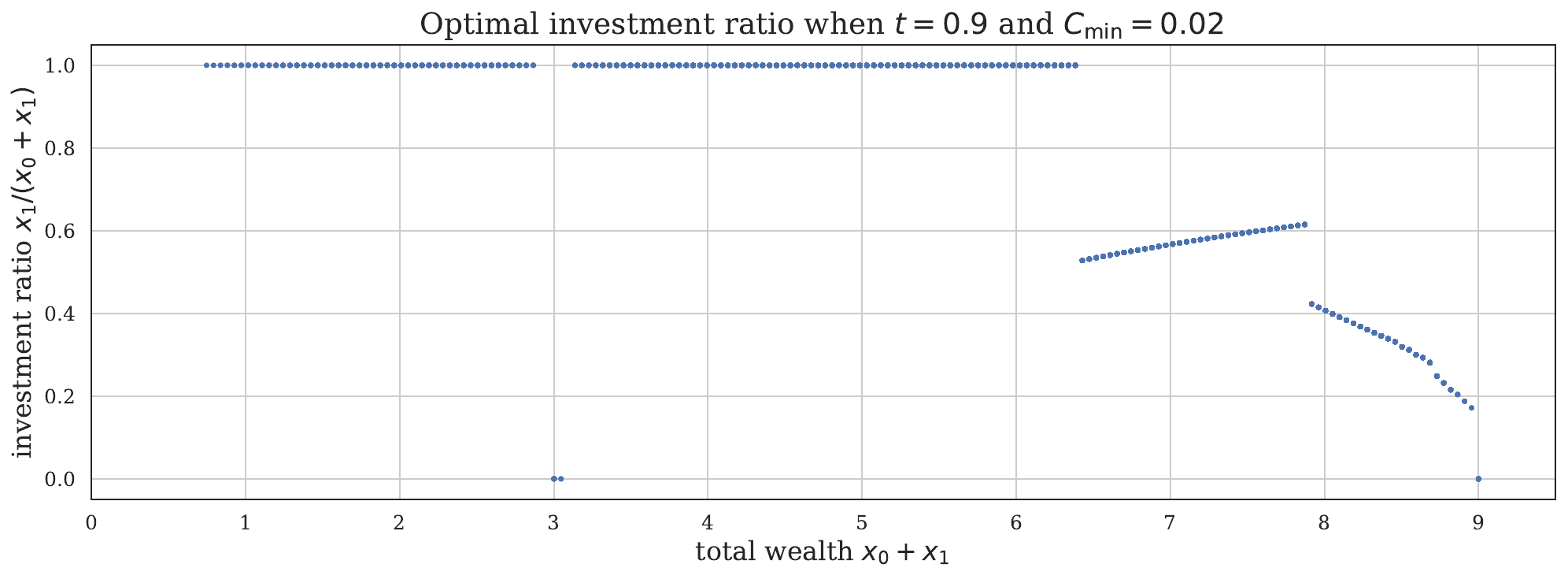}
		\subcaption{$t=0.9$}
	\end{minipage}
	\caption{Stock proportions for red target points in Figure \ref{fig:t0509}.}
	\label{fig:ratio0509}
\end{figure}

This subsection analyzes the optimal strategies at $t=0.5$ and $t=0.9$. The main observations are as follows:
\begin{enumerate}[label={(\arabic*)}]
	\item Comparing Figure \ref{fig:t0509} with Figure \ref{fig:transfer_c0_t0}, the red vertical bar near $x_0 = 3.0$ becomes longer as time approaches the deadline $T_1$. This indicates that the investor increasingly prioritizes reserving the required amount of $3.0$, investing only the excess wealth in the stock. Consequently, Figure \ref{fig:ratio0509} shows that the interval where the stock proportion increases with total wealth also widens. The red vertical bar in the middle originates from the right side when the amount $x_0$ in the bank account is high.
	
	In contrast, when $x_0 + x_1 \in [3.6, 6.6]$, which is below the total wealth corresponding to the central red bar, the optimal decision is to invest fully in the stock if $x_0$ is large. This provides another example where the optimal risk exposure is not monotonic in wealth levels.

	\item A distinct feature is the wedge-shaped white area in the lower-left corner of Figure \ref{fig:t0509} when $x_0 + x_1$ is near $3.0$. This reflects a behavior different from the $V$-shaped investment strategy described in \cite{capponi2024}.  
	
	At $t = 0.5$, when $x_0 = 3.0$ and $x_1 = 0.0$, the optimal choice is to invest entirely in the stock. As shown in Figure \ref{fig:ratiot05c002}, the target position allocates nearly $100\%$ to the stock around total wealth $3.0$, in contrast to the $V$-shaped strategy in the frictionless case. However, if the current position lies within the white wedge area, the optimal decision is to refrain from trading.  
	
	At $t = 0.9$, a similar continuation region appears near the equi-wealth line $x_0 + x_1 = 3.0$. Consequently, the investor must consider both the stock and bank account holdings, rather than total wealth alone, when determining the optimal stock exposure.
\end{enumerate}

\subsubsection{Time $t=1.0$: Funding ratios and importance weights}
\begin{figure}[h]
	\centering
	\begin{minipage}{0.45\textwidth}
		\includegraphics[width=\linewidth]{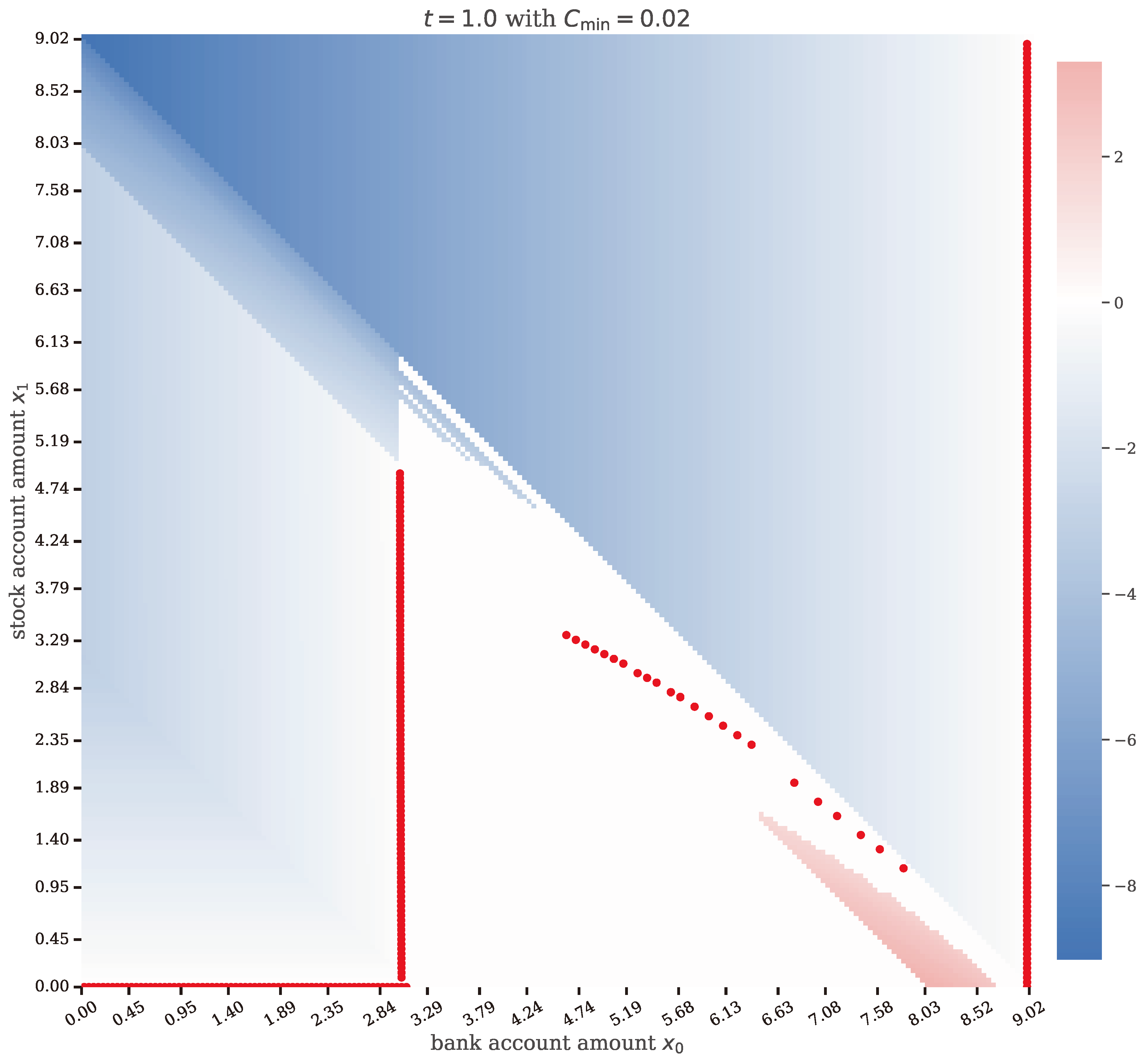}
		\subcaption{$w_1 = 5 w_2$}
		\label{fig:T1_5w2}
	\end{minipage} 
	\begin{minipage}{0.45\textwidth}
		\includegraphics[width=\linewidth]{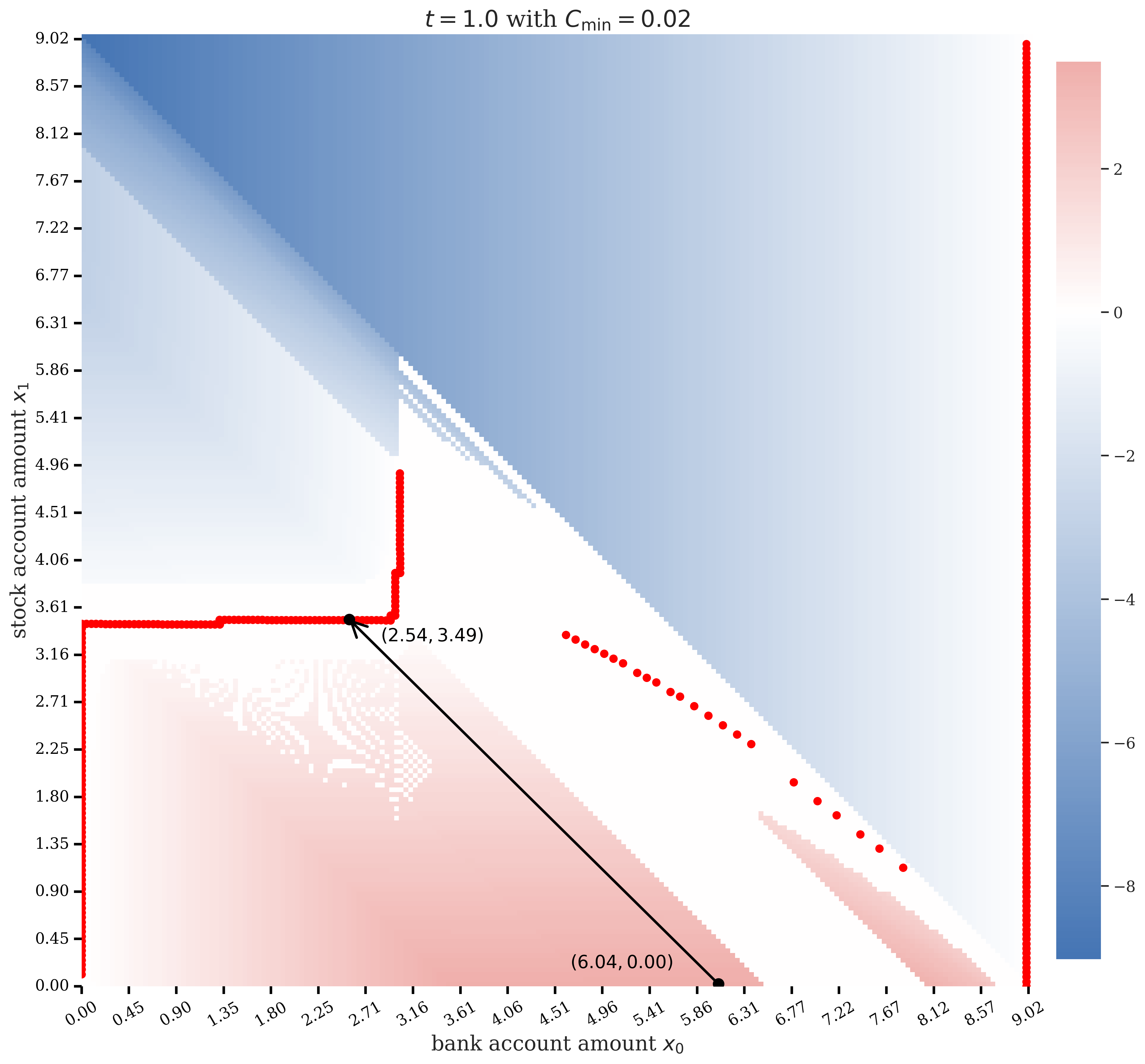}
		\subcaption{$w_1 = w_2$}
		\label{fig:T1_1w2}
	\end{minipage} 
	\caption{Optimal trading regions at the deadline $T_1$ under different goal weights.} 
	\label{fig:T1}
\end{figure}

At the deadline $T_1$, the following observations can be made:
\begin{enumerate}[label={(\arabic*)}]
	\item The weight configuration $w_1 = 5 w_2$ indicates that the first goal is substantially more important than the second. As shown in Figure~\ref{fig:T1_5w2}, the agent allocates all available funds to support the first goal, similar to the frictionless case in Figure \ref{fig:fund_plot_frictionless}. The importance of the first goal outweighs the potential additional returns from investing in the stock for another year. In this case, the fixed transaction cost has little influence on the optimal funding ratio.

	\item When the weights are equal, $w_1 = w_2$, the impact of fixed costs on the optimal funding ratio becomes more pronounced, especially when the total wealth ranges between $4.0$ and $6.0$. Figure \ref{fig:equal_w_0} illustrates that, in the absence of transaction costs, no funding is allocated to goal 1 when the total wealth is below $3.6$. For wealth between $3.6$ and $6.6$, approximately $3.6$ is retained in the stock, and the remainder is allocated to goal 1. The horizontal red bar in Figure \ref{fig:T1_1w2} is close to this critical threshold of $3.6$. The continuation region around this line highlights the influence of fixed costs. The optimal funding amount is determined by considering only the bank account, as no transfer occurs within the continuation region. Each point in Figure \ref{fig:equal_w_002} represents the corresponding funding ratio $\theta^*_1/ G_1$ for positions in the continuation region. The results indicate that the optimal funding ratios exhibit greater variability at a given level of total wealth when fixed costs are present. 
\end{enumerate}

\begin{figure}[H]
	\centering
	\begin{minipage}{0.9\textwidth}
		\includegraphics[width=\linewidth]{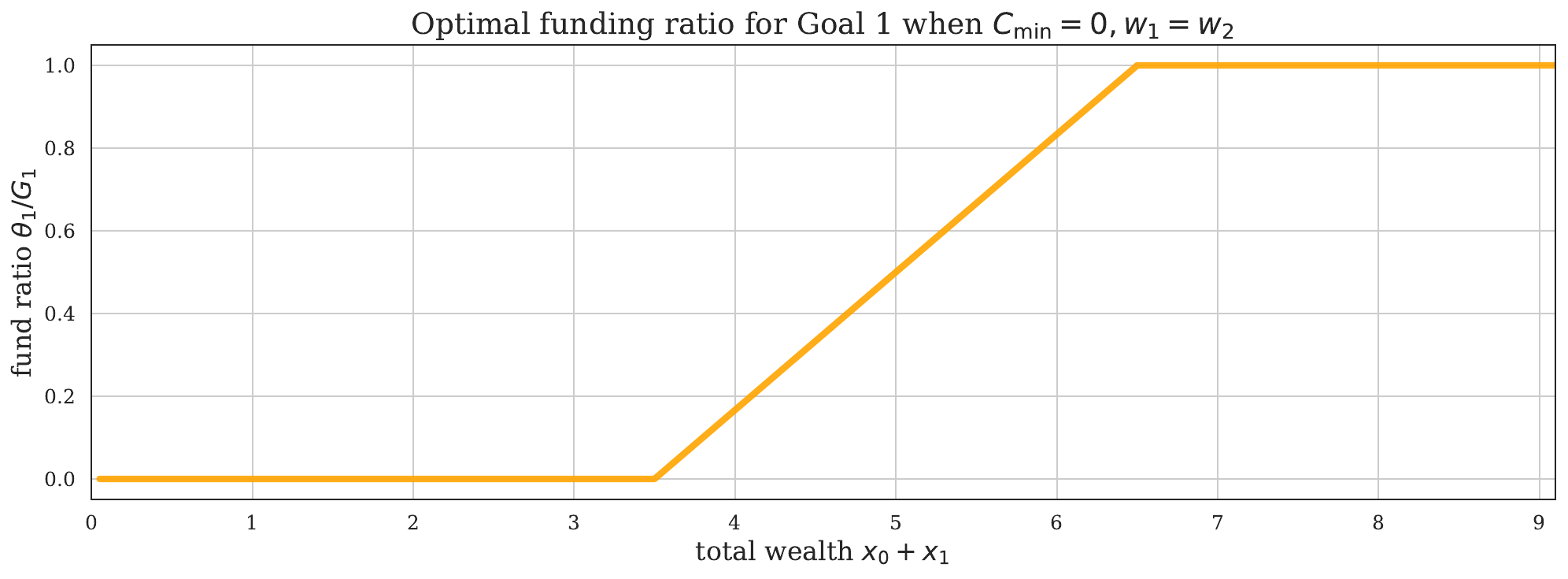}
		\subcaption{$C_{\min}=0$}
		\label{fig:equal_w_0}
	\end{minipage}
	\begin{minipage}{0.9\textwidth}
		\includegraphics[width=\linewidth]{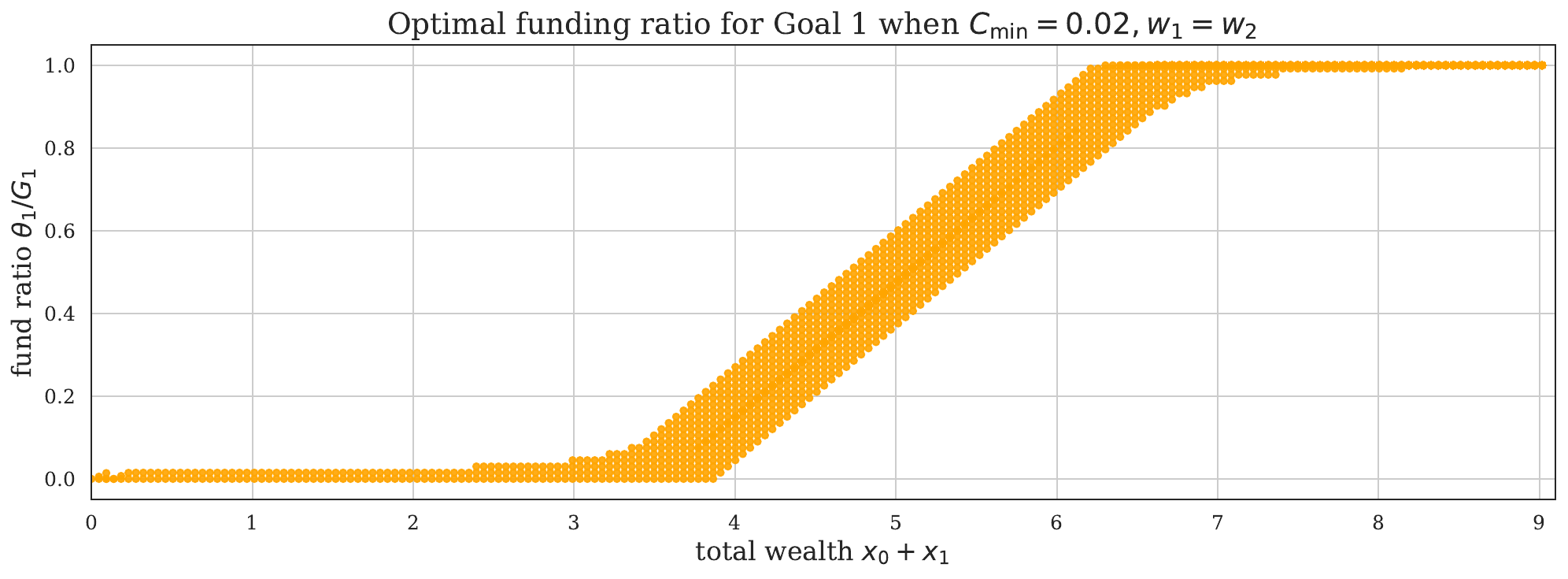}
		\subcaption{$C_{\min}=0.02$}
		\label{fig:equal_w_002}
	\end{minipage}
%	\begin{minipage}{0.9\textwidth}
%		\includegraphics[width=\linewidth]{fund_plot_c1_equal_w.pdf}
%		\subcaption{$C_{\min}=0.2$}
%	\end{minipage}
	\caption{Optimal funding ratios under equal weights.}
	\label{fig:fund_equal_w}
\end{figure}

\subsection{The straight continuation region near $G_1 + G_2 + C_{\min}$}\label{sec:straight}

The straight continuation region at the wealth level just below $G_1 + G_2 + C_{\min}$, illustrated as the narrow strip between the blue regions in the top-left panels of Figures \ref{fig:transfer_c0_t0} and \ref{fig:t0509}, is a distinctive feature that arises under the fixed-cost formulation. A closer examination of this pattern is provided below:
\begin{enumerate}[label={(\arabic*)}]
	\item As a consistency check, we verify that this phenomenon is not caused by discretization errors. Indeed, as shown in Figure \ref{fig:largerDx}, the pattern disappears when a coarser wealth grid is used. The explanation is straightforward: with a larger grid size, fixed costs become relatively less significant. Therefore, a finer grid is required to achieve higher numerical accuracy and to capture this behavior properly.

	\item When the stock return decreases, the straight continuation region becomes wider, as illustrated in Figure \ref{fig:lowerrtn}. This can be interpreted as follows. A lower expected return motivates the agent to hold a larger proportion of wealth in the stock to achieve the investment goals, reducing the likelihood of selling the asset. From another perspective, it also becomes more difficult to generate sufficient returns to offset the fixed transaction cost. Both effects contribute to a broader straight continuation region in the top-left area of Figure \ref{fig:lowerrtn}.
\end{enumerate}

\begin{figure}[H]
	\centering
	\begin{minipage}{0.45\textwidth}
		\includegraphics[width=\linewidth]{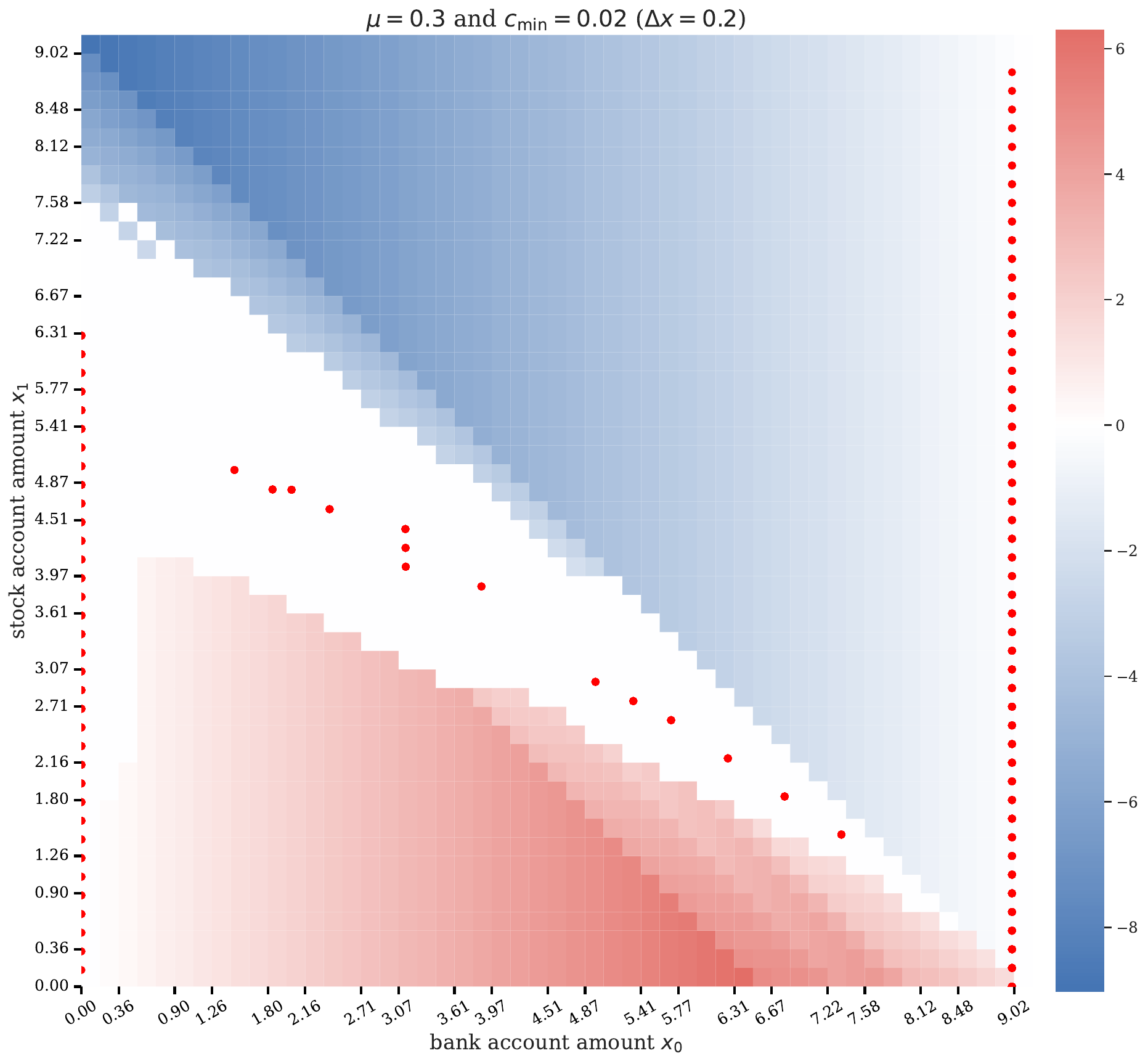}
		\subcaption{Larger wealth grid size ($\Delta x = 0.2$)}
		\label{fig:largerDx}
	\end{minipage}% 	
	\begin{minipage}{0.45\textwidth}
		\includegraphics[width=\linewidth]{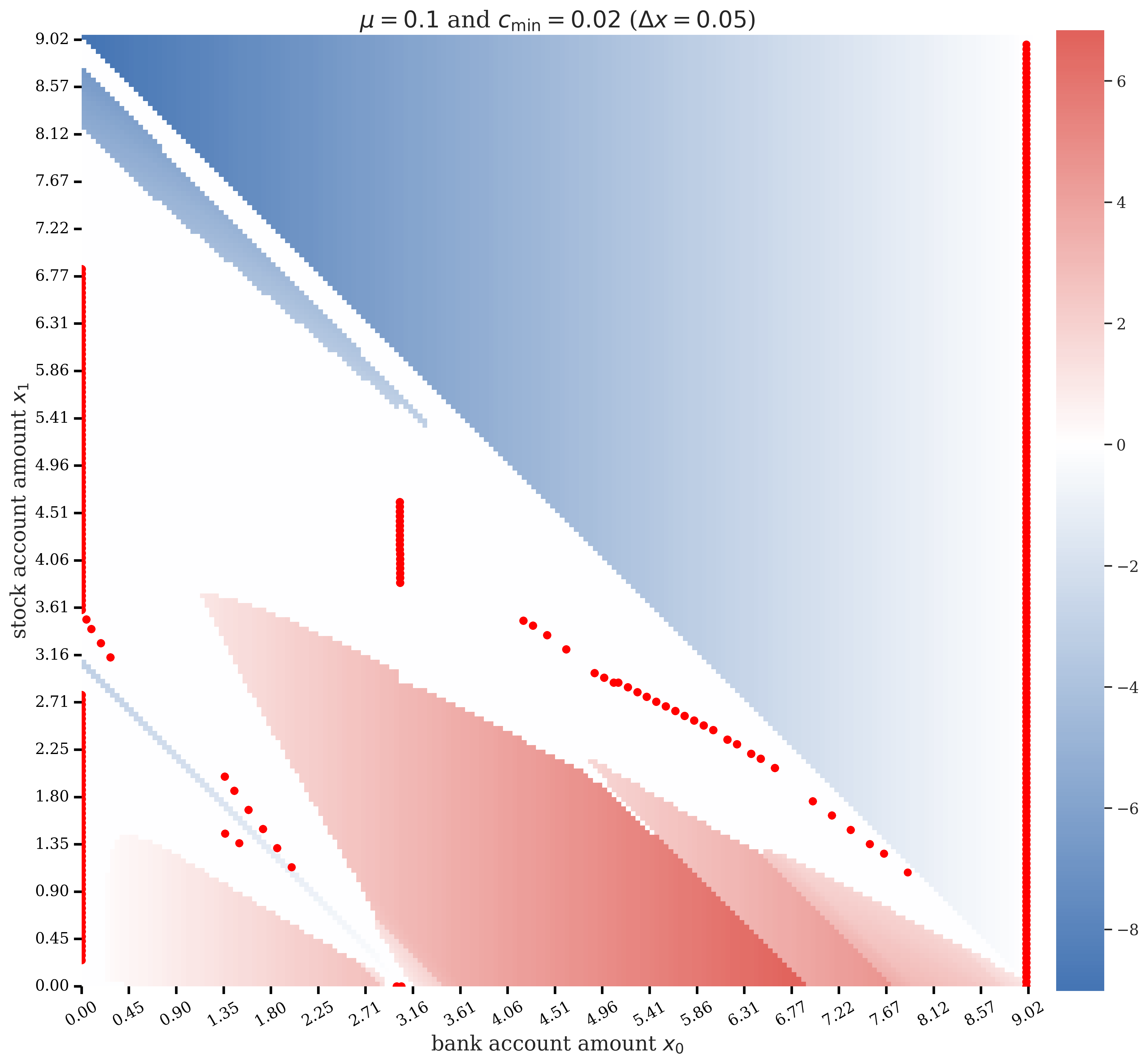}
		\subcaption{Lower stock return ($\mu = 0.1$)}
		\label{fig:lowerrtn}
	\end{minipage} 
	\caption{The straight continuation region near $G_1 + G_2 + C_{\min}$.}
	\label{fig:straight}
\end{figure}

\subsection{Higher fixed costs}

\begin{figure}[H]
	\centering
	\begin{minipage}{0.45\textwidth}
		\includegraphics[width=\linewidth]{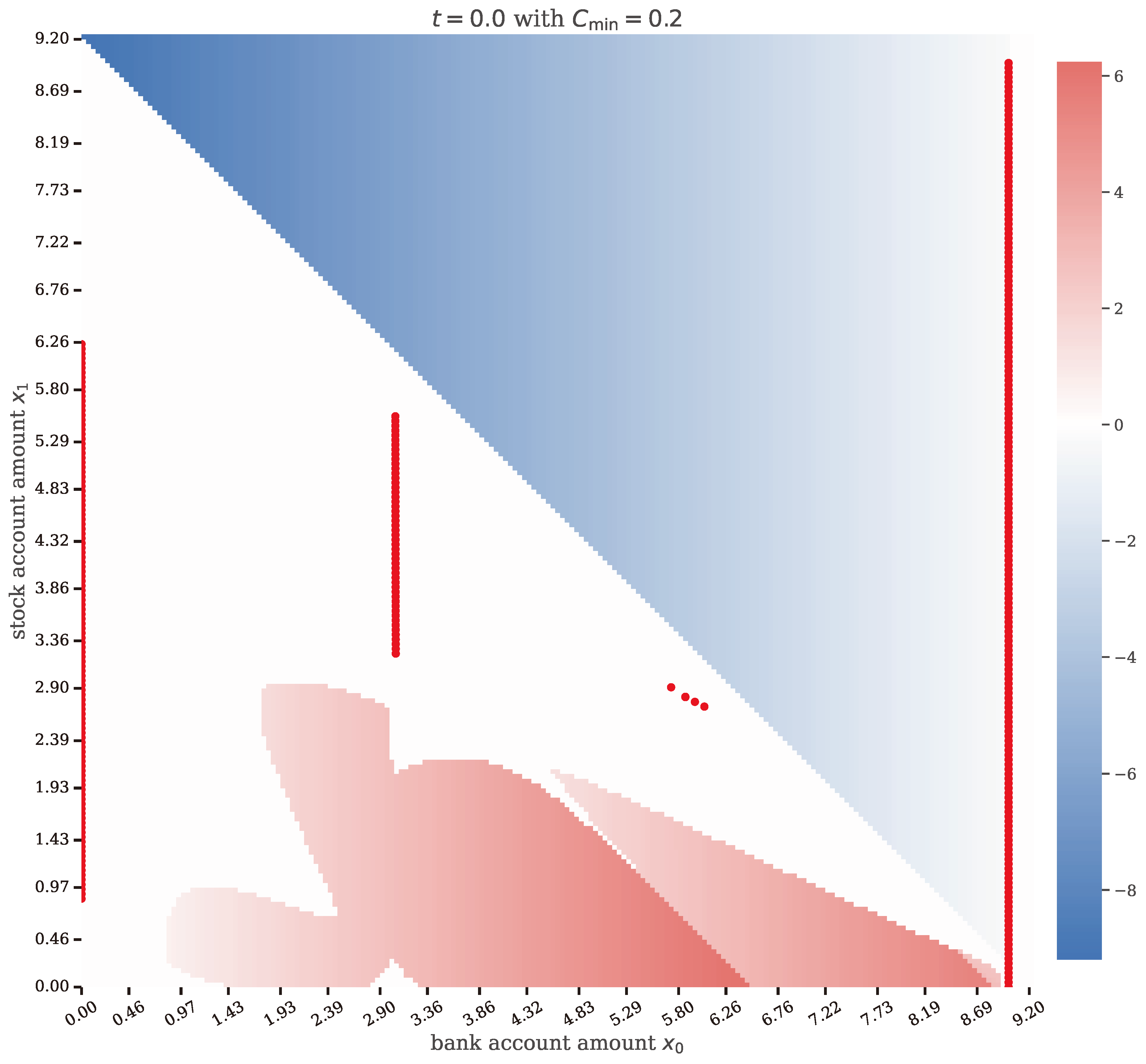}
		\subcaption{$t = 0.0$}
		\label{fig:t00c02}
	\end{minipage}
	%	\begin{minipage}{0.45\textwidth}
		%			\includegraphics[width=\linewidth]{transfer_c1_t1.pdf}
		%			\caption{}
		%	\end{minipage}
	\begin{minipage}{0.45\textwidth}
		\includegraphics[width=\linewidth]{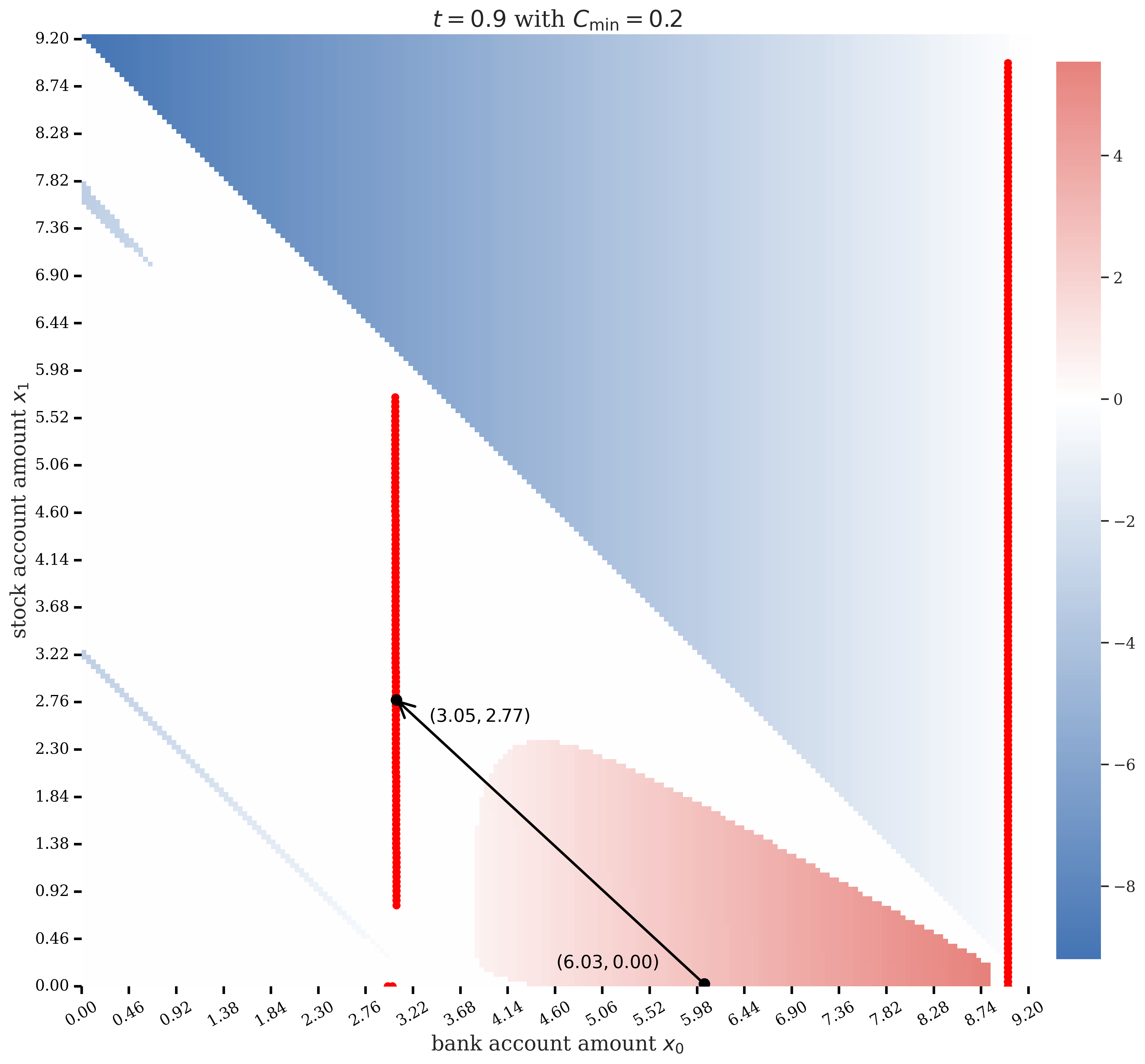}
		\subcaption{$t = 0.9$}
		\label{fig:t09c02}
	\end{minipage}
	\caption{Optimal trading regions at different times with $C_{\min} = 0.2$.}
	\label{fig:c02}
\end{figure}

When the fixed cost increases from $0.02$ to $0.2$, several phenomena can be observed in Figure \ref{fig:c02}:
\begin{enumerate}[label={(\arabic*)}]
	\item The continuation region becomes substantially wider. The higher fixed cost discourages trading activity, acting as a barrier to stock transactions. Consequently, the blue region in the upper left of Figure~\ref{fig:c02}, corresponding to wealth levels below $G_1 + G_2 + C_{\min}$, disappears, and the red region shrinks in size.
	
	\item The red vertical bar near $x_0 = 3.0$ becomes considerably longer, indicating that it is now more common to reserve the cash amount required for the first goal.
	
	\item A higher fixed cost may either reduce or increase exposure to the stock, depending on the specific situation:
	\begin{itemize}
		\item For $(x_0, x_1) = (6.0, 0.0)$ at $t = 0.9$, the target position is $(x_0, x_1) = (0.0, 5.98)$ when $C_{\min} = 0.02$, as shown in Figure \ref{fig:c002t09}. In contrast, when $C_{\min} = 0.2$, Figure \ref{fig:t09c02} shows that the target position from $(6.03, 0.0)$ is around $(3.05, 2.77)$, corresponding to a lower stock exposure.
		\item For $(x_0, x_1) = (0.0, 8.48)$ in the upper-left region at $t=0.9$, the higher cost case remains at the same position, while the lower cost case involves selling some stock. This illustrates a scenario where a higher fixed cost leads to higher stock exposure. 
	\end{itemize}
\end{enumerate}

\bibliographystyle{apalike}
\bibliography{ref.bib}

\appendix
\section{Proofs of the stochastic supersolution}
Lemma \ref{lem:Mproperty} gives some useful properties of the minimum operator $\cM$. The proof is similar to \citet[Lemma 5.1]{belak2022optimal} and thus omitted.
\begin{lemma}\label{lem:Mproperty}
	Let $k = 1, \ldots, K$ and $f: [T_{k-1}, T_k] \times \barS \rightarrow \R$. Then: 
	\begin{enumerate}[label={(\arabic*)}]
		\item If $f$ is LSC, then $\cM[f]_*(t, x) = \cM[f](t, x)$ for all $(t, x) \in [T_{k-1}, T_k] \times \barS$.
		\item If $f$ is USC, then $\cM[f]^*(t, x) = \cM[f](t, x)$ for all $(t, x) \in [T_{k-1}, T_k] \times (\barS \setminus \overline{\cS_\emptyset})$.
	\end{enumerate}
\end{lemma}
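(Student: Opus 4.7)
The plan is to prove the lower/upper semicontinuity of $\cM[f]$ directly, exploiting the fact that $D(x) = [-x_1, \chi(x_0)]$ is a compact interval whose endpoints depend continuously on $x$ (since $\chi$, the inverse of the continuous strictly increasing map $\Delta \mapsto \Delta + C(\Delta)$, is itself continuous), and that the rebalancing map $\Gamma$ is jointly continuous. In both parts I reduce to a compactness/semicontinuity argument on a convergent sequence $(t_n, x_n) \to (t, x)$, and the only delicate issue is the possible degeneration of $D$ near $\partial \cS_\emptyset$.

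For part (1), given any sequence $(t_n, x_n) \to (t, x)$ I will show $\cM[f](t, x) \leq \liminf_n \cM[f](t_n, x_n)$. If $x \in \cS_\emptyset$ (which is open in $\barS$), then eventually $x_n \in \cS_\emptyset$ and both sides equal $+\infty$. Otherwise $D(x) \neq \emptyset$; I extract a subsequence attaining the liminf, discarding terms with $D(x_n) = \emptyset$ (which contribute $+\infty$). Since $f$ is LSC and $D(x_n)$ is compact, the infimum defining $\cM[f](t_n, x_n)$ is attained at some $\Delta_n \in D(x_n)$. The $\Delta_n$ are uniformly bounded by continuity of $\chi$, so a further subsequence converges to some $\Delta^* \in D(x)$, and LSC of $f$ together with continuity of $\Gamma$ yields $\cM[f](t, x) \leq f(t, \Gamma(x, \Delta^*)) \leq \liminf_n f(t_n, \Gamma(x_n, \Delta_n))$.

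For part (2), I fix $(t, x) \in [T_{k-1}, T_k] \times (\barS \setminus \overline{\cS_\emptyset})$, so $-x_1 < \chi(x_0)$ strictly. Given $\epsilon > 0$, choose an $\epsilon$-minimizer $\Delta^* \in D(x)$ with $f(t, \Gamma(x, \Delta^*)) \leq \cM[f](t, x) + \epsilon$. For $(t_n, x_n) \to (t, x)$ and $n$ large, the interval $D(x_n) = [-x_{1,n}, \chi(x_{0,n})]$ is non-degenerate, and the projection $\Delta_n := \max\{-x_{1,n}, \min\{\chi(x_{0,n}), \Delta^*\}\} \in D(x_n)$ satisfies $\Delta_n \to \Delta^*$ by continuity of $\chi$. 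Then USC of $f$ and continuity of $\Gamma$ give $\limsup_n \cM[f](t_n, x_n) \leq \limsup_n f(t_n, \Gamma(x_n, \Delta_n)) \leq f(t, \Gamma(x, \Delta^*)) \leq \cM[f](t, x) + \epsilon$, and letting $\epsilon \downarrow 0$ finishes the argument.

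The main obstacle is precisely the behavior of the feasibility map $D$ on $\partial \cS_\emptyset$, where $\chi(x_0) = -x_1$ and $D(x)$ collapses to a single point. The LSC argument still goes through there because approximating sequences with $D(x_n) = \emptyset$ contribute $+\infty$ harmlessly to the liminf, but for USC one needs to approximate a near-minimizer $\Delta^* \in D(x)$ from inside $D(x_n)$, which may be empty for $x_n \in \cS_\emptyset$ arbitrarily close to $x$; this is exactly why part (2) must exclude $\overline{\cS_\emptyset}$.
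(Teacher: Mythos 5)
Your proof is correct and takes the expected direct semicontinuity route: compactness and continuity of the endpoint maps of $D(x) = [-x_1, \chi(x_0)]$, attainment plus subsequence extraction for the LSC direction, projection of a near-minimizer into $D(x_n)$ for the USC direction, and the correct identification of $\overline{\cS_\emptyset}$ (where $D(x)$ can degenerate or vanish under perturbation) as the obstruction to upper semicontinuity. The paper omits its own proof and refers to Lemma 5.1 of \citet{belak2022optimal}, which argues along the same lines, so your approach matches.
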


\begin{lemma}\label{lem:K_vissub}
	The upper stochastic envelope $v_{+}$ satisfies the viscosity subsolution property \eqref{eq:TK_vissub} at $T_K$, under Definition \ref{def:vis_sub}.
\end{lemma}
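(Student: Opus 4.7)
The statement requires two viscosity subsolution inequalities at $T_K$ for $x \in \cS$: the terminal payoff bound
\[
v_{K,+}^*(T_K, x) \leq w_K\bigl[G_K - x_0 - (x_1 - C(-x_1))^+\bigr]^+,
\]
and the intervention bound $v_{K,+}^*(T_K, x) \leq \cM[v_{K,+}^*]^*(T_K, x)$. I would prove each separately, with $L(x) := x_0 + (x_1 - C(-x_1))^+$.

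For the terminal payoff bound, my plan is to exhibit an explicit stochastic supersolution that realizes the right-hand side at $T_K$. Define
\[
\tilde{v}_K(t, x) := \E\bigl[w_K\bigl(G_K - L(X(T_K; t, x, \emptyset, \emptyset))\bigr)^+\bigr], \qquad \tilde{v}_k(t, x) := \sum_{i=k}^K w_i G_i \ \ (k<K).
\]
I would verify that $\tilde{v} = (\tilde{v}_1, \ldots, \tilde{v}_K) \in \cV^+$: continuity (hence USC) of $\tilde{v}_K$ follows from continuity of the uncontrolled SDE flow in its initial data together with dominated convergence; the growth condition is immediate since $0 \leq \tilde{v}_K \leq w_K G_K$; and the suitable-strategy property is realized by the strategy with no pre-$T_K$ trades, $\theta_i = 0$ for $i<K$, and the mandatory $\theta_K = L(X(T_K-))$. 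The flow/Markov property of the uncontrolled SDE gives equality in the $\cH$-inequality for $\tilde{v}_K$, while for $k<K$ each realization of $\cH$ is pointwise bounded by $\sum_{i=k}^K w_i G_i = \tilde{v}_k$. Since $v_{K,+}$ is the pointwise infimum over $\cV^+$, it follows that $v_{K,+}(T_K, \cdot) \leq \tilde{v}_K(T_K, \cdot) = w_K[G_K - L(\cdot)]^+$, and continuity of the right-hand side transfers this bound to the USC envelope.

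For the intervention bound I would argue by contradiction in the stochastic Perron style. Suppose at some $\bar{x} \in \cS$ one has $v_{K,+}^*(T_K, \bar{x}) > \cM[v_{K,+}^*]^*(T_K, \bar{x}) + 4\eta$ for some $\eta > 0$. The case $\bar{x} \in \overline{\cS_\emptyset}$ is vacuous since $\cM \equiv +\infty$ there, so assume $\bar{x} \notin \overline{\cS_\emptyset}$. Lemma \ref{lem:Mproperty}(2) reduces $\cM^*$ to $\cM$, and compactness of $D(\bar{x})$ together with upper semicontinuity of $v_{K,+}^*$ furnishes a minimizer $\bar{\Delta} \in D(\bar{x})$ with $v_{K,+}^*(T_K, \Gamma(\bar{x}, \bar{\Delta})) \leq v_{K,+}^*(T_K, \bar{x}) - 4\eta$. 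Using the infimum characterization, I would pick a near-optimal $v \in \cV^+$ whose $v_K$ is within $\eta$ of $v_{K,+}^*$ near $(T_K, \Gamma(\bar{x}, \bar{\Delta}))$, choose a continuous selection $\Delta: U \to \R$ on an open neighborhood $U$ of $\bar{x}$ with $\Delta(\bar{x}) = \bar{\Delta}$ and $\Delta(x) \in D(x)$, and glue: on a small space-time neighborhood of $(T_K, \bar{x})$ replace $v_K$ by the pointwise minimum of $v_K$ and $v_K(T_K, \Gamma(\cdot, \Delta(\cdot)))$, smoothly tapering back to $v_K$ on the complement. A suitable strategy for this shifted array is obtained from $v$'s strategy by prepending the trade $\Delta(X(T_K-))$ at $T_K$ whenever the endpoint lies in $U$; Lemma \ref{lem:two_super} then shows that the componentwise minimum with $v$ itself is a stochastic supersolution, yet it lies strictly below $v_K$ on a set where $v_{K,+}$ nearly attains $v_{K,+}^*(T_K, \bar{x})$, contradicting the infimum definition of $v_{K,+}$.

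The main obstacle is the construction and verification in the second step: ensuring the glued modification stays upper semicontinuous, respects the growth bound, and still satisfies the suitable-strategy inequality for random initial conditions $(\btau, \xi)$ with $\btau < T_K$. The goal-deadline structure, flagged as contribution (2) in the introduction, makes the interaction of this modification with the $\cH$-functional near $\rho = T_K$ subtle, since the tapering must preserve both the earlier inequalities on $[\btau, T_K)$ and the admissibility of the inserted trade. The borderline case $\bar{x} \in \partial \cS_\emptyset$ can be handled by exploiting continuity of the feasible set $D(\cdot)$ away from $\cS_\emptyset$.
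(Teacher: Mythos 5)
Your proposal splits into two parts, and they fare quite differently against the paper's proof.

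\textbf{Terminal payoff bound.} Your direct construction of the explicit stochastic supersolution $\tilde{v}_K(t,x) = \E[w_K(G_K - L(X(T_K;t,x,\emptyset,\emptyset)))^+]$, padded above with the constants $\sum_{i=k}^K w_i G_i$ for $k<K$, is a genuinely different route from the paper's Case~1, which runs a contradiction argument with a quadratic-plus-linear bump $\psi^{\varepsilon,\eta,p}(t,x) = v_{K,+}(T_K,\bar{x}) + |x-\bar{x}|^2/\eta + p(T_K-t)$ and a gluing $v^n_K \wedge \psi^{p,\delta}$. Your verification that $\tilde{v}\in\cV^+$ is correct: USC and growth are immediate, the $k=K$ supermartingale property follows from the Markov and optional stopping properties under the do-nothing strategy, and for $k<K$ the $\cH$-functional is pointwise bounded by $\sum_{i=k}^K w_i G_i$ regardless of the strategy. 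This is shorter and more transparent than the paper's bump construction, and it is a valid argument; the price is that it only works because the terminal payoff is an explicit, continuous function of $x$, whereas the paper's bump technique is the more flexible tool that they also reuse at interior times.

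\textbf{Intervention bound.} Here your sketch is in the spirit of the paper's Case~2 but has concrete gaps. First, you cannot invoke a minimizer $\bar{\Delta}$ from ``compactness of $D(\bar{x})$ together with upper semicontinuity of $v_{K,+}^*$'' --- upper semicontinuity over a compactum guarantees a maximizer, not a minimizer. The infimum in $\cM$ may fail to be attained; the paper instead uses a Borel-measurable $\delta''$-\emph{near}-minimizer $\Delta^*(t,x)$ from the selection theorem of Rieder, and the near-optimality ($\delta'' = \varepsilon/4$) is folded into the contradiction. Second, the claim that one can ``choose a continuous selection $\Delta:U\to\R$ with $\Delta(\bar{x})=\bar{\Delta}$ and $\Delta(x)\in D(x)$'' is unjustified and generally false when the near-optimal $\bar{\Delta}$ sits at the boundary of $D(\bar{x}) = [-\bar{x}_1,\chi(\bar{x}_0)]$, which is exactly the interesting case of full liquidation or a maximal buy. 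The measurable selector sidesteps this. Third, gluing with $v_K(T_K,\Gamma(\cdot,\Delta(\cdot)))$ and ``smoothly tapering'' is both more complicated and weaker than what is needed: the paper uses a simple constant cap $\psi^\eta(t,x) := v_{K,+}(T_K,\bar{x}) - \eta$, which is trivially USC and dominates $v^n_K(t,\Gamma(x,\Delta^*(t,x)))$ on the neighborhood by construction; the modified function is then the pointwise min $v^n_K\wedge\psi^\eta$, no tapering required. Fourth, and most importantly, the supermartingale property of the glued function for random initial conditions $(\btau,\xi)$ near $T_K$ --- which you flag as ``the main obstacle'' --- is indeed the technical heart of the proof. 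The paper handles it by stitching together three controls $U^0, U^1, U^2$, distinguishing the events $A$, $A^c$, $\{\tau' < T_K\}$, $\{\tau' = T_K\}$, and inserting the transaction $\Delta^*(\btau,\xi)$ at time $\btau$ on $A$; this insertion forces immediate exit from $B(\bar{x},\varepsilon)$ (since total wealth drops by at least $C_{\min}$), which is what makes the estimate close. Your proposal names the obstacle but does not supply the construction, so as written the second half is incomplete.
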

\begin{proof}
	Since $v_{K, +}$ is USC, it follows that $v^*_{K, +} = v_{K, +}$. For any $ \bar{x} \in \cS$, we aim to prove that
	\begin{equation*}
		\begin{aligned}
			\max \Big\{ & v_{K, +}(T_K, \bar{x}) - w_K \left[ G_K - \bar{x}_0 - (\bar{x}_1 - C(-\bar{x}_1))^+ \right]^+ , \\
			& v_{K, +}(T_K, \bar{x}) - \cM[v_{K, +}]^*(T_K, \bar{x}) \Big\} \leq 0.
		\end{aligned}
	\end{equation*}
	Assume on the contrary that the left-hand side is strictly positive. There are two possible cases.
	
	{\bf Case 1}. $v_{K, +}(T_K, \bar{x}) - w_K \left[ G_K - \bar{x}_0 - (\bar{x}_1 - C(-\bar{x}_1))^+ \right]^+ > 0$.
	
	Since the terminal value is continuous in $x$, there exists a small $\varepsilon > 0$ such that
	\begin{equation}\label{eq:bnd_TK}
		v_{K, +}(T_K, \bar{x}) - w_K \left[ G_K - x_0 - (x_1 - C(-x_1))^+ \right]^+ \geq \varepsilon,
	\end{equation}
	for $x \in \overline{B(\bar{x}, \varepsilon)}$, which is the closure of $B(\bar{x}, \varepsilon) :=\{ x : |x - \bar{x} | < \varepsilon \}$.
	
	For later use, define the sets
	\begin{align*}
		\cD(T_K, \bar{x}, \varepsilon) & := (T_K - \varepsilon, T_K] \times B(\bar{x}, \varepsilon), \\
		E(\varepsilon) & := \overline{\cD(T_K, \bar{x}, \varepsilon)} \backslash \cD(T_K, \bar{x}, \varepsilon/2), 
	\end{align*}
	where $\overline{\cD}$ denotes the closure of $\cD$.
	
	Note that $v_{K, +}$ is USC and $E(\varepsilon)$ is compact. Then $v_{K, +}$ is bounded from above on $E(\varepsilon)$. For a small enough $\eta > 0$, we have
	\begin{equation*}
		\sup_{(t, x) \in E(\varepsilon)} v_{K, +}(t, x) - v_{K, +} (T_K, \bar{x}) < \frac{\varepsilon^2}{4 \eta} - \varepsilon. 
	\end{equation*}
	As this inequality is strict, \citet[Proposition 4.1]{bayraktar2012linear} and \citet[Lemma 2.4]{bayraktar2014Dynkin} ensure that there exists $v^{n}_K$, which corresponds to a stochastic supersolution $v^{n} = (v^{n}_1, \ldots, v^{n}_K)$ and
	\begin{equation}\label{n2}
		\sup_{(t, x) \in E(\varepsilon)} v^n_{K}(t, x) - v_{K, +} (T_K, \bar{x}) < \frac{\varepsilon^2}{4 \eta} - \varepsilon. 
	\end{equation}
	
	For $p > 0$, define
	\begin{equation*}
		\psi^{\varepsilon, \eta, p}(t, x) := v_{K,+} (T_K, \bar{x}) + \frac{|x -  \bar{x}|^2}{\eta} + p(T_K - t).
	\end{equation*}
	With a large enough $p$, we can ensure that
	\begin{equation*}
		\cL[\psi^{\varepsilon, \eta, p}](t, x) > 0 \quad \text{ on } \overline{\cD(T_K, \bar{x}, \varepsilon)}.
	\end{equation*}	
	Thanks to the definition of $E(\varepsilon)$, the inequality \eqref{n2}, and a large enough $p$, we have
	\begin{align}
		\psi^{\varepsilon, \eta, p}(t, x) & \geq v_{K, +} (T_K, \bar{x}) + \frac{\varepsilon^2}{4 \eta} > \varepsilon + \sup_{(t, x) \in E(\varepsilon)} v^{n}_{K}(t, x) \nonumber \\
		& \geq \varepsilon + v^n_K (t, x) \quad \text{ on } E(\varepsilon). \label{Eineq}
	\end{align}
	
	Besides, for any $t \leq T_K$ and $x \in \overline{B(\bar{x}, \varepsilon)}$, \eqref{eq:bnd_TK} leads to
	\begin{align}
		\psi^{\varepsilon, \eta, p}(t, x) & \geq v_{K, +} (T_K, \bar{x}) \nonumber \\
		& \geq w_K \left[ G_K - x_0 - (x_1 - C(-x_1))^+ \right]^+ + \varepsilon. \label{psi_bd}
	\end{align}
	
	Let $0 <\delta < \varepsilon$ and set $\psi^{p, \delta} := \psi^{\varepsilon, \eta, p} - \delta$. Define
	\begin{equation}
		v^{p, \delta}_{K} (t, x) := \left\{ 
		\begin{array}{ c l }
			v^n_K (t, x) \wedge \psi^{p, \delta}(t, x) & \text{on } \overline{\cD(T_K, \bar{x},\varepsilon)}, \\
			v^n_K (t, x), & \text{otherwise}.
		\end{array}
		\right.
	\end{equation}
	
	Next, we show that $(v^n_1, \ldots, v^n_{K-1}, v^{p, \delta}_{K})$ is a stochastic supersolution, which leads to the following contradiction:
	$$v^{p, \delta}_{K} (T_K, \bar{x}) = v_{K, +} (T_K, \bar{x})  - \delta < v_{K, +} (T_K, \bar{x}).$$
	
	Clearly, $(v^n_1, \ldots, v^n_{K-1}, v^{p, \delta}_{K})$ satisfies Conditions (1) and (2) in Definition \ref{def:sto_super}. For the supermartingale property in Definition \ref{def:sto_super} (3), we first verify it when the random initial condition $(\btau, \xi)$ satisfies $\btau \in [T_{K-1}, T_K]$.
	
	Define the event
	\begin{equation*}
		A := \{ (\btau, \xi) \in \cD(T_K, \bar{x},\varepsilon/2) \} \cap \{\psi^{p, \delta}(\btau, \xi) < v^n_K(\btau, \xi)\}.
	\end{equation*}
	Then $A \in \cF_{\btau}$. 
	
	Let $U^0 := (\theta^0_{K}, \Lambda^0) := (L(X(T_K; \btau, \xi, \emptyset, \Lambda^0)), \{ (\tau^0_n, \Delta^0_n) \}^\infty_{n=1})$ be a suitable control for $v^n_{K}$ with the random initial condition $(\btau, \xi)$. Here, we recall that $\{ X(t; \btau, \xi, \emptyset, \Lambda^0) \}_{t \in [\btau, T]}$ denotes the solution where $\Lambda^0$ is used while $\theta_K$ is not determined.
	
	Define a new control $U^1 := (\theta^1_{K}, \Lambda^1)$ by
	\begin{equation}\label{U1}
		\theta^1_K  :=  \one_A \emptyset + \one_{A^c} \theta^0_K, \quad \Lambda^1  = \{ (\tau^1_n, \Delta^1_n) \}^\infty_{n=1} := \one_{A^c} \{ (\tau^0_n, \Delta^0_n) \}^\infty_{n=1}.
	\end{equation}
	Here, if $A$ happens, we do not conduct any transactions between the stock and the bank account. The funding amount $\theta_K$ is also to be determined. Instead, if $A^c$ happens, then $v^{p, \delta}_{K} (\btau, \xi) = v^n_{K} (\btau, \xi)$. Hence, $U^1$ follows a suitable control for $v^n_K$. Denote $\{ X(t; \btau, \xi, U^1) \}_{t \in [\btau, T]}$ as the solution of the state process with the random initial condition $(\btau, \xi)$ under the control $U^1$. Then $$\p(X(t; \btau, \xi, U^1) \in \overline{\cS}, \; \btau \leq t \leq T ) = 1.$$
	
	Define the exit time and position as
	\begin{align*}
		\tau' & := \inf \{ t \in [\btau, T_K] \, | \,  (t,  X(t; \btau, \xi, U^1)) \notin  \cD(T_K, \bar{x}, \varepsilon/2) \} \wedge T_K, \\
		\xi' & :=  X(\tau'; \btau, \xi, U^1) \in \cF_{\tau'}.
	\end{align*}

	There is a suitable control $U^2 := (\theta^2_{K}, \Lambda^2) := (L(X(T_K; \tau', \xi', \emptyset, \Lambda^2)), \{ (\tau^2_n, \Delta^2_n) \}^\infty_{n=1})$ for $v^n_{K}$ with the random initial condition $(\tau', \xi')$. This control $U^2$ will only be used when $\tau' < T_K$ happens. Finally, define a control $U := (\theta_{K}, \Lambda)$ by
	\begin{align*}
		\Lambda  & := \{ (\tau^1_n, \Delta^1_n) \one_{ \{ \tau^1_n \leq \tau' \}} \}^\infty_{n=1}  + \{ (\tau^2_n, \Delta^2_n) \one_{\{ \tau' \leq \tau^2_n \} \cap  \{ \tau' < T_K \}} \}^\infty_{n=1} ,\\
		\theta_K  & := L(X(T_K; \btau, \xi, \emptyset, \Lambda)).
	\end{align*}
	The control $U$ satisfies $$\p(X(t; \btau, \xi, U) \in \overline{\cS}, \; \btau \leq t \leq T ) = 1.$$
	We verify that $U$ is suitable for $v^{p, \delta}_{K}$ with $(\btau, \xi)$.
	
	Consider a stopping time $\rho \in [\btau, T_K]$. Applying the It\^o's formula to $\psi^{p, \delta}$ from $\tau$ to $\rho \wedge \tau'$ under the event $A$ and control $U^1$, we obtain
	\begin{align}
		& \one_A v^{p, \delta}_{K} (\btau, \xi) \nonumber \\
		& = \one_A \psi^{p, \delta} (\btau, \xi) \nonumber \\
		& = \one_A \psi^{p, \delta} (\btau, X(\btau; \btau, \xi, U^1)) \nonumber \\
		& \geq \E\Big[ \one_{A \cap \{ \rho < \tau'\}}  \psi^{p, \delta} (\rho, X(\rho; \btau, \xi, U^1)) + \one_{A \cap \{ \rho \geq \tau'\}}   \psi^{p, \delta} (\tau', \xi') \Big| \cF_{\btau} \Big]. \label{psiAineq1}
	\end{align} 
	Moreover, \eqref{Eineq} and \eqref{psi_bd} lead to
	\begin{align}
		\one_{A \cap \{ \rho \geq \tau'\}} \psi^{p, \delta} (\tau', \xi') \geq & \one_{A \cap \{ \rho \geq \tau'\} \cap \{ \tau' < T_K\}} v^n_K(\tau', \xi') \label{psiAineq2} \\
		& + \one_{A \cap \{ \rho \geq \tau'\} \cap \{ \tau' = T_K\}} w_K(G_K - \xi'_0 - (\xi'_1 - C(-\xi'_1))^+)^+. \nonumber
	\end{align}
	Combining \eqref{psiAineq1} and \eqref{psiAineq2}, since $v^{p, \delta}_{K} \leq \psi^{p, \delta}$ on $\overline{\cD(T_K, \bar{x}, \varepsilon)}$, we obtain
	\begin{align}
		& \one_A v^{p, \delta}_{K} (\btau, \xi) \nonumber \\
		& \geq \E\Big[ \one_{A \cap \{ \rho < \tau'\}}  v^{p, \delta}_{K} (\rho, X(\rho; \btau, \xi, U^1)) \label{psiAineq} \\
		& \qquad + \one_{A \cap \{ \rho \geq \tau'\} \cap \{ \tau' < T_K\}}   v^n_K(\tau', \xi') \nonumber \\
		& \qquad  + \one_{A \cap \{ \rho \geq \tau'\} \cap \{ \tau' = T_K\}} w_K(G_K - \xi'_0 - (\xi'_1 - C(-\xi'_1))^+)^+  \Big| \cF_{\btau} \Big] \nonumber \\
		& = \E\Big[ \one_{A \cap \{ \rho < \tau'\}}  v^{p, \delta}_{K} (\rho, X(\rho; \btau, \xi, U)) \nonumber \\
		& \qquad + \one_{A \cap \{ \rho \geq \tau'\} \cap \{ \tau' < T_K\}}   v^n_K(\tau', \xi') + \one_{A \cap \{ \rho \geq \tau'\} \cap \{ \tau' = T_K\}} w_K(G_K - \theta_K)^+  \Big| \cF_{\btau} \Big]. \nonumber
	\end{align} 
	In the last equality, we use the definition of $U$ and the fact that $\theta_K = L(\xi')$ under the event $A \cap \{ \rho \geq \tau'\} \cap \{ \tau' = T_K\}$.
	
	Under the event $A^c$, because $U^1$ is a suitable control for $v^n_{K}$ with the random initial condition $(\btau, \xi)$, we have
	\begin{align}
		& \one_{A^c} v^{p, \delta}_{K} (\btau, \xi) = \one_{A^c} v^{n}_{K} (\btau, \xi) \nonumber \\
		& \geq \E\Big[ \one_{A^c \cap \{ \rho < \tau'\}}  v^{n}_{K} (\rho, X(\rho; \btau, \xi, U^1))  \label{Acvnineq} \\
		& \qquad + \one_{A^c \cap \{ \rho \geq \tau'\} \cap \{ \tau' < T_K \}}   v^{n}_{K} (\tau', \xi') + \one_{A^c \cap \{ \rho \geq \tau'\} \cap \{ \tau' = T_K \}} w_K(G_K - \theta^1_K)^+ \Big| \cF_{\btau} \Big] \nonumber \\
		& = \E\Big[ \one_{A^c \cap \{ \rho < \tau'\}}  v^{n}_{K} (\rho, X(\rho; \btau, \xi, U))  \nonumber \\
		& \qquad + \one_{A^c \cap \{ \rho \geq \tau'\} \cap \{ \tau' < T_K \}}   v^{n}_{K} (\tau', \xi') + \one_{A^c \cap \{ \rho \geq \tau'\} \cap \{ \tau' = T_K \}} w_K(G_K - \theta_K)^+ \Big| \cF_{\btau} \Big]. \nonumber
	\end{align} 
	Here, we use $\theta^1_K = \theta_K$ under $A^c \cap \{ \rho \geq \tau'\} \cap \{ \tau' = T_K \}$. As $v^n_K \geq v^{p, \delta}_{K}$ everywhere, the definition of $U$,  \eqref{psiAineq}, and \eqref{Acvnineq} yield
	\begin{align}
		v^{p, \delta}_{K} (\btau, \xi) \geq \E\Big[ & \one_{\{ \rho < \tau'\}}  v^{p, \delta}_{K} (\rho, X(\rho; \btau, \xi, U))  \label{ineq:tau1} \\
		& + \one_{ \{ \rho \geq \tau'\} \cap \{ \tau' < T_K \}}  v^{n}_{K} (\tau', \xi') + \one_{\{ \rho \geq \tau'\} \cap \{ \tau' = T_K \}} w_K(G_K - \theta_K)^+ \Big| \cF_{\btau} \Big]. \nonumber
	\end{align} 
	
	Since $U^2$ is a suitable control for $v^n_{K}$ with the random initial condition $(\tau', \xi')$, \eqref{ineq:tau1} and the definition of $U$ yield the desired result:
	\begin{align*}
		v^{p, \delta}_{K} (\btau, \xi) \geq \E\Big[& \one_{\{ \btau \leq \rho < T_K \}}  v^{p, \delta}_{K} (\rho, X(\rho; \btau, \xi, U)) +  \one_{\{ \rho = T_K\}}  w_K (G_K - \theta_K)^+ \Big| \cF_{\btau} \Big].
	\end{align*}
	It is direct to verify the supermartingale property when $\tau \in [T_{k-1}, T_k]$, $k \neq K$. We omit it here.
	
	{\bf Case 2}. $v_{K, +}(T_K, \bar{x}) - \cM[v_{K, +}]^*(T_K, \bar{x}) > 0$.
	
	Because $\cM[v_{K, +}]^*(T_K, x)$ equals to infinity when $x \in \cS_\emptyset$, we should have $\bar{x} \notin \cS_\emptyset$. Moreover, if $\bar{x} \in \partial \cS_\emptyset$, then there exists a sequence $\{ x_k \}^\infty_{k=1} \subset \cS_\emptyset$ and $x_k \rightarrow \bar{x}$ when $k \rightarrow \infty$, such that  $\cM[v_{K, +}]^*(T_K, \bar{x})$ equals to infinity. It implies that $\bar{x} \notin \partial \cS_\emptyset$. Therefore, we have $\bar{x} \in \barS \setminus \overline{\cS_\emptyset}$ and $\cM[v_{K, +}]^*(T_K, \bar{x}) = \cM[v_{K, +}](T_K, \bar{x})$ by Lemma \ref{lem:Mproperty}. 
	
	Since $v_{K, +}(T_K, \bar{x}) - \cM[v_{K, +}](T_K, \bar{x}) > 0$ and $\cM[v_{K, +}]$ is USC when $(t, x) \in [T_{K-1}, T_K] \times (\barS \setminus \overline{\cS}_\emptyset)$, there exists $\varepsilon > 0$ such that
	\begin{equation}\label{v_eps1}
		v_{K, +}(T_K, \bar{x}) - \cM[v_{K, +}](t, x) \geq \varepsilon, \quad (t, x) \in \overline{\cD(T_K, \bar{x}, \varepsilon)}.
	\end{equation}
	
	Suppose $B(\bar{x}, \varepsilon) \subset \barS \setminus \overline{\cS_\emptyset}$ by choosing $\varepsilon$ small, which implies that $D(x) \neq \emptyset$ for all $x \in B(\bar{x}, \varepsilon)$. Note that after any admissible transaction $\Delta$, the total wealth is reduced by at least $C_{\min}$. We can further assume that the radius $\varepsilon > 0$ is small enough, such that the rebalancing position $\Gamma (x, \Delta)$ is out of $B(\bar{x}, \varepsilon)$ for all $x \in B(\bar{x}, \varepsilon)$ and $\Delta \in D(x)$.
	
	Denote the set of all positions that can be reached by $x \in \overline{B(\bar{x}, \varepsilon)}$ as
	\begin{equation*}
		I_\Gamma := \big\{ \Gamma(x, \Delta) \; \big| \; x \in \overline{B(\bar{x}, \varepsilon)} \text{ and } \Delta \in D(x) \big\}.
	\end{equation*}
	
	By Dini's argument, for $\delta' > 0$, there exists a stochastic supersolution $v^n_K$ such that
	\begin{equation*}
		0 \leq v^n_K(t, x) - v_{K, +}(t, x) \leq \delta', \quad (t, x) \in [T_K - \varepsilon, T_K] \times \overline{I_\Gamma}.
	\end{equation*}
	We can prove that 
	\begin{equation}\label{v_eps2}
		0 \leq \cM[v^n_K](t, x) - \cM[v_{K, +}](t, x) \leq \delta', \quad (t, x) \in \overline{\cD(T_K, \bar{x}, \varepsilon)}.
	\end{equation}
	Define $\psi(t, x) := v_{K, +}(T_K, \bar{x})$. With \eqref{v_eps1} and \eqref{v_eps2}, we obtain 
	\begin{align*}
		\psi(t, x) - \cM[v^n_K](t, x) \geq \varepsilon - \delta', \quad (t, x) \in \overline{\cD(T_K, \bar{x}, \varepsilon)}.
	\end{align*}
	By \citet[Theorem 4.8 (b)]{rieder1978measurable}, for $\delta'' >0$, there exists a Borel measurable $\delta''$-minimizer for $\cM[v^n_K](t, x)$ on $\overline{\cD(T_K, \bar{x}, \varepsilon)}$, denoted as $\Delta^*(t, x)$, such that
	\begin{equation*}
		\cM[v^n_K](t, x) \geq v^n_K(t, \Gamma(x, \Delta^*(t, x))) - \delta'', \quad (t, x) \in \overline{\cD(T_K, \bar{x}, \varepsilon)}.
	\end{equation*}
	If we take $\delta' = \delta'' = \varepsilon/4$, then
	\begin{equation*}
		\psi(t, x) \geq  v^n_K(t, \Gamma(x, \Delta^*(t, x))) + \varepsilon/2, \quad (t, x) \in \overline{\cD(T_K, \bar{x}, \varepsilon)}.
	\end{equation*}
	Let $0 < \eta < \varepsilon/2$ and set $\psi^\eta(t, x) := \psi(t, x) - \eta$. Then
	\begin{equation}\label{eq:psi_Delta}
		\psi^\eta(t, x) \geq  v^n_K(t, \Gamma(x, \Delta^*(t, x))), \quad (t, x) \in \overline{\cD(T_K, \bar{x}, \varepsilon)}.
	\end{equation}
	
	Define
	\begin{equation*}
		v^{\eta}_{K} (t, x) := \left\{ 
		\begin{array}{ c l }
			v^n_K (t, x) \wedge \psi^{\eta}(t, x) & \text{on } \overline{\cD(T_K, \bar{x},\varepsilon)}, \\
			v^n_K (t, x), & \text{otherwise}.
		\end{array}
		\right.
	\end{equation*}
	
	We verify the supermartingale property in Definition \ref{def:sto_super} (3) when the random initial condition $(\btau, \xi)$ satisfies $\btau \in [T_{K-1}, T_K]$.
	
	Similarly, define the event
	\begin{equation*}
		A := \{ (\btau, \xi) \in \cD(T_K, \bar{x},\varepsilon/2) \} \cap \{\psi^{\eta}(\btau, \xi) < v^n_K(\btau, \xi)\}.
	\end{equation*}
	
	Let $U^0 := (\theta^0_{K}, \Lambda^0) := (L(X(T_K; \btau, \xi, \emptyset, \Lambda^0)), \{ (\tau^0_n, \Delta^0_n) \}^\infty_{n=1})$ be a suitable control for $v^n_{K}$ with the random initial condition $(\btau, \xi)$. Define a new control $U^1 := (\theta^1_{K}, \Lambda^1)$ by
	\begin{equation*}
		\theta^1_K  :=  \one_A \emptyset + \one_{A^c} \theta^0_K, \quad \Lambda^1  := \{ (\tau^1_n, \Delta^1_n) \}^\infty_{n=1} := \one_{A} (\btau, \Delta^*(\btau, \xi)) + \one_{A^c} \{ (\tau^0_n, \Delta^0_n) \}^\infty_{n=1}.
	\end{equation*}
	Let 
	\begin{align*}
		\tau' := \inf \{ t \in [\btau, T_K] \, | \,  (t,  X(t; \btau, \xi, U^1)) \notin  \cD(T_K, \bar{x}, \varepsilon/2) \} \wedge T_K
	\end{align*}
	be the exit time and $\xi' :=  X(\tau'; \btau, \xi, U^1) \in \cF_{\tau'}$ be the exit position. 
	
	There is a suitable control $U^2 := (\theta^2_{K}, \Lambda^2) := (L(X(T_K; \tau', \xi', \emptyset, \Lambda^2)), \{ (\tau^2_n, \Delta^2_n) \}^\infty_{n=1})$ for $v^n_{K}$ with the random initial condition $(\tau', \xi')$. This control will only be used when $A^c \cap \{ \tau' < T_K\}$ or $A$ happens. Finally, define a control $U := (\theta_{K}, \Lambda)$ by
	\begin{align*}
		\Lambda  & := \{ (\tau^1_n, \Delta^1_n) \one_{ \{ \tau^1_n \leq \tau' \}} \}^\infty_{n=1}  + \{ (\tau^2_n, \Delta^2_n) \one_{\{ \tau' \leq \tau^2_n \} \cap  \{ A^c \cap \{\tau' < T_K\} \text{ or } A \}} \}^\infty_{n=1} ,\\
		\theta_K  & := L(X(T_K; \btau, \xi, \emptyset, \Lambda)).
	\end{align*}
	We verify that $U$ is suitable for $v^{\eta}_{K}$ with $(\btau, \xi)$.
	
	Consider a stopping time $\rho \in [\btau, T_K]$. Under the event $A$ and control $U^1$, \eqref{eq:psi_Delta} leads to	
	\begin{equation*}
		\one_A v^{\eta}_{K} (\btau, \xi) = \one_A \psi^{\eta} (\btau, \xi) \geq \one_A v^n_K(\btau, \Gamma(\xi, \Delta^*(\btau, \xi))) = \one_A v^n_K(\tau', \xi').
	\end{equation*} 
	Here, we note that the rebalancing position $\Gamma(\xi, \Delta^*(\btau, \xi))$ exits $B(\bar{x}, \varepsilon)$ immediately and hence $\tau' = \btau$. Since $U^2$ is a suitable control for $v^n_K$ with $(\tau', \xi')$, we have
	\begin{align}
		& \one_A v^n_K(\tau', \xi') \nonumber \\
		& \geq \E\Big[ \one_{A \cap \{ \btau \leq \rho < T_K\}}  v^n_{K} (\rho, X(\rho; \tau', \xi', U^2)) + \one_{A \cap \{ \rho = T_K\}} w_K(G_K - \theta^2_K)^+  \Big| \cF_{\btau} \Big] \label{Acase} \\
		& \geq \E\Big[ \one_{A \cap \{ \btau \leq \rho < T_K\}}  v^{\eta}_{K} (\rho, X(\rho; \btau, \xi, U)) + \one_{A \cap \{ \rho = T_K\}} w_K(G_K - \theta_K)^+  \Big| \cF_{\btau} \Big]. \nonumber
	\end{align} 
	The second inequality uses the definition of $U$ and the fact that $v^n_K \geq v^\eta_K$ everywhere.
	
	For the $A^c$ case, we apply the control $U^2$ on $A^c \cap \{ \tau' < T_K\}$ after obtaining the counterpart inequality of \eqref{Acvnineq}. Combining with \eqref{Acase}, the result follows as desired.
	
	It is direct to verify the supermartingale property when $\tau \in [T_{k-1}, T_k]$, $k \neq K$. We omit the detail.  
\end{proof}

\begin{lemma}\label{lem:k_vissub}
	The upper stochastic envelope $v_{+}$ satisfies the viscosity subsolution property \eqref{eq:k_vissub} at $T_k$, $k=1, \ldots, K-1$, under Definition \ref{def:vis_sub}.
\end{lemma}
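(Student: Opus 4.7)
The plan is to adapt the argument of Lemma \ref{lem:K_vissub} to the intermediate deadline $T_k$, with the principal novelty being that the terminal penalty at $T_k$ now involves both an optimization over the funding amount $\theta_k$ and the continuation value $v^*_{k+1,+}(T_k,\cdot)$ of the next subproblem, rather than a concrete liquidation value. Suppose for contradiction that at some $\bar{x} \in \cS$ the left-hand side of \eqref{eq:k_vissub} is strictly positive, so either (Case 1) the $\theta_k$-optimization term is dominated,
\[
v^*_{k,+}(T_k, \bar{x}) > \inf_{0 \leq \theta_k \leq \bar{x}_0}\bigl[ w_k(G_k - \theta_k)^+ + v^*_{k+1,+}(T_k, \bar{x}_0 - \theta_k, \bar{x}_1)\bigr],
\]
or (Case 2) $v^*_{k,+}(T_k, \bar{x}) > \cM[v^*_{k,+}]^*(T_k, \bar{x})$. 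Case 2 is essentially verbatim Case 2 of Lemma \ref{lem:K_vissub}, so only Case 1 requires new work.

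For Case 1, I would first use a Borel selection result (as in \citet[Theorem 4.8]{rieder1978measurable}, cf.\ also the $\Theta_k$ selector in Section \ref{sec:strategy}) to pick a measurable near-minimizer $\theta^*_k(x) \in [0, x_0]$. Upper semicontinuity of $v^*_{k+1,+}$ together with continuity of $x \mapsto w_k(G_k - \theta^*_k(x))^+$ yields a radius $\varepsilon > 0$ and a positive gap such that
\[
v_{k,+}(T_k, \bar{x}) - w_k(G_k - \theta^*_k(x))^+ - v^*_{k+1,+}(T_k, x_0 - \theta^*_k(x), x_1) \geq \varepsilon
\]
for all $x \in \overline{B(\bar{x}, \varepsilon)}$. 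I would then mimic the construction in Lemma \ref{lem:K_vissub}: pick $v^n \in \cV^+$ sufficiently close to $v_+$ on a suitable compact set (both near $(T_k, \bar{x})$ and near the post-transfer positions relevant to $v^n_{k+1}$), set
\[
\psi^{\varepsilon,\eta,p}(t,x) := v_{k,+}(T_k, \bar{x}) + \frac{|x-\bar{x}|^2}{\eta} + p(T_k-t)
\]
with $p$ large enough to force $\cL[\psi^{\varepsilon,\eta,p}] > 0$ on $\overline{\cD(T_k,\bar{x},\varepsilon)}$, and define, for small $\delta > 0$, $\psi^{p,\delta} := \psi^{\varepsilon,\eta,p} - \delta$ and
\[
v^{p,\delta}_k(t,x) := \begin{cases} v^n_k(t,x) \wedge \psi^{p,\delta}(t,x), & (t,x) \in \overline{\cD(T_k,\bar{x},\varepsilon)}, \\ v^n_k(t,x), & \text{otherwise}. \end{cases}
\]
The aim is to show that the modified array $(v^n_1,\ldots,v^n_{k-1}, v^{p,\delta}_k, v^n_{k+1},\ldots,v^n_K)$ is a stochastic supersolution; since $v^{p,\delta}_k(T_k,\bar{x}) = v_{k,+}(T_k,\bar{x}) - \delta$, this contradicts the infimum in the definition of $v_{k,+}$.

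The only nontrivial verification is the supermartingale property of $v^{p,\delta}_k$ when the random initial condition $(\btau,\xi)$ satisfies $\btau \in [T_{k-1}, T_k]$. Given a suitable control $U^0 = (\theta^0_{k:K}, \Lambda^0)$ for $v^n_k$ with $(\btau,\xi)$, set $A := \{(\btau,\xi) \in \cD(T_k,\bar{x},\varepsilon/2)\} \cap \{\psi^{p,\delta}(\btau,\xi) < v^n_k(\btau,\xi)\}$ and define $U^1$ exactly as in Lemma \ref{lem:K_vissub}: on $A$ defer all trading and the funding $\theta_k$, while on $A^c$ use $U^0$. Let $\tau'$ be the exit time from $\cD(T_k,\bar{x},\varepsilon/2)$ under $U^1$. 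If $\tau' < T_k$, splice in a suitable control $U^2$ for $v^n_k$ from $(\tau',\xi')$ exactly as before. If $\tau' = T_k$ under $A$, apply the funding $\theta_k = \theta^*_k(\xi')$ at $T_k$ and then splice in a suitable control for $v^n_{k+1}$ starting from $(T_k, \xi'_0 - \theta^*_k(\xi'),\xi'_1)$; this is where the selector $\theta^*_k$ enters. It\^o's formula applied to $\psi^{p,\delta}$ on $[\btau, \rho \wedge \tau']$ yields the needed supermartingale inequality inside $\cD(T_k,\bar{x},\varepsilon/2)$, and the $\varepsilon$-gap estimate together with a Dini-type approximation of $v^*_{k+1,+}$ by $v^n_{k+1}$ on the compact set of attainable post-transfer positions converts the $T_k$-boundary term into $w_k(G_k - \theta_k)^+ + v^n_{k+1}(T_k,\cdot)$, which is exactly the contribution required by the definition of $\cH$ in \eqref{cH}.

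The main obstacle is precisely this gluing step at $T_k$: the simultaneous bookkeeping of the measurable selector $\theta^*_k$, the Dini approximation of $v^*_{k+1,+}$ by $v^n_{k+1}$, and the measurable concatenation of $\Lambda^1$, $\Lambda^2$, and the continuation strategy for $v^n_{k+1}$ on the whole horizon $[\btau, T]$, all while keeping the composite strategy admissible. The random initial conditions with $\btau \in [T_{k'-1},T_{k'}]$ for $k' \neq k$ are immediate, because $v^{p,\delta}_k$ differs from $v^n_k$ only on $\overline{\cD(T_k,\bar{x},\varepsilon)}$, so the stochastic supersolution inequalities for those $k'$ continue to hold with the controls inherited from $v^n$.
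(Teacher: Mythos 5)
Your high-level plan matches the paper's: Case 2 is essentially the same as Lemma \ref{lem:K_vissub}, and the new content is Case 1, where you approximate $v_{k+1,+}$ on a compact set of reachable post-withdrawal positions, pick a measurable near-minimizer $\theta^*_k$, and then thread a control $U$ together from $U^1$, a continuation control for $v^n_k$ if the exit time $\tau'$ is before $T_k$, and a continuation control for $v^n_{k+1}$ if $\tau' = T_k$. That is the paper's structure (the paper names these $U^0,U^1,U^2,U^3,U^4$).

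There is a genuine gap, however, in the step that establishes the $\varepsilon$-gap on a neighborhood of $\bar{x}$. You write that \emph{``upper semicontinuity of $v^*_{k+1,+}$ together with continuity of $x \mapsto w_k(G_k - \theta^*_k(x))^+$''} gives
\[
v_{k,+}(T_k,\bar{x}) - w_k(G_k-\theta^*_k(x))^+ - v^*_{k+1,+}(T_k, x_0-\theta^*_k(x),x_1) \geq \varepsilon \quad \text{on } \overline{B(\bar{x},\varepsilon)}.
\]
This does not work: the Rieder selector $\theta^*_k$ is only Borel measurable, so $x \mapsto w_k(G_k-\theta^*_k(x))^+$ is \emph{not} continuous, and the composition $x \mapsto v^*_{k+1,+}(T_k, x_0-\theta^*_k(x),x_1)$ is not upper semicontinuous either — a USC function precomposed with a Borel map generally loses semicontinuity. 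The paper's route avoids this by never composing with the selector at this stage. It first shows that
\[
x \mapsto \inf_{0 \le \theta_k \le x_0}\bigl[ w_k(G_k-\theta_k)^+ + v_{k+1,+}(T_k, x_0-\theta_k, x_1)\bigr]
\]
is USC, using that the correspondence $x \twoheadmapsto [0,x_0]$ is compact-valued and upper hemicontinuous and that the integrand is USC (\citet[Theorem 17.21, Lemma 17.29]{aliprantis2006infinite}). This gives the uniform $\varepsilon$-gap around $\bar{x}$ against the \emph{infimum}. Only afterwards does one substitute: a Dini/minimizing-sequence argument replaces $v_{k+1,+}$ by $v^{n}_{k+1}$ on the compact $\overline{I_\theta}$ at a cost $\delta'$, and then a $\delta''$-minimizer $\theta^*_k$ for $\cM$-type infimum with $v^n_{k+1}$ is chosen by Rieder, yielding the pointwise bound $\psi^{p,\delta}(T_k,x) \geq w_k(G_k-\theta^*_k(x))^+ + v^{n}_{k+1}(T_k,x_0-\theta^*_k(x),x_1)$ on the ball. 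Without the Berge/Aliprantis USC step your $\varepsilon$-gap estimate is unjustified, and the rest of the supermartingale verification cannot proceed. The remainder of your sketch (the It\^o estimate on $\psi^{p,\delta}$, the splicing at $\tau'$, and the treatment of the event $A^c$) is consistent with the paper, provided this gap is filled.
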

\begin{proof}
	As $v_{k, +}$ is USC, we obtain $v^*_{k, +} = v_{k, +}$. Assume on the contrary that, there exists $ \bar{x} \in \cS$, such that
	\begin{equation*}
		\begin{aligned}
			\max \Big\{ & v_{k, +}(T_k, \bar{x}) - \inf_{0 \leq \theta_k \leq \bar{x}_0} \left[ w_k (G_k - \theta_k)^+ + v_{k+1, +}(T_k, \bar{x}_0 - \theta_k, \bar{x}_1) \right], \\
			& v_{k, +}(T_k, \bar{x}) - \cM[v_{k, +}]^*(T_k, \bar{x}) \Big\} > 0.
		\end{aligned}
	\end{equation*}
	
	{\bf Case 1}. $v_{k, +}(T_k, \bar{x}) - \inf_{0 \leq \theta_k \leq \bar{x}_0} \left[ w_k (G_k - \theta_k)^+ + v_{k+1, +}(T_k, \bar{x}_0 - \theta_k, \bar{x}_1) \right] > 0$.
	
	By \citet[Theorem 17.21 and Lemma 17.29]{aliprantis2006infinite}, the function given by
	\begin{equation*}
		(x_0, x_1) \mapsto \inf_{0 \leq \theta_k \leq x_0} \left[ w_k (G_k - \theta_k)^+ + v_{k+1, +}(T_k, x_0 - \theta_k, x_1) \right]
	\end{equation*}
	is USC. Then there exists $\varepsilon > 0$ small enough, such that
	\begin{equation}\label{eq:Tk_eps1}
		v_{k, +}(T_k, \bar{x}) - \inf_{0 \leq \theta_k \leq x_0} \left[ w_k (G_k - \theta_k)^+ + v_{k+1, +}(T_k, x_0 - \theta_k, x_1) \right] \geq \varepsilon, \quad \text{ for } x \in \overline{B(\bar{x}, \varepsilon)}.
	\end{equation}
	We introduce the set of positions that can be reached by withdrawing $\theta_k$:
	\begin{equation*}
		I_\theta := \big\{ (x_0 - \theta_k, x_1) \big| x \in \overline{B(\bar{x}, \varepsilon)} \text{ and } 0 \leq \theta_k \leq x_0 \big\}.
	\end{equation*}
	
	By \citet[Proposition 4.1]{bayraktar2012linear}, there exists a nonincreasing sequence of stochastic supersolutions $v^n_{k+1} \searrow v_{k+1, +}$. Moreover, every $v^n_{k+1}$ has a corresponding stochastic supersolution $v^n = (v^n_1, \ldots, v^n_K)$. By \citet[Lemma 2.4]{bayraktar2014Dynkin}, for $\delta' > 0$, there exists a large enough $n_1$ such that 
	\begin{equation*}
		0 \leq v^{n_1}_{k+1}(T_k, x) - v_{k+1, +}(T_k, x) \leq \delta', \quad x \in \overline{I_\theta}.
	\end{equation*}
	By a minimizing sequence argument, we can show that
	\begin{equation}\label{eq:vn1}
		v_{k, +}(T_k, \bar{x}) - \inf_{0 \leq \theta_k \leq x_0} \left[ w_k (G_k - \theta_k)^+ + v^{n_1}_{k+1}(T_k, x_0 - \theta_k, x_1) \right] \geq \varepsilon - \delta', \; \text{ for } x \in \overline{B(\bar{x}, \varepsilon)}.
	\end{equation}
	Besides, $v^{n_1}_{k+1}$ corresponds to a stochastic supersolution $v^{n_1} = (v^{n_1}_1, \ldots$, $ v^{n_1}_K)$. 
	
	With a slight abuse of notation, we define sets
	\begin{align*}
		\cD(T_k, \bar{x}, \varepsilon) & := (T_k - \varepsilon, T_k] \times B(\bar{x}, \varepsilon), \\
		E(\varepsilon) & := \overline{\cD(T_k, \bar{x}, \varepsilon)} \backslash \cD(T_k, \bar{x},\varepsilon/2). 
	\end{align*}

	Similar to Lemma \ref{lem:K_vissub}, for a small $\eta>0$, we can find $v^{n_2}_k$, which corresponds to a stochastic supersolution $v^{n_2} = (v^{n_2}_1, \ldots, v^{n_2}_K)$, and
	\begin{equation}\label{eq:vn2}
		\sup_{(t, x) \in E(\varepsilon)} v^{n_2}_{k}(t, x) - v_{k, +} (T_k, \bar{x}) < \frac{\varepsilon^2}{4 \eta} - \varepsilon. 
	\end{equation}
	Finally, we take
	$$v^n := (v^{n}_1, \ldots, v^{n}_{K}) := (v^{n_1}_1 \wedge v^{n_2}_1, \ldots, v^{n_1}_{K} \wedge v^{n_2}_{K}),$$
	which is a stochastic supersolution by Lemma \ref{lem:two_super}. The inequalities \eqref{eq:vn1} and \eqref{eq:vn2} also hold for $v^n_{k+1}$ and $v^n_k$, respectively.

	By \citet[Theorem 4.8 (b)]{rieder1978measurable}, for $\delta'' >0$, there exists a Borel measurable $\delta''$-minimizer $\theta^*_k(x)$, such that
	\begin{align}
		& \inf_{0 \leq \theta_k \leq x_0} \left[ w_k (G_k - \theta_k)^+ + v^{n}_{k+1}(T_k, x_0 - \theta_k, x_1) \right] \nonumber \\
		& \quad \geq  w_k (G_k - \theta^*_k(x))^+ + v^{n}_{k+1}(T_k, x_0 - \theta^*_k(x), x_1) - \delta'', \quad x \in \overline{\cS}. \label{eq:Tk_eps3}
	\end{align}
	
	With $p > 0$, we introduce
	\begin{equation*}
		\psi^{\varepsilon, \eta, p}(t, x) := v_{k,+} (T_k, \bar{x}) + \frac{|x -  \bar{x}|^2}{\eta} + p(T_k - t).
	\end{equation*}
	Let $\delta' = \delta'' = \varepsilon/4$ and $0 <\delta < \frac{\varepsilon}{2}$. Define $\psi^{p, \delta} := 	\psi^{\varepsilon, \eta, p} - \delta$. With a large enough $p > 0$, we can ensure that $\psi^{p, \delta}$ satisfies the following properties:
	\begin{itemize}
		\item $\cL[\psi^{p, \delta}](t, x) > 0 \quad \text{ on } \overline{\cD(T_k, \bar{x}, \varepsilon)}$.
		
		\item By \eqref{eq:vn2} and the definition of $v^n_k$,
		\begin{equation}\label{prop1_E}
			\psi^{p, \delta}(t, x) \geq v^n_k (t, x) \quad \text{ on } E(\varepsilon).
		\end{equation}
		
		\item By \eqref{eq:Tk_eps1}, \eqref{eq:vn1}, and \eqref{eq:Tk_eps3},
		\begin{equation}\label{prop2_Tkpsi}
			\psi^{p, \delta}(t, x) \geq  w_k (G_k - \theta^*_k(x))^+ + v^{n}_{k+1}(T_k, x_0 - \theta^*_k(x), x_1), \quad (t, x) \in \overline{\cD(T_k, \bar{x}, \varepsilon)}.
		\end{equation}
	\end{itemize}
	
	Hence, we define
	\begin{equation*}
		v^{p, \delta}_{k} (t, x) := \left\{ 
		\begin{array}{ c l }
			v^n_k (t, x) \wedge \psi^{p, \delta}(t, x) & \text{on } \overline{\cD(T_k, \bar{x}, \varepsilon)}, \\
			v^n_k (t, x), & \text{otherwise}.
		\end{array}
		\right.
	\end{equation*}
	
	Next, we show that $(v^n_1, \ldots, v^n_{k-1}, v^{p, \delta}_{k}, v^n_{k+1}, \ldots, v^n_{K})$ is a stochastic supersolution. Only the supermartingale property with $\btau \in [T_{k-1}, T_k]$ is non-trivial. Define the event
	\begin{equation*}
		A := \{ (\btau, \xi) \in \cD(T_k, \bar{x},\varepsilon/2) \} \cap \{\psi^{p, \delta}(\btau, \xi) < v^n_k(\btau, \xi)\}.
	\end{equation*}
	
	Let $U^0 := (\theta^0_{k:K}, \Lambda^0) := (\theta^0_{k:K}, \{ (\tau^0_n, \Delta^0_n) \}^\infty_{n=1})$ be a suitable control for $v^n_{k}$ with the random initial condition $(\btau, \xi)$. Define a new control $U^1 := (\theta^1_{k:K}, \Lambda^1)$ by
	\begin{equation*}
		\theta^1_{k:K}  :=  \one_A \emptyset + \one_{A^c} \theta^0_{k:K}, \quad \Lambda^1  := \{ (\tau^1_n, \Delta^1_n) \}^\infty_{n=1} := \one_{A^c} \{ (\tau^0_n, \Delta^0_n) \}^\infty_{n=1}.
	\end{equation*}
	Here, if $A$ happens, we do not conduct any transactions. Let 
	\begin{align*}
		\tau' := \inf \{ t \in [\btau, T_k] \, | \,  (t,  X(t; \btau, \xi, U^1)) \notin  \cD(T_k, \bar{x}, \varepsilon/2) \} \wedge T_k
	\end{align*}
	be the exit time and $\xi' :=  X(\tau'; \btau, \xi, U^1) \in \cF_{\tau'}$ be the exit position.

	There is a suitable control $U^2 := (\theta^2_{k:K}, \Lambda^2) := (\theta^2_{k:K}, \{ (\tau^2_n, \Delta^2_n) \}^\infty_{n=1})$ for $v^n_{k}$ with the random initial condition $(\tau', \xi')$. Since $\tau' \leq T_k$, the tuple $(T_k, \xi'_0 - \theta^*_k(\xi'), \xi'_1)$ is also a random initial condition. Similarly, there is a suitable control $U^3 := (\theta^3_{k+1:K}, \Lambda^3) := (\theta^3_{k+1:K}, \{ (\tau^3_n, \Delta^3_n) \}^\infty_{n=1})$ for $v^n_{k+1}$ with the random initial condition $(T_k, \xi'_0 - \theta^*_k(\xi'), \xi'_1)$. In the same manner, we introduce a suitable control $U^4 := (\theta^4_{k+1:K}, \Lambda^4) := (\theta^4_{k+1:K}, \{ (\tau^4_n, \Delta^4_n) \}^\infty_{n=1})$ for $v^n_{k+1}$ with the random initial condition $(T_k, \xi')$. Define a control $U := (\theta_{k:K}, \Lambda)$ by
	\begin{align*}
		\Lambda  := & \{ (\tau^1_n, \Delta^1_n) \one_{ \{ \tau^1_n \leq \tau' \}} \}^\infty_{n=1}  + \{ (\tau^2_n, \Delta^2_n) \one_{\{ \tau' \leq \tau^2_n \} \cap  \{ \tau' < T_k \}} \}^\infty_{n=1} \\
		& + \{ (\tau^3_n, \Delta^3_n) \one_{ \{\tau' \leq \tau^3_n \} \cap A \cap \{\tau' = T_k\}} \}^\infty_{n=1}  + \{ (\tau^4_n, \Delta^4_n) \one_{ \{\tau' \leq \tau^4_n \} \cap A^c \cap \{\tau' = T_k\}} \}^\infty_{n=1}, \\
		\theta_k  := & \theta^0_k \one_{A^c \cap \{\tau' = T_k\}}  + \theta^*_k(\xi') \one_{A \cap \{\tau' = T_k\}} + \theta^2_k \one_{\{\tau' < T_k \}}, \\
		\theta_{k+1:K} :=& \theta^2_{k+1:K} \one_{\{\tau' < T_k \}} + \theta^3_{k+1:K} \one_{A \cap \{\tau' = T_k\}} + \theta^4_{k+1:K} \one_{A^c \cap \{\tau' = T_k\}}.
	\end{align*}
	The control $U$ is constructed as follows. First, $U^1$ is applied on $[\btau, \tau']$. Then:
	\begin{itemize}
		\item It the event $A \cap \{\tau' = T_k\}$ occurs, $\xi'$ is the position before the $k$-th withdrawal. We use the amount $\theta^*_k(\xi')$ to support goal $G_k$. After that, we follow $U^3$ on $[\tau', T_K]$.
		\item If the event $A^c \cap \{\tau' = T_k\}$ occurs, it means that the amount $\theta^0_k$ is used and $\xi'$ is the position after supporting $G_k$ already. Then we continue to use $U^4$ on $[\tau', T_K]$.
		\item If the event $\tau' < T_k$ occurs, the control $U^2$ is applied on $[\tau', T_K]$.
	\end{itemize}
	We verify that $U$ is suitable for $v^{p, \delta}_{k}$ with $(\btau, \xi)$.
	
	Consider a stopping time $\rho \in [\btau, T_K]$. Applying the It\^o's formula to $\psi^{p, \delta}$ from $\tau$ to $\rho \wedge \tau'$ under the event $A$ with the control $U^1$, we obtain
	\begin{align*}
		& \one_A v^{p, \delta}_{k} (\btau, \xi) = \one_A \psi^{p, \delta} (\btau, \xi) = \one_A \psi^{p, \delta} (\btau, X(\btau; \btau, \xi, U^1)) \\
		& \geq \E\Big[ \one_{A \cap \{ \rho < \tau'\}}  \psi^{p, \delta} (\rho, X(\rho; \btau, \xi, U^1)) + \one_{A \cap \{ \rho \geq \tau'\}}   \psi^{p, \delta} (\tau', \xi') \Big| \cF_{\btau} \Big].
	\end{align*} 
	Moreover, \eqref{prop1_E} and \eqref{prop2_Tkpsi} lead to
	\begin{align*}
		\one_{A \cap \{ \rho \geq \tau'\}} \psi^{p, \delta} (\tau', \xi') \geq & \one_{A \cap \{ \rho \geq \tau'\} \cap \{ \tau' < T_k\}} v^n_k(\tau', \xi')  \\
		& + \one_{A \cap \{ \rho \geq \tau'\} \cap \{ \tau' = T_k\}} \Big( w_k (G_k - \theta^*_k(\xi'))^+ + v^{n}_{k+1}(T_k, \xi'_0 - \theta^*_k(\xi'), \xi'_1) \Big). 
	\end{align*}
	Since $v^{p, \delta}_{k} \leq \psi^{p, \delta}$ on $\overline{\cD(T_k, \bar{x}, \varepsilon)}$, we obtain
	\begin{align}
		& \one_A v^{p, \delta}_{k} (\btau, \xi) \nonumber \\
		& \geq \E\Big[ \one_{A \cap \{ \rho < \tau'\}}  v^{p, \delta}_{k} (\rho, X(\rho; \btau, \xi, U^1)) \\
		& \qquad + \one_{A \cap \{ \rho \geq \tau'\} \cap \{ \tau' < T_k\}}   v^n_k(\tau', \xi') \nonumber \\
		& \qquad  + \one_{A \cap \{ \rho \geq \tau'\} \cap \{ \tau' = T_k\}} \Big( w_k (G_k - \theta^*_k(\xi'))^+ + v^{n}_{k+1}(T_k, \xi'_0 - \theta^*_k(\xi'), \xi'_1) \Big) \Big| \cF_{\btau} \Big] \nonumber \\
		& = \E\Big[ \one_{A \cap \{ \rho < \tau'\}}  v^{p, \delta}_{k} (\rho, X(\rho; \btau, \xi, U)) \\
		& \qquad + \one_{A \cap \{ \rho \geq \tau'\} \cap \{ \tau' < T_k\}}   v^n_k(\tau', \xi') \nonumber \\
		& \qquad  + \one_{A \cap \{ \rho \geq \tau'\} \cap \{ \tau' = T_k\}} \Big( w_k (G_k - \theta_k)^+ + v^{n}_{k+1}(T_k, \xi'_0 - \theta_k, \xi'_1) \Big) \Big| \cF_{\btau} \Big]. \nonumber
	\end{align} 
	In the last equality, we use the definition of $U$ and the fact that $\theta_k = \theta^*_k(\xi')$ under the event $A \cap \{ \rho \geq \tau'\} \cap \{ \tau' = T_k\}$.
	
	Similar to Lemma \ref{lem:K_vissub}, under the event $A^c$, we have
	\begin{align}
		& \one_{A^c} v^{p, \delta}_{k} (\btau, \xi) = \one_{A^c} v^{n}_{k} (\btau, \xi) \nonumber \\
		& \geq \E\Big[ \one_{A^c \cap \{ \rho < \tau'\}}  v^{n}_{k} (\rho, X(\rho; \btau, \xi, U^1))  \\
		& \qquad + \one_{A^c \cap \{ \rho \geq \tau'\} \cap \{ \tau' < T_k \}}   v^{n}_{k} (\tau', \xi') \nonumber \\
		& \qquad + \one_{A^c \cap \{ \rho \geq \tau'\} \cap \{ \tau' = T_k\}} \Big( w_k (G_k - \theta^0_k)^+ + v^{n}_{k+1}(T_k, \xi') \Big) \Big| \cF_{\btau} \Big] \nonumber \\
		& = \E\Big[ \one_{A^c \cap \{ \rho < \tau'\}}  v^{n}_{k} (\rho, X(\rho; \btau, \xi, U))  \nonumber \\
		& \qquad + \one_{A^c \cap \{ \rho \geq \tau'\} \cap \{ \tau' < T_k \}}   v^{n}_{k} (\tau', \xi') \nonumber \\
		& \qquad + \one_{A^c \cap \{ \rho \geq \tau'\} \cap \{ \tau' = T_k\}} \Big( w_k (G_k - \theta_k)^+ + v^{n}_{k+1}(T_k, \xi') \Big) \Big| \cF_{\btau} \Big]. \nonumber
	\end{align} 
	These two inequalities yield  
	\begin{align}
		v^{p, \delta}_{k} (\btau, \xi) \geq \E\Big[ & \one_{\{ \rho < \tau'\}}  v^{p, \delta}_{k} (\rho, X(\rho; \btau, \xi, U)) \\
		& + \one_{ \{ \rho \geq \tau'\} \cap \{ \tau' < T_k \}}  v^{n}_{k} (\tau', \xi') \nonumber \\
		& + \one_{A \cap \{ \rho \geq \tau'\} \cap \{ \tau' = T_k\}} \Big( w_k (G_k - \theta_k)^+ + v^{n}_{k+1}(T_k, \xi'_0 - \theta_k, \xi'_1) \Big) \nonumber \\
		& + \one_{A^c \cap \{ \rho \geq \tau'\} \cap \{ \tau' = T_k\}} \Big( w_k (G_k - \theta_k)^+ + v^{n}_{k+1}(T_k, \xi') \Big) \Big| \cF_{\btau} \Big]. \nonumber
	\end{align} 
	The definition of $U$ leads to the desired result:
	\begin{equation*}
		v^{p, \delta}_k(\btau, \xi) \geq \E \big[ \cH \big([\btau, \rho], (v^{p, \delta}_k, v_{k+1:K}), X(\cdot; \btau, \xi, \theta_{k:K}, \Lambda) \big) \big| \cF_{\btau} \big].
	\end{equation*}
	
	{\bf Case 2}.  $v_{k, +}(T_k, \bar{x}) - \cM[v_{k, +}]^*(T_k, \bar{x}) > 0.$
	
	This case is similar to Lemma \ref{lem:K_vissub}. We report the control $U$ only. Let  $U^0$ be a suitable control for $v^n_{k}$ with $(\btau, \xi)$. Define $U^1 := (\theta^1_{k:K}, \Lambda^1)$ by
	\begin{equation*}
		\theta^1_{k:K}  :=  \one_A \emptyset + \one_{A^c} \theta^0_{k:K}, \quad \Lambda^1  := \{ (\tau^1_n, \Delta^1_n) \}^\infty_{n=1} := \one_{A} (\btau, \Delta^*(\btau, \xi)) + \one_{A^c} \{ (\tau^0_n, \Delta^0_n) \}^\infty_{n=1},
	\end{equation*}
	where $\Delta^*(t, x)$ is defined similarly as in Lemma \ref{lem:K_vissub}. Let $(\tau', \xi')$ be the exit time and position as before. There is a suitable control $U^2 := (\theta^2_{k:K}, \Lambda^2) := (\theta^2_{k:K}, \{ (\tau^2_n, \Delta^2_n) \}^\infty_{n=1})$ for $v^n_{k}$ with the random initial condition $(\tau', \xi')$. Besides, we introduce a suitable control $U^4 := (\theta^4_{k+1:K}, \Lambda^4) := (\theta^4_{k+1:K}, \{ (\tau^4_n, \Delta^4_n) \}^\infty_{n=1})$ for $v^n_{k+1}$ with the random initial condition $(T_k, \xi')$. Define a control $U := (\theta_{k:K}, \Lambda)$ by
	\begin{align*}
		\Lambda  := & \{ (\tau^1_n, \Delta^1_n) \one_{ \{ \tau^1_n \leq \tau' \}} \}^\infty_{n=1}  + \{ (\tau^2_n, \Delta^2_n) \one_{\{ \tau' \leq \tau^2_n \} \cap  \{ A^c \cap \{\tau' < T_k \} \text{ or } A \} } \}^\infty_{n=1} \\
		&  + \{ (\tau^4_n, \Delta^4_n) \one_{ \{\tau' \leq \tau^4_n \} \cap A^c \cap \{\tau' = T_k\}} \}^\infty_{n=1}, \\
		\theta_k  := & \theta^0_k \one_{A^c \cap \{\tau' = T_k\}} + \theta^2_k \one_{\{ A^c \cap \{\tau' < T_k \} \text{ or } A \} }, \\
		\theta_{k+1:K}  := & \theta^4_{k+1:K} \one_{A^c \cap \{\tau' = T_k\}} + \theta^2_{k+1:K} \one_{\{ A^c \cap \{\tau' < T_k \} \text{ or } A \} }.
	\end{align*}
	The control $U$ is suitable for $v^{p, \delta}_k$ with $(\btau, \xi)$.
\end{proof}

\begin{lemma}\label{lem:interior_vissub}
	The upper stochastic envelope $v_{+}$ satisfies the interior viscosity subsolution property \eqref{eq:interior_vissub} on $[T_{k-1}, T_k) \times \cS$, $k=1,\ldots, K$, under Definition \ref{def:vis_sub}.
\end{lemma}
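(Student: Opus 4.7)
The plan is to prove both parts of \eqref{eq:interior_vissub} by contradiction, following the same template used in Lemmas \ref{lem:K_vissub} and \ref{lem:k_vissub} but now in the interior of $[T_{k-1},T_k)\times\cS$. Fix $(\bar t,\bar x)\in[T_{k-1},T_k)\times\cS$ and $\varphi\in C^{1,2}$ with $(\bar t,\bar x)$ a maximum of $v^*_{k,+}-\varphi$. By adding a standard quadratic penalty $|x-\bar x|^2/\eta+p_0(\bar t-t)^2$ to $\varphi$, I may assume the maximum is strict and that $\varphi(\bar t,\bar x)=v^*_{k,+}(\bar t,\bar x)$. If the claim fails, then either (Case 1) $\cL[\varphi](\bar t,\bar x)>0$, or (Case 2) $v^*_{k,+}(\bar t,\bar x)-\cM[v^*_{k,+}]^*(\bar t,\bar x)>0$. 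In both cases I produce $\tilde v\in\cV^+$ whose $k$-th entry is strictly below $v_{k,+}$ at $(\bar t,\bar x)$, contradicting \eqref{eq:sto_upper}.

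For Case 1, by continuity of the coefficients of $\cL$, I pick $\varepsilon>0$ small enough that $\overline{\cD(\bar t,\bar x,\varepsilon)}:=[\bar t-\varepsilon,\bar t+\varepsilon]\times\overline{B(\bar x,\varepsilon)}\subset[T_{k-1},T_k)\times\cS$ and $\cL[\varphi](t,x)>0$ on this set. Put $E(\varepsilon):=\overline{\cD(\bar t,\bar x,\varepsilon)}\setminus\cD(\bar t,\bar x,\varepsilon/2)$. Using the strict-maximum property together with \citet[Proposition 4.1]{bayraktar2012linear} and \citet[Lemma 2.4]{bayraktar2014Dynkin}, I select a stochastic supersolution $v^n=(v^n_1,\dots,v^n_K)\in\cV^+$ whose $k$-th entry satisfies $v^n_k\le \varphi+\eta$ on $E(\varepsilon)$ for a small $\eta>0$ with $\varphi(\bar t,\bar x)-\delta>\varphi(t,x)-\eta$ on $E(\varepsilon)$ for some $0<\delta<\eta$. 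I then define
\[
v^\delta_k(t,x):=\begin{cases} v^n_k(t,x)\wedge(\varphi(t,x)-\delta), & (t,x)\in\overline{\cD(\bar t,\bar x,\varepsilon)},\\ v^n_k(t,x), & \text{otherwise,}\end{cases}
\]
and claim that the array $(v^n_1,\dots,v^n_{k-1},v^\delta_k,v^n_{k+1},\dots,v^n_K)$ lies in $\cV^+$. The USC, growth, and boundary conditions pass to $v^\delta_k$ trivially, so only the supermartingale property is non-trivial: given $(\btau,\xi)$ with $\btau\in[T_{k-1},T_k]$, set $A:=\{(\btau,\xi)\in\cD(\bar t,\bar x,\varepsilon/2)\}\cap\{\varphi(\btau,\xi)-\delta<v^n_k(\btau,\xi)\}$, take $U^0$ to be a suitable $v^n_k$-control, concatenate ``do nothing on $A$'' with $U^0$ on $A^c$, stop at the exit time $\tau'$ of $\cD(\bar t,\bar x,\varepsilon/2)$, and splice in a suitable $v^n_k$-control from $(\tau',\xi')$. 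Itô's formula applied to $\varphi-\delta$ on $[\btau,\rho\wedge\tau']$ on the event $A$, together with $\cL[\varphi]>0$, gives the supermartingale dominance, and at the boundary $E(\varepsilon)$ the construction of $v^n_k$ ensures $\varphi(\tau',\xi')-\delta\ge v^n_k(\tau',\xi')$, so that everything reduces to the existing supermartingale inequality for $v^n$. This yields $v^\delta_k(\bar t,\bar x)\le\varphi(\bar t,\bar x)-\delta<v^*_{k,+}(\bar t,\bar x)$, the desired contradiction.

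For Case 2, I first observe that $\cM[v^*_{k,+}]^*(\bar t,\bar x)<\infty$ forces $\bar x\in\barS\setminus\overline{\cS_\emptyset}$, so by Lemma \ref{lem:Mproperty}(2) we have $\cM[v^*_{k,+}]^*=\cM[v^*_{k,+}]$ in a neighborhood of $\bar x$ and this map is USC there. The proof then mirrors Case 2 of Lemma \ref{lem:K_vissub}: shrink $\varepsilon$ so that the rebalancing $\Gamma(x,\Delta)$ leaves $\overline{B(\bar x,\varepsilon)}$ for every feasible $\Delta$, apply Dini's argument to obtain $v^n\in\cV^+$ with $\cM[v^n_k]\le\cM[v^*_{k,+}]+\varepsilon/4$ on $\overline{\cD(\bar t,\bar x,\varepsilon)}$, invoke \citet[Theorem 4.8(b)]{rieder1978measurable} to get a Borel $(\varepsilon/4)$-minimizer $\Delta^*(t,x)$ of $\cM[v^n_k]$, and set $\psi^\eta(t,x):=v^*_{k,+}(\bar t,\bar x)-\eta$ with $\eta<\varepsilon/2$. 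Taking $v^\eta_k:=v^n_k\wedge\psi^\eta$ on $\overline{\cD(\bar t,\bar x,\varepsilon)}$, the suitable control executes $\Delta^*(\btau,\xi)$ immediately on $A$ (exiting $B(\bar x,\varepsilon)$ at once) and then follows a $v^n_k$-control from the post-trade position, giving the required supermartingale inequality via $\psi^\eta\ge v^n_k(\cdot,\Gamma(\cdot,\Delta^*))$.

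The hard part will be Case 1: carefully orchestrating the concatenated control across the disjoint events $A$ and $A^c$ and across the stopping time $\tau'$, while simultaneously handling what happens if $\tau'<T_k$ versus $\tau'\ge T_k$ in the $\cH$-functional of Definition \ref{def:sto_super}. Fortunately, since $\bar t<T_k$ I may choose $\varepsilon$ small enough that $\bar t+\varepsilon<T_k$, so $\tau'<T_k$ almost surely on $\{(\btau,\xi)\in\cD(\bar t,\bar x,\varepsilon/2)\}$, and no jump through a goal deadline interferes with the Itô estimate. This makes the present lemma technically simpler than Lemmas \ref{lem:K_vissub} and \ref{lem:k_vissub}, where the interaction with the deadline was the main source of difficulty; here the only real work is checking the supermartingale property across the pasting at $\tau'$, which proceeds exactly as in those lemmas.
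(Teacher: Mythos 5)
Your proposal follows the same approach as the paper's proof: for both cases of \eqref{eq:interior_vissub} you contradict minimality of $v_{k,+}$ by pasting the test function (shifted down) onto a selected $v^n\in\cV^+$ near $(\bar t,\bar x)$, then verify the supermartingale property by splicing a ``do nothing'' (or immediate $\Delta^*$) prelude onto suitable controls from the exit time, exactly as the paper does. Two small points deserve care. First, the inequality you state for the selected supersolution on the annulus $E(\varepsilon)$ has the wrong sign: for the pasted function $v^n_k\wedge(\varphi-\delta)$ to coincide with $v^n_k$ on $E(\varepsilon)$ (so that the handoff at the exit time reduces to the supermartingale inequality of $v^n$), you need a uniform gap $v^n_k\le\varphi-\eta$ there (with $0<\delta<\eta$), not $v^n_k\le\varphi+\eta$; this follows from strictness of the maximum on the compact annulus plus the Dini-type selection from \citet[Proposition 4.1]{bayraktar2012linear} and \citet[Lemma 2.4]{bayraktar2014Dynkin}. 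Second, your remark that $\tau'<T_k$ almost surely on $A$ (after shrinking $\varepsilon$ so the ball lies below $T_k$) is correct and is exactly why no Itô estimate crosses a deadline; but on $A^c$ the random initial condition may sit at $\btau=T_k$ so that $\tau'=T_k$, and in that case the post-$\tau'$ control must be a suitable control for $v^n_{k+1}$ from $(T_k,\xi')$ (the position after funding goal $k$), not a $v^n_k$-control as you wrote. The paper accommodates this through a separate control $U^4$ in \eqref{eq:interior_U}; alternatively you can just let $U^0$ run uninterrupted on $A^c$, which is what the splicing amounts to.
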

\begin{proof}
	Let $(\bar{t},\bar{x}) \in [T_{k-1}, T_k) \times \cS$. Consider a test function $\varphi \in C^{1, 2}([T_{k-1}, T_k) \times \cS)$, such that  $v_{k, +} - \varphi$ attains a strict local maximum of zero at $(\bar{t},\bar{x})$. Assume on the contrary that
	\begin{equation}
		\max \Big\{  \cL[\varphi](\bar{t}, \bar{x}), v_{k, +}(\bar{t}, \bar{x}) - \cM[v_{k, +}]^*(\bar{t}, \bar{x}) \Big\} > 0.
	\end{equation} 
	
	{\bf Case 1}. $\cL[\varphi](\bar{t}, \bar{x}) > 0$.
	
	The proof is similar to \citet[Theorem 3.1]{bayraktar2013stochastic}. We give the main steps and omit similar arguments. With a small $\eta > 0$, we define $\varphi^\eta(t, x) := \varphi(t, x) - \eta$. Moreover, $\varphi^\eta$ satisfies the following properties:
	\begin{itemize}
		\item $\cL[\varphi^{\eta}](t, x) > 0 \text{ on } \overline{B(\bar{t}, \bar{x}, \varepsilon)}$.
		
		\item $\varphi^{\eta}(t, x) \geq v^n_k (t, x) \text{ on } \overline{B(\bar{t}, \bar{x}, \varepsilon)} \setminus B(\bar{t}, \bar{x}, \varepsilon/2)$.
		
		\item $\varphi^{\eta}(\bar{t}, \bar{x}) < v_{k, +}(\bar{t}, \bar{x}) $.
	\end{itemize}
	
	We introduce
	\begin{equation}
		v^{\eta}_{k} (t, x) := \left\{ 
		\begin{array}{ c l }
			v^n_k (t, x) \wedge \varphi^{\eta}(t, x) & \text{on } \overline{B(\bar{t}, \bar{x}, \varepsilon)}, \\
			v^n_k (t, x), & \text{otherwise}.
		\end{array}
		\right.
	\end{equation}
	
	To show that $(v^n_1, \ldots, v^n_{k-1}, v^{\eta}_{k}, v^n_{k+1}, \ldots, v^n_{K})$ is a stochastic supersolution, we only need to consider the case with $\btau \in [T_{k-1}, T_k]$. Define the event
	\begin{equation*}
		A := \{ (\btau, \xi) \in B(\bar{t}, \bar{x},\varepsilon/2) \} \cap \{\varphi^{\eta}(\btau, \xi) < v^n_k(\btau, \xi)\}.
	\end{equation*}
	
	Let $U^0 := (\theta^0_{k:K}, \Lambda^0) := (\theta^0_{k:K}, \{ (\tau^0_n, \Delta^0_n) \}^\infty_{n=1})$ be a suitable control for $v^n_{k}$ with the random initial condition $(\btau, \xi)$. Define a new control $U^1 := (\theta^1_{k:K}, \Lambda^1)$ by
	\begin{equation*}
		\theta^1_{k:K}  :=  \one_A \emptyset + \one_{A^c} \theta^0_{k:K}, \quad \Lambda^1  := \{ (\tau^1_n, \Delta^1_n) \}^\infty_{n=1} := \one_{A^c} \{ (\tau^0_n, \Delta^0_n) \}^\infty_{n=1}.
	\end{equation*}
	Let 
	\begin{align*}
		\tau' := \inf \{ t \in [\btau, T_k] \, | \,  (t,  X(t; \btau, \xi, U^1)) \notin  B(\bar{t}, \bar{x}, \varepsilon/2) \} \wedge T_k
	\end{align*}
	be the exit time and $\xi' :=  X(\tau'; \btau, \xi, U^1) \in \cF_{\tau'}$ be the exit position. 
	
	We introduce a suitable control $U^2 := (\theta^2_{k:K}, \Lambda^2) := (\theta^2_{k:K}, \{ (\tau^2_n, \Delta^2_n) \}^\infty_{n=1})$ for $v^n_{k}$ with $(\tau', \xi')$, and a suitable control $U^4 := (\theta^4_{k+1:K}, \Lambda^4) := (\theta^4_{k+1:K}, \{ (\tau^4_n, \Delta^4_n) \}^\infty_{n=1})$ for $v^n_{k+1}$ with $(T_k, \xi')$. Define $U := (\theta_{k:K}, \Lambda)$ by
	\begin{equation}\label{eq:interior_U}
		\begin{aligned}
			\Lambda  := & \{ (\tau^1_n, \Delta^1_n) \one_{ \{ \tau^1_n \leq \tau' \}} \}^\infty_{n=1}  + \{ (\tau^2_n, \Delta^2_n) \one_{\{ \tau' \leq \tau^2_n \} \cap  \{ A^c \cap \{\tau' < T_k \} \text{ or } A \} } \}^\infty_{n=1} \\
			&  + \{ (\tau^4_n, \Delta^4_n) \one_{ \{\tau' \leq \tau^4_n \} \cap A^c \cap \{\tau' = T_k\}} \}^\infty_{n=1}, \\
			\theta_k  := & \theta^0_k \one_{A^c \cap \{\tau' = T_k\}} + \theta^2_k \one_{\{ A^c \cap \{\tau' < T_k \} \text{ or } A \} }, \\
			\theta_{k+1:K}  := & \theta^4_{k+1:K} \one_{A^c \cap \{\tau' = T_k\}} + \theta^2_{k+1:K} \one_{\{ A^c \cap \{\tau' < T_k \} \text{ or } A \} }.
		\end{aligned}
	\end{equation}
	Then the remaining proof follows similarly.
	
	{\bf Case 2}. $v_{k, +}(\bar{t}, \bar{x}) - \cM[v_{k, +}]^*(\bar{t}, \bar{x}) > 0$.
	
	Again, we report the control $U$ only. Let  $U^0$ be a suitable control for $v^n_{k}$ with $(\btau, \xi)$. Define $U^1 := (\theta^1_{k:K}, \Lambda^1)$ by
	\begin{equation*}
		\theta^1_{k:K}  :=  \one_A \emptyset + \one_{A^c} \theta^0_{k:K}, \quad \Lambda^1  := \{ (\tau^1_n, \Delta^1_n) \}^\infty_{n=1} := \one_{A} (\btau, \Delta^*(\btau, \xi)) + \one_{A^c} \{ (\tau^0_n, \Delta^0_n) \}^\infty_{n=1},
	\end{equation*}
	where $\Delta^*(t, x)$ is defined similarly as in Lemma \ref{lem:K_vissub}. Then $U$ can be constructed as in \eqref{eq:interior_U}.
\end{proof}

\section{Proofs of the stochastic subsolution}
\begin{proof}[Proof of Lemma \ref{lem:classical_sub}]
	{\bf Step 1}. We prove the inequality \eqref{eq:sub_TK} at $T_K$ first. Consider the first term in \eqref{eq:sub_TK}. Since $(x_1 - C(-x_1))^+ \leq x_1$, we have $G_K - x_0 - (x_1 - C(-x_1))^+ \geq G_K - x_0 - x_1$, which further implies
	\begin{equation}
		[G_K - x_0 - (x_1 - C(-x_1))^+]^+ \geq (G_K - x_0 - x_1)^+.
	\end{equation} 
	If $x_0 + x_1 > G_K$, then
	\begin{align*}
		& F^a_K(T_K, x) - w_K \left[ G_K - x_0 - (x_1 - C(-x_1))^+ \right]^+ \\
		& \leq F^a_K(T_K, x) - w_K \left[ G_K - x_0 - x_1 \right]^+ \\
		& = F^a_K(T_K, x) \\
		& = w_K G_K - 2 w_K G^{1-q}_K (a + x_0 + x_1)^q \\
		& \leq w_K G_K - 2 w_K G^{1-q}_K G^q_K = - w_K G_K < 0.
	\end{align*} 
	If $x_0 + x_1 \leq G_K$ and $(x_0, x_1) \neq (0, 0)$, we obtain
	\begin{align*}
		& F^a_K(T_K, x) - w_K \left[ G_K - x_0 - (x_1 - C(-x_1))^+ \right]^+ \\
		& \leq F^a_K(T_K, x) - w_K \left[ G_K - x_0 - x_1 \right]^+ \\
		& = F^a_K(T_K, x) - w_K G_K + w_K(x_0 + x_1)\\
		& = w_K G_K - 2 w_K G^{1-q}_K (a + x_0 + x_1)^q - w_K G_K + w_K(x_0 + x_1)\\
		& \leq - 2 w_K G^{1-q}_K (x_0 + x_1)^q + w_K(x_0 + x_1) \\
		& = w_K (x_0 + x_1)^q [-2 G^{1-q}_K + (x_0 + x_1)^{1-q}] \\
		& \leq w_K (x_0 + x_1)^q [-2 G^{1-q}_K + G^{1-q}_K] = - w_K (x_0 + x_1)^q G^{1-q}_K < 0.
	\end{align*}
	Combining these two inequalities together,
	\begin{equation}\label{eq:FK_bd}
		\begin{aligned}
			& F^a_K(T_K, x) - w_K \left[ G_K - x_0 - (x_1 - C(-x_1))^+ \right]^+ \\
			& \leq - w_K (\min\{x_0 + x_1, G_K\})^q G^{1-q}_K < 0, \quad x \in \cS.
		\end{aligned}
	\end{equation} 
	For the second term in \eqref{eq:sub_TK}, if $x \in \cS_\emptyset$, then
	\begin{equation}\label{eq:trans0}
		F^a_K(T_K, x) - \cM[F^a_K](T_K, x) = - \infty.
	\end{equation}
	If $x \notin \cS_\emptyset$, then
	\begin{equation}\label{eq:trans1}
		\begin{aligned}
			& F^a_K(T_K, x) - \cM[F^a_K](T_K, x) \\
			& = w_K G_K - 2 w_K G^{1-q}_K (a + x_0 + x_1)^q \\
			& \quad - \inf_{\Delta \in D(x)} [w_K G_K - 2 w_K G^{1-q}_K (a + x_0 + x_1 - C(\Delta))^q] \\
			& \leq 2 w_K G^{1-q}_K [(a + x_0 + x_1 - C_{\min})^q - (a+x_0 + x_1)^q] < 0.
		\end{aligned}
	\end{equation}
	
	Clearly, a continuous function $\kappa^b_K(x)$ exists, with $\kappa^b_K(x) \leq 0$ for $x \in \barS$ and $\kappa^b_K(x) < 0$ for $x \in \cS$. 
	
	{\bf Step 2}. Next, we prove \eqref{eq:sub_t}. Clearly, the term $F^a_k(t, x) - \cM[F^a_k](t, x)$ can be handled as in the Step 1. For the infinitesimal generator term, we have
	\begin{align*}
		& \cL[F^a_k](t, x) \\
		& \quad = C_k e^{\lambda (T_k - t)} (a + x_0 + x_1)^q \Big\{ - \lambda + \frac{qr x_0}{a + x_0 + x_1} + \frac{q \mu x_1}{a + x_0 + x_1} + \frac{q (q - 1) \sigma^2 x^2_1}{2 (a + x_0 + x_1)^2} \Big\} \\
		& \quad \leq C_k e^{\lambda (T_k - t)} (a + x_0 + x_1)^q ( - \lambda + q \max\{ r, \mu, 0\} ) < 0, \; \text{ if } x \in \cS.
	\end{align*}
	Then we can find $\kappa^c_k(x)$ satisfying required properties.
	
	{\bf Step 3}. For the inequality at $T_k$, we only need to consider the first term:
	\begin{align*}
		& F^a_k(T_k, x) - \inf_{0 \leq \theta_k \leq x_0} \left[ w_k (G_k - \theta_k)^+ + F^a_{k+1}(T_k, x_0 - \theta_k, x_1) \right] \\
		& \quad \leq  F^a_k(T_k, x)  - w_k (G_k - x_0 - x_1)^+ - F^a_{k+1}(T_k, x_0 - 0, x_1) \\
		& \quad = \sum^{K}_{i = k} w_i G_i - C_k (a + x_0 + x_1)^q - w_k (G_k - x_0 - x_1)^+ \\
		& \qquad - \Big\{ \sum^{K}_{i = k+1} w_i G_i - C_{k+1} (a + x_0 + x_1)^q e^{\lambda(T_{k+1} - T_k)}  \Big\} \\
		& \quad = w_k G_k - w_k (G_k - x_0 - x_1)^+ - 2w_k  G^{1-q}_k (a + x_0 + x_1)^q \\
		& \quad \leq w_k G_k - w_k (G_k - x_0 - x_1)^+ - 2w_k  G^{1-q}_k (x_0 + x_1)^q.
	\end{align*}
	Similar to the Step 1, if $x_0 + x_1 > G_k$, then
	\begin{align*}
		& w_k G_k - w_k (G_k - x_0 - x_1)^+ - 2w_k  G^{1-q}_k (x_0 + x_1)^q \leq - w_k G_k < 0.
	\end{align*}
	If $x_0 + x_1 \leq G_k$ and $(x_0, x_1) \neq (0, 0)$, we have
	\begin{align*}
		& w_k G_k - w_k (G_k - x_0 - x_1)^+ - 2w_k  G^{1-q}_k (x_0 + x_1)^q \leq - w_k G^{1-q}_k (x_0 + x_1)^q < 0.
	\end{align*}
	Hence, there exists $\kappa^b_k(x)$ with desired properties.
\end{proof}

\begin{proof}[Proof of Lemma \ref{lem:F0}]
	Since $F^0_k$ is continuous, Condition (1) on the LSC property holds. The growth condition (2) also holds directly. Condition (3) is verified in the proof of Lemma \ref{lem:classical_sub}, in the same spirit of \eqref{eq:trans0} and \eqref{eq:trans1}.
	
	Finally, we verify Condition (4). At the goal deadline $T_k$, where $k = 1, \ldots, K-1$, Lemma \ref{lem:classical_sub} indicates that 
	\begin{equation}\label{ineq:Tk}
		F^0_k(T_k, x) \leq w_k (G_k - \theta_k)^+ + F^0_{k+1}(T_k, x_0 - \theta_k, x_1), 
	\end{equation}
	for all $x \in \cS$ and admissible $\theta_k$. At the last deadline $T_K$, \eqref{eq:FK_bd} in the proof for Lemma \ref{lem:classical_sub} and $F^0_K(T_K, 0) = w_K G_K$ imply that
	\begin{equation}\label{eq:FKTK}
		F^0_K(T_K, x) \leq w_K \left[ G_K - x_0 - (x_1 - C(-x_1))^+ \right]^+,
	\end{equation}
	for all $x \in \barS$.

	Between goal deadlines, we can apply the It\^o's formula together with the property $\cL[F^0_k](t, x) < 0$ for $x \in \cS$. As a demonstration, we consider the case when $k=K-1$, $\btau \in [T_{K-2}, T_{K-1}]$, and $T_{K-1} \leq \rho \leq T$. If the random initial value $\xi \neq 0$, then a recursive application of the properties mentioned above shows that
	\begin{align*}
		F^0_{K-1} (\btau, \xi) & = F^0_{K-1} (T_{K-1},  X(T_{K-1} -)) \\
		& \quad + \int^{T_{K-1}}_{\btau} \cL[F^0_{K-1}](t, X(t)) dt - \int^{T_{K-1}}_{\btau} \sigma X_1(t) \frac{\partial F^0_{K-1}}{\partial x_1}(t, X(t)) dW(t) \\
		& \leq w_{K-1} (G_{K-1} - \theta_{K-1})^+ + F^0_K(T_{K-1}, X_0(T_{K-1}-) - \theta_{K-1}, X_1(T_{K-1})) \\
		& \quad + \int^{T_{K-1}}_{\btau} \cL[F^0_{K-1}](t, X(t)) dt - \int^{T_{K-1}}_{\btau} \sigma X_1(t) \frac{\partial F^0_{K-1}}{\partial x_1}(t, X(t)) dW(t) \\
		& = w_{K-1} (G_{K-1} - \theta_{K-1})^+ + F^0_{K}(\rho, X(\rho-)) \\
		& \quad + \int^{\rho}_{T_{K-1}} \cL[F^0_{K}](t, X(t)) dt - \int^{\rho}_{T_{K-1}} \sigma X_1(t) \frac{\partial F^0_{K}}{\partial x_1}(t, X(t)) dW(t)  \\
		& \quad + \int^{T_{K-1}}_{\btau} \cL[F^0_{K-1}](t, X(t)) dt - \int^{T_{K-1}}_{\btau} \sigma X_1(t) \frac{\partial F^0_{K-1}}{\partial x_1}(t, X(t)) dW(t),
	\end{align*}
	where $X(t)$ represents $X(t; \btau, \xi, \theta_{K-1:K}, \emptyset)$. Thanks to \eqref{eq:FKTK}, we have
	\begin{align*}
		F^0_{K}(\rho, X(\rho-)) & = F^0_{K}(\rho, X(\rho)) \one_{\{\rho < T \}}  + F^0_{K}(T, X(T-)) \one_{\{\rho = T \}} \\
		& \leq F^0_{K}(\rho, X(\rho)) \one_{\{\rho < T \}} + w_K \left[ G_K - \theta_K \right]^+ \one_{\{\rho = T \}}. 
	\end{align*}
	Combining these two inequalities together, a localization argument with Fatou's lemma yields the corresponding Condition (4) when $\xi \neq 0$. If $\xi = 0$, both $X_0$ and $X_1$ stay at zero and Condition (4) follows from the explicit value of $F^0_k(t, 0)$. The proof for the general $k$ and $\rho \in [\btau, T]$ is in the same spirit while lengthy. 
\end{proof}

\begin{lemma}\label{lem:K_vissup}
	The lower stochastic envelope $v_{-}$ satisfies the viscosity supersolution property \eqref{K_vissup} at $T_K$, under Definition \ref{def:vis_super}.
\end{lemma}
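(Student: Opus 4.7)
My plan is to argue by contradiction, mirroring in the opposite direction the construction used in Lemma \ref{lem:K_vissub}. First observe that since $v_{-} \in \cV^{-}$, the function $v_{K,-}$ is already LSC, so $v_{K,*} = v_{K,-}$, and Lemma \ref{lem:Mproperty}(1) gives $\cM[v_{K,-}]_{*} = \cM[v_{K,-}]$. If the conclusion fails, there exist $\bar{x} \in \cS$ and $\varepsilon > 0$ with
\[
v_{K,-}(T_K, \bar{x}) + 3\varepsilon \;\le\; \min\Bigl\{\, w_K\bigl[G_K - \bar{x}_0 - (\bar{x}_1 - C(-\bar{x}_1))^+\bigr]^+,\; \cM[v_{K,-}](T_K, \bar{x})\,\Bigr\}.
\]
I would then construct an array $\tilde v \in \cV^{-}$ whose $K$-th component strictly dominates $v_{K,-}$ at $(T_K, \bar{x})$, contradicting the maximality in \eqref{eq:envelope}.

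For the construction I introduce the test function
\[
\psi(t, x) := v_{K,-}(T_K, \bar{x}) + \varepsilon - \tfrac{|x - \bar{x}|^2}{\eta} - p(T_K - t),
\]
together with the neighborhoods $\cD(T_K, \bar{x}, \delta) := (T_K - \delta, T_K] \times B(\bar{x}, \delta)$ and $E(\delta) := \overline{\cD(T_K, \bar{x}, \delta)} \setminus \cD(T_K, \bar{x}, \delta/2)$, with $\delta, \eta, p > 0$ to be tuned. Using continuity of the liquidation payoff, LSC of $\cM[v_{K,-}]$ at $(T_K, \bar{x})$, and the global boundedness of $v_{K,-}$ (from $v_{K,-} \leq V_K \leq w_K G_K$), I would select the parameters so that (i) $\cL[\psi] < 0$ on $\overline{\cD(T_K, \bar{x}, \delta)}$, (ii) $\psi \leq v_{K,-}$ on the outer ring $E(\delta)$ (driving $\psi$ below the uniform lower bound of $v_{K,-}$ on $\overline{B(\bar{x}, \delta)}$ through large $1/\eta$ and $p$), (iii) $\psi(T_K, x) \leq w_K[G_K - x_0 - (x_1 - C(-x_1))^+]^+$ on $B(\bar{x}, \delta)$, and (iv) $\psi \leq \cM[v_{K,-}]$ on $\overline{\cD(T_K, \bar{x}, \delta)}$. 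I would then set
\[
\tilde v_K(t, x) := \begin{cases} v_{K,-}(t, x) \vee \psi(t, x), & (t, x) \in \overline{\cD(T_K, \bar{x}, \delta)},\\ v_{K,-}(t, x), & \text{otherwise},\end{cases}
\]
and $\tilde v_k := v_{k,-}$ for $k < K$.

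The LSC property, the growth bound, and the inequality $\tilde v_K \leq \cM[\tilde v_K]$ all follow from $\tilde v_K \geq v_{K,-}$ together with (iv). The substantive task is Condition (4) of Definition \ref{def:sto_sub} for $k = K$ with $\btau \in [T_{K-1}, T_K]$; for $k < K$ the replacement of $v_{K,-}$ by the larger $\tilde v_K$ inside $\cH$ only strengthens the right-hand side. Given any $(\btau, \xi)$-admissible $\theta_K$ and stopping time $\rho \in [\btau, T_K]$, I would partition on $A := \{(\btau, \xi) \in \cD(T_K, \bar{x}, \delta/2)\} \cap \{\psi(\btau, \xi) > v_{K,-}(\btau, \xi)\}$. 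On $A^c$ the bound follows from the property of $v_{K,-}$ combined with $\tilde v_K \geq v_{K,-}$. On $A$ I would introduce $\tau' := \inf\{t \in [\btau, T_K] : (t, X(t; \btau, \xi, \emptyset, \emptyset)) \notin \cD(T_K, \bar{x}, \delta/2)\} \wedge T_K$ and apply It\^o's formula to $\psi$ under the uncontrolled dynamics up to $\rho \wedge \tau'$; the strict negativity of $\cL[\psi]$ then delivers $\psi(\btau, \xi) \leq \E[\psi(\rho \wedge \tau', X(\rho \wedge \tau')) \big| \cF_{\btau}]$. At the stopping point: when $\rho < \tau'$ I use $\psi \leq \tilde v_K$ inside $\cD(T_K, \bar{x}, \delta/2)$; when $\tau' \leq \rho$ and $\tau' < T_K$ the process lies in $E(\delta)$ where $\psi \leq v_{K,-}$, after which the submartingale property of $v_{K,-}$ from $\tau'$ onward together with $\tilde v_K \geq v_{K,-}$ closes the estimate; when $\tau' = T_K \leq \rho$ the terminal obstacle from (iii) yields $\psi(T_K, X(T_K-)) \leq w_K(G_K - \theta_K)^+$ since $\theta_K = L(X(T_K-))$. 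Combining the three cases shows $\tilde v \in \cV^{-}$, and $\tilde v_K(T_K, \bar{x}) \geq \psi(T_K, \bar{x}) = v_{K,-}(T_K, \bar{x}) + \varepsilon$ furnishes the required contradiction.

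The hard part will be the simultaneous calibration of $(\delta, \eta, p)$. Driving $\psi$ below $v_{K,-}$ across both the spatial portion $\{|x - \bar{x}| \geq \delta/2\}$ and the temporal face $\{t \leq T_K - \delta/2\}$ of $E(\delta)$ forces $\eta$ to be small and $p$ to be large relative to the uniform lower bound of $v_{K,-}$; the quadratic spatial penalty then injects a contribution of order $1/\eta$ into $\cL[\psi]$ through the diffusion term $\sigma^2 x_1^2$, so $p$ must also dominate $1/\eta$. Only after $\eta$ and $p$ are fixed in this order can $\delta$ be shrunk enough to harvest the LSC estimate on $\cM[v_{K,-}]$ and the continuity of the liquidation payoff needed for (iii) and (iv).
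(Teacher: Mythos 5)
Your proposal follows essentially the same roadmap as the paper's proof: the same quadratic-in-space, linear-in-time test function $\psi$, the same bump $\tilde v_K := v_{K,-} \vee \psi$ on a small parabolic neighborhood of $(T_K, \bar{x})$, the same event $A$, the same exit time $\tau'$, the same three-way case split (interior stop, lateral exit before $T_K$, and the terminal face with $\theta_K$ the liquidation value), and the same verification that Conditions (1)--(4) of Definition \ref{def:sto_sub} hold so that $\tilde v \in \cV^-$ contradicts maximality. Packaging the lift $+\varepsilon$ into $\psi$ rather than adding a separate $\delta$-shift (as the paper does with $\psi^{p,\delta}:=\psi^{\varepsilon,\eta,p}+\delta$) is a harmless cosmetic variation, and the reduction of Condition (3) to (iv) together with $\tilde v_K \geq v_{K,-}$ is correct.

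The one place where you go astray is the parameter calibration in your last paragraph. You propose to fix $\eta$ and $p$ first and then shrink $\delta$. This order cannot work: on the spatial face of $E(\delta)$ the penalty is of order $\delta^2/\eta$ and on the temporal face of order $p\delta$, and both degenerate to zero as $\delta\to 0$ with $\eta$ and $p$ fixed, so condition (ii) fails. The correct sequencing, and the one the paper uses (its $\varepsilon$ plays the role of your $\delta$), is to fix the neighborhood radius first --- small enough for the LSC/continuity estimates behind (iii) and (iv), and small enough that $E(\delta)$ is a fixed compact set away from the origin on which the LSC function $v_{K,-}$ has a finite lower bound --- then choose $\eta$ small to control the spatial face, and finally take $p$ large to control the temporal face and to make $\cL[\psi]<0$; the latter is unproblematic because the $1/\eta$-order diffusion contribution $\sigma^2 x_1^2/\eta$ is bounded on the now-fixed compact neighborhood. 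Once the sequencing is straightened out, the rest of your argument goes through.
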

\begin{proof} 
	Since $v_{K,-}$ itself is also LSC, we have $v_{K,-,*} = v_{K,-}$ and $\cM[v_{K,-}]_* = \cM[v_{K,-}]$ by Lemma \ref{lem:Mproperty}. Assume on the contrary that there exists $\bar{x} := (\bar{x}_0, \bar{x}_1) \in \cS$, such that 
	\begin{equation}
		\begin{aligned}
			\max \Big\{ & v_{K,-}(T_K, \bar{x}) - w_K \left[ G_K - \bar{x}_0 - (\bar{x}_1 - C(-\bar{x}_1))^+ \right]^+ , \\
			& v_{K,-}(T_K, \bar{x}) - \cM[v_{K,-}](T_K, \bar{x}) \Big\} < 0.
		\end{aligned}
	\end{equation}
	For $\varepsilon > 0$ small enough, we define several sets for later use:
	\begin{equation}
		\begin{aligned}
			B(\bar{x}, \varepsilon) & := \{ x | x \in \barS \text{ and } |x - \bar{x}| < \varepsilon \}, \\
			\cD(T_K, \bar{x}, \varepsilon) & := (T_K - \varepsilon, T_K] \times B(\bar{x}, \varepsilon), \\
			E(\varepsilon) &:= \overline{\cD(T_K, \bar{x}, \varepsilon)} \setminus \cD(T_K, \bar{x}, \varepsilon/2).
		\end{aligned}
	\end{equation}
	
	Since $\cM[v_{K,-}]$ is LSC and $\left[ G_K - x_0 - (x_1 - C(-x_1))^+ \right]^+$ is continuous, there exists $\varepsilon > 0$ small enough, such that 
	\begin{equation}\label{eq:sub_eps}
		\begin{aligned}
			\max \Big\{ & v_{K,-}(T_K, \bar{x}) - w_K \left[ G_K - x_0 - (x_1 - C(-x_1))^+ \right]^+ , \\
			& v_{K,-}(T_K, \bar{x}) - \cM[v_{K,-}](t, x) \Big\} \leq -\varepsilon,
		\end{aligned}
	\end{equation}
	when $(t, x) \in \overline{\cD(T_K, \bar{x}, \varepsilon)}$.
	
	As $v_{K,-}$ is LSC and $E(\varepsilon)$ is compact, the function $v_{K,-}$ is bounded from below on $E(\varepsilon)$. With a small enough $\eta > 0$, we have
	\begin{equation}\label{eq:g_eta}
		- \inf_{(t, x) \in E(\varepsilon)} v_{K,-}(t, x) + v_{K,-}(T_K, \bar{x}) < \frac{\varepsilon^2}{4 \eta} - \varepsilon.
	\end{equation} 
	Note that $v_{-} \in \cV^-$. For $p >0$, we define
	\begin{equation*}
		\psi^{\varepsilon, \eta, p}(t, x) := v_{K,-}(T_K, \bar{x}) - \frac{|x - \bar{x}|^2}{\eta} - p(T_K - t).
	\end{equation*} 
	With a large enough $p$,
	\begin{equation}\label{eq:L_psi}
		\cL[\psi^{\varepsilon, \eta, p}] (t, x) < 0 \text{ holds for } (t, x) \in \overline{\cD(T_K, \bar{x}, \varepsilon)}. 
	\end{equation}
	By the definition of $E(\varepsilon)$, the property of $v_{K,-}$ in \eqref{eq:g_eta}, and making $p$ sufficiently large, we obtain the following inequality when $(t, x) \in E(\varepsilon)$:
	\begin{align*}
		\psi^{\varepsilon, \eta, p}(t, x) < v_{K,-}(T_K, \bar{x}) - \frac{\varepsilon^2}{4 \eta} < \inf_{(t, x) \in E(\varepsilon)} v_{K,-}(t, x) - \varepsilon \leq v_{K,-}(t, x) - \varepsilon.
	\end{align*}
	Besides, \eqref{eq:sub_eps} implies that
	\begin{equation}\label{eq:psi}
		\psi^{\varepsilon, \eta, p}(t, x) \leq v_{K,-}(T_K, \bar{x}) \leq w_K \left[ G_K - x_0 - (x_1 - C(-x_1))^+ \right]^+  - \varepsilon,
	\end{equation}
	when $(t, x) \in \overline{\cD(T_K, \bar{x}, \varepsilon)}$.
	
	Let $0 < \delta < \varepsilon$ be small enough and define
	\begin{equation}\label{hdelta}
		v^{\delta}_{K} (t, x) = \left\{ 
		\begin{array}{cl}
			v_{K,-} (t, x) \vee ( \psi^{\varepsilon, \eta, p}(t, x) + \delta) & \text{on } \overline{\cD(T_K, \bar{x}, \varepsilon)}, \\
			v_{K,-} (t, x), & \text{otherwise}.
		\end{array}
		\right.
	\end{equation}
	
	We verify that $(v_{1,-}, \ldots, v_{K-1,-}, v^\delta_K)$ is a stochastic subsolution under Definition \ref{def:sto_sub}. Since $v^\delta_K$ is LSC and satisfies the polynomial growth condition with order $p_0 \in (0, 1)$, Conditions (1) and (2) in Definition \ref{def:sto_sub} are satisfied. 
	
	For Condition (3), we only need to verify it for $v^\delta_K$. As $v^\delta_K \geq v_{K,-}$ everywhere, we have
	\begin{equation}
		v^\delta_K(t, x) - \cM[v^\delta_K](t, x) \leq v^\delta_K(t, x) - \cM[v_{K,-}](t, x).
	\end{equation}
	If $v^\delta_K(t, x) = v_{K,-}(t, x)$, then Condition (3) is satisfied. If $v^\delta_K(t, x) = \psi^{\varepsilon, \eta, p}(t, x) + \delta$ instead, then it must be $(t, x) \in \overline{\cD(T_K, \bar{x}, \varepsilon)}$. It leads to
	\begin{align*}
		& \psi^{\varepsilon, \eta, p}(t, x) + \delta - \cM[v_{K,-}](t, x) \\
		& = v_{K,-}(T_K, \bar{x}) - \frac{|x - \bar{x}|^2}{\eta} - p(T_K - t) + \delta - \cM[v_{K,-}](t, x) \\
		& \leq v_{K,-}(T_K, \bar{x})  - \cM[v_{K,-}](t, x) + \delta \\
		& \leq - \varepsilon + \delta < 0,
	\end{align*}
	where \eqref{eq:sub_eps} is used. Therefore, $v^\delta_K$ satisfies Condition (3).
	
	For Condition (4), as $(v_{1,-}, \ldots, v_{K-1,-})$ satisfies Condition (4) and $v^\delta_K \geq v_{K,-}$ everywhere, we only need to prove it for $v^\delta_K$. Consider any random initial condition $(\btau, \xi)$ with $\btau \in [T_{K-1}, T_K]$, $\xi \in \cF_{\btau}$, and $\p(\xi \in \barS) = 1$. Note that the last withdrawal $\theta_K$ is specified by the liquidation. Define the event 
	\begin{equation}
		A := \{(\btau, \xi) \in \cD(T_K, \bar{x}, \varepsilon/2) \} \cap \{ \psi^{\varepsilon, \eta, p}(\btau, \xi) + \delta > v_{K,-}(\btau, \xi)\}.
	\end{equation}
	Then $A \in \cF_{\btau}$. Let 
	\begin{align*}
		\tau^1 := \inf \big\{ t \in [\btau, T_K] \, \big| &  (t,  X(t; \btau, \xi, \theta_K, \emptyset)) \notin \cD(T_K, \bar{x}, \varepsilon/2) \big\} \wedge T_K
	\end{align*}
	be the exit time and denote
	\begin{align*}
		\xi^{1} := (\xi^{1}_0, \xi^{1}_1) :=  X(\tau^1; \btau, \xi, \theta_K, \emptyset) \in \cF_{\tau^1}
	\end{align*}
	as the exit position. Since it is possible that $\tau^1 = T$, we also introduce  
	\begin{align*}
		\xi^{1-} := (\xi^{1-}_0, \xi^{1-}_1) :=  X(\tau^1-; \btau, \xi, \theta_K, \emptyset)
	\end{align*}
	as the position that excludes any jump caused by $\theta_K$. 
	
	Let $\rho \in [\btau, T]$ be another stopping time. For notational simplicity, denote $$\psi^{\delta}(\btau, \xi) := \psi^{\varepsilon, \eta, p}(\btau, \xi) + \delta.$$
	
	Under the event $A$,
	\begin{align}
		\one_{A} v^\delta_K(\btau, \xi) & = \one_A \psi^{\delta}(\btau, \xi) \nonumber \\
		& \leq \E [\one_A \psi^{\delta} (\tau^1 \wedge \rho, X((\tau^1 \wedge \rho) -;\btau, \xi, \theta_K, \emptyset)) | \cF_{\btau}] \nonumber \\
		& = \E [\one_{A \cap \{\rho < \tau^1 \}} \psi^{\delta} (\rho, X(\rho;\btau, \xi, \theta_K, \emptyset)) | \cF_{\btau}] + \E [\one_{A \cap \{ \rho \geq \tau^1\}} \psi^{\delta} (\tau^1, \xi^{1-}) | \cF_{\btau}]. \label{eq:A_psi}
	\end{align}
	The first line follows from the definition of $A$. The second line is from applying It\^o's formula to $\psi^{\delta}(t, X)$ from $\btau$ to $(\tau^1 \wedge \rho) -$, together with \eqref{eq:L_psi}. The third line uses the definition of $\xi^{1-}$.
	
	When the event $A \cap \{\rho < \tau^1 \}$ happens, we have $(\rho, X(\rho;\btau, \xi, \theta_K, \emptyset)) \in \cD(T_K, \bar{x}, \varepsilon/2)$. By the definition of $v^\delta_K$, 
	\begin{equation}\label{eq:A1}
		\one_{A \cap \{\rho < \tau^1 \}} \psi^{\delta} (\rho, X(\rho;\btau, \xi, \theta_K, \emptyset)) \leq \one_{A \cap \{\rho < \tau^1 \}} v^{\delta}_K (\rho, X(\rho;\btau, \xi, \theta_K, \emptyset)).
	\end{equation}
	
	For the second term in \eqref{eq:A_psi}, we separate it into two cases. When $\tau^1 = T_K$, \eqref{eq:psi} leads to
	\begin{equation}\label{eq:tau1TK1}
		\one_{A \cap \{\rho \geq \tau^1\} \cap \{ \tau^1 = T_K\}}  \psi^\delta(\tau^1, \xi^{1-}) \leq \one_{A \cap \{\rho \geq \tau^1\} \cap \{ \tau^1 = T_K\}}  w_K (G_K - \theta_K)^+.
	\end{equation}
	If $\tau^1 < T_K$, then
	\begin{align}
		&  \one_{A \cap \{\rho \geq \tau^1\} \cap \{ \tau^1 < T_K\}} \psi^\delta (\tau^1, \xi^{1-}) \nonumber \\
		& \leq \one_{A \cap \{\rho \geq \tau^1\} \cap \{ \tau^1 < T_K\}} v_{K,-} (\tau^1, \xi^{1-}) \nonumber \\
		& \leq \one_{A \cap \{\rho \geq \tau^1\} \cap \{ \tau^1 < T_K\}} \E[ \one_{\{\rho < T_K\}} v_{K,-}(\rho, X(\rho; \tau^1, \xi^{1-}, \theta_K, \emptyset)) + \one_{\{\rho = T_K\}} w_K(G_K - \theta_K)^+ | \cF_{\tau^1}] \nonumber \\
		& = \one_{A \cap \{\rho \geq \tau^1\} \cap \{ \tau^1 < T_K\}} \E[ \one_{\{\rho < T_K\}} v_{K,-}(\rho, X(\rho; \btau, \xi, \theta_K, \emptyset)) + \one_{\{\rho = T_K\}} w_K(G_K - \theta_K)^+ | \cF_{\tau^1}] \nonumber \\
		& \leq \one_{A \cap \{\rho \geq \tau^1\} \cap \{ \tau^1 < T_K\}} \E[ \one_{\{\rho < T_K\}} v^\delta_K(\rho, X(\rho; \btau, \xi, \theta_K, \emptyset)) + \one_{\{\rho = T_K\}} w_K(G_K - \theta_K)^+ | \cF_{\tau^1}]. \label{eq:tau1TK2}
	\end{align}
	The first inequality uses $\xi^{1-} \in \partial B(\bar{x}, \varepsilon/2)$ when $\tau^1 < T_K$. The second inequality follows from the submartingale property of $v_{K,-}$, with the random initial condition $(\tau^1, \xi^{1-})$. The equality uses the definition of $X(\cdot; \btau, \xi, \theta_K, \emptyset)$. The last inequality is from the fact that $v^\delta_K \geq v_{K,-}$ everywhere.
	
	Combining \eqref{eq:tau1TK1} and \eqref{eq:tau1TK2} and taking expectation conditional on $\cF_{\btau}$, we have
	\begin{align}
		& \E [\one_{A \cap \{ \rho \geq \tau^1\}} \psi^{\delta} (\tau^1, \xi^{1-}) | \cF_{\btau}] \label{eq:A2}\\
		& \leq \one_{A} \E[ \one_{\{\rho \geq \tau^1\} \cap \{\rho < T_K\}} v^\delta_K(\rho, X(\rho; \btau, \xi, \theta_K, \emptyset)) + \one_{\{\rho \geq \tau^1\} \cap \{\rho = T_K\}} w_K(G_K - \theta_K)^+ | \cF_{\btau}]. \nonumber
	\end{align}
	
	With \eqref{eq:A1} and \eqref{eq:A2}, \eqref{eq:A_psi} reduces to
	\begin{align}
		& \one_{A} v^\delta_K(\btau, \xi)  \nonumber \\
		& \leq \one_{A} \E[ \one_{\{\rho < T_K\}} v^\delta_K(\rho, X(\rho; \btau, \xi, \theta_K, \emptyset)) + \one_{\{\rho = T_K\}} w_K(G_K - \theta_K)^+ | \cF_{\btau}]. \label{eq:A_TK}
	\end{align}	 
	
	Under the event $A^c$, we use the definition of $A$, the submartingale property of $v_{K,-}$, and $v_{K,-} \leq v^\delta_K$ everywhere, to derive
	\begin{align}
		& \one_{A^c} v^\delta_K(\btau, \xi) \nonumber \\
		& = \one_{A^c} v_{K,-}(\btau, \xi) \nonumber \\
		& \leq \one_{A^c} \E[ \one_{\{\rho < T_K\}} v_{K,-}(\rho, X(\rho; \btau, \xi, \theta_K, \emptyset)) + \one_{\{\rho = T_K\}} w_K(G_K - \theta_K)^+ | \cF_{\btau}] \nonumber \\
		& \leq \one_{A^c} \E[ \one_{\{\rho < T_K\}} v^\delta_K(\rho, X(\rho; \btau, \xi, \theta_K, \emptyset)) + \one_{\{\rho = T_K\}} w_K(G_K - \theta_K)^+ | \cF_{\btau}]. \label{eq:Ac_TK}
	\end{align}
	Putting \eqref{eq:A_TK} and \eqref{eq:Ac_TK} together, we obtain Condition (4) as desired. 
	
	Hence, $(v_{1,-}, \ldots, v_{K-1, -}, v^\delta_K)$ is a stochastic subsolution under Definition \ref{def:sto_sub}. However, $$v^\delta_{K}(T_K, \bar{x}) = v_{K,-}(T_K, \bar{x}) + \delta > v_{K,-}(T_K, \bar{x}), $$ which contradicts with the definition of $v_{K,-}$ as a supremum.
\end{proof}

\begin{lemma}\label{lem:k_vissup}
	The lower stochastic envelope $v_{-}$ satisfies the viscosity supersolution property \eqref{k_vissup} at $T_k$, $k = 1, \ldots, K-1$, under Definition \ref{def:vis_super}.
\end{lemma}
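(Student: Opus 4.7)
The plan is to mirror the strategy of Lemma \ref{lem:K_vissup}: argue by contradiction, build a small upward bump of $v_{k,-}$ at $(T_k, \bar x)$, and check that the bumped array is still a stochastic subsolution, thereby contradicting the supremum definition \eqref{eq:envelope}. Suppose, for contradiction, that there exist $\bar x \in \cS$ and $\varepsilon_0 > 0$ such that
\begin{equation*}
	v_{k,-}(T_k, \bar x) < \inf_{0 \leq \theta_k \leq \bar x_0}\!\left[ w_k (G_k - \theta_k)^+ + v_{k+1,-}(T_k, \bar x_0 - \theta_k, \bar x_1)\right] - \varepsilon_0
\end{equation*}
and $v_{k,-}(T_k, \bar x) < \cM[v_{k,-}](T_k, \bar x) - \varepsilon_0$. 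The right-hand side of the second inequality is LSC in $x$ by Lemma \ref{lem:Mproperty} together with the LSC of $v_{k,-}$. For the first inequality, the map
$$F(x) := \inf_{\theta_k \in [0, x_0]}\!\left[ w_k (G_k - \theta_k)^+ + v_{k+1,-}(T_k, x_0 - \theta_k, x_1)\right]$$
is LSC in $x \in \barS$: the integrand is jointly LSC in $(\theta_k, x)$, and the correspondence $x_0 \mapsto [0, x_0]$ is continuous with compact values, so a Berge-type argument applies. Consequently, both strict inequalities survive, with $\varepsilon_0$ replaced by $\varepsilon_0/2$, on a closed cylindrical neighborhood $\overline{\cD(T_k, \bar x, \varepsilon)} := [T_k - \varepsilon, T_k] \times \overline{B(\bar x, \varepsilon)}$ for $\varepsilon > 0$ small.

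Next, as in Lemma \ref{lem:K_vissup}, introduce
\begin{equation*}
	\psi^{\varepsilon,\eta,p}(t, x) := v_{k,-}(T_k, \bar x) - \frac{|x - \bar x|^2}{\eta} - p(T_k - t),
\end{equation*}
choose $\eta > 0$ small so that $\psi^{\varepsilon,\eta,p} < v_{k,-} - \varepsilon$ on the annulus $E(\varepsilon) := \overline{\cD(T_k, \bar x, \varepsilon)} \setminus \cD(T_k, \bar x, \varepsilon/2)$, and $p > 0$ large so that $\cL[\psi^{\varepsilon,\eta,p}] < 0$ on all of $\overline{\cD(T_k, \bar x, \varepsilon)}$. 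For any $\delta \in (0, \varepsilon_0/4)$, set
\begin{equation*}
	v^\delta_k(t, x) := \begin{cases} v_{k,-}(t, x) \vee (\psi^{\varepsilon,\eta,p}(t, x) + \delta), & (t, x) \in \overline{\cD(T_k, \bar x, \varepsilon)}, \\ v_{k,-}(t, x), & \text{otherwise},\end{cases}
\end{equation*}
and replace only the $k$-th entry of $v_{-}$ by $v^\delta_k$ to form a candidate array.

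It remains to verify that this candidate sits in $\cV^-$. The LSC and growth bounds are immediate, and the intervention inequality $v^\delta_k \leq \cM[v^\delta_k]$ follows on $\{v^\delta_k = \psi^{\varepsilon,\eta,p} + \delta\}$ from $\cM[v^\delta_k] \geq \cM[v_{k,-}]$ together with the propagated second strict inequality. The submartingale property \eqref{eq:submartingale} is the main step. Following the partition of Lemma \ref{lem:K_vissup}, split the analysis on the event $A := \{(\btau, \xi) \in \cD(T_k, \bar x, \varepsilon/2)\} \cap \{\psi^{\varepsilon,\eta,p}(\btau, \xi) + \delta > v_{k,-}(\btau, \xi)\}$ (where Ito's formula applied to $\psi^{\varepsilon,\eta,p}$ up to the exit time $\tau^1$ from $\cD(T_k, \bar x, \varepsilon/2)$ does the work) and on $A^c$ (where $v_{k,-} \in \cV^-$ is used directly). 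The new ingredient compared with Lemma \ref{lem:K_vissup} is the handling of $\{\tau^1 = T_k\}$: there the functional $\cH$ from \eqref{cH} reduces to $v_{k+1,-}(T_k, \xi^{1-}_0 - \theta_k, \xi^{1-}_1) + w_k(G_k - \theta_k)^+$, and we must dominate $\psi^\delta(T_k, \xi^{1-})$ by this expression for an arbitrary $(\btau, \xi)$-admissible $\theta_k$. This is exactly the propagated first strict inequality, giving a surplus of $\varepsilon_0/2 - \delta > 0$ uniformly in $\theta_k$; on $\{\rho > T_k\}$ one then concatenates with the submartingale property of $v_{k+1,-}$ on $[T_k, \rho]$. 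The random initial conditions with $\btau \in [T_{k-2}, T_{k-1}]$ or earlier are handled as in Lemma \ref{lem:K_vissup} since $v^\delta_k$ only differs from $v_{k,-}$ on $\overline{\cD(T_k, \bar x, \varepsilon)}$.

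Because $v^\delta_k(T_k, \bar x) = v_{k,-}(T_k, \bar x) + \delta > v_{k,-}(T_k, \bar x)$, the candidate array contradicts \eqref{eq:envelope}, completing the proof. The main obstacle is the Berge-type LSC of $F$: without it the two strict inequalities cannot be propagated jointly to the neighborhood, and the bookkeeping at the instant $\tau^1 = T_k$ (where the withdrawal $\theta_k$ triggers a jump and the array switches from $v^\delta_k$ to $v_{k+1,-}$) becomes impossible to close. The remaining steps are mechanical adaptations of the $T_K$-argument in Lemma \ref{lem:K_vissup}.
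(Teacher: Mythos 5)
Your proposal is correct and follows essentially the same approach as the paper: contradiction via a local bump $v^\delta_k = v_{k,-} \vee (\psi^{\varepsilon,\eta,p}+\delta)$ on a space–time cylinder around $(T_k,\bar x)$, propagation of both strict inequalities (the second by LSC of $\cM[v_{k,-}]$ via Lemma \ref{lem:Mproperty}, the first by the Berge-type Lemma 17.30 argument with the upper-hemicontinuous compact-valued correspondence $x_0 \twoheadmapsto [0,x_0]$), and a submartingale verification via It\^o and exit from $\cD(T_k,\bar x,\varepsilon/2)$. The one presentational difference is the partition at the exit time: the paper splits $\{\rho\geq\tau^1\}$ into $Q_1$ ($\tau^1<T_k$, or $\tau^1=T_k$ with $\xi^{1-}$ on the spatial boundary) and $Q_2$ (time exit with $\xi^{1-}$ still interior), using the $E(\varepsilon)$-annulus bound on $Q_1$ and the $T_k$-target inequality on $Q_2$; you instead handle all of $\{\tau^1=T_k\}$ uniformly via the propagated first inequality, which works because $\xi^{1-}\in\overline{B(\bar x,\varepsilon/2)}\subset\overline{B(\bar x,\varepsilon)}$ in either subcase, and it mildly streamlines the bookkeeping. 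This is a cosmetic variation on the same argument, not a different route.
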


\begin{proof}
	Note that we also have $v_{k, -, *} = v_{k,-}$, $v_{k+1, -, *} = v_{k+1, -}$, and $\cM[v_{k,-}]_* = \cM[v_{k,-}]$. Assume on the contrary that there exists $\bar{x} := (\bar{x}_0, \bar{x}_1) \in \cS$, such that 
	\begin{equation*}
		\begin{aligned}
			\max \Big\{ & v_{k,-}(T_k, \bar{x}) - \inf_{0 \leq \theta_k \leq \bar{x}_0} \left[ w_k (G_k - \theta_k)^+ + v_{k+1,-}(T_k, \bar{x}_0 - \theta_k, \bar{x}_1) \right] , \\
			& v_{k,-}(T_k, \bar{x}) - \cM[v_{k,-}](T_k, \bar{x}) \Big\} < 0.
		\end{aligned}
	\end{equation*}
	
	Since $v_{k+1,-}$ is LSC and the correspondence $(x_0, x_1) \twoheadmapsto \{ \theta_k | 0 \leq \theta_k \leq x_0 \}$ is upper hemicontinuous, we obtain that
	\begin{equation*}
		(x_0, x_1) \mapsto \inf_{0 \leq \theta_k \leq x_0} \left[ w_k (G_k - \theta_k)^+ + v_{k+1,-}(T_k, x_0 - \theta_k, x_1) \right]
	\end{equation*}
	is LSC by \citet[Lemma 17.30]{aliprantis2006infinite}.
	
	For $\varepsilon > 0$ small enough, with a slight abuse of notation, we define several sets for later use:
	\begin{equation*}
		\begin{aligned}
			B(\bar{x}, \varepsilon) & := \{ x | x \in \barS \text{ and } |x - \bar{x}| < \varepsilon \}, \\
			\cD(T_k, \bar{x}, \varepsilon) & := (T_k - \varepsilon, T_k] \times B(\bar{x}, \varepsilon), \\
			E(\varepsilon) &:= \overline{\cD(T_k, \bar{x}, \varepsilon)} \setminus \cD(T_k, \bar{x}, \varepsilon/2).
		\end{aligned}
	\end{equation*}
	
	By the LSC property, there exists $\varepsilon > 0$ small enough, such that 
	\begin{equation}\label{k:sub_eps}
		\begin{aligned}
			& v_{k,-}(T_k, \bar{x}) + \varepsilon \leq \inf_{0 \leq \theta_k \leq x_0} \left[ w_k (G_k - \theta_k)^+ + v_{k+1,-}(T_k, x_0 - \theta_k, x_1) \right], \\
			& v_{k,-}(T_k, \bar{x}) + \varepsilon \leq \cM[v_{k,-}](t, x),
		\end{aligned}
	\end{equation}
	when $(t, x) \in \overline{\cD(T_k, \bar{x}, \varepsilon)}$.

	Similarly, with a large enough $p > 0$ and small enough $\eta > 0$, we can define
	\begin{equation}\label{def:psi}
		\psi^{\varepsilon, \eta, p}(t, x) := v_{k,-}(T_k, \bar{x}) - \frac{|x - \bar{x}|^2}{\eta} - p(T_k - t).
	\end{equation} 
	It satisfies the following properties:
	\begin{itemize}
		\item For $(t, x) \in \overline{\cD(T_k, \bar{x}, \varepsilon)}$,
		\begin{equation}\label{k:L_psi}
			\cL[\psi^{\varepsilon, \eta, p}] (t, x) < 0. 
		\end{equation}
		\item When $(t, x) \in E(\varepsilon)$,
		\begin{align}\label{k:Eeps}
			\psi^{\varepsilon, \eta, p}(t, x) < v_{k,-}(T_k, \bar{x}) - \frac{\varepsilon^2}{4 \eta} < \inf_{(t, x) \in E(\varepsilon)} v_{k,-}(t, x) - \varepsilon \leq v_{k,-}(t, x) - \varepsilon.
		\end{align}
		\item 	Besides, \eqref{k:sub_eps} and \eqref{def:psi} imply that
		\begin{equation}\label{k:psi}
			\begin{aligned}
				\psi^{\varepsilon, \eta, p}(t, x) & \leq \inf_{0 \leq \theta_k \leq x_0} \left[ w_k (G_k - \theta_k)^+ + v_{k+1,-}(T_k, x_0 - \theta_k, x_1) \right]  - \varepsilon, \\
				\psi^{\varepsilon, \eta, p}(t, x) & \leq \cM[v_{k,-}](t, x) - \varepsilon,
			\end{aligned}
		\end{equation}
		when $(t, x) \in \overline{\cD(T_k, \bar{x}, \varepsilon)}$.
	\end{itemize}
	
	Let $0 < \delta < \varepsilon$ be small enough and define
	\begin{equation}\label{k:hdelta}
		v^{\delta}_{k} (t, x) = \left\{ 
		\begin{array}{cl}
			v_{k,-} (t, x) \vee ( \psi^{\varepsilon, \eta, p}(t, x) + \delta) & \text{on } \overline{\cD(T_k, \bar{x}, \varepsilon)}, \\
			v_{k,-} (t, x), & \text{otherwise}.
		\end{array}
		\right.
	\end{equation}
	
	We show that $(v_{1,-}, \ldots, v_{k-1, -},  v^\delta_k, v_{k+1,-}, \ldots, v_{K,-})$ is a stochastic subsolution under Definition \ref{def:sto_sub}. Conditions (1), (2), (3) can be verified similarly as before.

	For Condition (4), since $v^\delta_k \geq v_{k,-}$ everywhere, we only need to prove it for $\btau \in [T_{k-1}, T_k]$. Consider any random initial condition $(\btau, \xi)$ with $\xi \in \cF_{\btau}$ and $\p(\xi \in \barS) = 1$ and any $(\btau, \xi)$-admissible withdrawals $\theta_{k:K}$. Define the event 
	\begin{equation*}
		A := \{(\btau, \xi) \in \cD(T_k, \bar{x}, \varepsilon/2) \} \cap \{ \psi^{\varepsilon, \eta, p}(\btau, \xi) + \delta > v_{k,-}(\btau, \xi)\}.
	\end{equation*}
	Then $A \in \cF_{\btau}$. Let 
	\begin{align*}
		\tau^1 := \inf \big\{ t \in [\btau, T_k] \, \big| &  (t,  X(t; \btau, \xi, \theta_{k:K}, \emptyset)) \notin \cD(T_k, \bar{x}, \varepsilon/2) \big\} \wedge T_k
	\end{align*}
	be the exit time. Since it is possible that $\tau^1 = T_k$, we introduce  
	\begin{align*}
		\xi^{1-} := (\xi^{1-}_0, \xi^{1-}_1) :=  X(\tau^1-; \btau, \xi, \theta_{k:K}, \emptyset)
	\end{align*}
	as the position that excludes a possible jump at $\tau^1$. 
	
	Let $\rho \in [\btau, T]$ be another stopping time. For notational simplicity, denote $$\psi^{\delta}(\btau, \xi) := \psi^{\varepsilon, \eta, p}(\btau, \xi) + \delta. $$
	
	Under the event $A$,
	\begin{align}
		\one_{A} v^\delta_k(\btau, \xi) & = \one_A \psi^{\delta}(\btau, \xi) \nonumber \\
		& \leq \E [\one_A \psi^{\delta} (\tau^1 \wedge \rho, X((\tau^1 \wedge \rho) -;\btau, \xi, \theta_{k:K}, \emptyset)) | \cF_{\btau}] \nonumber \\
		& = \E [\one_{A \cap \{\rho < \tau^1 \}} \psi^{\delta} (\rho, X(\rho;\btau, \xi, \theta_{k:K}, \emptyset)) | \cF_{\btau}] + \E [\one_{A \cap \{ \rho \geq \tau^1\}} \psi^{\delta} (\tau^1, \xi^{1-}) | \cF_{\btau}]. \label{k:A_psi}
	\end{align}
	The inequality is from applying It\^o's formula to $\psi^{\delta}(t, X)$ from $\btau$ to $(\tau^1 \wedge \rho) -$, together with \eqref{k:L_psi}. 
	
	When the event $A \cap \{\rho < \tau^1 \}$ happens, we have $(\rho, X(\rho;\btau, \xi, \theta_{k:K}, \emptyset)) \in \cD(T_K, \bar{x}, \varepsilon/2)$. By the definition of $v^\delta_k$, 
	\begin{equation}\label{k:A1}
		\one_{A \cap \{\rho < \tau^1 \}} \psi^{\delta} (\rho, X(\rho;\btau, \xi, \theta_{k:K}, \emptyset)) \leq \one_{A \cap \{\rho < \tau^1 \}} v^{\delta}_k (\rho, X(\rho;\btau, \xi, \theta_{k:K}, \emptyset)).
	\end{equation}
	
	For the second term in \eqref{k:A_psi}, we introduce two events:
	\begin{align*}
		Q_1 & := \{ \tau^1 < T_k \} \cup \{ \tau^1 = T_k \text{ and } \xi^{1-} \notin B(\bar{x}, \varepsilon/2) \}, \\
		Q_2 & := \{ \tau^1 = T_k \text{ and } \xi^{1-} \in B(\bar{x}, \varepsilon/2) \}.
		%	 \cap \{ \psi^\delta(\tau^1, \xi^{1-}) > v_{k,-}(\tau^1, \xi^{1-}) \}, \\
		%	Q_3 & := \{ \tau^1 = T_k \text{ and } \xi^{1-} \in \B(\bar{x}, \varepsilon/2) \} \cap \{ \psi^\delta(\tau^1, \xi^{1-}) \leq v_{k,-}(\tau^1, \xi^{1-}) \}.
	\end{align*}
	
	For $Q_1$, we have
	\begin{align*}
		& \one_{A \cap \{\rho \geq \tau^1\} \cap Q_1} \psi^\delta (\tau^1, \xi^{1-}) \\
		& \leq \one_{A \cap \{\rho \geq \tau^1\} \cap Q_1} v_{k,-}(\tau^1, \xi^{1-}) \\
		& \leq \one_{A \cap \{\rho \geq \tau^1\} \cap Q_1} \E[  \cH([\tau^1, \rho], v_{k:K,-}, X(\cdot; \tau^1, \xi^{1-}, \theta_{k:K}, \emptyset))  | \cF_{\tau^1}] \\
		& \leq \one_{A \cap \{\rho \geq \tau^1\} \cap Q_1} \E[  \cH([\tau^1, \rho], (v^\delta_k, v_{k+1:K,-}), X(\cdot; \tau^1, \xi^{1-}, \theta_{k:K}, \emptyset))  | \cF_{\tau^1}] \\
		& = \one_{A \cap \{\rho \geq \tau^1\} \cap Q_1} \E[  \cH([\tau^1, \rho], (v^\delta_k, v_{k+1:K,-}), X(\cdot; \btau, \xi, \theta_{k:K}, \emptyset))  | \cF_{\tau^1}].
	\end{align*}
	The first inequality follows from \eqref{k:Eeps} and $(\tau^1, \xi^{1-}) \in E(\varepsilon)$. The second inequality uses the submartingale property of $v_{k,-}$, with the random initial condition $(\tau^1, \xi^{1-})$. The third inequality is due to $v_{k,-} \leq v^\delta_k$ everywhere. The last equality is from the definition of $X(\cdot; \btau, \xi, \theta_{k:K}, \emptyset)$.
	
	For $Q_2$, we obtain
	\begin{align*}
		& \one_{A \cap \{\rho \geq \tau^1\} \cap Q_2} \psi^\delta (\tau^1, \xi^{1-}) \\
		& = \one_{A \cap \{\rho \geq \tau^1\} \cap Q_2} \psi^\delta (T_k, \xi^{1-}) \\
		& \leq \one_{A \cap \{\rho \geq \tau^1\} \cap Q_2} \E[  w_k(G_k - \theta_k)^+ + v_{k+1,-}(T_k, \xi^{1-}_0 - \theta_k, \xi^{1-}_1) | \cF_{\tau^1}] \\
		& = \one_{A \cap \{\rho \geq \tau^1\} \cap Q_2} \E[  w_k(G_k - \theta_k)^+ + v_{k+1,-}(T_k, \xi^{1}) | \cF_{\tau^1}] \\
		& \leq \one_{A \cap \{\rho \geq \tau^1\} \cap Q_2} \E[  w_k(G_k - \theta_k)^+ + \cH([T_k, \rho], v_{k+1:K,-}, X(\cdot; T_k, \xi^{1}, \theta_{k+1:K}, \emptyset)) | \cF_{\tau^1}] \\
		& = \one_{A \cap \{\rho \geq \tau^1\} \cap Q_2} \E[  w_k(G_k - \theta_k)^+ + \cH([T_k, \rho], v_{k+1:K,-}, X(\cdot; \btau, \xi, \theta_{k:K}, \emptyset)) | \cF_{\tau^1}].
	\end{align*}
	The first equality uses $\tau^1 = T_k$. The first inequality follows from \eqref{k:psi} and the fact that $(T_k, \xi^{1-}) \in \overline{\cD(T_k, \bar{x}, \varepsilon)}$. The second equality holds due to the definition of $\xi^{1-}$ and $\xi^1$. The last two lines use the submartingale property of $v_{k+1,-}$, with the random initial condition $(T_k, \xi^1)$ and the definition of $X(\cdot; \btau, \xi, \theta_{k:K}, \emptyset)$.

	Under the event $A^c$, it is direct to show
	\begin{align*}
		\one_{A^c} v^\delta_k(\btau, \xi)  & = \one_{A^c} v_{k,-}(\btau, \xi) \\
		& \leq \one_{A^c} \E[ \cH([\btau, \rho], v_{k:K,-}, X(\cdot; \btau, \xi, \theta_{k:K}, \emptyset)) | \cF_{\btau}] \\
		& \leq \one_{A^c} \E[ \cH([\btau, \rho], (v^\delta_k, v_{k+1:K,-}), X(\cdot; \btau, \xi, \theta_{k:K}, \emptyset)) | \cF_{\btau}].
	\end{align*}
	Putting these inequalities together, we obtain the following Condition (4) as desired:
	\begin{align*}
		v^\delta_k(\btau, \xi) \leq \E[ \cH([\btau, \rho], (v^\delta_k, v_{k+1:K,-}), X(\cdot; \btau, \xi, \theta_{k:K}, \emptyset)) | \cF_{\btau}].
	\end{align*}

	Hence, $(v_{1,-}, \ldots, v_{k-1,-},  v^\delta_k, v_{k+1,-}, \ldots, v_{K,-})$ is a stochastic subsolution under Definition \ref{def:sto_sub}. However, $v^\delta_{k}(T_k, \bar{x}) = v_{k,-}(T_k, \bar{x}) + \delta > v_{k,-}(T_k, \bar{x})$, which contradicts with the definition of $v_{k,-}$ as a supremum.
\end{proof}

\section{Proofs of the comparison principle}
\begin{proof}[Proof of Proposition \ref{prop:compareTk}]
	Choose $q \in (p_0, 1)$ in $F^1_k(t, x)$. Moreover, we replace the constant 2 in $C_k$ by a sufficiently large constant specified later. For any $\eta > 1$, define
	\begin{equation*}
		u_\eta(T_k, x) := \frac{\eta+1}{\eta} u(T_k, x) + \frac{1}{\eta} F^1_k(T_k, x), \quad 	v_\eta(T_k, x) := \frac{\eta-1}{\eta} v(T_k, x) - \frac{1}{\eta} F^1_k(T_k, x).
	\end{equation*} 
	The idea is to show $u_\eta(T_k, x) - v_\eta(T_k, x) \leq 0$ for all $\eta > 1$ and $x \in \barS$, which implies $u(T_k, x) - v(T_k, x) \leq 0$ when $\eta \rightarrow \infty$.
	
	Assume on the contrary that, there exist $x^* \in \barS$ and $\eta > 1$ such that 
	\begin{equation*}
		u_\eta(T_k, x^*) - v_\eta(T_k, x^*) > 0.
	\end{equation*}
	Then 
	\begin{equation*}
		C_\eta := \sup_{x \in \barS} \big\{ u_\eta(T_k, x) - v_\eta(T_k, x) \big\} > 0.
	\end{equation*}
	
	For each $n \geq 0$, define
	\begin{equation*}
		\Phi_n(x, x') := u_\eta(T_k, x) - v_\eta(T_k, x') - \frac{n}{2} |x - x'|^2, \quad x, x' \in \barS.
	\end{equation*}
	We note that
	\begin{equation}\label{ineq:Phi}
		\begin{aligned}
			0 & < u_\eta(T_k, x^*) - v_\eta(T_k, x^*) \leq \sup_{x \in \barS} \big\{ u_\eta(T_k, x) - v_\eta(T_k, x) \big\} \\
			& \leq \sup_{x, x' \in \barS} \Phi_{n+1}(x, x') \leq \sup_{x, x' \in \barS} \Phi_{n}(x, x') \leq \sup_{x, x' \in \barS} \Phi_{0}(x, x').
		\end{aligned}
	\end{equation}
	Under the growth condition \eqref{eq:growth} and $q > p_0$ in $F^1_k(t, x)$, we have $\Phi_n(x, x') \rightarrow - \infty$ when $|(x, x')| \rightarrow + \infty$ in $\barS \times \barS$. Together with the USC property of $u_\eta - v_\eta$, then $\sup_{x, x' \in \barS} \Phi_{n}(x, x')$ is attained at some $(x_n , x'_n)$. The inequality \eqref{ineq:Phi} indicates that $\{(x_n, x'_n)\}^\infty_{n=1}$ is in the following set:
	\begin{equation}\label{set:superlevel}
		\big\{  (x, x') \in \barS \times \barS \; \big| \; u_\eta(T_k, x) - v_\eta(T_k, x') \geq 0 \big\}.
	\end{equation}
	The USC property of $u_\eta(T_k, x) - v_\eta(T_k, x')$ shows that the set \eqref{set:superlevel} is closed. Since $u_\eta(T_k, x) - v_\eta(T_k, x') \rightarrow - \infty$ when $|(x, x')| \rightarrow + \infty$, the set  \eqref{set:superlevel} is bounded. Therefore, the set \eqref{set:superlevel} is compact. Up to a subsequence, we can assume that $\{(x_n, x'_n)\}^\infty_{n=1}$ is convergent. Then \eqref{ineq:Phi} yields
	\begin{equation*}
		0  < u_\eta(T_k, x^*) - v_\eta(T_k, x^*) \leq u_\eta(T_k, x_n) - v_\eta(T_k, x'_n) - \frac{n}{2} |x_n - x'_n|^2,
	\end{equation*}
	which means that
	\begin{equation*}
		u_\eta(T_k, x_n) - v_\eta(T_k, x'_n) - \{u_\eta(T_k, x^*) - v_\eta(T_k, x^*)\} \geq \frac{n}{2} |x_n - x'_n|^2.
	\end{equation*}
	When $n \rightarrow \infty$, the left-hand side is bounded because of the USC property. Then we must have
	\begin{equation*}
		\lim_{n \rightarrow \infty} |x_n - x'_n|^2 = 0.
	\end{equation*}
	Hence, there exists $\bar{x} \in \barS$ and
	\begin{equation}
		\lim_{n \rightarrow \infty} x_n = \lim_{n \rightarrow \infty} x'_n = \bar{x}.
	\end{equation}
	By definition, we have
	\begin{equation*}
		\sup_{x, x' \in \barS} \Phi_n(x, x') = u_\eta(T_k, x_n) - v_\eta(T_k, x'_n) - \frac{n}{2} |x_n - x'_n|^2.
	\end{equation*}
	Then
	\begin{align*}
		0 & \leq \limsup_{n \rightarrow \infty} \frac{n}{2} |x_n - x'_n|^2 = \limsup_{n \rightarrow \infty} \Big\{  u_\eta(T_k, x_n) - v_\eta(T_k, x'_n) -   \sup_{x, x' \in \barS} \Phi_n(x, x') \Big\} \\
		& \leq \limsup_{n \rightarrow \infty} \Big\{ u_\eta(T_k, x_n) - v_\eta(T_k, x'_n) \Big\} + \limsup_{n \rightarrow \infty} \Big\{ -   \sup_{x, x' \in \barS} \Phi_n(x, x') \Big\} \\
		& \leq u_\eta(T_k, \bar{x}) - v_\eta(T_k, \bar{x}) - \sup_{x \in \barS} \big\{ u_\eta(T_k, x) - v_\eta(T_k, x) \big\} \\
		& \leq 0.
	\end{align*}
	Here, we use the USC property and \eqref{ineq:Phi} in the second to last inequality. Hence, all the inequalities should be equalities:
	\begin{equation}\label{supbar}
		\lim_{n \rightarrow \infty} \frac{n}{2} |x_n - x'_n|^2 = 0 \quad \text{ and }  \quad u_\eta(T_k, \bar{x}) - v_\eta(T_k, \bar{x}) = \sup_{x \in \barS} \big\{ u_\eta(T_k, x) - v_\eta(T_k, x) \big\}.
	\end{equation}
	It also implies that, up to another subsequence (still indexed with $n$),
	\begin{equation}\label{eq:uxn_conv}
		\begin{aligned}
			u_\eta(T_k, \bar{x}) & = \limsup_{n \rightarrow \infty} u_\eta(T_k, x_n) = \lim_{n \rightarrow \infty} u_\eta(T_k, x_n), \\ 
			v_\eta(T_k, \bar{x}) & = \liminf_{n \rightarrow \infty} v_\eta(T_k, x'_n) = \lim_{n \rightarrow \infty} v_\eta(T_k, x'_n).
		\end{aligned}
	\end{equation}
	
	We claim that $\bar{x} \neq 0$. In fact,
	\begin{align*}
		u_\eta(T_k, 0) - v_\eta(T_k, 0) = u(T_k, 0) - v(T_k, 0) + \frac{1}{\eta} \Big( u(T_k, 0) + v(T_k, 0) + 2 F^1_k(T_k, 0) \Big).
	\end{align*} 
	The assumption \eqref{cond:0} ensures that, when the constant $2$ in $C_k$ from $F^1_k(t, x)$ is replaced by a sufficiently large constant, we have
	\begin{align*}
		u(T_k, 0) - v(T_k, 0) & \leq 0, \\
		u(T_k, 0) + v(T_k, 0) + 2 F^1_k(T_k, 0) & \leq 4 \sum^K_{i = k} w_i G_i - 2 C_k < 0.
	\end{align*}
	Then $u_\eta(T_k, 0) - v_\eta(T_k, 0) < 0$, which implies that $\bar{x} \neq 0$. Hence, we can assume that $x_n \neq 0$ and $x'_n \neq 0$ when $n$ is large enough.
	
	By \citet[Proposition 4.2]{belak2019utility} and \eqref{eq:subTk} in Lemma \ref{lem:classical_sub}, we have
	\begin{equation}\label{sub_kappa}
		\begin{aligned}
			\max \Big\{ & u_\eta(T_k, x_n) - \inf_{0 \leq \theta_k \leq x_{n, 0}} \left[ w_k (G_k - \theta_k)^+ + f(T_k, x_{n, 0} - \theta_k, x_{n, 1}) \right], \\
			& u_\eta(T_k, x_n) - \cM[u_\eta]^* (T_k, x_n) \Big\} \leq - \frac{\bar{\kappa}}{\eta},
		\end{aligned}
	\end{equation}
	and
	\begin{equation}\label{sup_kappa}
		\begin{aligned}
			\max \Big\{ & v_\eta(T_k, x'_n) - \inf_{0 \leq \theta_k \leq x'_{n, 0}} \left[ w_k (G_k - \theta_k)^+ + f(T_k, x'_{n, 0} - \theta_k, x'_{n, 1}) \right], \\
			& v_\eta(T_k, x'_n) - \cM[v_\eta]_* (T_k, x'_n) \Big\} \geq \frac{\bar{\kappa}}{\eta}.
		\end{aligned}
	\end{equation}
	Here, $\bar{\kappa} := \inf_{n} \min\{\kappa^b_k(x_n), \kappa^b_k(x'_n) \} > 0$, where $\kappa^b_k(\cdot)$ is defined in \eqref{eq:subTk}. 
	
	Suppose $v_\eta(T_k, x'_n) - \inf_{0 \leq \theta_k \leq x'_{n, 0}} \left[ w_k (G_k - \theta_k)^+ + f(T_k, x'_{n, 0} - \theta_k, x'_{n, 1}) \right] \geq \bar{\kappa}/\eta$ does not hold for infinitely many $n$. Then there exists $N$ large enough, such that
	\begin{equation}\label{eq3}
		v_\eta(T_k, x'_n) - \cM[v_\eta]_* (T_k, x'_n) \geq \frac{\bar{\kappa}}{\eta}, \quad n \geq N.
	\end{equation}
	We proceed to obtain a contradiction. In the following steps, the threshold $N$ may vary line by line. First, by the definition of $\cM[\cdot]$, \eqref{eq3} implies that $x'_n \notin \cS_\emptyset$. Since $\cS_\emptyset$ is open, it further implies that $\bar{x} \notin \cS_\emptyset$. 
	
	By the convergence result in \eqref{eq:uxn_conv},
	\begin{equation}\label{eq2} 
		u_\eta(T_k, \bar{x}) - v_\eta(T_k, \bar{x}) \leq u_\eta(T_k, x_n) - v_\eta(T_k, x'_n) + \frac{\bar{\kappa}}{4\eta}, \quad n \geq N.
	\end{equation}
	The LSC property of $\cM[v_\eta]_*$ on $\barS$ leads to
	\begin{equation}\label{eq4}
		\cM[v_\eta]_*(T_k, x'_n) \geq \cM[v_\eta]_*(T_k, \bar{x}) - \frac{\bar{\kappa}}{4\eta}, \quad n \geq N.
	\end{equation}
	Besides, since $v_\eta$ is LSC, Lemma \ref{lem:Mproperty} proves that
	\begin{equation}\label{eq5} 
		\cM[v_\eta]_*(T_k, x'_n) = \cM[v_\eta](T_k, x'_n) \quad \text{ and } \quad \cM[v_\eta]_*(T_k, \bar{x}) = \cM[v_\eta](T_k, \bar{x}).
	\end{equation}
	As $\bar{x} \notin \cS_\emptyset$, the LSC property of $v_\eta$ ensures the existence of an optimizer $\Delta \in D(\bar{x})$, such that 
	\begin{equation}\label{eq6}
		\cM[v_\eta](T_k, \bar{x}) = v_\eta(T_k, \Gamma(\bar{x}, \Delta)).
	\end{equation}
	
	Putting these results together, we have
	\begin{align*}
		u_\eta(T_k, \bar{x}) - v_\eta(T_k, \bar{x}) \leq & u_\eta(T_k, x_n) - v_\eta(T_k, x'_n) + \frac{\bar{\kappa}}{4\eta}  & \text{(by \eqref{eq2})} \\
		\leq & u_\eta(T_k, x_n) - \cM[v_\eta]_* (T_k, x'_n) - \frac{\bar{\kappa}}{\eta} + \frac{\bar{\kappa}}{4\eta} & \text{(by \eqref{eq3})} \\
		\leq & u_\eta(T_k, x_n) - \cM[v_\eta]_*(T_k, \bar{x}) + \frac{\bar{\kappa}}{4\eta} - \frac{\bar{\kappa}}{\eta} + \frac{\bar{\kappa}}{4\eta} & \text{(by \eqref{eq4})} \\
		= & u_\eta(T_k, x_n) - \cM[v_\eta](T_k, \bar{x}) - \frac{\bar{\kappa}}{2\eta} & \text{(by \eqref{eq5})} \\
		= & u_\eta(T_k, x_n) - v_\eta(T_k, \Gamma(\bar{x}, \Delta)) - \frac{\bar{\kappa}}{2\eta}, \quad n \geq N. & \text{(by \eqref{eq6})} 
	\end{align*}
	
	Next, we show that $\bar{x} \notin \overline{\cS_\emptyset}\setminus \cS_\emptyset$. Indeed, if not, then $\Gamma(\bar{x}, \Delta) = 0$ and
	\begin{align*}
		& u_\eta(T_k, x_n) - v_\eta(T_k, \Gamma(\bar{x}, \Delta)) \\
		& \quad = u_\eta(T_k, x_n) - v_\eta(T_k, 0) \\
		& \quad = u(T_k, x_n) - v(T_k, 0) + \frac{1}{\eta} \big\{ u(T_k, x_n) + v(T_k, 0) + F^1_k(T_k, x_n)  + F^1_k(T_k, 0) \big\}.
	\end{align*} 
	The assumptions \eqref{cond:0} and \eqref{eq:growth} guarantee that $u(T_k, x_n) \leq v(T_k, 0)$. Moreover, 
	\begin{equation*}
		u(T_k, x_n) + v(T_k, 0) + F^1_k(T_k, x_n)  + F^1_k(T_k, 0) \leq 4 \sum^K_{i = k} w_i G_i - 2 C_k < 0.
	\end{equation*}
	Then it leads to $u_\eta(T_k, x_n) - v_\eta(T_k, \Gamma(\bar{x}, \Delta)) < 0$, which contradicts with the previous inequality.
	
	We simplify $u_\eta(T_k, x_n)$ as follows:
	\begin{itemize}
		\item \eqref{sub_kappa} shows that 
		\begin{equation}\label{eq7}
			u_\eta(T_k, x_n) - \cM[u_\eta]^* (T_k, x_n) \leq - \bar{\kappa}/\eta.
		\end{equation}
		\item Since $\bar{x} \notin \overline{\cS_\emptyset}$, Lemma \ref{lem:Mproperty} proves that
		\begin{equation}
			\cM[u_\eta]^* (T_k, \bar{x}) = \cM[u_\eta](T_k, \bar{x}).
		\end{equation}
		Moreover, since $\Gamma(\bar{x}, \Delta)$ is feasible, 
		\begin{equation}\label{eq9}
			\cM[u_\eta](T_k, \bar{x}) \leq u_\eta(T_k, \Gamma(\bar{x}, \Delta)).
		\end{equation} 
		\item The USC property of $\cM[u_\eta]^*$ yields
		\begin{equation}\label{eq8}
			\cM[u_\eta]^*(T_k, x_n) \leq \cM[u_\eta]^*(T_k, \bar{x}) + \frac{\bar{\kappa}}{2 \eta}, \quad n \geq N.
		\end{equation}
	\end{itemize}
	Hence,
	\begin{align*}
		0 < & u_\eta(T_k, \bar{x}) - v_\eta(T_k, \bar{x})   & \text{(by \eqref{supbar}) }\\
		\leq & u_\eta(T_k, x_n) - v_\eta(T_k, \Gamma(\bar{x}, \Delta)) - \frac{\bar{\kappa}}{2\eta} \\
		\leq & \cM[u_\eta]^* (T_k, x_n) - \frac{\bar{\kappa}}{\eta} - v_\eta(T_k, \Gamma(\bar{x}, \Delta)) - \frac{\bar{\kappa}}{2\eta} & \text{(by \eqref{eq7})} \\
		\leq & \cM[u_\eta]^*(T_k, \bar{x}) + \frac{\bar{\kappa}}{2 \eta} - \frac{\bar{\kappa}}{\eta} - v_\eta(T_k, \Gamma(\bar{x}, \Delta)) - \frac{\bar{\kappa}}{2\eta} & \text{(by \eqref{eq8})} \\
		\leq & u_\eta(T_k, \Gamma(\bar{x}, \Delta)) - v_\eta(T_k, \Gamma(\bar{x}, \Delta)) - \frac{\bar{\kappa}}{\eta}  & \text{(by \eqref{eq9})} \\
		\leq & u_\eta(T_k, \bar{x}) - v_\eta(T_k, \bar{x})  - \frac{\bar{\kappa}}{\eta},
	\end{align*}
	which is a contradiction. Therefore, we must have $$ v_\eta(T_k, x'_n) - \inf_{0 \leq \theta_k \leq x'_{n, 0}} \left[ w_k (G_k - \theta_k)^+ + f(T_k, x'_{n, 0} - \theta_k, x'_{n, 1}) \right] \geq \bar{\kappa}/\eta$$ for infinitely many $n$. Up to another subsequence, \eqref{sub_kappa} leads to
	\begin{equation}\label{kappa0kappa}
		\begin{aligned}
			& v_\eta(T_k, x'_n) - \inf_{0 \leq \theta_k \leq x'_{n, 0}} \left[ w_k (G_k - \theta_k)^+ + f(T_k, x'_{n, 0} - \theta_k, x'_{n, 1}) \right] \\
			& \geq \frac{\bar{\kappa}}{\eta} > 0 > -\frac{\bar{\kappa}}{\eta} \geq u_\eta(T_k, x_n) - \inf_{0 \leq \theta_k \leq x_{n, 0}} \left[ w_k (G_k - \theta_k)^+ + f(T_k, x_{n, 0} - \theta_k, x_{n, 1}) \right].
		\end{aligned}
	\end{equation}
	
	Since $f$ is continuous and bounded, 
	\begin{align*}
		x = (x_0, x_1) \mapsto \inf_{0 \leq \theta_k \leq x_{0}} \left[ w_k (G_k - \theta_k)^+ + f(T_k, x_{0} - \theta_k, x_{1}) \right]
	\end{align*}
	is a continuous function. 
	
	Letting $n \rightarrow \infty$ in \eqref{kappa0kappa}, we obtain $v_\eta(T_k, \bar{x}) > u_\eta(T_k, \bar{x})$, which is also a contradiction. Then the claim follows as desired.
\end{proof}

\begin{proof}[Proof of Proposition \ref{prop:compareTK}]
	Thanks to the strict classical subsolution property of $F^1_K(T_K, x)$ at $x \in \cS$, we can obtain
	\begin{equation*}
		\begin{aligned}
			& v_\eta(T_K, x'_n) - w_K \left[ G_K - x'_{n, 0} - (x'_{n, 1} - C(-x'_{n,1}))^+ \right]^+ \\
			& \geq \frac{\bar{\kappa}}{\eta} > 0 > -\frac{\bar{\kappa}}{\eta} \geq u_\eta(T_K, x_n) -  w_K \left[ G_K - x_{n, 0} - (x_{n, 1} - C(-x_{n,1}))^+ \right]^+,
		\end{aligned}
	\end{equation*}
	with the same proof procedure in Proposition \ref{prop:compareTk}.
	
	Letting $n \rightarrow \infty$, we have a contradiction as $v_\eta(T_K, \bar{x}) - u_\eta(T_K, \bar{x}) > 0$.
\end{proof}

\begin{proof}[Proof of Proposition \ref{prop:compare}]
	The claim follows directly from modifying the proof of Proposition \ref{prop:compareTk} and applying Ishii's lemma, which is similar to \citet[Theorem 5.4]{belak2022optimal}. We also note that the strict classical subsolution property of $F^1_k(t, x)$ in \eqref{eq:sub_t} is used to apply \citet[Proposition 4.2]{belak2019utility}. 
\end{proof}

\section{Proofs of optimal strategies}
\begin{proof}[Proof of Lemma \ref{lem:borel}]
	{\bf Step 1}. For any $(t, x) \in [T_{k-1}, T_k] \times \barS$, $k = 1, \ldots, K$, Condition (3) in Definition \ref{def:sto_sub} leads to
	\begin{align*}
		h_k(t, x) \leq \cM[h_k](t, x) \leq \cM[h^*_k](t, x) \leq  \cM[h^*_k]^*(t, x).
	\end{align*} 
	Since $\cM[h^*_k]^*$ is USC, taking $\limsup$ shows that $h^*_k(t, x) \leq \cM[h^*_k]^*(t, x)$, as required by the viscosity subsolution property.
	
	{\bf Step 2}. Fix $x \in \cS$ and $T_k$, $k = 1, \ldots, K-1$. Consider a sequence $(s_n, y_n) \rightarrow (T_k, x)$ where $s_n \leq T_k$, such that
	\begin{equation*}
		\lim_{n \rightarrow \infty} h_k(s_n, y_n) = h^*_k(T_k, x).
	\end{equation*} 
	Recall that $x_0$ is the wealth in the bank account. For any constant $\theta_k \in [0, x_0]$, define the (random) withdrawal for goal $k$ as 
	\begin{equation*}
		\Theta_n : = \min\{ \theta_k, X_0(T_k; s_n, y_n, \emptyset, \emptyset)\},
	\end{equation*}
	which is $\cF_{T_k}$-measurable. The submartingale property (4) of $h_k$ yields
	\begin{align*}
		& h^*_k(T_k, x) \\
		& =  \lim_{n \rightarrow \infty} h_k(s_n, y_n) \\
		& \leq \limsup_{n \rightarrow \infty} \E \Big[ w_k(G_k - \Theta_n)^+ + h_{k+1}(T_k, X_0(T_k; s_n, y_n, \emptyset, \emptyset)- \Theta_n, X_1(T_k; s_n, y_n, \emptyset, \emptyset)) \Big] \\
		& \leq \limsup_{n \rightarrow \infty} \E \Big[ w_k(G_k - \Theta_n)^+ + h^*_{k+1}(T_k, X_0(T_k; s_n, y_n, \emptyset, \emptyset)- \Theta_n, X_1(T_k; s_n, y_n, \emptyset, \emptyset)) \Big] \\
		& \leq  \E \Big[ \limsup_{n \rightarrow \infty} \Big( w_k(G_k - \Theta_n)^+ + h^*_{k+1}(T_k, X_0(T_k; s_n, y_n, \emptyset, \emptyset)- \Theta_n, X_1(T_k; s_n, y_n, \emptyset, \emptyset)) \Big) \Big] \\
		& \leq \E[ w_k(G_k - \theta_k)^+ +  h^*_{k+1}(T_k, x_0 - \theta_k, x_1) ] \\
		& = w_k(G_k - \theta_k)^+ +  h^*_{k+1}(T_k, x_0 - \theta_k, x_1).
	\end{align*}
	Here, the second line uses the submartingale property (4) from $s_n$ to $T_k$. Note that $\Theta_n$ is admissible. The third line follows from $h_{k+1} \leq h^*_{k+1}$. The fourth line is from Fatou's lemma and the fact that $h_{k+1}$ is bounded from above. The fifth line holds because $X$ has continuous paths, $h^*_{k+1}$ is USC, and $\lim_{n \rightarrow \infty} \Theta_n = \theta_k$. The last line holds since these terms are deterministic. As $\theta_k \in [0, x_0]$ is arbitrary, we obtain
	\begin{equation*}
		h^*_k(T_k, x) \leq \inf_{0 \leq \theta_k \leq x_0} \Big( w_k(G_k - \theta_k)^+ +  h^*_{k+1}(T_k, x_0 - \theta_k, x_1) \Big).
	\end{equation*}
	The case for $T_K$ follows similarly by replacing $\Theta_n$ with the liquidation value.
	
	{\bf Step 3}. Finally, fix $(t, x) \in [T_{k-1}, T_k) \times \cS$, $k=1, \ldots, K$. Consider $(s_n, y_n) \subset [T_{k-1}, T_k) \times \cS$, such that $(s_n, y_n) \rightarrow (t, x)$ when $n \rightarrow \infty$ and 
	\begin{equation}\label{eq:seqh*}
		\lim_{n \rightarrow \infty} h_k(s_n, y_n) = h^*_k (t, x).
	\end{equation}
	
	Define a test function $\varphi \in C^{1, 2}([T_{k-1}, T_k) \times \cS)$, such that $(t, x)$ is a maximum point of $h^*_k - \varphi$, with
	\begin{equation*}
		h^*_k(t, x) = \varphi (t, x) \quad \text{ and } \quad h_k(s, y) \leq h^*_k(s, y) \leq \varphi(s, y) \text{ when } (s, y) \in  [T_{k-1}, T_k) \times \cS.
	\end{equation*}
	Set $\gamma_n := \varphi(s_n, y_n) - h_k(s_n, y_n)$. As $\varphi$ is continuous and \eqref{eq:seqh*} holds, we have 
	\begin{equation*}
		0 \leq \gamma_n \rightarrow 0, \quad \text{ when } n \rightarrow \infty.
	\end{equation*}
	We introduce another sequence $\{\delta_n\}_{n}$ of strictly positive real numbers, satisfying
	\begin{equation*}
		\lim_{n \rightarrow \infty} \delta_n = 0 \quad \text{and} \quad \lim_{n \rightarrow \infty} \frac{\gamma_n}{\delta_n} = 0.
	\end{equation*}
	
	Let $\varepsilon > 0$ and define
	\begin{equation*}
		\rho_n := \inf \{ t \in [s_n, T_k] : | X(t; s_n, y_n, \emptyset, \emptyset) - y_n | \geq \varepsilon \} \wedge (s_n + \delta_n) \wedge T_k.
	\end{equation*}
	For $n$ large enough, we have $s_n + \delta_n < T_k$ and $\rho_n < T_k$. We apply the submartingale property of $h_k$, the fact that $h_k \leq \varphi$, and It\^o's formula to obtain 
	\begin{align*}
		h_k(s_n, y_n) \leq & \E [ h_k(\rho_n, X(\rho_n ; s_n, y_n, \emptyset, \emptyset)) ] \\
		\leq & \E [ \varphi(\rho_n, X(\rho_n ; s_n, y_n, \emptyset, \emptyset)) ] \\
		= & \varphi(s_n, y_n) - \E \Big[ \int^{\rho_n}_{s_n} \cL[\varphi] (u, X(u ; s_n, y_n, \emptyset, \emptyset)) du  \Big].
	\end{align*}	
	Rearranging the terms and dividing by $\delta_n$, we have
	\begin{equation*}
		\frac{1}{\delta_n} \E \Big[ \int^{\rho_n}_{s_n} \cL[\varphi] (u, X(u ; x_n, y_n, \emptyset, \emptyset)) du  \Big] - \frac{\gamma_n}{\delta_n} \leq 0.
	\end{equation*}
	Sending $n \rightarrow \infty$, the dominated convergence theorem and mean value theorem show that
	\begin{equation*}
		\cL[\varphi](t, x) \leq 0, \quad (t, x) \in [T_{k-1}, T_k) \times \cS.
	\end{equation*}
\end{proof}

\begin{proof}[Proof of Lemma \ref{lem:Tk_submart}]
	If $\btau = T_k$, \eqref{eq2:Tk_submart} is trivial. Then we only need to prove
	\begin{equation*}
		V_k(\btau, \xi) \one_{\{\btau < T_k \}} \leq \E \big[ V_k(\rho, X(\rho; \btau, \xi, \emptyset, \emptyset)) \one_{\{\btau < T_k \}} \big| \cF_{\btau} \big].
	\end{equation*}
	
	Define 
	\begin{equation*}
		\eta_n = \min\{  \rho, \max\{ T_k - 1/n, \btau \} \}, \quad n \geq N.
	\end{equation*}
	Here, constant $N$ is large enough, such that $T_k - 1/N > 0$. Note that $\eta_n$ is a stopping time. Moreover, $\btau \leq \eta_n \leq \rho$. If $\btau < T_k$, then $\eta_n < T_k$. Instead, if $\btau=T_k$, then $\eta_n = T_k$. Also, $\lim_{n \rightarrow \infty} \eta_n = \rho$. 
	
	Since $V_k$ is a stochastic subsolution and $\btau \leq \eta_n < T_k$ when $\btau < T_k$, \eqref{eq:submartingale} in Definition \ref{def:sto_sub} leads to
	\begin{equation*}
		V_k(\btau, \xi) \one_{\{\btau < T_k \}} \leq \E \big[ V_k(\eta_n, X(\eta_n; \btau, \xi, \emptyset, \emptyset)) \one_{\{\btau < T_k \}} \big| \cF_{\btau} \big].
	\end{equation*}
	Since $V_k$ is bounded and continuous and $X(\cdot; \btau, \xi, \emptyset, \emptyset)$ has continuous paths, dominated convergence theorem shows that
	\begin{align*}
		V_k(\btau, \xi) \one_{\{\btau < T_k \}} \leq & \lim_{n \rightarrow \infty} \E \big[ V_k(\eta_n, X(\eta_n; \btau, \xi, \emptyset, \emptyset)) \one_{\{\btau < T_k \}} \big| \cF_{\btau} \big] \\
		= & \E \big[ V_k(\rho, X(\rho; \btau, \xi, \emptyset, \emptyset)) \one_{\{\btau < T_k \}} \big| \cF_{\btau} \big]. 
	\end{align*}
	Hence, the claim \eqref{eq2:Tk_submart} holds.
\end{proof}

\begin{proof}[Proof of Theorem \ref{thm:optimalstrategy}]
	Clearly, $(\theta^*_{k:K}, \Lambda^*)$ is admissible by construction. We only need to prove the optimality.
	
	Denote constant $\lambda \in (0, 1)$ and $W_g >\sum^{K}_{i=1} w_i G_i$. Consider the perturbed continuation and intervention regions defined as follows:
	\begin{align}
		\cC_{i, \lambda} & := \left\{ (t, x) \in [T_{i-1}, T_i] \times \barS : V_i(t, x) + W_g(1 - \lambda)/\lambda < \cM[V_i] (t, x) \right\}, \\
		\cI_{i, \lambda} & := \left\{ (t, x) \in [T_{i-1}, T_i] \times \barS : V_i(t, x) + W_g(1 - \lambda)/\lambda \geq \cM[V_i] (t, x) \right\}.
	\end{align}
	By Lemma \ref{lem:Mproperty}, $\cM[V_i]$ is LSC on  $[T_{i-1}, T_i] \times \barS$. Then $\cC_{i, \lambda}$ is open and $\cI_{i, \lambda}$ is closed, respectively. We note that $\cC_{i, \lambda}$ can be empty when $\lambda$ is close to zero. $\cI_{i, \lambda}$ is decreasing in $\lambda$ and $\cI_i$ in \eqref{eq:I_i} satisfies
	\begin{equation*}
		\cI_i = \bigcap_{\lambda \in (0, 1)} \cI_{i, \lambda}.
	\end{equation*}

	{\bf Step 1}. Given $\lambda \in (0, 1)$ and $(s, y) \in [T_{k-1}, T_k] \times \barS$, we define a stopping time as
	\begin{equation*}
		\rho^{\lambda, k, s, y} := \inf \{ u \in [s, T_k] : (u, X(u; s, y, \emptyset, \emptyset)) \in \cI_{k, \lambda} \} \wedge T_k, 
	\end{equation*}
	and two functions as
	\begin{align*}
		h_k(s, y) & := \E \big[V_k(\rho^{\lambda, k, s, y},  X(\rho^{\lambda, k, s, y}; s, y, \emptyset, \emptyset)) \big], \\
		h_{k, \lambda} (s, y) & := \lambda V_k(s, y) + (1 - \lambda) h_k (s, y).
	\end{align*}
	Since $0 \leq V_k < W_g$, we have $0 \leq h_k < W_g$ and $0 \leq h_{k, \lambda} < W_g$. 
	
	{\bf Step 2}. When $k \neq K$, we verify that $h^*_{k, \lambda}$ is a USC viscosity subsolution on $[T_{k-1}, T_k] \times \barS$, that is, it satisfies Conditions (1), (2), and (4) in Definition \ref{def:vis_sub}. Since the value function $V_k$ is a viscosity supersolution, the comparison principle in Propositions \ref{prop:compare} and \ref{prop:compareTk} yields that $h_{k, \lambda} \leq h^*_{k, \lambda} \leq V_k$ on $[T_{k-1}, T_k] \times \barS$.
	
	The idea is to show that Lemma \ref{lem:borel} can be applied here. 
	\begin{itemize}
		\item Lemma \ref{lem:Tk_submart} yields the submartingale property, where $\rho^{\lambda, k, s, y} = T_k$ is allowed:
		\begin{equation*}
			V_k(s, y) \leq \E \big[V_k(\rho^{\lambda, k, s, y},  X(\rho^{\lambda, k, s, y}; s, y, \emptyset, \emptyset)) \big] \leq h_k(s, y), \quad (s, y) \in [T_{k-1}, T_k] \times \barS.
		\end{equation*}
		It implies that $V_k \leq h_{k, \lambda}$.
		\item The growth condition (2) in Definition \ref{def:sto_sub} of stochastic subsolutions holds since $0 \leq h_{k, \lambda} < W_g$.
		\item Condition (3) about the non-decreasing property in transactions can be shown as follows. First,
		\begin{align*}
			\cM[h_{k, \lambda}](s, y) & \geq \lambda \cM[V_k](s, y) + (1 - \lambda) \cM[h_k] (s, y) \\
			& \geq \lambda \cM[V_k] (s, y) + (1 - \lambda) \cM[V_k](s, y) \\
			& = \cM[V_k](s, y).
		\end{align*}
		Here, the second inequality uses $h_k \geq V_k$. 
		
		If $(s, y) \in \cI_{k, \lambda}$, then the stopping time $\rho^{\lambda, k, s, y} = s$, which leads to $h_k(s, y) = V_k(s, y)$ and $h_{k, \lambda}(s, y) = V_k(s, y)$. Hence,
		\begin{align*}
			\cM[h_{k, \lambda}](s, y) \geq \cM[V_k](s, y) \geq V_k(s, y) = h_{k, \lambda}(s, y), \quad \text{ for } (s, y) \in \cI_{k, \lambda}.
		\end{align*}
		The second inequality holds since $V_k$ satisfies Condition (3).
		
		Instead, if $(s, y) \in \cC_{k, \lambda}$, then  $V_k(s, y) + W_g(1 - \lambda)/\lambda < \cM[V_k] (s, y)$. It yields
		\begin{align*}
			\cM[h_{k, \lambda}](s, y) & \geq \lambda \cM[V_k](s, y) + (1 - \lambda) \cM[h_k] (s, y) \\
			& \geq \lambda V_k(s, y) + W_g (1 - \lambda) + (1 - \lambda) \cM[h_k] (s, y) \\
			& \geq \lambda V_k(s, y) + (1 - \lambda) h_k (s, y) \\
			& = h_{k, \lambda}(s, y).
		\end{align*}
		The third inequality holds since $W_g > h_k$ and $(1 - \lambda) \cM[h_k] (s, y) \geq 0$.
		
		\item We show that $(h_{k, \lambda}, V_{k+1:K})$ satisfies the submartingale condition (4) in Definition \ref{def:sto_sub}. Since $h_{k, \lambda}$ is a linear combination of $V_k$ and $h_k$, we only need to show that $(h_{k}, V_{k+1:K})$ satisfies this condition. Consider a random initial condition $(\btau, \xi)$ with $\btau \in [T_{k-1}, T_k]$. Fix a stopping time $\bar{\rho} \in [\btau, T]$ and $(\btau, \xi)$-admissible withdrawals $\theta_{k:K}$. For notational simplicity, we introduce the uncontrolled wealth process stopped at $\bar{\rho} \wedge T_k$ and $T_k$ as follows:
		\begin{align*}
			\bar{\eta} := X(\bar{\rho} \wedge T_k; \btau, \xi, \emptyset, \emptyset), \qquad \eta_{T_k} := X(T_k; \btau, \xi, \emptyset, \emptyset).
		\end{align*}
		Replacing $(s, y)$ in $\rho^{\lambda, k, s, y}$ with random initial conditions, we define
		\begin{align*}
			\rho_1 & := \rho^{\lambda, k, \btau, \xi}, \quad \qquad \eta_1 := X(\rho_1; \btau, \xi, \emptyset, \emptyset), \\
			\rho_2 & := \rho^{\lambda, k, \bar{\rho} \wedge T_k, \bar{\eta}}, \qquad \eta_2 := X(\rho_2; \rho_1, \eta_1, \emptyset, \emptyset).        %= \inf \{ u \in [\bar{\rho} \wedge T_k, T_k] : (u, X(u; \bar{\rho} \wedge T_k, \bar{\eta}, \emptyset, \emptyset)) \in \cI_{k, \lambda} \} \wedge T_k
		\end{align*}
		We note that $\rho_1 \leq \rho_2$ since $\btau \leq \bar{\rho} \wedge T_k$.
		
		The submartingale property can be shown as follows:
		\begin{align*}
			h_k(\btau, \xi) = & \E\big[V_k(\rho_1, \eta_1) \big| \cF_{\btau} \big] \\
			\leq & \E \big[\one_{\{ \bar{\rho} < T_k\}} V_k(\rho_2, \eta_2) + \one_{\{\bar{\rho} \geq T_k \}} V_k(T_k, X(T_k; \rho_1, \eta_1, \emptyset, \emptyset)) \big| \cF_{\btau} \big] \\
			= & \E \big[\one_{\{ \bar{\rho} < T_k\}} V_k(\rho_2, X(\rho_2; \bar{\rho}, \bar{\eta}, \emptyset, \emptyset)) + \one_{\{\bar{\rho} \geq T_k \}} V_k(T_k, X(T_k; \btau, \xi, \emptyset, \emptyset)) \big| \cF_{\btau} \big] \\
			= & \E \big[\one_{\{ \bar{\rho} < T_k\}} h_k(\bar{\rho}, \bar{\eta}) + \one_{\{\bar{\rho} \geq T_k \}} V_k(T_k, X(T_k; \btau, \xi, \emptyset, \emptyset)) \big| \cF_{\btau} \big] \\
			\leq & \E \big[\one_{\{ \bar{\rho} < T_k\}} h_k(\bar{\rho}, \bar{\eta}) + \one_{\{\bar{\rho} \geq T_k \}} \cH([T_k, \bar{\rho}], V_{k+1:K}, X(\cdot; T_k, \eta_{T_k}, \theta_{k:K}, \emptyset)) \big| \cF_{\btau} \big] \\
			= & \E\big[ \cH([\btau, \bar{\rho}], (h_k, V_{k+1:K}), X(\cdot; \btau, \xi, \theta_{k:K}, \emptyset)) \big| \cF_{\btau} \big].
		\end{align*}
		The first line is from the strong Markov property. The second line uses Lemma \ref{lem:Tk_submart} from $\rho_1$ to $\rho_2$. The third line uses the pathwise uniqueness:
		\begin{align*}
			& \eta_2 = X(\rho_2; \rho_1, \eta_1, \emptyset, \emptyset) = X(\rho_2; \bar{\rho}, \bar{\eta}, \emptyset, \emptyset) \quad \text{ when } \bar{\rho} < T_k, \\
			& X(T_k; \rho_1, \eta_1, \emptyset, \emptyset) = X(T_k; \bar{\tau}, \xi, \emptyset, \emptyset).
		\end{align*}
		Th fourth line uses the strong Markov property:
		\begin{align*}
			h_k(\bar{\rho} \wedge T_k, \bar{\eta}) & = \E \big[ V_k(\rho_2, X(\rho_2; \bar{\rho} \wedge T_k, \bar{\eta}, \emptyset, \emptyset))  \big| \cF_{\bar{\rho} \wedge T_k}\big],
		\end{align*}
		and the tower property.  The fifth line is from the submartingale property of $V_k$ from $T_k$ to $\bar{\rho}$. The last line uses the pathwise uniqueness and the definition of $\cH$.		
	\end{itemize}
	
	Moreover, the boundary condition at $0$ is satisfied as 
	\begin{equation*}
		h^*_{k, \lambda}(t, 0) \leq \sum^K_{i = k} w_i G_i, \; t \in [T_{k-1}, T_k].
	\end{equation*}
	
	Hence, the conditions to apply comparison principle are satisfied. We have $h_{k, \lambda} \leq h^*_{k, \lambda} \leq V_k$ on $[T_{k-1}, T_k] \times \barS$.
	
	{\bf Step 3}. Fix $n \in \{ 0, 1, 2, \ldots \}$. For notational simplicity, we write
	\begin{equation*}
		(\tau, \xi) := (\tau^{*, k}_n, \xi^{*, k}_n), \quad \rho^{\lambda} := \rho^{\lambda, k, \tau, \xi}.
	\end{equation*}
	
	The strong Markov property leads to
	\begin{align*}
		h_{k}(\tau, \xi) = & \E\big[V_k(\rho^{\lambda, k, s, y}, X(\rho^{\lambda, k, s, y}; s, y, \emptyset, \emptyset)) \big] \big|_{(s, y) = (\tau, \xi)} \\
		= & \E\big[V_k(\rho^{\lambda}, X(\rho^{\lambda}; \tau, \xi, \emptyset, \emptyset)) \big| \cF_\tau \big] \qquad \text{ when } \tau \leq T_k.
	\end{align*}
	With $V_k \geq h_{k, \lambda}$, we have
	\begin{equation*}
		V_k(\tau, \xi) \geq h_{k, \lambda} (\tau, \xi) = \lambda V_k(\tau, \xi) + (1 - \lambda) \E\big[V_k(\rho^{\lambda}, X(\rho^{\lambda}; \tau, \xi, \emptyset, \emptyset)) \big| \cF_\tau \big] \quad \text{ when } \tau \leq T_k.
	\end{equation*}
	It yields
	\begin{equation*}
		V_k(\tau, \xi) \geq \E\big[V_k(\rho^{\lambda}, X(\rho^{\lambda}; \tau, \xi, \emptyset, \emptyset)) \big| \cF_\tau \big] \quad \text{ when } \tau \leq T_k.
	\end{equation*}
	Lemma \ref{lem:Tk_submart} gives another side of inequality. Hence, we have 
	\begin{equation}\label{eq:rho_lambda}
		V_k(\tau, \xi) = \E\big[V_k(\rho^{\lambda}, X(\rho^{\lambda}; \tau, \xi, \emptyset, \emptyset)) \big| \cF_\tau \big] \quad \text{ when } \tau \leq T_k.
	\end{equation}
	
	{\bf Step 4}. By definition, $\rho^\lambda \leq \tau^{*, k}_{n+1} \wedge T_k$. Moreover, $\rho^\lambda$ is nondecreasing in $\lambda$. Then the limit $\rho := \lim_{\lambda \uparrow 1} \rho^\lambda$ exists and $\rho \leq \tau^{*, k}_{n+1} \wedge T_k$. Define two events
	\begin{equation*}
		B_1 := \{ \tau \leq T_k \} \cap \{ \tau^{*, k}_{n+1} \leq T_k \} = \{ \tau^{*, k}_{n+1} \leq T_k  \}, \quad 	B_2 := \{ \tau \leq T_k \} \cap \{ \tau^{*, k}_{n+1} > T_k \}.
	\end{equation*}
	% Then $B_1 \cup B_2 = \{ \tau \leq T_k \}$. Instead, if $B_2$ happens, when have $\rho^\lambda = T_k$. 
	We obtain
	\begin{align*}
		\cM[V_k](\rho, X(\rho; \tau, \xi, \emptyset, \emptyset)) \geq & V_k(\rho, X(\rho; \tau, \xi, \emptyset, \emptyset))  \\
		= & \lim_{\lambda \uparrow 1} V_k(\rho^\lambda, X(\rho^\lambda; \tau, \xi, \emptyset, \emptyset)) \\
		\geq & \liminf_{\lambda \uparrow 1} \Big( \cM[V_k] (\rho^\lambda, X(\rho^\lambda; \tau, \xi, \emptyset, \emptyset)) - W_g(1-\lambda)/\lambda \Big) \\
		\geq & \cM[V_k](\rho, X(\rho; \tau, \xi, \emptyset, \emptyset)) \qquad \text{ on } B_1.
	\end{align*}
	Here, the first line is due to that $V_k$ satisfies Condition (3) in Definition \ref{sto_sub}. The second line holds since $V_k$ is continuous and $X$ has continuous paths. The third line uses the definition of $\cI_{k, \lambda}$ and the fact that $(\rho^\lambda, X(\rho^\lambda; \tau, \xi, \emptyset, \emptyset)) \in \cI_{k, \lambda}$ when $B_1$ happens. The last line relies on the LSC property of $\cM[V_k]$. It implies that all inequalities are equalities and $\rho = \tau^{*, k}_{n+1}$ on $B_1$. Therefore,
	\begin{align*}
		V_k(\tau, \xi) = & \lim_{\lambda \uparrow 1} \E \big [V_k(\rho^\lambda, X(\rho^\lambda; \tau, \xi, \emptyset, \emptyset)) \big| \cF_{\tau} \big] \\
		\geq & \liminf_{\lambda \uparrow 1} \E \Big[ \cM[V_k] (\rho^\lambda, X(\rho^\lambda; \tau, \xi, \emptyset, \emptyset)) - W_g(1-\lambda)/\lambda \Big| \cF_{\tau} \Big] \\
		\geq & \E \Big[ \cM[V_k] (\tau^{*, k}_{n+1}, X(\tau^{*, k}_{n+1}; \tau, \xi, \emptyset, \emptyset)) \Big| \cF_{\tau} \Big] \\
		\geq & \E \Big[ V_k (\tau^{*, k}_{n+1}, X(\tau^{*, k}_{n+1}; \tau, \xi, \emptyset, \emptyset)) \Big| \cF_{\tau} \Big] \\
		\geq & 	V_k(\tau, \xi) \qquad \text{ on } B_1.
	\end{align*}
	The first line uses \eqref{eq:rho_lambda}. The second line is again from the fact that $(\rho^\lambda, X(\rho^\lambda; \tau, \xi, \emptyset, \emptyset)) \in \cI_{k, \lambda}$ when $B_1$ happens. The third line follows from Fatou's lemma and the LSC property of $\cM[V_k]$. The fourth line holds because $V_k$ satisfies the non-decreasing property (3) in Definition \ref{def:sto_sub}. The last line uses Lemma \ref{lem:Tk_submart}. Then the inequalities are all equalities. By the definition of $\xi^{*, k}_{n+1}$, we have
	\begin{align*}
		V_k(\tau^{*, k}_n, \xi^{*, k}_n) = & \E \Big[ \cM[V_k] (\tau^{*, k}_{n+1}, X(\tau^{*, k}_{n+1}; \tau, \xi, \emptyset, \emptyset)) \Big| \cF_{\tau^{*, k}_n} \Big] \\
		= & \E \Big[ V_k (\tau^{*, k}_{n+1},  \xi^{*, k}_{n+1}) \Big| \cF_{\tau^{*, k}_n} \Big] \qquad \text{ on } B_1. 
	\end{align*}

	Instead, on $B_2$, $\rho = T_k$. By dominated convergence theorem, we have 
	\begin{align*}
		V_k(\tau, \xi) = & \lim_{\lambda \uparrow 1} \E \big [V_k(\rho^\lambda, X(\rho^\lambda; \tau, \xi, \emptyset, \emptyset)) \big| \cF_{\tau} \big] \\
		= & \E \big [V_k(T_k, X(T_k; \tau, \xi, \emptyset, \emptyset)) \big| \cF_{\tau} \big] \quad \text{ on } B_2.
	\end{align*}
	
	Putting them together, we obtain
	\begin{equation}\label{eq:iter}
		\begin{aligned}
			V_k(\tau^{*, k}_n, \xi^{*, k}_n) \one_{\{\tau^{*, k}_{n} \leq T_k \}} = \E \Big[ & V_k (\tau^{*, k}_{n+1},  \xi^{*, k}_{n+1}) \one_{\{\tau^{*, k}_{n+1} \leq T_k \}} \\
			& + V_k(T_k, X(T_k; \tau^{*, k}_n, \xi^{*, k}_n, \emptyset, \emptyset)) \one_{\{\tau^{*, k}_{n} \leq T_k \} \cap \{\tau^{*, k}_{n+1} > T_k \}} \Big| \cF_{\tau^{*, k}_n} \Big]. 
		\end{aligned} 
	\end{equation}
	Iteratively applying this equality on $n = 0, 1, \ldots$, we have
	\begin{align*}
		V_k(t, x) =&  \lim_{n \rightarrow \infty} \E \Big[ V_k(\tau^{*, k}_n, \xi^{*, k}_n) \one_{\{\tau^{*, k}_n \leq T_k \}}  + V_k(T_k, X(T_k; \tau^{*, k}_0, \xi^{*, k}_0, \emptyset, \Lambda^*_k)) \one_{\{\tau^{*, k}_n > T_k \}} \Big] \\
		= & \E \Big[ V_k(T_k, X(T_k; \tau^{*, k}_0, \xi^{*, k}_0, \emptyset, \Lambda^*_k)) \Big] \\
		= &  \E \Big[ w_i (G_i - \theta^*_k)^+ +  V_{k+1}(T_k, X(T_k; \tau^{*, k}_0, \xi^{*, k}_0, \theta^*_k, \Lambda^*_k)) \Big].
	\end{align*}
	Here, the first line uses \eqref{eq:iter} and the definition of $\Lambda^*$. The second line follows from the dominated convergence theorem and the fact that $\p(\lim_{n \rightarrow \infty} \tau^{*, k}_n > T_k) = 1$. The last line relies on the definition of $\theta^*_k$ and the following fact: Since $V_k$ is a viscosity solution of \eqref{Tk_bc} at $T_k$ with the boundary condition \eqref{eq:cond_0} at $x=0$, we have
	\begin{equation*}
		V_k(T_k, x) = \min_{n \in \{0, 1, 2, \ldots\}} \cM^n[U_k](x), \quad x \in \barS,
	\end{equation*}
	where
	\begin{equation*}
		\begin{aligned}
			U_k(x) := & \inf_{0 \leq \theta_k \leq x_0} \left[ w_k (G_k - \theta_k)^+ + V_{k+1}(T_k, x_0 - \theta_k, x_1) \right], \quad x \in \barS.
		\end{aligned}
	\end{equation*}

	{\bf Step 5}. We repeat Step 1 to 4 above, until $k=K$. Only the terminal condition at $T$ is different and requires slight modifications. Then we finally have the desired result:
	\begin{align*}
		V_k(t, x) = &  \E \Big[ \sum^{K}_{i=k} w_i (G_i - \theta^*_k)^+ \Big].
	\end{align*}
\end{proof}

\end{document}